\documentclass[10pt,letterpaper]{amsart}
\usepackage[utf8]{inputenc}\usepackage[left=2.8cm,right=2.8cm,top=3.5cm,bottom=3cm]{geometry}
\usepackage{amssymb,latexsym,amsmath,amsthm,amscd}
\usepackage{graphicx}
\usepackage{leftidx}
\usepackage{dsfont}
\usepackage{stmaryrd}
\usepackage{bm}
\usepackage{comment}
\usepackage[pagebackref=false,colorlinks]{hyperref}
\hypersetup{pdffitwindow=true,linkcolor=blue,citecolor=olive,urlcolor=cyan}
\usepackage{hyperref}
\usepackage[all]{xy}
\usepackage{mathrsfs}
\usepackage{color}
\usepackage[dvipsnames]{xcolor}
\definecolor{Blue}{rgb}{0.3,0.3,0.9}

\usepackage[T2A,OT1]{fontenc}
\DeclareSymbolFont{cyrillic}{T2A}{cmr}{m}{n}
\DeclareMathSymbol{\Sha}{\mathalpha}{cyrillic}{216}

\theoremstyle{plain}
\newtheorem{thm}{Theorem}[subsection] 

\usepackage{mathrsfs}

\theoremstyle{definition}

\newtheorem{defn}[thm]{Definition}

\newtheorem{rmk}[thm]{Remark}
\newtheorem*{rmk*}{Remark}

\newtheorem{assumption}{Assumption}[section]

\theoremstyle{definition}

\theoremstyle{plain}
\newtheorem{prop}[thm]{Proposition}
\theoremstyle{plain}
\newtheorem{lem}[thm]{Lemma}
\theoremstyle{plain}
\newtheorem{cor}[thm]{Corollary}
\newtheorem*{thm-intro}{Theorem}
\newtheorem*{cor-intro}{Corollary}

\theoremstyle{remark}

\newtheorem{rem}[thm]{Remark}

\numberwithin{equation}{section}

\newcommand{\cc}{\mathbf{c}}

\newcommand{\ch}{\xi}

\newcommand{\rH}{{\mathrm{H}}}

\newcommand{\pp}{\mathfrak{p}}
\newcommand{\ppbar}{\bar{\mathfrak{p}}}
\newcommand{\fkm}{\mathfrak{m}}

\newcommand{\fkf}{\mathfrak{f}}

\newcommand{\fkl}{\mathfrak{l}}

\newcommand{\fkn}{\mathfrak{n}}

\newcommand{\fa}{\mathfrak{a}}

\newcommand{\relstr}{{{\rm rel},{\rm str}}}

\newcommand{\ord}{{{\rm ord},{\rm ord}}}

\DeclareMathOperator{\GL}{GL}

\DeclareMathOperator{\pa}{\mathfrak{p}}

\DeclareMathOperator{\et}{\acute{e}t}

\DeclareMathOperator{\Ll}{\mathscr{L}}
\DeclareMathOperator{\Ss}{\mathscr{S}}
\DeclareMathOperator{\p}{\mathfrak{P}}

\DeclareMathOperator{\La}{\Lambda}
\DeclareMathOperator{\la}{\lambda}
\DeclareMathOperator{\Ga}{\Gamma}

\DeclareMathAlphabet\mathbfcal{OMS}{cmsy}{b}{n}
\DeclareTextSymbolDefault{\uhorn}{T5}
\DeclareTextSymbolDefault{\ocircumflex}{T5}
\DeclareTextSymbolDefault{\acircumflex}{T5}

\DeclareMathOperator{\ur}{ur} 

\newcommand{\bQ}{\mathbf{Q}}

\newcommand{\bC}{\mathbf{C}}

\def\cL{{\mathcal L}}

\def\cM{\mathcal M}
\def\cO{\mathcal O}

\def\cP{{\mathcal P}}

\def\cQ{\mathcal Q}

\def\Lcal{\mathcal L}

\newcommand{\VQdag}{{\mathbf{V}_{\underline{Q}}^\dagger}}
\newcommand{\Vsdag}{{\mathbf{V}_{Q_1}^\dagger}}
\newcommand{\Vdag}{{\mathbf{V}^\dagger}}
\newcommand{\Adag}{{\mathbf{A}^\dagger}}

\newcommand{\unb}{{\boldsymbol{f}}}

\newcommand{\Q}{\mathbf{Q}}
\newcommand{\Z}{\mathbf{Z}}

\def\makeop#1{\expandafter\def\csname#1\endcsname
	{\mathop{\rm #1}\nolimits}\ignorespaces}
\makeop{Hom}   \makeop{End}   \makeop{Aut}   
\makeop{Pic} \makeop{Gal}       \makeop{Div} \makeop{Lie}
\makeop{PGL}   \makeop{Corr} \makeop{PSL} \makeop{sgn} \makeop{Spf}
\makeop{Tr} \makeop{Nr} \makeop{Fr} \makeop{disc}
\makeop{Proj} \makeop{supp} \makeop{ker}   \makeop{Im} \makeop{dom}
\makeop{coker} \makeop{Stab} \makeop{SO} \makeop{SL} \makeop{SL}
\makeop{Cl}    \makeop{cond} \makeop{Br} \makeop{inv} \makeop{rank}
\makeop{id}    \makeop{Fil} \makeop{Frac}  \makeop{GL} \makeop{SU}
\makeop{Trd}   \makeop{Sp} \makeop{Tr}    \makeop{Trd} \makeop{Res}
\makeop{ind} \makeop{depth} \makeop{Tr} \makeop{st} \makeop{Ad}
\makeop{Int} \makeop{tr}    \makeop{Sym} \makeop{can} \makeop{SO}
\makeop{torsion} \makeop{GSp} \makeop{Tor}\makeop{Ker} \makeop{rec}
\makeop{Ind} \makeop{Coker}
\makeop{vol} \makeop{Ext} \makeop{gr} \makeop{ad}
\makeop{Gr}\makeop{corank} \makeop{Ann}
\makeop{Hol} 
\makeop{Fitt} \makeop{Mp} \makeop{CAP}

\newcommand{\dBr}[1]{\llbracket{#1}\rrbracket}
\newcommand{\bN}{\mathbf{N}}
\newcommand{\bT}{\mathbb{T}}

\newcommand{\cR}{\mathbb{I}}

\newcommand{\bfff}{{\boldsymbol{f}}}
\newcommand{\bff}{{\boldsymbol{f}}}
\newcommand{\bfg}{{\boldsymbol{g}}}
\newcommand{\bfh}{{\boldsymbol{h}}}

\begin{document}
	
\title{Euler systems for self-dual Hecke characters over biquadratic fields with applications}

\author[K.\,T.\,Do]{Kim Tuan Do}
	
\subjclass[2020]{Primary 11G05; Secondary 11G40}
\date{\today} 

\address[]{Department of Mathematics, University of California Santa Barbara, CA 93106, USA}
\email{ktdo@ucsb.edu}



\begin{abstract}
We construct a new Euler system attached to a self-dual Hecke character $\chi$ over an imaginary biquadratic field $K_0$. The Euler system is anticyclotomic in the sense of Jetchev--Nekov\'a{\v r}--Skinner. As arithmetic applications, we deduce instances of the Bloch--Kato conjecture in analytic ranks $0$, $1$, and $2$ (the last one for certain CM abelian surfaces). In addition, we establish the full Iwasawa--Greenberg main conjecture for $\chi$ over $K_0$. In this biquadratic setting, our work generalizes Rubin's work on the Iwasawa main conjectures for imaginary quadratic fields using Euler systems of elliptic units, while complementing known results of Hida and Hsieh for general CM fields.

\end{abstract}


\maketitle
\tableofcontents

\addtocontents{toc}{\setcounter{tocdepth}{2}}
\section{Introduction}

\addtocontents{toc}{\setcounter{tocdepth}{-10}}

{
\renewcommand{\thethm}{\Alph{thm}}

Let $K_0/\Q$ be an imaginary biquadratic field and $p=\mathcal{P}_1\mathcal{P}_2\mathcal{P}_3\mathcal{P}_4$  be a prime that splits completely in $K_0$. Then $K_0$ will contain two distinct imaginary quadratic subfields $K_1$, $K_2$ together with one real quadratic subfield $K_3$. Let $\chi$ be a conjugate self-dual Hecke character of $K_0$ (i.e. $\chi\chi^{\cc}=\bN$, where $\chi^\cc(\sigma)=\chi(\cc\sigma\cc^{-1})$, this is called self-dual by Bertolini--Darmon--Prasanna \cite[Def. $3.4$]{bdp3}) with infinity type $(a,-1-a,b,-1-b)$ such that $\chi^{-1}$ can be factored
 \begin{equation}\label{eqn:chi_decomp}
\chi^{-1}=\widetilde{\psi_1{\psi}_1'^{\cc}}\,\widetilde{\psi_2}\,\bN^{(k_1'+k_1+k_2-4)/2}.
    \end{equation}
Here, for $i\in\{1,2\}$, $\psi_i$ is a Hecke character of $K_i$ of infinity type $(1-k_i,0)$ and modulus  $\fkf_i$; $\tilde{\psi}_i$ is the Hecke character of $K_0$, obtained by composing $\mathbb{A}_{K_0}^{\times}\xrightarrow{\mathbb{N}_{K_0/K_i}}\mathbb{A}_{K_i}^{\times}\xrightarrow{\psi_i}\mathbb{C}$. Let $\psi_1'$ be another Hecke character of $K_1$ of infinity type $(1-k_1',0)$  with $k_1'\geq 1$, and modulus  $\fkf_1'$, which also comes with $\widetilde{\psi_1'^{\cc}}$, the restriction to $G_{K_0}$. Note that if this happens, we must have the following infinity type of $\chi$
\[\left(\frac{-k_1'+k_1+k_2}{2},\frac{k_1'-k_1-k_2}{2}+1,\frac{k_1'-k_1+k_2}{2},1-\frac{k_1'-k_1+k_2}{2}\right).\] 
Here, we enlarging $\Q_p$ to $F$ so that the characters $\psi_1,\psi_1',\psi_2$ (or their $p$-adic avatars) take values in $\mathcal{O}$, the ring of integers of $F$.
We then focus on the $1$-dimensional $G_{K_0}={\rm Gal}(\overline{\Q}/K_0)$-representation $V_\chi:=F(\chi)$, containing a lattice $\cO_\chi$.

Throughout the remainder of this section, 
we assume the following hypotheses:
\begin{itemize}
    \item $p>k_1'$.
    \item $p\nmid 6h_{K_0}$, where $h_{K_0}$ is the class number of $K_0$.
    \item $p\nmid N_{K_1/\Q}(\fkf_1'\fkf_1)\cdot{\rm disc}(K_1/\Q) N_{K_2/\Q}(\fkf_2)\cdot{\rm disc}(K_2/\Q)$.
    \item $\chi_{\psi_1'}\epsilon_{K_1}=1$ and $\chi_{\psi_1}\epsilon_{K_1}\chi_{\psi_2}\epsilon_{K_2}=1$. This forces the central character of $\chi$ to be non-trivial. We call this condition \emph{self-dual} following Burungale--He--Tian--Ye \cite[eqn. $1.1$]{BHTY}, see also \cite[Rmk. 3.7]{bdp3}. 
\end{itemize}

For example, $\chi$ is a Hecke character of infinity type $(1,0,0,1)$ conductor $(42+42i)$ defined over $K_0=\Q(\zeta_{12})$ and decomposes as in (\ref{eqn:chi_decomp}) (see Section \ref{Example} for more explicit examples of $\chi$ obtained from LMFDB), where:
\begin{itemize}
\item $\psi_1'$ is a Hecke character of infinity type $(-1,0)$ and conductor $(2\sqrt{-3})$ defined over $K_1=\Q(\sqrt{-3})$ and gives rise to the CM modular form $f\in S_2(\Ga_0(36))$ (this is the modular form 36.2.a.a in LMFDB).
   \item $\psi_1$ is a Hecke character of infinity type $(-2,0)$ and conductor $(6-3\sqrt{-3})$ defined  over $K_1=\Q(\sqrt{-3})$ and gives rise to the CM modular form $g\in S_3(\Ga_1(189),\chi_g)$ (this is the modular form 189.3.d.b in LMFDB). 
   \item $\psi_2$ is a Hecke character of finite order and conductor $(14+14i)$ defined  over $K_2=\Q(i)$ and gives rise to the CM modular form $h\in S_1(\Ga_1(1568),\chi_h)$ (this is the modular form 1568.1.c.a in LMFDB). 
\end{itemize}

For every integral ideal $\mu_3$ of $\cO_{K_3}$, let $K_0[\mu_3]$ be the maximal $p$-subextension of the ring class field of $K_0$ of conductor $\mu_3$. Denote by $\mathcal{N}'$ the subset of the set of squarefree products of primes $\mu_3\subset\cO_{K_3}$, where $m=N_{K_3/\Q}(\mu_3)$ is squarefree, prime to $p$, and split in $K_0$ (see Definition \ref{def:thin-primes}). The following result is the main theorem of the paper, constructing anticyclotomic Euler systems for $V_\chi$.

\begin{thm}[Theorem\,\ref{maintheorem2-chi}]
\label{thmA} 
There exists a collection of Iwasawa cohomology classes
\[
^{\cc}\mathbf{z}_{\mu_3,\chi}\in H^1_{\rm Iw}\bigl(K_0[\mu_3 p^\infty],\cO_{\chi}\bigr),
\] 
indexed by the ideals $\mu_3\in\mathcal{N}'$ with $m=N_{K_3/\Q}(\mu_3)$, 
such that for every prime $\la_3\in \mathcal{N}$ of norm $\ell$, with $(\ell,mp)=1$ we have the norm relation
\begin{equation*}
    \mathrm{Norm}_{K_0[\mu_3]}^{K_0[\mu_3\la_3]}(^{\cc}\mathbf{z}_{\mu_3\la_3,\chi})=P_{\cL_4}(\mathrm{Frob}_{\cL_4})(^{\cc}\mathbf{z}_{\mu_3,\chi}),
\end{equation*}
where $P_{\cL_4}(X)=\det(1-X\cdot \mathrm{Frob}_{\cL_4}\,|\,\cO_{\chi^{-1}}(1))$, and ${\rm Frob}_{\cL_4}$ is the geometric Frobenius.
\end{thm}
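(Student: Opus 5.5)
The plan is to obtain the classes by restricting, to the relevant $G_{K_0}$-subrepresentation, a Euler system for a Rankin--Selberg convolution of CM forms, and then to read off the norm relation from the one already known for that convolution.

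By Hecke and Weil, the characters $\psi_1,\psi_1',\psi_2$ give CM forms $g,f,h$ of weights $k_1,k_1',k_2$. The decomposition (\ref{eqn:chi_decomp}) says that, after the Tate twist by $\bN^{(k_1'+k_1+k_2-4)/2}$, the character $\chi^{-1}$ is a direct summand of the restriction to $G_{K_0}$ of the triple tensor product
\[
V_f\otimes V_g\otimes V_h\simeq \mathrm{Ind}_{K_1}^{\Q}\psi_1'\otimes\mathrm{Ind}_{K_1}^{\Q}\psi_1\otimes\mathrm{Ind}_{K_2}^{\Q}\psi_2 .
\]
Mackey's formula gives this splitting: $\mathrm{Ind}\psi_1'\otimes\mathrm{Ind}\psi_1$ restricted to $K_1$ contains $\psi_1\psi_1'^{\cc}$, and tensoring with $\mathrm{Ind}_{K_2}\psi_2$ and restricting to $K_0=K_1K_2$ gives $\widetilde{\psi_1\psi_1'^{\cc}}\,\widetilde{\psi_2}$.

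The next step is to construct the underlying Euler system. I would take the diagonal-cycle classes of Bertolini--Seveso--Venerucci and Darmon--Rotger, or rather their tame deformations in the style of Loeffler--Zerbes, for $V_{\bfg}\otimes V_{\bfh}$ twisted by characters of $K_3$-ring class towers. Equivalently, one can work directly with Gross--Kudla--Schoen/Beilinson--Flach type cycles over the real quadratic field $K_3$, since restricting to $K_3$ combines $\mathrm{Ind}_{K_1}$ and $\mathrm{Ind}_{K_2}$ into a single induction from $K_0$. These classes, indexed by $\mu_3\in\mathcal N'$ and living over the ring class fields of conductor $\mu_3p^\infty$, come from the paper's earlier construction via Hida families: the family $\boldsymbol f$ passing through $f$ supplies the $p^\infty$-direction, and we specialize in weight to reach the weights above.

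With these classes in hand, we corestrict and restrict from $\Q$ (or $K_3$) to $K_0[\mu_3p^\infty]$. Then we apply the Mackey projector $e_\chi$ onto the summand $\cO_\chi$ and define ${}^{\cc}\mathbf z_{\mu_3,\chi}$ as the image. The superscript $\cc$ reflects the conjugation needed to land in $\chi$ rather than $\chi^{\cc}$. The hypotheses $p>k_1'$, $p\nmid 6h_{K_0}$ and $p$ prime to the conductors guarantee three things: integrality of the classes, that the projector is defined over $\cO$, and that the Iwasawa cohomology is torsion-free, so the projection is well defined on lattices.

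The norm relation would come from the tame norm relation for the diagonal/Beilinson--Flach classes, whose Euler factor is the degree-$8$ polynomial $\det(1-\mathrm{Frob}_\ell X\mid V_f^*\otimes V_g^*\otimes V_h^*(1))$ or its counterpart over $K_3$. Because $\ell=N(\la_3)$ splits completely in $K_0$, $\mathrm{Frob}_\ell$ restricted to $G_{K_0}$ acts diagonally on the Mackey decomposition. The main obstacle is the next step: showing that after projecting to the $\chi$-isotypic part and passing to $K_0[\mu_3]$, this factor collapses to the single linear factor $P_{\cL_4}(\mathrm{Frob}_{\cL_4})$. This requires matching the four primes $\cL_i$ of $K_0$ above $\la_3$ with the eigenvalues $\psi_1'(\cdot)$, $\psi_1(\cdot)$, $\psi_2(\cdot)$. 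The other factors have to be shown either to cancel, because the Galois group of $K_0[\mu_3\la_3]/K_0[\mu_3]$ acts through a ring class character with trivial restriction, or to be absorbed into the corestriction along the intermediate primes. I would carry this out by computing explicitly the $\la_3$-Hecke operators on the CM forms and using the anticyclotomic relation $\mathrm{Frob}_{\cL_i}=\mathrm{Frob}_{\cL_j}^{-1}$ on ring class fields to pair up the eigenvalues.
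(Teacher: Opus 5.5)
There is a genuine gap, and it sits exactly at the step you call the main obstacle; the mechanisms you propose there cannot work. You plan to read the relation off a degree-$8$ triple-product tame norm relation, but no such relation is established for diagonal classes, and the paper does not use one. The paper's route is different:
\begin{itemize}
\item the level is raised only in the two CM forms $\theta_{\psi_1},\theta_{\psi_2}$, with $\theta_{\psi_1'}$ kept at fixed level through a test vector;
\item the level-raising is converted into ray class towers of $K_1$ and $K_2$ by the patched CM Hecke modules of Theorem~\ref{norm1};
\item the result is pushed to the ring class group of $K_0$ by Proposition~\ref{ringclass}.
\end{itemize}
Remove the factor $\ell-1$ and multiply by $-\psi_1(\la_1)\psi_2(\bar{\la}_2)\mathrm{Frob}_{\cL_4}$. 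The relation of Proposition~\ref{wrongnorm} then becomes, modulo $\ell-1$,
\[
\bigl(1-\psi^{-1}(\cL_4)\mathrm{Frob}_{\cL_4}\bigr)\bigl(1-\chi^{-1}(\cL_4)\mathrm{Frob}_{\cL_4}\bigr).
\]
This factor is the \emph{same} on both summands of $H^1(K_0[\mu_3],\cO_\psi)\oplus H^1(K_0[\mu_3],\cO_\chi)$. The fixed-level form $\theta_{\psi_1'}$ contributes only its scalar Hecke eigenvalue $\psi_1'(\la_1)+\psi_1'(\bar{\la}_1)$, which cannot distinguish $\tilde{\psi}_1'^{-1}$ from $\tilde{\psi}_1'^{-\cc}$. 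So after projecting to the $\chi$-component, a spurious factor $1-\psi^{-1}(\cL_4)\mathrm{Frob}_{\cL_4}$ remains. It does not cancel, and corestriction does not absorb it. The anticyclotomic relation $\mathrm{Frob}_{\cL_3}=\mathrm{Frob}_{\cL_4}^{-1}$ that you want to use to pair up eigenvalues is precisely what turns the relation into this single quadratic in $\mathrm{Frob}_{\cL_4}$; it does not split off the $\chi$-factor.

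The missing idea is the device of \cite[Thm.~5.3.3]{LZ-Sym-square}.
\begin{itemize}
\item Choose $\tau_\chi\in G_{K_0[p^\infty]}$ with $\chi(\tau_\chi)\equiv 1$ but $\psi(\tau_\chi)\not\equiv 1\pmod{\mathfrak{P}}$. Such an element exists because $\bar{\psi}$ and $\bar{\chi}$ differ on inertia at $\cP_1$. This is where $p>k_1'$ is used, not for integrality as you suggest.
\item Restrict to primes $\ell'\in\mathcal{T}'_\chi$, i.e.\ $\ell'\equiv 1\pmod p$ with $\mathrm{Frob}_{\ell'}$ conjugate to $\tau_\chi$. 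For these, $1-\psi^{-1}(\cL_4')\mathrm{Fr}_{\cL_4'}$ is a unit in the local ring $\cO[\mathrm{Gal}(K_0[\mu_3]/K_0)]$.
\item Rescale the class of conductor $\mu_3$ by the product of these inverses over $\ell'\mid m$. Under the norm, the new factor at $\la_3$ cancels the unwanted one.
\end{itemize}
This is why the classes are indexed by $\mathcal{N}'$ and the relation holds only for $\la_3\in\mathcal{L}'_\chi$. Rubin's Lemmas 9.6.1 and 9.6.3 \cite{Rubin-ES} then upgrade the congruence modulo $\ell-1$ to an equality.

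A smaller correction: the anticyclotomic $\Z_p^2$-direction comes from the two CM Hida families $\bfg,\bfh$ via $\Gamma^-\simeq\Gamma_{\pp_1}^{K_1}\times\Gamma_{\pp_2}^{K_2}$. It does not come from a family through $f$, which is specialized and would give only one variable. The resulting twists by $\tilde{\kappa}_{\rm ac,1},\tilde{\kappa}_{\rm ac,2}$ must then be removed with Rubin's twisting argument \cite[Thm.~6.4.1]{Rubin-ES}.
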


\begin{rmk*}
In \cite{JNS}, Jetchev--Nekov{\'a}{\v{r}}--Skinner have developed a theory of `split' anticyclotomic Euler systems attached to conjugate self-dual representations over CM fields, where classes are defined over ring class extensions of CM fields (indexed by ideals of their totally real subfields). Our construction fits within their framework (see some details for the imaginary quadratic case in Alonso--Castella--Rivero\cite[\S{8}]{ACR},\cite[\S{4.3}]{Do-PhD}).
\end{rmk*}

\begin{rmk*}
  In theorems \ref{maintheorem2} and \ref{maintheorem2}, we actually construct two Euler systems, one for $\chi$ and one for $\psi$, where \[\psi^{-1}=\widetilde{\psi_1{\psi}_1'}\,\widetilde{\psi_2}\,\bN^{(k_1'+k_1+k_2-4)/2}.\]
    The purpose of the left superscript notation is to distinguish those two, according to the convention of \cite[\S $3.3$]{CD} (here Castella and the author also constructed two Euler systems, one works for the definite setting and the other works for the indefinite setting). All of our applications hereafter are for $\chi$, since that is enough to reach some CM abelian surfaces.
\end{rmk*}

Since the construction of our Euler systems comes from diagonal cycles, it forces our classes to be in some specific Selmer subgroups of $H^1(K,V_{\chi})$ (for appropriate $K$) with Greenberg local conditions, see Section \ref{subsec:local-condition}. Feeding Theorem~\ref{thmA} to the general Euler system machinery of \cite{JNS}, we can deduce the following cases of the Bloch--Kato conjecture in analytic rank $0$ (for the rank $1$ case, see Remark \ref{rmk:BK_rank_1}).

\begin{thm}[Theorem \ref{thm:BK-def}]
\label{thmB}
Assume further that:
\begin{enumerate}
    \item $k_1\ge k_1'+k_2$,
    \item $\mathrm{gcd}(N_{\psi_1'},N_{\psi_1},N_{\psi_2})=1$,
\end{enumerate}
then we have the following implication
\[
L(\chi,0)\neq 0\quad\Longrightarrow\quad{\rm Sel}_{\rm BK}(K_0,V_{\chi})=0.
\] 
In other words, the Bloch--Kato conjecture holds in analytic rank zero for $V_{\chi}$.
\end{thm}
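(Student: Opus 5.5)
The plan is to run the anticyclotomic Euler system argument of Jetchev--Nekov\'a\v{r}--Skinner on the classes of Theorem~\ref{thmA}, using an explicit reciprocity law to relate the bottom class to $L(\chi,0)$. I will carry out the steps below in order.

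\textbf{Step 1 (bottom class and its Selmer position).} Specialize the Iwasawa class ${}^{\cc}\mathbf{z}_{1,\chi}\in H^1_{\rm Iw}(K_0[p^\infty],\cO_\chi)$ at the trivial anticyclotomic character to obtain ${}^{\cc}z_\chi\in H^1(K_0,V_\chi)$. Because the classes come from diagonal cycles, they lie in a Selmer group with Greenberg local conditions at the primes $\mathcal{P}_i$ above $p$ (see Section~\ref{subsec:local-condition}). The inequality $k_1\ge k_1'+k_2$ puts us in the ``unbalanced'' region where the $\bfg$-dominant triple product $p$-adic $L$-function interpolates. In this region I expect the Greenberg condition to be \emph{relaxed} at one prime above $p$ (say $\mathcal{P}_1$) and \emph{strict} at its conjugate, and ordinary at the other two. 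Call this Selmer structure $\mathcal{F}_{\rm rel,str}$.

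\textbf{Step 2 (explicit reciprocity law).} Next, show that the image of ${}^{\cc}z_\chi$ under the relaxed-to-ordinary quotient at $\mathcal{P}_1$, composed with the Bloch--Kato dual exponential or Perrin-Riou map, equals a nonzero multiple (Euler factors times periods) of a $p$-adic $L$-value. By Hida's and Darmon--Rotger's factorization of the triple product $p$-adic $L$-function for CM families, this value is $L_p(\chi)$ at a point outside the interpolation range. On the other hand, for the specialization relevant to $\chi$, the triple product $L$-function $L(f\otimes g\otimes h,c)$ factors as $L(\chi,0)$ times other Hecke $L$-values. The hypothesis $\gcd(N_{\psi_1'},N_{\psi_1},N_{\psi_2})=1$ removes the local root number obstructions, so the Ichino/Harris--Kudla formula expresses the central value as a nonzero multiple of $L(\chi,0)$ times nonvanishing factors. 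Combining these, $L(\chi,0)\neq 0$ forces $\mathrm{res}_{\mathcal{P}_1}({}^{\cc}z_\chi)\notin H^1_{\rm ord}$. In particular ${}^{\cc}z_\chi\neq 0$, and it is not in the ordinary (= Bloch--Kato) Selmer group.

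\textbf{Step 3 (JNS machinery and conclusion).} Verify the hypotheses of the JNS theorem for $T=\cO_\chi$: the norm relations of Theorem~\ref{thmA}, and the big-image/irreducibility conditions for the residual representation of $G_{K_0}$ restricted to the relevant ring class towers. The latter should follow from $p\nmid 6h_{K_0}$, $p>k_1'$ and the conductor hypotheses, with the usual Chebotarev choice of primes $\la_3\in\mathcal{N}'$ split in $K_0$. JNS then gives that $H^1_{\mathcal{F}_{\rm rel,str}}(K_0,V_\chi)$ is one-dimensional, spanned by ${}^{\cc}z_\chi$, and that the dual Selmer group $H^1_{\mathcal{F}^*}$ with the opposite conditions vanishes. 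Apply Poitou--Tate global duality, comparing $\mathcal{F}_{\rm rel,str}$ with the ordinary structure $\mathcal{F}_{\rm ord}$. Since $\mathcal{F}_{\rm ord}$ is obtained by imposing the ordinary condition at $\mathcal{P}_1$, and Step~2 says the generator violates it, we get $H^1_{\mathcal{F}_{\rm ord}}(K_0,V_\chi)=0$. Finally, identify $\mathcal{F}_{\rm ord}$ with Bloch--Kato at $p$ using the Hodge--Tate weights of $\chi$; away from $p$ the two conditions agree for $V_\chi$ after inverting $p$. This yields ${\rm Sel}_{\rm BK}(K_0,V_\chi)=0$.

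\textbf{Main obstacle.} The hardest part will be Step~2: matching the Greenberg conditions coming from the diagonal cycle construction with those needed in the JNS framework, and proving that the reciprocity law output is nonzero precisely when $L(\chi,0)\ne0$. In particular one must check that the Euler factors at $\mathcal{P}_i$ and the auxiliary $L$-values in the factorization do not vanish, and that the test vectors are nonzero under the gcd hypothesis. To handle this, I would first try to reduce to the imaginary quadratic case of \cite{CD}, viewing everything over $K_1$ via induction from $K_0$.
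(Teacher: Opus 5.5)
Your overall route (Euler system, reciprocity law, JNS, then comparison with Bloch--Kato) matches the paper's. However, two of your steps would fail as stated.

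\textbf{Step 2: the companion $L$-value.} Over $K_0$ the triple-product representation splits into a $\psi$-part and a $\chi$-part. Hence $L(\mathbf{V}^\dagger_{\mathcal{Q}_1},0)=L(\chi,0)\cdot L(\psi,0)$, where $\psi^{-1}=\widetilde{\psi_1\psi_1'}\,\widetilde{\psi_2}\,\bN^{(k_1'+k_1+k_2-4)/2}$ is another self-dual character. Nothing in your hypotheses controls its central value. The gcd condition only makes the local root numbers $+1$: Hsieh's $\bfg$-unbalanced $p$-adic $L$-function then exists and the triple-product sign is $+1$. It does not make $L(\psi,0)\neq0$. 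So ``$L(\chi,0)$ times nonvanishing factors'' is unjustified, and if $L(\psi,0)=0$ the reciprocity law tells you nothing about ${}^{\cc}z_\chi$.

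The paper's fix is a twisting trick. Replace $(\psi_1,\psi_1')$ by $(\psi_1\phi,\psi_1'\phi)$ for a ring class character $\phi$ of $K_1$ of $\ell$-power conductor. This leaves $\chi$ and the self-duality conditions unchanged but turns $\psi$ into $\psi\tilde{\phi}^{-2}$. The triple-product sign and the sign of $\chi$ (since $L(\chi,0)\neq0$) are both $+1$, so the sign of $\psi$ is $+1$ too. The nonvanishing theorem of \cite{BHTY} then supplies $\phi$ with $L(\psi\tilde{\phi}^{-2},0)\neq0$.

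Note also that the relevant point lies \emph{inside} the $\bfg$-unbalanced interpolation range; that is exactly what $k_1\ge k_1'+k_2$ buys. No Ichino-type formula or out-of-range value is needed. Theorem \ref{thm:ERL} together with the interpolation formula of Theorem \ref{thm:hsieh-triple} gives $\mathrm{res}_{\cP_3}({}^{\cc}z_\chi)\neq0$ directly.

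\textbf{Step 3: the local conditions and the duality input.} For a character the only Greenberg conditions are relaxed or strict; there is no ``ordinary'' third option. The class lies in ${\rm Sel}_{\varnothing,0,\varnothing,0}(K_0,V_\chi)$ (Proposition \ref{propselmer2}). For $k_1\ge k_1'+k_2$, Lemma \ref{lem:BK-Gr} gives ${\rm Sel}_{\rm BK}(K_0,V_\chi)={\rm Sel}_{\varnothing,0,0,\varnothing}(K_0,V_\chi)$. These differ at $\cP_3$ and $\cP_4$ in \emph{opposite} directions. So Bloch--Kato is not obtained from the Euler-system structure by tightening one condition, and $\mathrm{res}_{\cP_3}({}^{\cc}z_\chi)\neq0$ only yields ${\rm Sel}_{\varnothing,0,0,0}(K_0,V_\chi)=0$.

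JNS also does not make the dual Selmer group vanish. Conjugation by $\cc$ swaps $\cP_1\leftrightarrow\cP_2$ and $\cP_3\leftrightarrow\cP_4$. By conjugate self-duality it identifies ${\rm Sel}_{0,\varnothing,0,\varnothing}(K_0,V_\chi^*)$ with ${\rm Sel}_{\varnothing,0,\varnothing,0}(K_0,V_\chi)$, which is one-dimensional, not zero.

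This identification is the missing ingredient, and the paper's argument runs as follows.
\begin{enumerate}
\item Conjugating ${}^{\cc}z_\chi$ gives $x\in{\rm Sel}_{0,\varnothing,0,\varnothing}(K_0,V_\chi^*)$ with $\mathrm{res}_{\cP_4}(x)\neq0$.
\item For any $\kappa\in{\rm Sel}_{\rm BK}(K_0,V_\chi)$, global reciprocity gives $\sum_i\langle\mathrm{res}_{\cP_i}\kappa,\mathrm{res}_{\cP_i}x\rangle=0$. Only the $\cP_4$ term survives, which forces $\mathrm{res}_{\cP_4}\kappa=0$.
\item Hence ${\rm Sel}_{\rm BK}={\rm Sel}_{\varnothing,0,0,0}\subset{\rm Sel}_{\varnothing,0,\varnothing,0}$.
\item Only now do the one-dimensionality from JNS and $\mathrm{res}_{\cP_3}({}^{\cc}z_\chi)\neq0$ give ${\rm Sel}_{\rm BK}(K_0,V_\chi)=0$.
\end{enumerate}
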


By applying the formalism of \cite{JNS}, our Euler system yields a divisibility result for the Iwasawa main conjecture over the anticyclotomic plane for the imaginary biquadratic field $K_0$. This generalizes previous results on the anticyclotomic main conjecture for imaginary quadratic fields, which were established by Agboola--Howard\cite{AHsplit} for CM elliptic curves and by Arnold \cite{Arnold} for higher weights CM forms. Furthermore, we deduce the full Iwasawa main conjecture for certain conjugate self-dual characters. In this biquadratic setting, our result complements the seminal work of Hida \cite{hida-coates} and Hsieh \cite{Hsieh-JAMS} on anticyclotomic characters over general CM fields, while directly generalizing Rubin's landmark proof \cite{rubin-IMC} of the Iwasawa main conjectures for imaginary quadratic fields via elliptic units.

\begin{thm}[Theorem \ref{thm:IMC-def}]
    \label{thmC}
    Assume further that the sign of $\chi$ is $+1$ together with condition \ref{Assumption:Hsieh}. Then inside $\Lambda_{K_0}\otimes_{\Z_p}\Q_p$, we have the following equality
\[
{\rm char}_{\Lambda_{K_0}}(X_{0,\varnothing,\varnothing,0}(K_{0,\infty},\cO_{\chi^{-1}}(1)))= (\mathcal{L}_{\cP_1,\cP_4,\chi}^{{\mathrm{K}_0}}).
\]
In other words, the Iwasawa main conjecture holds for $\chi$.
\end{thm}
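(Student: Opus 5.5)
The equality in Theorem~\ref{thmC} will come from proving two opposite divisibilities and combining them. The divisibility
\[
{\rm char}_{\Lambda_{K_0}}\bigl(X_{0,\varnothing,\varnothing,0}(K_{0,\infty},\cO_{\chi^{-1}}(1))\bigr)\supseteq(\mathcal{L}_{\cP_1,\cP_4,\chi}^{\mathrm{K}_0})
\]
will be obtained from the Euler system of Theorem~\ref{thmA}, by feeding it into the Iwasawa-theoretic machinery of Jetchev--Nekov\'a\v{r}--Skinner. The reverse divisibility will be imported from Hida--Hsieh, which is where condition~\ref{Assumption:Hsieh} is needed.

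For the Euler system divisibility I would proceed in four steps.
\begin{enumerate}
\item Check the local conditions. The classes ${}^{\cc}\mathbf{z}_{1,\chi}$ come from diagonal cycles, so by the Greenberg local conditions of Section~\ref{subsec:local-condition} they lie in the Selmer group with the ``relaxed at $\cP_1$, strict at $\cP_4$'' type conditions over the $\Z_p^2$-anticyclotomic-type tower $K_{0,\infty}$. Dually, these conditions define $X_{0,\varnothing,\varnothing,0}$.
\item Verify the JNS hypotheses on the residual representation. The relevant Galois image is large enough because $\chi$ is a character whose restriction to suitable Frobenii is nontrivial mod $\mathfrak m$; this uses $p\nmid 6h_{K_0}$ and $p>k_1'$.
\item Conclude from the JNS divisibility that ${\rm char}(X_{0,\varnothing,\varnothing,0})$ divides ${\rm char}$ of the quotient of the rank-one Iwasawa Selmer module by $\Lambda_{K_0}\cdot{}^{\cc}\mathbf{z}_{1,\chi}$. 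The sign $+1$ ensures that the bottom class is non-torsion in the correct ``rank one'' module.
\item Apply an explicit reciprocity law. The image of ${}^{\cc}\mathbf{z}_{1,\chi}$ under the Perrin-Riou/Coleman map at the relaxed prime $\cP_1$ equals, up to a unit and a nonzero constant (hence the tensoring with $\Q_p$), the $p$-adic $L$-function $\mathcal{L}_{\cP_1,\cP_4,\chi}^{\mathrm{K}_0}$. This reciprocity law is inherited from the factorization of the triple-product $p$-adic $L$-function of $(\theta_{\psi_1'},\theta_{\psi_1},\theta_{\psi_2})$ via the decomposition \eqref{eqn:chi_decomp}.
\end{enumerate}
Poitou--Tate global duality then converts the statement of step 3 into the displayed divisibility.

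For the reverse inclusion, $\mathcal{L}_{\cP_1,\cP_4,\chi}^{\mathrm{K}_0}$ should be identified, after restriction to the anticyclotomic variables, with the Katz-type CM $p$-adic $L$-function of Hida and Hsieh for $K_0$. Under condition~\ref{Assumption:Hsieh}, Hsieh's Eisenstein congruence method gives the divisibility $(\mathcal{L})\subseteq{\rm char}(X)$ over the relevant tower, and I would transport this to $\Lambda_{K_0}$ by a standard specialization/control argument. Equal characteristic ideals then follow, since both sides are nonzero in $\Lambda_{K_0}\otimes\Q_p$: nonvanishing of the $p$-adic $L$-function follows from Hida/Hsieh's $\mu$-invariant and nonvanishing results.

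The main obstacle is the comparison of $p$-adic $L$-functions, together with the exact matching of Selmer conditions and periods. The ordinary signature $(\cP_1,\cP_4)$ must correspond to a CM type used by Hsieh, and the triple-product interpolation, with its Euler factors and periods, must agree with Katz's up to a unit in $\Lambda_{K_0}\otimes\Q_p$. I would attempt this by comparing interpolation formulas on a Zariski-dense set of critical characters, using Rankin--Selberg factorizations of the triple product $L$-value into Hecke $L$-values of $\chi\cdot\xi$.
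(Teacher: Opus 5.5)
\textbf{Gap: the two divisibilities live in different Iwasawa algebras.} Write $X$ and $\mathcal{L}$ for the objects in the statement, and $X^-$, $\mathcal{L}^-$ for their anticyclotomic counterparts.

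Your Euler system is anticyclotomic in the sense of Jetchev--Nekov\'a\v{r}--Skinner: the classes ${}^{\cc}\mathbf{z}_{\mu_3,\chi}$ live over the ring class towers $K_0[\mu_3p^\infty]$. So the JNS machinery (Theorem~\ref{thm:IMC-general}), together with reciprocity and Poitou--Tate, only yields ${\rm char}_{\Lambda^-_{K_0}}(X^-)\supseteq(\mathcal{L}^-)$ in $\Lambda^-_{K_0}$. It says nothing directly over the full $\Lambda_{K_0}$, which is where the theorem lives; in step 1 you silently identified $K_{0,\infty}$ with the anticyclotomic $\Z_p^2$-tower.

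Moreover, the inclusion you attribute to Hsieh, $(\mathcal{L})\subseteq{\rm char}(X)$, is literally the same as your Euler system inclusion. What Hsieh's Eisenstein congruences give is the opposite inclusion, ${\rm char}_{\Lambda_{K_0}}(X)\subseteq(\mathcal{L})$. It already holds over the full tower under Assumption~\ref{Assumption:Hsieh}, so there is nothing to transport.

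The missing idea is the lifting argument of \cite[Thm.~8.17]{Hsieh-JAMS}, with Hida's anticyclotomic theorem replaced by your anticyclotomic bound. Write ${\rm char}_{\Lambda_{K_0}}(X)=(\mathcal{L}\cdot g)$. Specialize to the anticyclotomic plane via a control theorem to get ${\rm char}(X^-)=(\mathcal{L}^-g^-)$. The Euler system bound together with $\mathcal{L}^-\neq 0$ shows that $g^-$ is a unit. Then $g$ is a unit because $\Lambda_{K_0}\to\Lambda^-_{K_0}$ is a local homomorphism, with the usual care about powers of $p$. This is exactly where nonvanishing of $\mathcal{L}^-$ is needed. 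It plays no role in intersecting two opposite inclusions in a single ring, which is how your last paragraph uses it.

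\textbf{The Euler-system half also needs repair.} First, the sign being $+1$ does not by itself make ${}^{\cc}\mathbf{z}_\chi$ non-torsion. Second, the reciprocity law does not output the $\chi$-part alone. What one has is $\mathcal{L}^{\mathrm{K}_1,-}_{\pp_1,\bar{\xi}_1^-}\cdot{\rm Log}^{\bfg}({\rm res}_{\cP_3}({}^{\cc}\mathbf{z}_\chi))=\mathscr{L}_p^{\bfg}$. By Proposition~\ref{prop:factor-L-indef}, the square of $\mathscr{L}_p^{\bfg}$ is, up to a unit, the product of the anticyclotomic Katz $L$-functions of $\psi$ and of $\chi$.

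The paper disposes of the $\psi$-factor as follows.
\begin{enumerate}
\item Replace $(\psi_1,\psi_1')$ by $(\psi_1\phi,\psi_1'\phi)$ for a suitable ring class character $\phi$ of $K_1$. This fixes $\chi$ and changes $\psi$ to $\psi\tilde{\phi}^{-2}$.
\item Use \cite[Thm.~1.4]{BHTY} to make the $\psi$-factor a unit.
\item Use \cite[Thm.~1.1]{BHTY} to get nonvanishing of the $\chi$-factor, hence non-torsionness of the base class. The squared JNS bound then matches the squared factorization.
\end{enumerate}

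Finally, the relevant Coleman map is at $\cP_3$, not $\cP_1$. The classes satisfy the $(\varnothing,0,\varnothing,0)$ conditions, while $X_{0,\varnothing,\varnothing,0}$ is dual to the $(\varnothing,0,0,\varnothing)$ conditions. The Poitou--Tate comparison therefore takes place at $\cP_3$ and $\cP_4$; $\cP_1$ is relaxed on both sides and plays no role.
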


Finally, we establish a rare instance in which the Bloch--Kato Selmer group has dimension $2$, generalizing the results of Castella \cite{Castella-rank2} from CM elliptic curves to CM abelian surfaces.

\begin{thm}[Theorem \ref{thm:rank-2}]
    \label{thmD}
   For $k_2=1$, $k_1'$ even, and $k_1=k_1'+1$, we assume that
\begin{enumerate}
    \item $\chi^{-1}$, having infinity type $(1,0,0,1)$ is attached to a CM abelian surface,
    \item $L(\chi,0)=0$ with sign of the functional equation being $+1$ (hence vanishing to an even order) 
\end{enumerate}
then we have:
      \[\dim_{F} (\mathrm{Sel}_{\mathrm{BK}}(K_0,V_\chi))=2.\]

\end{thm}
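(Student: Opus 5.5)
The strategy is to follow Castella's rank-two argument for CM elliptic curves, with the diagonal cycle Euler system of Theorem~\ref{thmA} playing the role of the Heegner point or elliptic unit classes. The key input is a factorization of the relevant $p$-adic $L$-functions. Under the hypotheses $k_2=1$, $k_1'$ even and $k_1=k_1'+1$, the infinity type of $\chi$ from (\ref{eqn:chi_decomp}) is $(1,0,0,1)$ up to the normalization in the statement. We then exploit the decomposition
\[
\chi^{-1}=\widetilde{\psi_1\psi_1'^{\cc}}\,\widetilde{\psi_2}\,\bN^{(k_1'+k_1+k_2-4)/2},
\]
which rewrites $V_\chi$, after induction, in terms of Rankin--Selberg type data over the imaginary quadratic fields $K_1$ and $K_2$. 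Via Shapiro's lemma this gives
\[
\mathrm{Sel}_{\rm BK}(K_0,V_\chi)\;\simeq\;\mathrm{Sel}_{\rm BK}(\Q,\operatorname{Ind}_{K_0}^{\Q}V_\chi),
\]
and, because $\chi^{-1}$ is attached to a CM abelian surface $A$, this is the $\chi$-isotypic part of the Bloch--Kato Selmer group of $A$ over $K_0$.

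Next we split the Selmer group according to the Greenberg local conditions at $\cP_1,\dots,\cP_4$ of Section~\ref{subsec:local-condition}. Write $\mathrm{Sel}_{\cP_1,\cP_4}$ for the Selmer group that is relaxed at one prime above $p$ and strict at its conjugate, as in the ordinary-type condition used in Theorem~\ref{thmC}. The plan has three steps.

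\begin{enumerate}
\item Prove $\dim\mathrm{Sel}_{\rm BK}\le 2$. Apply the Euler system bound from Theorem~\ref{thmA} together with the \cite{JNS} machinery to the modified Selmer group, in the spirit of the proof of Theorem~\ref{thmB}. The aim is to show that the strict Selmer group $\mathrm{Sel}_{0,0}$ is finite (zero after inverting $p$) whenever the relevant specialization of $\mathcal{L}^{K_0}_{\cP_1,\cP_4,\chi}$ is nonzero. Here we pass to a companion point of the anticyclotomic plane, or use the nonvanishing of the BDP-type $p$-adic $L$-value predicted by the sign $+1$ and the Hida--Hsieh nonvanishing results. A Poitou--Tate comparison then bounds $\mathrm{Sel}_{\rm BK}$ by the two local dimensions at $p$, giving $\dim\le 2$.
\item Prove $\dim\mathrm{Sel}_{\rm BK}\ge 2$. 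Since the sign is $+1$ and $L(\chi,0)=0$, the order of vanishing is at least $2$. Use the $p$-parity result for the CM abelian surface $A$, which has root number $+1$, together with the fact that $\mathrm{Sel}_{\rm BK}$ is a module over the CM field whose dimension is forced to be even in the self-dual situation. Combined with nonvanishing of the group, this gives dimension at least $2$. To show $\mathrm{Sel}_{\rm BK}\neq 0$, use the main conjecture of Theorem~\ref{thmC}: the vanishing of $L(\chi,0)$ forces the interpolated value $\mathcal{L}_{\cP_1,\cP_4,\chi}^{\mathrm{K}_0}(\mathbf 1)=0$. The control theorem for $X_{0,\varnothing,\varnothing,0}$ then produces a nonzero class in a Greenberg Selmer group, and this class is transferred to $\mathrm{Sel}_{\rm BK}$ via the local comparisons at the $\cP_i$.
\item Combine the two inequalities to conclude $\dim_F\mathrm{Sel}_{\rm BK}(K_0,V_\chi)=2$.
\end{enumerate}

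The main obstacle is the exact dimension count in step 2, together with the precise bound in step 1: the local conditions at the four primes $\cP_i$ must interlock so that the upper bound is exactly $2$ rather than larger. I would first try to obtain the upper bound by mimicking Castella \cite{Castella-rank2}, using a second Euler system or the companion character $\psi$ from the Remark after Theorem~\ref{thmA}. That class lives in a Selmer group with the relaxed and strict conditions interchanged, and its nonvanishing (via an explicit reciprocity law) would cut down the relaxed directions one at a time.
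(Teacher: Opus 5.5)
\textbf{The gap: the upper bound in your step 1.} At the bottom layer, the only output of the Euler system of Theorem~\ref{thmA} together with the \cite{JNS} machinery is Theorem~\ref{thm:rank-1-general}: if the base class ${}^{\cc}z_\chi$ is non-torsion, then $\dim\mathrm{Sel}_{\varnothing,0,\varnothing,0}(K_0,V_\chi)=1$.

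\begin{itemize}
\item No available result says ``a strict Selmer group vanishes when some specialization of $\mathcal{L}^{\mathrm{K}_0}_{\cP_1,\cP_4,\chi}$ is nonzero''. Here you also cannot certify that ${}^{\cc}z_\chi$ is non-torsion. Its only link to $L$-values is the reciprocity law (Theorem~\ref{thm:ERL}) at $\cP_3$. Because $L(\chi,0)=0$, that law only tells you ${\rm res}_{\cP_3}({}^{\cc}z_\chi)=0$. This is the opposite of the proof of Theorem~\ref{thmB}, where $L(\chi,0)\neq 0$ is exactly what makes the base class nonzero.
\item Moving to a ``companion point'' of the anticyclotomic plane, or invoking generic nonvanishing results of Hida--Hsieh type, gives at best $\Lambda_{K_0}^-$-adic non-torsion of ${}^{\cc}\mathbf{z}_\chi$, as in the proof of Theorem~\ref{thm:IMC-def}. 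By descent this yields only lower bounds over $K_0$ (compare Theorem~\ref{thm:BK-def-1}), never the upper bound. To descend an upper bound you would need the characteristic ideal of $\mathrm{Sel}_{\varnothing,0,\varnothing,0}(K_0[p^\infty],\cO_\chi)/\Lambda_{K_0}^-\cdot{}^{\cc}\mathbf{z}_\chi$ not to vanish at the trivial character. That is again the non-torsion of ${}^{\cc}z_\chi$, equivalently a nonvanishing statement at the specific point $\chi$, which generic results do not give.
\item Your fallback, the companion Euler system for $\psi$, bounds Selmer groups of $V_\psi$. That is a different character, so it cannot cut down relaxed directions for $V_\chi$.
\end{itemize}

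The paper does not derive this either. Its body version, Theorem~\ref{thm:rank-2}, explicitly assumes that ${}^{\cc}z_\chi$ is non-torsion; the statement in the introduction drops this hypothesis. Without that assumption, or a genuine substitute such as a derivative or regulator formula for the class, your argument does not close.

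\textbf{How the argument runs with that hypothesis.} It is short, and your lower-bound machinery becomes unnecessary.
\begin{itemize}
\item Theorem~\ref{thm:rank-1-general} gives $\mathrm{Sel}_{\varnothing,0,\varnothing,0}(K_0,V_\chi)=F\cdot{}^{\cc}z_\chi$.
\item Since ${\rm res}_{\cP_3}({}^{\cc}z_\chi)=0$, the class lies in $\mathrm{Sel}_{\varnothing,0,0,0}\subset\mathrm{Sel}_{\varnothing,0,0,\varnothing}=\mathrm{Sel}_{\rm BK}$. The last equality is Lemma~\ref{lem:BK-Gr}, since $k_1=k_1'+k_2$. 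Hence $\dim\mathrm{Sel}_{\rm BK}\ge 1$.
\item The kernel of ${\rm res}_{\cP_4}$ on $\mathrm{Sel}_{\rm BK}$ is $\mathrm{Sel}_{\varnothing,0,0,0}$, of dimension at most $1$. So $\dim\mathrm{Sel}_{\rm BK}\le 1+\dim H^1(K_{0,\cP_4},V_\chi)\le 3$.
\item The parity result of Bell \cite{rank-CM-abelian-variety} then forces the dimension to be $2$.
\end{itemize}
The paper phrases the last steps as a case split on whether ${\rm res}_{\cP_4}(\mathrm{Sel}_{\rm BK})$ vanishes. It uses parity to exclude the trivial case and Corollary~\ref{cor:rank-2-preparation}, a global-duality argument, in the other case.

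\textbf{Your lower bound.} Your route to $\mathrm{Sel}_{\rm BK}\neq 0$ via Theorem~\ref{thmC} and a control theorem is plausible. However, it imports Assumption~\ref{Assumption:Hsieh} and the hypotheses of Theorem~\ref{thm:IMC-def}, which are not part of the statement. Also, evenness of the dimension is not a formal consequence of a module structure over the CM field. It is exactly the content of the parity theorem above.
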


\subsection{Relation to previous works}\label{subsec:previous}


Relying on the Euler system of Beilinson--Flach classes  constructed by Lei--Loeffler--Zerbes \cite{LLZ,LLZ-K} and Kings--Loeffler--Zerbes \cite{KLZ,KLZ-AJM}, Lamplugh  \cite{JackLamplugh} constructed Euler systems for ${\rm Ind}_{K_0}^{K_1}\mathcal{O}(\vartheta\rho)$ over $K_1$ (where $\rho$ is an auxiliary character) and used that to bound the associated Selmer group over $K_0$ via Rubin's machinery \cite{Rubin-ES}. This method has structural limitations. Firstly, $\vartheta$ is not self-dual. Secondly, even though by Shapiro's lemma one has the isomorphism $H^1(K_1,{\rm Ind}_{K_0}^{K_1}\mathcal{O}(\vartheta\rho))\cong H^1(K_0,\mathcal{O}(\vartheta\rho))$, having an Euler system over the LHS meaning varying over class fields of $K_1$. This is arithmetically distinct from having an Euler system over the RHS, meaning varying over class fields of $K_0$, a degree $2$ extension of $K_1$. 

When $\chi$ is a finite order anticyclotomic Hecke character of $K$ (therefore not self-dual), where $K/F$ is a general CM extension, Lee \cite{YSLee} constructed an anticyclotomic Euler system for $\psi$ using congruences and $p$-adic local Langlands, following the method of Urban \cite{Urban-new-cyclotomic-ES}.

The anticyclotomic Euler system over $K_0$ that we construct in this paper is closely related to the anticyclotomic diagonal Euler system over $K_0$ \cite{Do-biquadratic}, and comparable with the Euler system \cite{Do-PhD,CD} over $K$ (an imaginary quadratic field). The construction of the cohomology classes, similar to \cite{Do-PhD,CD,Do-biquadratic}, is based on a generalization of the diagonal cycles pioneered by Gross--Kudla \cite{gross-kudla} and Gross--Schoen \cite{gross-schoen}, and improved recently by Darmon--Rotger and Bertolini--Seveso--Venerucci (see \cite{BDRSV} and \cite{BSV}). The proof of the norm relation relies on crucial maps coming from \cite{LLZ-K}.

Historically speaking, given $3$ newforms $f,g,h$ associated with the diagonal cycles, our program began by requiring the latter two forms to have CM over the same imaginary quadratic field $K$, yielding Euler systems for $V_{f,\chi}$ over $K$ in \cite{Do-PhD,CD}. Subsequently, the author considered these two newforms having CM over different imaginary quadratic fields, which resulted in Euler systems over the imaginary biquadratic field $K_0$ in \cite{Do-biquadratic}. Meanwhile, Castella \cite{Castella-rank2} chose all three forms to have CM over the same imaginary quadratic field $K$. While this did not lead to a new Euler system due to the presence of elliptic units, it sufficed to construct generalized Kato class together with results towards the rank-two Bloch--Kato conjecture for CM elliptic curves. 

The present paper is a natural development of these three approaches, considering the case where the three newforms have CM over two distinct fields. This framework allows us to obtain not only new Euler systems over $K_0$ (made possible by the technique in Loeffler--Zerbes \cite{LZ-Sym-square}), but also results for higher-rank cases of the Bloch--Kato conjecture for CM abelian surfaces.

\subsection{Acknowledgements} This work grew out of joint discussions with Francesc Castella and the author would like to thank him for many fruitful discussions. The author also thanks Ashay Burungale, Rusiru Gambheera, Haruzo Hida, Ming-Lun Hsieh, and Sam Mundy for helpful communications and interest in this work. 
\addtocontents{toc}{\setcounter{tocdepth}{2}}

}

\section{Preliminaries}
\subsection{Galois representations attached to newforms} We establish in this section the Galois representations associated to modular forms. Let $f \in S_k(\Gamma_1(N_f), \chi_f)$ be a normalized newform with $q$-expansion $\sum a_n q^n$. Fix a prime $p \nmid N_f$. Fix embeddings $i_\infty: \overline{\Q} \hookrightarrow \bC$ and $i_p: \overline{\Q} \hookrightarrow \overline{\Q}_p$. Let $L \subset \overline{\Q}$ be the coefficient field of $f$, and $\mathfrak{P}$ the prime of $L$ above $p$ induced by $i_p$. Setting $S = \{\ell \mid pN_f\} \cup \{\infty\}$, the work of Eichler--Shimura and Deligne yield a continuous, absolutely irreducible representation
$\rho_{f,\mathfrak{P}}:G_{\Q,S}\rightarrow \GL_2(L_{\mathfrak{P}})$, 
such that for all primes $\ell\notin S$:
\begin{enumerate}
    \item $\mathrm{tr}(\rho_{f,\mathfrak{P}}(\mathrm{Frob}_\ell))=i_p(a_\ell)$,
    \item $\det(\rho_{f,\mathfrak{P}}(\mathrm{Frob}_\ell))=i_p(\chi_f(\ell)\ell^{k-1})$,
\end{enumerate} 
where $\mathrm{Frob}_\ell$ is the geometric Frobenius.

The geometric realization $V_f$ of $\rho_{f,\mathfrak{P}}$ is given by the specific subspace of the \'etale cohomology group:
\[
H^1_{\et}(Y_1(N_f)_{\overline{\Q}},\Ss_{k-2})\otimes L_{\mathfrak{P}}.
\]
Dually, the contragredient representation $V_f^{\vee}={\rm Hom}(V_f,L_{\mathfrak{P}})$ can be identified with the maximal quotient of \[
H^1_{\et}(Y_1(N_f)_{\overline{\Q}},\Ll_{k-2}(1))\otimes L_{\mathfrak{P}}
\]
on which the dual Hecke operator $T_\ell'$ acts as multiplication by $a_\ell$ for all $\ell\nmid N_fp$ and the diamond operators $\langle d\rangle = \langle d\rangle^{*}$ acts as multiplication by $\chi_f(d)$ for all $d\in (\Z/N_f\Z)^\times$. Let $\cO$ be the ring of integers of $L_\mathfrak{P}$. There exists a $G_{\Q}$-stable $\cO$-lattice $T_f^\vee\subset V_f^\vee$ defined as the image of 
$H^1_{\et}(Y_1(N_f)_{\overline{\Q}},\Ll_{k-2}(1))\otimes\cO$ under the natural map into $V_f^\vee$.

If $f$ is ordinary at $p$ (meaning that $i_p(a_p)\in\cO^\times$), the restriction of the contragredient representation $V_f^{\vee}$ to $G_{\Q_p}$ induces the short exact sequence:
\begin{equation*}
0\rightarrow V_f^{\vee,+}\rightarrow V_f^{\vee}\rightarrow V_f^{\vee,-}\rightarrow 0.
\end{equation*}
 The $G_{\Q_p}$-action on the quotient $V_{f}^{\vee,-}$ is unramified, and the arithmetic Frobenius ${\rm Frob}_p^{-1}$ acts via multiplication by $\alpha_p$, which is the unit root of the Hecke polynomial $x^2-a_px+\chi_f(p)p^{k-1}$.

\subsection{Hecke characters and theta series }\label{theta}

This section is devoted to reviewing our conventions on Hecke characters over $K$, an imaginary quadratic field. Let $p=\pp\ppbar$ be a prime that splits in $K$ with $\pp$ induced by $i_p:\overline{\Q}\hookrightarrow\overline{\Q}_p$. We say that a Hecke character $\psi:\mathbb{A}_K^{\times}/K^{\times}\rightarrow \mathbb{C}^{\times}$ has infinity type $(m,n)\in\Z^2$, if $\psi_{\infty}(x_{\infty})=x_{\infty}^m\bar{x}_{\infty}^n$. Given $g\in G_K$, take $x\in\mathbb{A}_K^{\times}$ such that $\text{rec}_K(x)=g\vert_{K^{\rm ab}}$ where $\text{rec}_K:\mathbb{A}_K^{\times}\rightarrow G_K^{\text{ab}}$ is the geometrically normalized Artin reciprocity map. Define the $p$-adic avatar of $\psi$ as $  \psi_{\p}(g)=i_p\circ i_{\infty}^{-1}(\psi(x)x_{\infty}^{-m}\bar{x}_{\infty}^{-n})x_{\pa}^m x_{\bar{\pa}}^n$.
By abuse of notation, we will also denote this associated $p$-adic avatar by $\psi$ when the context is clear.

Attached to $\psi$, a Hecke character of $K$ of infinity type $(1-k,0)$ with modulus $\fkf$ that takes values in a finite extension $L/K$, is the theta series 
\begin{equation*}
\theta_{\psi}=\sum_{(\mathfrak{a},\fkf)=1}\psi(\mathfrak{a})q^{N_{K/\Q}(\mathfrak{a})} \in S_k(\Gamma_1(N_\psi),\chi_\psi\epsilon_K)
\end{equation*}
where $N_\psi=N_{K/\Q}(\fkf){\rm disc}(K/\Q)$, $\chi_\psi$ is the unique Dirichlet character modulo $N_{K/\Q}(\fkf)$ such that 
$\psi((n))=n\chi_\psi(n)$ for all $n\in\Z$ with $(n,N_{K/\Q}(\fkf))=1$, and $\epsilon_K$ is the quadratic character attached to $K$. 
The cuspform $\theta_\psi$ is new of level $N_\psi=N_{K/\Q}(\mathfrak{f})\cdot \text{disc}(K/\Q)$ if $\fkf$ is the conductor of $\psi$, see Miyake \cite{Mi}. 
The Galois representation associated with $\theta_\psi$ is
\[
\quad V_{\theta_{\psi}}^{\vee}\cong {\rm Ind}^{\Q}_KL_{\mathfrak{P}}(\psi^{-1}),
\]
where $\mathfrak{P}$ is the prime of $L$ above $p$ with respect to $i_p$.

\subsection{Hecke algebras and commutative diagrams of norm maps}\label{LLZrecall} We outline the construction of certain patched CM Hecke modules following \cite{LLZ-K}. Keeping the notation of the previous section, let $\fkn\subset\cO_K$ be an ideal divisible by $\fkf$. Let $K_{\fkn}$ be the ray class field of $K$ with conductor $\fkn$ and let $H_\fkn={\rm Gal}(K_{\fkn}/K)$ be the corresponding ray class group. We write $K(\fkn)$ for the largest $p$-subextension of $K$ contained in $K_{\fkn}$, so that its Galois group ${\rm Gal}(K(\fkn)/K)\cong H_{\fkn}^{(p)}$ is the largest $p$-power quotient of $H_{\fkn}$. For any ideal $\mathfrak{t}$ of $K$ coprime to $\fkn$, let $[\mathfrak{t}]$ denote its class in $H_{\fkn}$. Define the level $N=N_{K/\Q}(\fkn){\rm disc}(K/\Q)$. Let $\mathbb{T}'(N)$ be the subalgebra of ${\rm End}_{\Z}(H^1(Y_1(N)(\bC),\Z))$ generated by $\langle d\rangle'$ and $T_\ell'$ for all primes $\ell$. Under these assumptions, one can prove that:

\begin{prop}[Proposition 3.2.1 in \cite{LLZ-K}]\label{phi_n}
 There exists an algebra homomorphism $\phi_{\fkn}: \mathbb{T}'(N)\rightarrow \mathcal{O}[H_{\fkn}]$ 
defined by 
    \begin{align*}
        \phi_{\fkn}(T_\ell')=\sum_{\substack{\mathfrak{l}\subset\cO_K,\fkl\nmid\mathfrak{n},\\N_{K/\Q}(\fkl)=\ell}}[\fkl]\psi(\fkl),\qquad
        \phi_{\fkn}(\langle d\rangle ')=\chi_\psi(d)\epsilon_{K}(d)[(d)].
    \end{align*}
\end{prop}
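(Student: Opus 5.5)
The plan is to realize $\phi_{\fkn}$ as the Hecke eigensystem of a family of theta series, i.e. to build an $\cO[H_{\fkn}]$-valued modular form on which $\mathbb{T}'(N)$ acts through the stated formulas. Consider the formal $q$-expansion
\[
\Theta_{\fkn}=\sum_{(\mathfrak{a},\fkn)=1}\psi(\mathfrak{a})[\mathfrak{a}]\,q^{N_{K/\Q}(\mathfrak{a})}\in \cO[H_{\fkn}]\dBr{q}.
\]
For each character $\eta$ of $H_{\fkn}$ (with values in a finite extension of $L_{\mathfrak{P}}$), the specialization $\eta(\Theta_{\fkn})$ is the theta series $\theta_{\psi\eta}$ of \S\ref{theta}. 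Since $\psi\eta$ is a Hecke character of infinity type $(1-k,0)$ and modulus dividing $\fkn$, we have $\theta_{\psi\eta}\in S_k(\Gamma_1(N),\chi_{\psi\eta}\epsilon_K)$ with $N=N_{K/\Q}(\fkn){\rm disc}(K/\Q)$. Moreover, $\theta_{\psi\eta}$ is a normalized Hecke eigenform for all $T_\ell$ with $\ell\nmid N$, and also for the $U_\ell$ with $\ell\mid N$, because the coefficients are multiplicative in ideals.

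First, I compute the eigenvalues of each $\theta_{\psi\eta}$. The $T_\ell$-eigenvalue is the coefficient $a_\ell=\sum_{N\fkl=\ell,\ \fkl\nmid\fkn}\psi\eta(\fkl)$. The nebentypus satisfies $(\chi_{\psi\eta}\epsilon_K)(d)=\psi\eta((d))/d\cdot\epsilon_K(d)=\chi_\psi(d)\epsilon_K(d)\eta([(d)])$. Composing $\eta$ with the formulas for $\phi_{\fkn}$ gives exactly these numbers. Hence, for each $\eta$, the composite $\eta\circ\phi_{\fkn}$ on the generators agrees with the eigencharacter $\mathbb{T}'(N)\to\overline{\Q}_p$ of $\theta_{\psi\eta}$. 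This uses the standard compatibility between the dual operators $T_\ell',\langle d\rangle'$ acting on $H^1(Y_1(N))$ and the eigenvalues of forms, which is the normalization already used in defining $V_f^\vee$.

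Next, I glue the characters. The eigencharacters of the $\theta_{\psi\eta}$ assemble into a ring homomorphism
\[
\mathbb{T}'(N)\to\prod_\eta \overline{\Q}_p\cong \overline{\Q}_p\otimes_{\cO}\cO[H_{\fkn}].
\]
The isomorphism holds because $H_{\fkn}$ is finite abelian, so the group algebra over $\overline{\Q}_p$ decomposes into character components. Each eigencharacter is an algebra map, since the Hecke algebra acts on the $\eta$-line spanned by the eigenform $\theta_{\psi\eta}$. Therefore the product map is an algebra homomorphism. By the computation in the first step, it sends $T_\ell'$ and $\langle d\rangle'$ to the images of the stated elements of $\cO[H_{\fkn}]$. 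Because these elements generate $\mathbb{T}'(N)$, the image lies in $\cO[H_{\fkn}]$, and this defines $\phi_{\fkn}$.

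The main obstacle is making this step rigorous at primes $\ell\mid N$, where $T_\ell'$ is really $U_\ell'$ and the eigenform might be old. The formula $\sum_{\fkl\nmid\fkn}$ must still give the correct $U_\ell$-eigenvalue of $\theta_{\psi\eta}$ viewed at level $N$, even when $\psi\eta$ has conductor strictly smaller than $\fkn$. I would handle this by working directly with the $q$-expansion $\Theta_{\fkn}$. The form $\Theta_{\fkn}$ has coefficients supported on ideals prime to $\fkn$, so the $U_\ell$-action on $q$-expansions gives $a_{\ell n}=a_\ell a_n$ by multiplicativity of ideal norms. This identity holds even though the specialization is not new, so $\Theta_{\fkn}$ is a genuine $U_\ell$-eigenvector at level $N$. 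The other point to check is a possible sign or inverse ambiguity between $T_\ell$ and its dual $T_\ell'$, which is resolved by the convention already fixed in \S2.1.
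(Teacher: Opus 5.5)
Your argument is correct and is essentially the proof of the cited result \cite[Prop.~3.2.1]{LLZ-K}; the paper itself only quotes that result. As there, you decompose $\overline{\Q}_p\otimes_{\cO}\cO[H_{\fkn}]$ into $\eta$-components, identify each component with the eigensystem of the level-$N$ theta series $\theta_{\psi\eta}$ (a $U_\ell$-eigenform at $\ell\mid N$ because its coefficients are supported on ideals prime to $\fkn$), and use that the $T'_\ell$ and $\langle d\rangle'$ generate $\mathbb{T}'(N)$ to land in $\cO[H_{\fkn}]$.

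The one correction is that you must take $k=2$, i.e.\ infinity type $(-1,0)$, as in \cite{LLZ-K} and in the weight-$(2,2,2)$ step of Section 3 where the result is used. The reason is that $\mathbb{T}'(N)$ acts on $H^1(Y_1(N)(\bC),\Z)$ with trivial coefficients, so for $k\neq 2$ the form $\theta_{\psi\eta}$ does not occur there. In that case one has to replace $\mathbb{T}'(N)$ by the Hecke algebra on cohomology with $\mathrm{Sym}^{k-2}$ coefficients, after which your argument goes through verbatim.
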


For $\fkm=\fkn\fkl$, where $\fkl$ is a prime ideal and $(\fkm,p)=1$, set $M=N_{K/\Q}(\fkm)\text{disc}(K/\Q)$. This yields the following `norm map'
\begin{equation}
\mathcal{N}_{\fkn}^{\fkm}:\mathcal{O}[H_{\fkm}^{(p)}]\otimes_{\mathbb{T}'(M)\otimes\Z_p,\phi_{\fkm}}H^1_{\et}(Y_1(M)_{\overline{\Q}},\Z_p(1))\rightarrow \mathcal{O}[H_{\fkn}^{(p)}]\otimes_{\mathbb{T}'(N)\otimes\Z_p,\phi_{\fkn}}H^1_{\et}(Y_1(N)_{\overline{\Q}},\Z_p(1)).\nonumber
\end{equation}
Following the convention for degeneracy maps in \cite[Sec.,2.1.2]{CD}, this norm map is explicitly defined by distinguishing three cases:
\begin{enumerate}
    \item If $\fkl\mid\fkn$: 
    \begin{equation*}
        \mathcal{N}_{\fkn}^{\fkm}=1\otimes \text{pr}_{1*};
    \end{equation*}
    \item If $\fkl\nmid\fkn$ is split or ramified in $K$ with $N_{K/\Q}(\fkl)=\ell$: 
    \begin{equation*}
        \mathcal{N}_{\fkn}^{\fkm}=1\otimes \text{pr}_{1*}-\frac{\psi(\mathfrak{l)}[\fkl]}{\ell}\otimes \text{pr}_{\ell*};
    \end{equation*}
    \item If $\fkl\nmid\fkn$ is inert in $K$ (where $\fkl=(\ell)$): 
    \begin{equation*}
        \mathcal{N}_{\fkn}^{\fkm}=1\otimes \text{pr}_{1*}-\frac{\psi(\fkl)[\fkl]}{\ell^2}\otimes \text{pr}_{\ell\ell*}.
    \end{equation*}
\end{enumerate}
By composition, the definition of $\mathcal{N}_{\mathfrak{n}}^{\mathfrak{m}}$ naturally extends to any pair of ideals $\fkn\mid \fkm$.

Suppose that $p$ splits in $K$ and $(p,\fkf)=1$. For any ideal $\fkn\subset\cO_K$ divisible by $\mathfrak{f}$ such that $(\fkn,\ppbar)=1$, the maximal ideal of $\mathbb{T}'(N)$ defined as the kernel of the composition
\[
\mathbb{T}'(N)\overset{\phi_\fkn}\longrightarrow\cO[H_\fkn]\overset{{\rm aug}}\longrightarrow\cO\rightarrow\cO/\mathfrak{P},
\]
is $p$-ordinary, non-Eisenstein, and $p$-distinguished. We conclude this section by recalling a crucial result in \cite{LLZ-K}, which will later be used to establish the norm relation of our Euler system.

\begin{thm}[Corollary 5.2.6 in \cite{LLZ-K}]\label{norm1}
Let $\mathcal{A}$ be the set of ideals $\fkm\subset\cO_K$  with $(\fkm,\ppbar)=1$. Put $\mathcal{A}_{\fkf}=\{\fkf\fkm\colon\fkm\in\mathcal{A}\}$. For any  $\fkn\in \mathcal{A}_{\mathfrak{f}}$, there is a $G_{\Q}$-equivariant isomorphism of $\mathcal{O}[H_{\fkn}^{(p)}]$-modules 
\begin{equation*}
    \nu_{\fkn}:\mathcal{O}[H_{\fkn}^{(p)}]\otimes_{\mathbb{T}'(N)\otimes\Z_p,\phi_{\fkn}}H^1_{\et}(Y_1(N)_{\overline{\Q}},\Z_p(1))\overset{\cong}{\longrightarrow}{\rm Ind}^{\Q}_{K(\fkn)}\mathcal{O}(\psi_{\mathfrak{P}}^{-1}).
\end{equation*}
 Furthermore, for any $\fkn,\fkm\in \mathcal{A}_\mathfrak{f}$ with $\fkn\mid\fkm$, the following diagram commutes:
\begin{equation*}
    \xymatrix{
\mathcal{O}[H_{\fkm}^{(p)}]\otimes_{\mathbb{T}'(M)\otimes\Z_p,\phi_{\fkm}}H^1_{\et}(Y_1(M)_{\overline{\Q}},\Z_p(1)) \ar[d]_{\mathcal{N}_{\fkn}^{\fkm}} \ar[r]^(.7){\nu_{\fkm}}_(.7){\cong} & {\rm Ind}^{\Q}_{K(\fkm)}\mathcal{O}(\psi_{\mathfrak{P}}^{-1}) \ar[d]_{\mathrm{Norm}_{\fkn}^{\fkm}} \\
\mathcal{O}[H_{\fkn}^{(p)}]\otimes_{\mathbb{T}'(N)\otimes\Z_p,\phi_{\fkn}}H^1_{\et}(Y_1(N)_{\overline{\Q}},\Z_p(1)) \ar[r]^(.7){ \nu_{\fkn}}_(.7){\cong} & {\rm Ind}^{\Q}_{K(\fkn)}\mathcal{O}(\psi_{\mathfrak{P}}^{-1}), }
\end{equation*}
where $\mathrm{Norm}_{\fkn}^{\fkm}$ is the natural norm map of the induced representations.
\end{thm}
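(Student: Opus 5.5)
The statement in question is Theorem~\ref{norm1}, which the paper imports as Corollary 5.2.6 of \cite{LLZ-K}. The plan is to follow Lei--Loeffler--Zerbes: first build the isomorphism $\nu_{\fkn}$ for each level, then deduce the compatibility of the norm maps from the explicit formulas defining $\mathcal{N}_{\fkn}^{\fkm}$.

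\emph{Step 1: construct $\nu_{\fkn}$.} Put $M_{\fkn}:=\mathcal{O}[H_{\fkn}^{(p)}]\otimes_{\mathbb{T}'(N)\otimes\Z_p,\phi_{\fkn}}H^1_{\et}(Y_1(N)_{\overline{\Q}},\Z_p(1))$.
\begin{enumerate}
\item Localize at the maximal ideal $\mathfrak{m}$ cut out by $\phi_{\fkn}$. This ideal is non-Eisenstein, $p$-ordinary and $p$-distinguished, so two standard facts apply:
\begin{itemize}
\item $H^1_{\et}(Y_1(N)_{\overline{\Q}},\Z_p(1))_{\mathfrak{m}}$ is free of rank $2$ over $\mathbb{T}'(N)_{\mathfrak{m}}$ (Mazur--Wiles, Gorenstein-type argument);
\item the quotient $\mathbb{T}'(N)_{\mathfrak{m}}\to\mathcal{O}[H_{\fkn}^{(p)}]$ is a CM component, identified with a Hida/CM Hecke algebra.
\end{itemize}
Hence $M_{\fkn}$ is free of rank $2$ over $\mathcal{O}[H_{\fkn}^{(p)}]$.
\item Define $\nu_{\fkn}$ by comparing traces of Frobenius. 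By Chebotarev and the Eichler--Shimura relation, $\mathrm{Frob}_\ell$ acts on $M_{\fkn}$ with characteristic polynomial $X^2-\phi_{\fkn}(T_\ell')X+\ell\,\phi_{\fkn}(\langle \ell\rangle')$. This is exactly the characteristic polynomial of $\mathrm{Frob}_\ell$ on ${\rm Ind}^{\Q}_{K(\fkn)}\mathcal{O}(\psi_{\mathfrak{P}}^{-1})$, viewed as a rank-$2$ $\mathcal{O}[H_{\fkn}^{(p)}]$-module via ${\rm Ind}_K^{\Q}$ of $\mathcal{O}[H^{(p)}_{\fkn}](\psi^{-1}\cdot\text{taut})$.
\item Pass from equal traces to isomorphic modules. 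The residual representation ${\rm Ind}_K^\Q\bar\psi^{-1}$ is absolutely irreducible, so Carayol/Nyssen--Rouquier over the local ring (or each local factor of the semilocal ring) gives an isomorphism of $\mathcal{O}[H_{\fkn}^{(p)}][G_\Q]$-modules.
\end{enumerate}

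\emph{Step 2: normalize the isomorphisms compatibly.} The $\nu_{\fkn}$ are determined only up to units of $\mathcal{O}[H_{\fkn}^{(p)}]$, so they must be pinned down coherently across levels. I would:
\begin{enumerate}
\item use the ordinary filtration to choose $\nu_{\fkn}$ so that it is compatible with the unramified quotient at $\ppbar$, which is where $(\fkn,\ppbar)=1$ enters;
\item then compute $\mathrm{Norm}^{\fkm}_{\fkn}\circ\nu_{\fkm}$ and $\nu_{\fkn}\circ\mathcal{N}^{\fkm}_{\fkn}$.
\end{enumerate}
Both are $G_\Q$-equivariant maps between free rank-$2$ modules with absolutely irreducible residual representation. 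By Schur's lemma they therefore differ by a scalar in $\mathcal{O}[H_{\fkn}^{(p)}]$. It suffices to reduce to $\fkm=\fkn\fkl$ with $\fkl$ prime and treat the three cases of $\mathcal{N}^{\fkm}_{\fkn}$ separately.

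\emph{Step 3: identify the scalar, the main obstacle.} The scalar must be shown to equal $1$. I would do this by comparing the two sides at the level of Hecke eigen-systems and $q$-expansions:
\begin{itemize}
\item Dually, $\mathrm{pr}_{1*}$ and $\mathrm{pr}_{\ell*}$ correspond to $\theta(z)\mapsto\theta(z)$ and $\theta(z)\mapsto\theta(\ell z)$ on the $\mathcal{O}[H]$-valued theta series $\sum[\fa]\psi(\fa)q^{N\fa}$.
\item The combination $1\otimes\mathrm{pr}_{1*}-\psi(\fkl)[\fkl]\ell^{-1}\otimes\mathrm{pr}_{\ell*}$ is precisely what removes the Euler factor at $\fkl$. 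It therefore matches the natural projection $\mathcal{O}[H_{\fkm}]\to\mathcal{O}[H_{\fkn}]$ on theta series; in the inert case the same holds with $\ell^2$.
\end{itemize}
Making this rigorous requires checking the unit ambiguity against a fixed generator, for instance via the pairing with the ordinary projector or the $\mathcal{O}[H]$-adic family of $q$-expansions. I expect this bookkeeping to be the delicate part of the argument, since it is the only place where the specific choice of normalizations in Step 2 is actually tested.
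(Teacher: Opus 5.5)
The paper does not prove this statement; it quotes it from \cite{LLZ-K}, so your outline has to be judged on its own. It has a genuine gap in Steps~2--3.

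Step~1 is essentially fine. Since $H_{\fkn}^{(p)}$ is a $p$-group, $\cO[H_{\fkn}^{(p)}]$ is a complete local ring, so $\phi_{\fkn}$ factors through $\mathbb{T}'(N)_{\mathfrak{m}}$. Freeness of rank $2$ then follows by base change from the $\mathfrak{m}$-localized cohomology, and Carayol's theorem gives \emph{some} isomorphism $\nu_{\fkn}$. The hypothesis $(\fkn,\ppbar)=1$ enters here, through the $p$-ordinarity of $\mathfrak{m}$, not through any normalization.

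The problem starts with your normalization. Fixing $\nu_{\fkn}$ via the ordinary filtration at $\ppbar$ removes none of the ambiguity: multiplication by any unit of $\cO[H_{\fkn}^{(p)}]$ is a $G_{\Q}$-automorphism of the induced module that preserves every filtration. So the scalar $c$ in $\nu_{\fkn}\circ\mathcal{N}^{\fkm}_{\fkn}=c\cdot\mathrm{Norm}^{\fkm}_{\fkn}\circ\nu_{\fkm}$ depends on arbitrary choices, and ``$c=1$'' is not a meaningful target. What must be proved is that $c$ is a \emph{unit}. Equivalently, $\mathcal{N}^{\fkm}_{\fkn}$ must be surjective, i.e.\ induce an isomorphism $M_{\fkm}\otimes_{\cO[H_{\fkm}^{(p)}]}\cO[H_{\fkn}^{(p)}]\cong M_{\fkn}$. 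The statement itself forces this, since the $\nu$'s are isomorphisms and $\mathrm{Norm}^{\fkm}_{\fkn}$ is onto.

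Your $q$-expansion heuristic does not supply this. Matching $\mathrm{pr}_{1*}-\psi(\fkl)[\fkl]\ell^{-1}\mathrm{pr}_{\ell*}$ with removal of an Euler factor only shows that $\mathcal{N}^{\fkm}_{\fkn}$ intertwines $\phi_{\fkm}$ with $\phi_{\fkn}$. That means it is well defined on the tensor products, not that it is nonzero modulo the maximal ideal. The needed inputs are:
\begin{itemize}
\item For $\fkl\nmid\fkn$ with $\ell\neq p$: an Ihara-lemma-type surjectivity for the $\mathfrak{m}$-localized cohomology of $Y_1(M)$ over $Y_1(N)$. The naive argument via $\mathrm{pr}_{1*}\circ\mathrm{pr}_1^*=\deg\mathrm{pr}_1=\ell^2-1$ fails exactly for the primes $\ell\equiv 1\pmod p$ that the Euler system uses.
\item For $\fkl=\pp$: the ordinary $p$-stabilization and control statements of Hida theory.
\end{itemize}

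Even with every $c$ a unit, rescaling one pair at a time does not give a family coherent over the whole infinite, non-linearly ordered set $\mathcal{A}_{\fkf}$. Once $\mathcal{N}^{\fkm}_{\fkn}$ induces an isomorphism after base change, each $\nu_{\fkm}$ determines a unique $\nu_{\fkn}$ making the square commute. The sets of isomorphisms are torsors under the compact groups $\cO[H_{\fkn}^{(p)}]^{\times}$, and they form an inverse system with surjective transition maps. You can then take a point of the inverse limit. Alternatively, fix the $\nu$'s along a cofinal chain $\fkn_1\mid\fkn_2\mid\cdots$, descend, and use transitivity and surjectivity of the $\mathcal{N}$'s to get compatibility for arbitrary $\fkn\mid\fkm$.

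With surjectivity of $\mathcal{N}^{\fkm}_{\fkn}$ and this limit argument in place of Step~3, the outline would be complete. As written, it omits the step that carries the actual content of the corollary.
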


\section{The construction}\label{sec:main-thms}

Let $K_0$ be the compositum of two distinct imaginary quadratic fields, $K_1$ and $K_2$. Given two Hecke characters $\psi_1$ and $\psi_1'$ of $K_1$, alongside a Hecke character $\psi_2$ of $K_2$, we use the results of \cite{Do-biquadratic, CD, BSV, LLZ-K} to construct a family of cohomology classes for the character $\tilde{\psi}_1(\tilde{\psi}_1')^c\tilde{\psi}_2$ defined over ring class field extensions of $K_0$. We prove that these classes satisfy the norm relations of an anticyclotomic Euler system. Following the approach of \cite[\S{3}]{Do-biquadratic}, we first perform this construction and establish the tame norm relations when the associated theta series $(\theta_{\psi_1}, \theta_{\psi_1'}, \theta_{\psi_2})$ have weights $(2,2,2)$. By varying the diagonal cycle classes within Hida families, we then extend the construction to more general weights and establish the wild norm relations.

Throughout this section, we assume the following setup:
\begin{itemize}
    \item Let $K_1/\Q$ and $K_2/\Q$ be distinct imaginary quadratic fields with discriminants $D_1$ and $D_2$, respectively. For $i \in \{1, 2\}$, let $\epsilon_{K_i}$ denote the quadratic character associated with $K_i$.

\item Let $K_0 = K_1 K_2$ be the biquadratic compositum field, and let $K_3$ denote the unique real quadratic field contained within $K_0$.

\item For $i\in \{0,1,2\}$ and an integral ideal $\fkn_i \subset \cO_{K_i}$, we denote the ray class field of $K_i$ with conductor $\fkn_i$ by $^{i}K_{\fkn_i}$, and its ray class group by $^{i}H_{\fkn_i}$. Let $K_i(\fkn_i)$ be the largest $p$-subextension of $K_i$ contained in $^{i}K_{\fkn_i}$, so ${\rm Gal}(K_i(\fkn_i)/K_i)\cong {^{i}H_{\fkn_i}^{(p)}}$ is the maximal $p$-power quotient of $^{i}H_{\fkn_i}$.

\item Let $\psi_1$ and $\psi_1'$ be Hecke characters of $K_1$ of infinity types $(1-k_1,0)$ and $(1-k_1',0)$, with $k_1, k_1' \ge 1$ and moduli $\fkf_1$ and $\fkf_1'$, respectively. Let $\psi_2$ be a Hecke character of $K_2$ of infinity type $(1-k_2,0)$, with $k_2 \ge 1$ and modulus $\fkf_2$.

\item For $\psi \in \{\psi_1, \psi_1', \psi_2\}$ (defined over $K \in \{K_1, K_2\}$ with infinity type $(1-k,0)$ and modulus $\fkf$), we define $\tilde{\psi} = \psi\circ\mathrm{N}_{K_0/K}$ as the base-changed Hecke character of $K_0$ obtained via the composition $\mathbb{A}_{K_0}^{\times} \xrightarrow{\mathrm{N}_{K_0/K}} \mathbb{A}_{K}^{\times} \xrightarrow{\psi} \mathbb{C}^\times$. Let $\theta_{\psi} \in S_k(N_{\psi}, \chi_{\psi}\epsilon_K)$ be the associated theta series, see Section \ref{theta}.

\item We impose the following self-duality requirement:
\begin{equation}\label{eq:self-dual}\chi_{\psi_1'}\epsilon_{K_1} = 1 \quad \text{and} \quad \chi_{\psi_1}\epsilon_{K_1}\chi_{\psi_2}\epsilon_{K_2} = 1.\end{equation}
\end{itemize}

Let $L/K_0$ be a finite extension containing the Fourier coefficients of $\theta_{\psi_1'}$, $\theta_{\psi_1}$, and $\theta_{\psi_2}$. Fix a prime $p\geq 5$ that splits completely in $K_0$ such that $(p,N_{\psi_1'}N_{\psi_1}N_{\psi_2})=1$. Let $\mathfrak{P}\vert\mathfrak{p}$ be the prime of  $L$ determined by a fixed embedding $i_p:\overline{\Q}\hookrightarrow\overline{\Q}_p$. Let $L_\mathfrak{P}$ be the completion of $L$ at $\mathfrak{P}$, and let $\cO$ be its ring of integers. By enlarging $L_{\mathfrak{P}}$ if
necessary, we may assume that the character $\psi_1,\psi_1',\psi_2$ (or, by abuse of notation, their associated $p$-adic avatars) take values in $\cO$.

\subsection{Prime decomposition convention}\label{subsec:prime-decomp} 

Following \cite[\S{3.1}]{Do-biquadratic}, let $\ell$ be a rational prime that splits completely in $K_0$. Fix a prime $\cL_1 \mid \ell$ in $\cO_{K_0}$. Let $\tau_i$ generate $\mathrm{Gal}(K_0/K_i)$ for $i \in \{1,2,3\}$. Note that $\tau_3 = \tau_1\tau_2$ acts as complex conjugation on $K_0/K_3$. The Galois orbit of $\cL_1$ forms the four primes above $\ell$ in $K_0$:
$$\cL_1, \quad \cL_4 = \tau_1\cL_1, \quad \cL_3 = \tau_2\cL_1, \quad \cL_2 = \tau_3\cL_1=\tau_1\tau_2\cL_1.$$

This decomposition entirely determines the prime factorizations of $\ell$ in the subfields $K_1$, $K_2$, and the real quadratic subfield $K_3$:
\begin{itemize}
    \item In $K_1$: $(\ell) = \la_1\bar{\la}_1$, where $\la_1 = \cL_1\cL_4$ and $\bar{\la}_1 = \cL_2\cL_3$.

\item In $K_2$: $(\ell) = \la_2\bar{\la}_2$, where $\la_2 = \cL_1\cL_3$ and $\bar{\la}_2 = \cL_2\cL_4$.

\item In $K_3$: $(\ell) = \la_3\tilde{\la}_3$, where $\la_3 = \cL_3\cL_4$ (so $\la_3 \mid \la_1\la_2$) and $\tilde{\la}_3 = \cL_1\cL_2$.
\end{itemize}

Let $\cL$ be the set of primes $\la_3 \subset \cO_{K_3}$ such that $\ell = \mathrm{N}_{K_3/\Q}(\la_3)$ is prime to $p$ and splits completely in $K_0$. Let $\mathcal{N}$ denote the set of squarefree products of primes in $\cL$ whose absolute norm down to $\Q$ remains squarefree. For any $\mu_3 \in \mathcal{N}$ with norm $m = \mathrm{N}_{K_3/\Q}(\mu_3) = \prod_i \ell_i$, the decomposition described above extends multiplicatively. Specifically, we obtain analogous factorizations
\begin{itemize}
    \item In $K_0$: $(m) = \mathcal{M}_1\mathcal{M}_2\mathcal{M}_3\mathcal{M}_4$    
    \item In $K_1$: $(m) = \mu_1\bar{\mu}_1$, where $\mu_1 = \cM_1\cM_4$ and $\bar{\mu}_1 = \cM_2\cM_3$.

\item In $K_2$: $(\ell) = \mu_2\bar{\mu}_2$, where $\mu_2 = \cM_1\cM_3$ and $\bar{\mu}_2 = \cM_2\cM_4$.

\item In $K_3$: $(\ell) = \mu_3\tilde{\mu}_3$, where $\mu_3 = \cM_3\cM_4$ (so $\mu_3 \mid \mu_1\mu_2$) and $\tilde{\mu}_3 = \cM_1\cM_2$.
\end{itemize}
Here, for every $i$, $\ell_i=\prod_j \cL_{j,i}$, $\cM_j=\prod_{i}\cL_{j,i}$,  for $1\le j\le 4$, and $\mu_k=\prod_i \la_{k,i}$ for $1\le k\le 3$.

\subsection{Construction in weight (2,2,2) and tame norm relation}
\label{subsec:tame}
 Throughout this subsection, we assume the weights $k_1'=k_1=k_2=2$. Let $N=\text{lcm}(N_{\psi_1'},N_{\psi_1},N_{\psi_2})$. Following Section $3.1$ of \cite{CD}, which relies on diagonal classes in the triple product of modular curves \cite[\S{3}]{BSV}, we obtain cohomology classes for every positive integer $m$:
\begin{equation}\label{kappa3}
\mathcal{Z}_m^{(1)}:=\tilde{\kappa}_{m}^{(3)}\in H^1\bigl(\Q,H^1_{\et}(Y_1(N_{\psi_1'})_{\overline{\Q}},\Z_p(1))\otimes H^1_{\et}(Y_1(N_{\psi_1}m)_{\overline{\Q}},\Z_p(1))\otimes H^1_{\et}(Y_1(N_{\psi_2}m)_{\overline{\Q}},\Z_p(1))(-1)\bigr).
\end{equation}
 We then choose a test vector $\breve{f}\in S_2(N)[\theta_{\psi_1'}]$. As noted in op. cit., the maps used to construct $\mathcal{Z}_m^{(1)}$ are compatible with Hecke correspondences. This compatibility allows us to tensor the classes with $\cO$, yielding:
\begin{align*}
\mathcal{Z}_{\mu_3}^{(1)}\in H^1\bigl(\Q,{\rm Ind}^{\Q}_{K_1}\mathcal{O}({\psi_1'^{-1}})&\otimes H^1_{\et}(Y_1(N_{\psi_1}m)_{\overline{\Q}},\Z_p(1))\otimes_{\mathbb{T}'(N_{\psi_1}m)}\mathcal{O}[^{1}H_{\mathfrak{f}_1\mu_1}^{(p)}]\\
 &\quad\otimes H^1_{\et}(Y_1(N_{\psi_2}m)_{\overline{\Q}},\Z_p(1))\otimes_{\mathbb{T}'(N_{\psi_2}m)}\mathcal{O}[^{2}H_{\mathfrak{f}_2\mu_2}^{(p)}](-1)\bigr).
\end{align*}
Here, the test vector $\breve{f}$ determines the  projection $H^1_{\et}(Y_{\overline{\Q}},\Z_p(1))\rightarrow {\rm Ind}^{\Q}_{K_1}\mathcal{O}({\psi_1'^{-1}})$ in the first factor. The tensor products are taken from Proposition\,\ref{phi_n} 
\[
\phi_{\mathfrak{f}_1\mu_1}:\mathbb{T}'(N_{\psi_1}m)\rightarrow\mathcal{O}[^{1}H_{\mathfrak{f}_1\mu_1}^{(p)}],\quad 
\phi_{\mathfrak{f}_2\mu_2}:\mathbb{T}'(N_{\psi_2}m)\rightarrow\mathcal{O}[^{2}H_{\mathfrak{f}_2\mu_2}^{(p)}]
\]
associated with two distinct imaginary quadratic fields $K_1$ and $K_2$, respectively.  Applying the isomorphisms from Proposition \ref{norm1} for both fields:
\begin{align*}
\nu_{\fkf_1\mu_1}:H^1_{\et}(Y_1(N_{\psi_1}m)_{\overline{\Q}},\Z_p(1))\otimes_{\mathbb{T}'(N_{\psi_1}m)}\mathcal{O}[^{1}H_{\mathfrak{f}_1\mu_1}^{(p)}]&\xrightarrow{\sim}{\rm Ind}^{\Q}_{K_1(\fkf_1\mu_1)}\mathcal{O}(\psi_{1}^{-1}),\\
\nu_{\fkf_2\mu_2}:H^1_{\et}(Y_1(N_{\psi_2}m)_{\overline{\Q}},\Z_p(1))\otimes_{\mathbb{T}'(N_{\psi_2}m)}\mathcal{O}[^{2}H_{\mathfrak{f}_2\mu_2}^{(p)}]&\xrightarrow{\sim}{\rm Ind}^{\Q}_{K_2(\fkf_2\mu_2)}\mathcal{O}(\psi_{2}^{-1}),
\end{align*}
we then obtain a group cohomology class in
\[
H^1\bigl(\Q,{\rm Ind}^{\Q}_{K_1}\mathcal{O}({\psi_1'^{-1}})\otimes_{\mathcal{O}}\text{Ind}^{\Q}_{K_1(\fkf_1\mu_1)}\mathcal{O}(\psi_1^{-1})\otimes_{\mathcal{O}}\text{Ind}^{\Q}_{K_2(\fkf_2\mu_2)}\mathcal{O}(\psi_2^{-1})(-1)\bigr).
\]
Via the natural projections $
^{1}H_{\fkf_1\mu_1}\twoheadrightarrow {^{1}H_{\mu_1}}$ and $^{2}H_{\fkf_2\mu_2}\twoheadrightarrow {^{2}H_{\mu_2}}$, this maps to a class
\begin{equation}\label{eq:inf-res-decomp}
\mathcal{Z}_{\mu_3}^{(2)}\in H^1\bigl(\Q,{\rm Ind}^{\Q}_{K_1}\mathcal{O}_{\psi_1'^{-1}}\otimes_{\mathcal{O}}{\rm Ind}^{\Q}_{K_1}\mathcal{O}_{\psi_1^{-1}}[^{1}H_{\mu_1}^{(p)}]\otimes_{\mathcal{O}}{\rm Ind}^{\Q}_{K_2}\mathcal{O}_{\psi_2^{-1}}[^{2}H_{\mu_2}^{(p)}](-1)\bigr).
\end{equation}
Note that the cohomology group can be rewritten as
\[
H^1\bigl(\Q,{\rm Ind}^{\Q}_{K_1}\mathcal{O}_{\psi_1'^{-1}}\otimes_{\mathcal{O}}{\rm Ind}^{\Q}_{K_0}\mathcal{O}_{\tilde{\psi}_1^{-1}\tilde{\psi}_2^{-1}}[{^{1}H_{\mu_1}^{(p)}}\times {^{2}H_{\mu_2}^{(p)}}](-1)\bigr),\]
which, by Shapiro's lemma, defines an element:
\begin{equation}
    \label{eqn:pre-shapiro}\mathcal{Z}_{\mu_3}^{(3)}\in
H^1\bigl(K_0,{\rm Ind}^{\Q}_{K_1}\mathcal{O}_{\psi_1'^{-1}}\otimes_{\mathcal{O}}\mathcal{O}_{\tilde{\psi}_1^{-1}\tilde{\psi}_2^{-1}}[{^{1}H_{\mu_1}^{(p)}}\times {^{2}H_{\mu_2}^{(p)}}](-1)\bigr).
\end{equation}

Given an integral ideal $\fkn$ of $K_3$, let $H[\fkn]^{(p)}={\rm Gal}(K_0[\fkn]/K_0)$ be the maximal $p$-power quotient of the ring class group of $K_0$ of conductor $\fkn$, (so $K_0[\fkn]$ is the maximal $p$-subextension inside the ring class field of $K_0$ of conductor $\fkn$). Note that for the ring class objects of $K_0$, we drop the upper left corner $0$ notation. The following result enables us to project to ring class fields of $K_0$.

\begin{prop}[Theorem 3.2.1 in \cite{Do-biquadratic}]\label{ringclass} Suppose $p\nmid 6h_{K_0}$ and $\mu_3$ is a squarefree ideal of $\cO_{K_3}$ of norm $m$, where $m$ is divisible only by primes split in $K_0$. We have the following short exact sequence:
\[
1\rightarrow(\cO_{K_3}/\mu_3\cO_{K_3})^{\times, (p)}\rightarrow{^{1}H}_{\mu_1}^{(p)}\times {^{2}H}_{\mu_2}^{(p)}\xrightarrow{e\circ w} H[\mu_3]^{(p)}\rightarrow 1.
\]
Furthermore, if $(\ell)=\cL_1\cL_2\cL_3\cL_4$ is a prime splitting in $K_0$ and coprime to $m$, the projection $e\circ w$ maps 
\begin{align*}
      \quad  [\la_1]\times[\bar{\la}_2]&\mapsto \mathrm{Frob}_{\cL_4/\la_3} 
    \end{align*}
where $\mathrm{Frob}_{\cL_4/\la_3} $ is the Frobenius element of $\cL_4$ in $H[\mu_3]^{(p)}$. 
\end{prop}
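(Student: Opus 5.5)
The natural approach is class field theory: identify all three groups with idèle class quotients of $K_0$, $K_1$, $K_2$, and realize the map $e\circ w$ as a composite of Artin maps.

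\textbf{Setup and construction of $e\circ w$.} By the reciprocity map we write
\[
{^{i}H_{\mu_i}}\cong \mathbb{A}_{K_i}^{\times}/K_i^{\times}K_{i,\infty}^{\times}U_{K_i}(\mu_i),
\]
where $U_{K_i}(\mu_i)$ is the group of units congruent to $1$ mod $\mu_i$. Similarly $H[\mu_3]$ is the quotient of $\mathbb{A}_{K_0}^{\times}$ by $K_0^{\times}K_{0,\infty}^{\times}\widehat{\cO}_{\mu_3}^{\times}$, with $\cO_{\mu_3}=\cO_{K_3}+\mu_3\cO_{K_0}$.

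I would define $w$ as the map induced by pulling back along the norms, $\mathbb{A}_{K_i}^\times\to\mathbb{A}_{K_0}^\times$ inverse to $\mathrm{N}_{K_0/K_i}$ on the level of Galois groups. Concretely, $w$ is the product of the restriction-transfer maps ${^{i}H}_{\mu_i}\to {^{0}H}_{\mu_1\mu_2\cO_{K_0}}$, i.e. the map $\mathfrak a\mapsto \mathfrak a\cO_{K_0}$ on ideals. Then $e$ is the natural projection of the ray class group of $K_0$ of conductor $\mu_1\mu_2$ onto the ring class group of conductor $\mu_3$, followed by taking maximal $p$-quotients. Since $p\nmid h_{K_0}$ and $p\nmid 6$ (so the unit groups of $K_0,K_1,K_2$, which have order dividing $12$, contribute nothing after $p$-completion), everything reduces to the local unit groups at primes dividing $m$.

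\textbf{Local computation at a prime $\ell\mid m$.} Write $\ell=\cL_1\cL_2\cL_3\cL_4$, so all residue fields are $\mathbf F_\ell$.
\begin{itemize}
\item The $\ell$-part of ${^{1}H_{\mu_1}}\times{^{2}H_{\mu_2}}$ is $(\cO/\la_1)^\times\times(\cO/\la_2)^\times\cong\mathbf F_\ell^\times\times\mathbf F_\ell^\times$, indexed by the places $\cL_1\cL_4$ and $\cL_1\cL_3$.
\item The $\ell$-part of the ring class group of conductor $\la_3=\cL_3\cL_4$ is $(\cO_{K_0}/\la_3\cO_{K_0})^\times/(\cO_{K_3}/\la_3)^\times\cong\mathbf F_\ell^\times\times\mathbf F_\ell^\times/\Delta\cong\mathbf F_\ell^\times$.
\end{itemize}
Tracking the embeddings, the composite map is $(x,y)\mapsto x/y$ up to inversion. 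It is surjective, and its kernel is the diagonal, which is $(\cO_{K_3}/\la_3)^\times$. Taking $p$-parts is exact because these groups are finite abelian. Combined with the exact sequence of global class groups (the $p$-parts of the class groups vanish), this yields the short exact sequence by the snake lemma. Surjectivity globally follows because the ring class group of conductor $\mu_3$ is generated by local units at $\mu_3$ modulo the class group, whose $p$-part is trivial.

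\textbf{Frobenius formula.} The ideal $\la_1=\cL_1\cL_4$ extends to $\cL_1\cL_4$ in $K_0$, and $\bar\la_2=\cL_2\cL_4$ extends to $\cL_2\cL_4$. The product in the ring class group is $[\cL_1\cL_2\cL_4^2]$. Since $\cL_1\cL_2=\tilde\la_3$ comes from $K_3$ and is prime to $\mu_3$, it is trivial in the ring class group. This leaves $[\cL_4]^2$, which on $p$-parts is a square of the Frobenius.

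So the precise normalization of $w$ needs a square root, or a twist by the norm from $K_3$. I expect this bookkeeping to be \textbf{the main obstacle}: I must choose $w$ (possibly composed with the inverse of squaring, legitimate since $p$ is odd) so that $[\la_1]\times[\bar\la_2]\mapsto\mathrm{Frob}_{\cL_4}$ exactly. One option is to define $w(a,b)=a\cdot\tau_3(b)$, using $\tau_3\bar\la_2=\cL_1\cL_3$ and $\cL_1\cL_4\cdot\cL_1\cL_3\sim\cL_4\cL_3\cdot\cL_1^2$. I would try candidates like this and verify the kernel identification at each $\ell$ remains the image of $(\cO_{K_3}/\mu_3)^{\times}$.
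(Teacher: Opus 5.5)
The paper gives no proof of this statement (it is quoted from \cite{Do-biquadratic}), so I am judging your argument on its own. Your overall strategy (class field theory, reduction to local units at $\ell\mid m$, counting, snake lemma) is sound, but there is a genuine gap: \textbf{the map $w$ you define does not exist}.

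Extension of ideals, i.e.\ the inclusion $\mathbb{A}_{K_1}^\times\hookrightarrow\mathbb{A}_{K_0}^\times$, sends ${}^{1}H_{\mu_1}$ only to the ray class group of $K_0$ of conductor $\mu_1\cO_{K_0}=\cM_1\cM_4$. It does not reach conductor $\mu_1\mu_2\cO_{K_0}$, and it does not factor through $H[\mu_3]$ either. At $\ell\mid m$, the ring class group of conductor $\la_3=\cL_3\cL_4$ sees only the ratio of the $\cL_3$- and $\cL_4$-components of a unit. But $\cL_3$ lies over $\bar{\la}_1$, which is not in the conductor of ${}^{1}H_{\mu_1}$.

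Concretely, take $\alpha\in\cO_{K_1}$ with $\alpha\equiv 1\pmod{\mu_1}$, so $\alpha$ is trivial in ${}^{1}H_{\mu_1}$. The class of $\alpha\cO_{K_0}$ in $H[\mu_3]^{(p)}$ is the image of $\alpha \bmod \bar{\la}_1\in(\cO_{K_0}/\cL_3)^\times$. By the Chinese remainder theorem you can make this nontrivial as soon as $p\mid\ell-1$. The same happens for $K_2$, since $\cL_4$ lies over $\bar{\la}_2$.

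Equivalently, on units at $\ell$ your $w$ gives, up to inversion, $a_{\bar{\la}_1}b_{\la_2}/(a_{\la_1}b_{\bar{\la}_2})$. So the formula $(x,y)\mapsto x/y$ you wrote is not the map you defined. Post-composing cannot repair this:
\begin{itemize}
\item Inverting the squaring map leaves the dependence on the $\bar{\la}_i$-components.
\item Twisting by $\tau_3$ also fails, and in any case yields $[\cL_1]^2$.
\item More generally, any combination of ideal extension and Galois conjugation either kills the $\ell$-part or depends on the $\bar{\la}_i$-components.
\end{itemize}
The square $[\cL_4]^2$ you found is a symptom of the same problem: modulo norms from $K_3$, the map $\mathfrak{A}\mapsto \mathrm{N}_{K_0/K_1}\mathfrak{A}\cdot\mathrm{N}_{K_0/K_2}\mathfrak{A}$ is squaring.

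\textbf{The fix.} Define $e\circ w$ by compatibility with restriction from $G_{K_0}$. This is in any case what the paper needs, since $\mathcal{Z}^{(3)}_{\mu_3}$ is pushed forward $G_{K_0}$-equivariantly and $G_{K_0}$ acts on ${}^{1}H_{\mu_1}^{(p)}\times{}^{2}H_{\mu_2}^{(p)}$ by $\sigma\mapsto(\sigma|_{K_1(\mu_1)},\sigma|_{K_2(\mu_2)})$. Idelically this is $x\mapsto(\mathrm{N}_{K_0/K_1}x,\mathrm{N}_{K_0/K_2}x)$.

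Suppose $x$ has unit components $(x_1,x_2,x_3,x_4)$ at $(\cL_1,\cL_2,\cL_3,\cL_4)$ for $\ell\mid m$. Its image has $\la_1$- and $\la_2$-components $(x_1x_4,\,x_1x_3)$, while its class in $H[\mu_3]$ has $\ell$-component $x_3/x_4$. You will also need two small facts:
\begin{itemize}
\item $p\nmid 6h_{K_0}$ gives $p\nmid h_{K_1}h_{K_2}$.
\item $[\cO_{K_0}^\times:\cO_{K_3}^\times]$ is finite and prime to $p$. Note that $\cO_{K_0}^\times$ itself is infinite, so your remark that the unit groups have order dividing $12$ needs correcting.
\end{itemize}
With these, the restriction $G_{K_0}\to{}^{1}H_{\mu_1}^{(p)}\times{}^{2}H_{\mu_2}^{(p)}$ is surjective and its kernel fixes $K_0[\mu_3]$. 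The induced map is $(a,b)\mapsto b/a$ on each $\ell$-component. Its kernel is the diagonal, which is the image of $(\cO_{K_3}/\la_3)^\times$ under the identifications $\cO_{K_3}/\la_3=\cO_{K_0}/\cL_4=\cO_{K_1}/\la_1$ and $\cO_{K_3}/\la_3=\cO_{K_0}/\cL_3=\cO_{K_2}/\la_2$.

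The Frobenius formula is then immediate, with no square root: $\cL_4$ lies over both $\la_1$ and $\bar{\la}_2$ with residue degree one, so $\mathrm{Frob}_{\cL_4}$ restricts to $([\la_1],[\bar{\la}_2])$. Your counting and snake-lemma steps go through unchanged with this map.
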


Pushing $\mathcal{Z}_{\mu_3}^{(3)}$ from (\ref{eqn:pre-shapiro}) through the composition $e\circ w$ yields the class 
\[
\mathcal{Z}_{\mu_3}^{(4)}\in  H^1\bigl(K_0,{\rm Ind}^{\Q}_{K_1}\mathcal{O}_{\psi_1'^{-1}}\otimes_{\mathcal{O}}\mathcal{O}_{\tilde{\psi}_1^{-1}\tilde{\psi}_2^{-1}}[H[\mu_3]^{(p)}](-1)\bigr).
\]
By Shapiro's lemma, this defines
\[
\mathcal{Z}_{\mu_3}^{(5)}\in H^1\bigl(K_0[\mu_3],{\rm Ind}^{\Q}_{K_1}\mathcal{O}_{\psi_1'^{-1}}\otimes_{\cO}\cO(\tilde{\psi}_{1}^{-1}\tilde{\psi}_{2}^{-1})(-1)\bigr).
\]
These classes satisfy the following relations:

\begin{prop}[Theorem 3.2.3 in \cite{Do-biquadratic}]\label{wrongnorm}
The family $\{\mathcal{Z}_{\mu_3}^{(5)}\}$ satisfies the following norm relation:
\begin{align*}
{\rm Norm}_{K_0[\mu_3]}^{K_0[\mu_3\la_3]}(\mathcal{Z}_{\mu_3\la_3}^{(5)})=&(\ell-1)\bigg( (\psi_1'(\la_1)+\psi_1'(\bar{\la}_1))-\frac{\psi_1(\la_1)\psi_2(\bar{\la}_2)}{\ell}([\la_1]\times [\bar{\la}_2])\\&-\frac{\psi_1(\bar{\la}_1)\psi_2(\la_2)}{\ell}([\bar{\la}_1]\times [\la_2])
   +(1-\ell)\frac{\psi_1(\la_1)\psi_2(\la_2)}{\ell^2}([\la_1]\times[\la_2])\bigg)
    (\mathcal{Z}_{\mu_3}^{(5)}).
\end{align*}
\end{prop}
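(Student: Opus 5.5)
We will trace the passage from $\mu_3$ to $\mu_3\la_3$ through each step of the construction $\mathcal{Z}^{(1)}\to\cdots\to\mathcal{Z}^{(5)}$, and compute how the norm map acts at each stage. Passing from $m$ to $m\ell$ raises the level of the second and third modular curves from $N_{\psi_1}m$, $N_{\psi_2}m$ to $N_{\psi_1}m\ell$, $N_{\psi_2}m\ell$, while the first factor $Y_1(N_{\psi_1'})$ stays fixed.

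\textbf{Step 1: tame relation for the diagonal classes.} We first invoke the tame norm relation for the diagonal classes $\tilde\kappa^{(3)}_m$ of \cite[\S3]{BSV}, in the form used in \cite[\S3.1]{CD}. It expresses $(\mathrm{pr}_{1*}\otimes \mathrm{pr}_{1*})(\tilde\kappa^{(3)}_{m\ell})$ and the mixed degeneracy pushforwards $(\mathrm{pr}_{1*}\otimes\mathrm{pr}_{\ell*})$, $(\mathrm{pr}_{\ell*}\otimes\mathrm{pr}_{1*})$, $(\mathrm{pr}_{\ell*}\otimes\mathrm{pr}_{\ell*})$ in terms of $\tilde\kappa^{(3)}_m$. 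The coefficients involve the Hecke operator $T_\ell'$ on the first factor, the diamond operators $\langle\ell\rangle'$, and the factor $(\ell-1)$ coming from the degree of the fibre-product correspondence.

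\textbf{Step 2: apply the norm maps.} We then apply the norm maps $\mathcal{N}^{\fkf_1\mu_1\la_1}_{\fkf_1\mu_1}$ and $\mathcal{N}^{\fkf_2\mu_2\la_2}_{\fkf_2\mu_2}$. Since $\ell$ is split in each $K_i$, these are given by case (2) of Section \ref{LLZrecall}, namely $1\otimes\mathrm{pr}_{1*}-\frac{\psi_i(\la_i)[\la_i]}{\ell}\otimes\mathrm{pr}_{\ell*}$. Tensoring the two and expanding produces four terms. We substitute Step 1 into each term and evaluate the Hecke operators via Proposition \ref{phi_n}:
\begin{itemize}
\item on the $\psi_1'$ factor, $T_\ell'$ acts by $\psi_1'(\la_1)+\psi_1'(\bar\la_1)$;
\item on the other two factors, $T_\ell'$ and $\langle\ell\rangle'$ become the group-ring elements $[\la_i]\psi_i(\la_i)+[\bar\la_i]\psi_i(\bar\la_i)$ and $\chi_{\psi_i}\epsilon_{K_i}(\ell)[(\ell)]$.
\end{itemize}
The self-duality condition \eqref{eq:self-dual} makes the diamond contributions from the two factors cancel against each other.

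\textbf{Step 3: transport to induced representations.} Theorem \ref{norm1} turns these algebraic norms into the natural norms $\mathrm{Norm}$ on $\mathrm{Ind}^{\Q}_{K_i(\fkf_i\mu_i)}$. These are compatible with the projections to ${}^iH_{\mu_i}$ and with Shapiro's lemma, which gives the relation at the level of $\mathcal{Z}^{(3)}$ inside $\cO[{}^1H^{(p)}_{\mu_1}\times{}^2H^{(p)}_{\mu_2}]$.

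\textbf{Step 4: push to the ring class field.} Finally we push forward along $e\circ w$ using Proposition \ref{ringclass}. The kernel $(\cO_{K_3}/\mu_3)^{\times,(p)}$ is killed, and each group element maps to a Frobenius in $H[\mu_3]^{(p)}$; in particular $[\la_1]\times[\bar\la_2]\mapsto\mathrm{Frob}_{\cL_4/\la_3}$. The compatibility of corestriction $K_0[\mu_3\la_3]\to K_0[\mu_3]$ with this projection comes from the same exact sequence at levels $\mu_3$ and $\mu_3\la_3$. Collecting the terms gives the stated formula. In particular:
\begin{itemize}
\item the mixed terms $[\la_1]\times[\bar\la_2]$ and $[\bar\la_1]\times[\la_2]$ carry coefficient $1/\ell$;
\item the $[\la_1]\times[\la_2]$ term acquires the extra factor $(1-\ell)/\ell^2$ from the $\mathrm{pr}_{\ell*}\otimes\mathrm{pr}_{\ell*}$ contribution.
\end{itemize}

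\textbf{Main obstacle.} The hardest part is the bookkeeping in Steps 2 and 4. One must show that the terms involving $[\bar\la_1]\times[\bar\la_2]$ and the diamond contributions either cancel or collapse under $e\circ w$; this needs $\la_3\mid\la_1\la_2$ together with the fact that the ideal classes of $\cL_1\cL_2$-type become trivial in the ring class group of conductor $\mu_3$. One must also check that the overall factor $(\ell-1)$ comes out correctly. We would first carry out this computation at the level of group rings, before applying Shapiro, to keep it transparent.
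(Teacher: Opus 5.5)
Your outline follows the same route as the argument behind this proposition: the paper only quotes Theorem 3.2.3 of \cite{Do-biquadratic}, which rests on the tame degeneracy relations of the diagonal classes from \cite{CD}, the norm maps of Theorem~\ref{norm1}, and the projection $e\circ w$ of Proposition~\ref{ringclass}. The bookkeeping should come out simpler than you fear. Consistent with the stated formula, $\mathrm{pr}_{1*}\otimes\mathrm{pr}_{1*}$, $\mathrm{pr}_{1*}\otimes\mathrm{pr}_{\ell*}$, $\mathrm{pr}_{\ell*}\otimes\mathrm{pr}_{1*}$ and $\mathrm{pr}_{\ell*}\otimes\mathrm{pr}_{\ell*}$ should contribute $(\ell-1)T_\ell'$ on the $\psi_1'$-factor, $(\ell-1)T_\ell'$ on the $\psi_1$-factor, $(\ell-1)T_\ell'$ on the $\psi_2$-factor, and $\ell^2-1$, respectively. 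The formula then already holds in $\cO[{}^1H^{(p)}_{\mu_1}\times{}^2H^{(p)}_{\mu_2}]$, with no diamond or $[\bar{\la}_1]\times[\bar{\la}_2]$ terms; self-duality and any collapse under $e\circ w$ are not needed, and $[\bar{\la}_1]\times[\bar{\la}_2]$, a Frobenius at $\cL_2$, would not collapse anyway. Finally, the coefficient $(1-\ell)/\ell^2$ is $(\ell+1)/\ell^2$ from $\mathrm{pr}_{\ell*}\otimes\mathrm{pr}_{\ell*}$ minus $1/\ell$ from each mixed term, not a contribution of $\mathrm{pr}_{\ell*}\otimes\mathrm{pr}_{\ell*}$ alone.
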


Following \cite[$\S{3.2.1}$]{CD} verbatim by quotienting out the diamond operators action (an idea originating from Darmon--Rotger \cite[$\S{1.4}$]{DR2}), we can remove the $(\ell-1)$ factor  to obtain modified classes
\[
\mathcal{Z}_{\mu_3}^{(6)}\in H^1\bigl(K_0[\mu_3],{\rm Ind}^{\Q}_{K_1}\mathcal{O}_{\psi_1'^{-1}}\otimes_{\cO}\cO(\tilde{\psi}_{1}^{-1}\tilde{\psi}_{2}^{-1})(-1)\bigr).
\]
This cohomology group actually decomposes as 
\begin{equation}\label{eqn:main-decomposition}
    H^1\bigl(K_0[\mu_3],(\cO({\tilde{\psi}_1'^{-1}})\oplus\cO( {\tilde{\psi}_1'^{-\cc}}))\otimes_{\cO}\cO(\tilde{\psi}_{1}^{-1}\tilde{\psi}_{2}^{-1})(-1)\bigr)\simeq H^1\bigl(K_0[\mu_3],\cO_{\psi})\oplus H^1\bigl(K_0[\mu_3],\cO_{\chi})
\end{equation}
where  $\psi_1'^\cc(\sigma)=\psi_1'(\cc\sigma\cc^{-1})$ denotes the composition of $\psi_1'$ with the action of the non-trivial automorphism of $K_1/\Q$. Here,
$$\psi^{-1}=\widetilde{\psi_1{\psi}_1'}\,\widetilde{\psi_2}\,\bN$$
is a self-dual (i.e. $\psi\psi^\cc=\bN$) Hecke character over $K_0$ with infinity type $(2,-1,0,1)$ and
$$\chi^{-1}=\widetilde{\psi_1{\psi}_1'^{\cc}}\,\widetilde{\psi_2}\,\bN$$
is a self-dual Hecke character over $K_0$ with infinity type $(1,0,1,0)$.

By projecting to the first direct summand of equation (\ref{eqn:main-decomposition}),
we obtain classes
\[\mathcal{Z}_{\mu_3,\psi}^{(6)}\in H^1\bigl(K_0[\mu_3],\cO_{\psi}),\]
which satisfy the following analogue of Proposition \ref{wrongnorm} (now stripped of the $(\ell-1)$ factor):
\begin{prop}\label{wrongnorm-psi}
The family $\{\mathcal{Z}_{\mu_3,\psi}^{(6)}\}$ satisfies the following relation:
\begin{align*}
{\rm Norm}_{K_0[\mu_3]}^{K_0[\mu_3\la_3]}(\mathcal{Z}_{\mu_3\la_3,\psi}^{(6)})=\bigg(& (\psi_1'(\la_1)+\psi_1'(\bar{\la}_1))-\frac{\psi_1(\la_1)\psi_2(\bar{\la}_2)}{\ell}([\la_1]\times [\bar{\la}_2])-\frac{\psi_1(\bar{\la}_1)\psi_2(\la_2)}{\ell}([\bar{\la}_1]\times [\la_2])\\
   &+(1-\ell)\frac{\psi_1(\la_1)\psi_2(\la_2)}{\ell^2}([\la_1]\times[\la_2])\bigg)
    (\mathcal{Z}_{\mu_3,\psi}^{(6)}).
\end{align*}
\end{prop}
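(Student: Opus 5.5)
The relation should follow by pushing Proposition \ref{wrongnorm} through two operations: removing the factor $(\ell-1)$ via the passage $\mathcal{Z}^{(5)}_{\mu_3}\rightsquigarrow\mathcal{Z}^{(6)}_{\mu_3}$, and projecting onto the first summand of (\ref{eqn:main-decomposition}). The plan is to check that each operation commutes with ${\rm Norm}_{K_0[\mu_3]}^{K_0[\mu_3\la_3]}$ and with the Hecke operator on the right-hand side. Then the identity of Proposition \ref{wrongnorm} descends to $\mathcal{Z}^{(6)}_{\mu_3,\psi}$ with the $(\ell-1)$ stripped off.

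\textbf{Step 1.} Following \cite[\S3.2.1]{CD}, I would recall how $\mathcal{Z}^{(6)}$ is built. One works with the classes before quotienting by the diamond operators $\langle d\rangle$ for $d\in(\Z/\ell\Z)^\times$. The norm from level $m\ell$ to level $m$ then factors as the trace over $(\Z/\ell\Z)^\times$, which contributes the factor $\ell-1$, followed by the norm on the quotient. At the level of $\mathcal{Z}^{(6)}$ one therefore gets the same expression as in Proposition \ref{wrongnorm} without $(\ell-1)$. This is an identity in $H^1(K_0[\mu_3],{\rm Ind}^{\Q}_{K_1}\cO_{\psi_1'^{-1}}\otimes\cO(\tilde\psi_1^{-1}\tilde\psi_2^{-1})(-1))$. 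Since I cannot simply divide by $\ell-1$ (it may be divisible by $p$), this step must genuinely repeat the argument of \cite{CD}. In that argument the ambient level-raising maps ${\rm pr}_{1*}$, ${\rm pr}_{\ell*}$ and the LLZ maps of Theorem \ref{norm1} are equivariant for the diamond action at $\ell$. The $(\ell-1)$ arises solely from the degree of the covering forgetting the $\Gamma_1(\ell)$-structure on the second and third factors.

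\textbf{Step 2.} I would check that the decomposition (\ref{eqn:main-decomposition}) is functorial in $\mu_3$. The splitting ${\rm Ind}^{\Q}_{K_1}\cO_{\psi_1'^{-1}}|_{G_{K_0}}\cong\cO(\tilde\psi_1'^{-1})\oplus\cO(\tilde\psi_1'^{-\cc})$ is a splitting of $G_{K_0}$-modules, independent of $\mu_3$. The projection onto the first summand is therefore a morphism of coefficient modules, and it commutes with corestriction from $K_0[\mu_3\la_3]$ to $K_0[\mu_3]$.

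The right-hand operator involves the scalar $\psi_1'(\la_1)+\psi_1'(\bar\la_1)$, which is the Hecke eigenvalue $a_\ell(\theta_{\psi_1'})$ acting through the first tensor factor. It also involves group-ring elements in ${^1H}^{(p)}_{\mu_1}\times{^2H}^{(p)}_{\mu_2}$, pushed to $H[\mu_3]^{(p)}$ via $e\circ w$ as in Proposition \ref{ringclass}. These group-ring elements act on the $\cO[H[\mu_3]^{(p)}]$-module structure, so they commute with the projection. The scalar is a scalar, hence also commutes. Applying the projection to both sides of the Step 1 identity gives the claimed relation.

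\textbf{Main obstacle.} The hard part is Step 1: making rigorous that the diamond-quotient modification removes exactly the $(\ell-1)$ and changes nothing else. In particular, one must verify that the degeneracy maps twisted by $[\la_1]\times[\bar\la_2]$ and similar elements are compatible with the quotient by $\langle d\rangle$ at $\ell$. I would first try to transcribe \cite[\S3.2.1]{CD} with $K$ replaced by the pair $(K_1,K_2)$. The key point is that the diamond operators at $\ell$ act on ${\rm Ind}_{K_i(\fkf_i\mu_i\la_i)}$ through $(\cO_{K_i}/\la_i)^\times$, a subgroup killed in the projection to the smaller conductor.
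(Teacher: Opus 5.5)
Your proposal is correct and follows the same route as the paper. The paper likewise obtains $\mathcal{Z}^{(6)}_{\mu_3}$ by transcribing the diamond-operator quotient of \cite[\S3.2.1]{CD} (after Darmon--Rotger) to remove the factor $(\ell-1)$ from Proposition~\ref{wrongnorm}. It then projects onto the first summand of (\ref{eqn:main-decomposition}), a projection that commutes with corestriction, with the scalar $a_\ell(\theta_{\psi_1'})=\psi_1'(\la_1)+\psi_1'(\bar\la_1)$, and with the group-ring action; your write-up just spells out these compatibilities, which the paper leaves implicit.
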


\begin{rem}\label{assumption-key-psi} Upon restriction to inertia at $\cP_1$ and reduction modulo $p$: $\bar{\psi}|_{I_{\cP_1}}=\omega^2$ and $\bar{\chi}|_{I_{\cP_1}}=\omega$, where $\omega$ is the mod $p$ cyclotomic character having order $(p-1)$. Because their exponents are not congruent modulo $(p-1)$, the characters are distinct. Hence, there exists:
\begin{itemize}
    \item $\tau_{\psi}\in G_{K_0}$ such that $\psi(\tau_{\psi})\equiv1\pmod{\mathfrak{P}}$ while $\chi(\tau_{\psi})\not\equiv 1\pmod{\mathfrak{P}}$,
    \item $\tau_{\chi}\in G_{K_0}$ such that $\chi(\tau_{\chi})\equiv1\pmod{\mathfrak{P}}$ while $\psi(\tau_{\chi})\not\equiv 1\pmod{\mathfrak{P}}$.
\end{itemize}

\end{rem}

\begin{defn}\label{def:thin-primes}
  For $\gamma\in\{\chi,\psi\}$, let $\mathcal{T}'_{\gamma}$ be the set of primes $\ell \nmid pN$ split in $K_0$ such that
\begin{itemize}
    \item $\ell \equiv 1\pmod{p}$,
    \item $\mathrm{Frob}_\ell$ is equal to a conjugate of $\tau_{\gamma}$.
\end{itemize}
Denote by $\mathcal{L}'_{\gamma}$ the set of primes $\la_3\subset\cO_{K_3}$, for which $\ell=N_{K_3/\Q}(\la_3)\in \mathcal{T}'_{\gamma}$ . Let $\mathcal{N}'_{\gamma}$ be the set of squarefree products of primes inside $\mathcal{L}'_{\gamma}$ whose norm down to $\Q$ is still squarefree.
\end{defn}
Then the term in the right-hand side of Proposition~\ref{wrongnorm} can be massaged to agree with the local Euler factor at $\cL_4$ of the Galois representation $[\mathcal{O}_{\psi}]^\vee(1)=\mathcal{O}_{\psi^{-1}}(1)$, giving the correct norm relations:

\begin{thm}\label{maintheorem1}
Suppose $p\nmid 6h_{K_0}$. Let $\mu_3\in\mathcal{N}'_{\psi}$ be a squarefree ideal of $\cO_{K_3}$ of norm $m$. Then there exists a collection of cohomology classes
\[
\mathcal{Z}_{\mu_3,\psi}\in H^1\bigl(K_0[\mu_3],\mathcal{O}_{\psi}\bigr)
\]
such that for every split prime $\la_3\in\mathcal{L}'_{\psi}$ of $\cO_{K_3}$ of norm $\ell$ with $(\ell,m)=1$, we have the norm relation
\begin{equation*}
    \mathrm{Norm}_{K_0[\mu_3]}^{K_0[\mu_3\la_3]}(\mathcal{Z}_{\mu_3\la_3,\psi})=P_{\cL_4}(\mathrm{Frob}_{\cL_4/\la_3})(\mathcal{Z}_{\mu_3,\psi}),
\end{equation*}
where $P_{\cL_4}(X)=\det(1-X\cdot \mathrm{Frob}_{\cL_4/\la_3}\,|\,\mathcal{O}_{\psi^{-1}}(1))$.
\end{thm}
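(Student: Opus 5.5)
The classes will be built from $\mathcal{Z}^{(6)}_{\mu_3,\psi}$. The right-hand side of Proposition~\ref{wrongnorm-psi} will be compared with the Euler factor $P_{\cL_4}(\mathrm{Frob}_{\cL_4/\la_3})$ modulo the ideal that the thin primes $\ell\in\mathcal{T}'_\psi$ make small. The remaining discrepancy will then be absorbed by the standard modification of Rubin \cite[Lemma 9.6.1]{Rubin-ES}, as in \cite[\S 3.2]{CD} and \cite[Thm.~3.2.4]{Do-biquadratic}.

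\emph{Step 1: rewrite the right-hand side in $\mathcal{O}[H[\mu_3]^{(p)}]$.} By Proposition~\ref{ringclass}, the element $[\la_1]\times[\bar\la_2]$ maps to $\sigma:=\mathrm{Frob}_{\cL_4/\la_3}$. Its conjugate partner $[\bar\la_1]\times[\la_2]$ maps to the Frobenius at $\tau_3\cL_4=\cL_3$. Since $\cL_3\cL_4=\la_3$ is trivial in the ring class group of conductor prime to $\la_3$, and the ring class group is anticyclotomic, this element maps to $\sigma^{-1}$. Next, $[\la_1]\times[\la_2]$ maps to the Frobenius of $\cL_1$ composed with a norm-type element, i.e.\ to an element coming from $\tilde\la_3$. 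That element is trivial in $H[\mu_3]^{(p)}$ up to the kernel $(\cO_{K_3}/\mu_3)^{\times,(p)}$ of Proposition~\ref{ringclass}, so it maps to $1$.

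Using self-duality \eqref{eq:self-dual} and the definition $\psi^{-1}=\widetilde{\psi_1\psi_1'}\widetilde{\psi_2}\bN$, the coefficient $\psi_1(\la_1)\psi_2(\bar\la_2)/\ell$ should be expressed through $\psi^{-1}(\cL_4)\ell\,\psi_1'(\la_1)^{-1}$. Then $\psi_1'(\la_1)\psi_1'(\bar\la_1)=\ell$ for weight $2$, because $\chi_{\psi_1'}\epsilon_{K_1}=1$.

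\emph{Step 2: reduce modulo the thin-prime ideal.} For $\ell\in\mathcal{T}'_\psi$ we have $\ell\equiv1\pmod p$. So $1-\ell\equiv 0$, the last term vanishes, and $\ell^{\pm1}\equiv1$. Since $\mathrm{Frob}_\ell$ is conjugate to $\tau_\psi$, we have $\psi(\cL_4)\equiv1$, and $\chi(\cL_4)\not\equiv1$. The right-hand side therefore becomes, modulo $\mathfrak{P}$, a polynomial $Q(\sigma)$ of the shape $a-b\sigma-b'\sigma^{-1}$. Here $a=\psi_1'(\la_1)+\psi_1'(\bar\la_1)$, and the coefficients $b,b'$ are products of Hecke eigenvalues.

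\emph{Step 3: congruence with the Euler factor.} Recall $P_{\cL_4}(\sigma)=1-\psi^{-1}(\cL_4)\ell^{-1}\cdot\ell\,\sigma$, up to normalization of Frobenius. I would show that $Q(\sigma)\equiv u\cdot P_{\cL_4}(\sigma)\pmod{(\ell-1)\text{ and }(\sigma-1)\text{-adic error}}$ for some unit $u\in\mathcal{O}[H[\mu_3]^{(p)}]^\times$. The approach is to use $\sigma^{-1}\equiv 2-\sigma \pmod{(\sigma-1)^2}$ together with the relations from Step 1. Rubin's lemma (the version in \cite[\S 9.6]{Rubin-ES}, or \cite[Lemma 3.2.5]{CD}) then says the following. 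If $Q(\sigma)\equiv P(\sigma)$ modulo the ideal generated by $\ell-1$ in $\mathcal{O}[H[\mu_3\la_3]^{(p)}]$, then there exist elements $\mathcal{Z}_{\mu_3,\psi}=\big(\prod_{\la_3\mid\mu_3}u_{\la_3}\big)\mathcal{Z}^{(6)}_{\mu_3,\psi}$ satisfying the exact relation. Here the $u_{\la_3}$ are suitable lifts, which are units because $p$-power order groups make the augmentation ideal topologically nilpotent.

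\emph{Main obstacle.} The hard part is Step 3: checking that the combination $a-b\sigma-b'\sigma^{-1}$ really matches the degree-one polynomial $P_{\cL_4}$ up to a unit, and not only up to a non-unit. Two facts are needed. First, $a\not\equiv b+b'$ appropriately, so that the constant term is a unit. Second, the $\psi_1'$ contribution collapses after projecting to the $\psi$-summand of \eqref{eqn:main-decomposition}. The choice of $\tau_\psi$ in Remark~\ref{assumption-key-psi} is made exactly so that $\psi(\mathrm{Frob})\equiv1$ while the $\chi$-component is not. I would first try to verify this separation by evaluating both sides under every character of $H[\mu_3]^{(p)}$, which lets the comparison be done scalar by scalar.
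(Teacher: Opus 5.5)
There is a genuine gap, and it sits in Steps 2--3, exactly where you flag the main obstacle.

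\textbf{Why Steps 2--3 fail.} Rubin's Lemma 9.6.1 only lets you trade the Euler polynomial for one congruent to it modulo $\ell-1$. Since $p\mid\ell-1$, this is strictly finer than a congruence modulo $\mathfrak{P}$. So your reductions are not admissible:
\begin{itemize}
\item using $\psi(\cL_4)\equiv 1\pmod{\mathfrak{P}}$, or replacing $\sigma^{-1}$ by $2-\sigma$ modulo $(\sigma-1)^2$, changes the relation by terms that are not multiples of $\ell-1$, and no exact norm relation can be recovered afterwards;
\item a character-by-character comparison does not give an integral congruence in $\mathcal{O}[H[\mu_3]^{(p)}]$ either.
\end{itemize}
Your diagnosis of what must be checked is also off. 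When $\psi(\cL_4)\equiv 1$, $P_{\cL_4}(\sigma)=1-\psi^{-1}(\cL_4)\sigma$ itself lies in the maximal ideal $\mathfrak{P}+I_G$, so ``the constant term is a unit'' is not the relevant condition. Nor does the $\psi_1'$-trace collapse on projecting to the $\psi$-summand: Proposition~\ref{wrongnorm-psi} is already the projected relation and still contains $\psi_1'(\la_1)+\psi_1'(\bar{\la}_1)$.

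\textbf{The missing idea} is an exact factorization modulo $\ell-1$. Use
\begin{itemize}
\item $\psi_1\psi_2((\ell))=\ell^2\equiv 1$ and $\psi_1'(\la_1)\psi_1'(\bar{\la}_1)=\ell\equiv 1$;
\item $\psi_1'(\la_1)=\tilde{\psi}_1'(\cL_4)$, $\psi_1'(\bar{\la}_1)=\tilde{\psi}_1'^{\cc}(\cL_4)$, and $\psi_1(\la_1)\psi_2(\bar{\la}_2)=\tilde{\psi}_1\tilde{\psi}_2(\cL_4)$.
\end{itemize}
Then $-\psi_1(\la_1)\psi_2(\bar{\la}_2)\sigma$ times the factor of Proposition~\ref{wrongnorm-psi} is congruent modulo $\ell-1$ to
\[
\bigl(1-\psi^{-1}(\cL_4)\sigma\bigr)\bigl(1-\chi^{-1}(\cL_4)\sigma\bigr),
\]
the Euler factor of $\mathcal{O}_{\psi^{-1}}(1)\oplus\mathcal{O}_{\chi^{-1}}(1)$. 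Up to the scalar $\psi_1(\la_1)\psi_2(\bar{\la}_2)$, the $\psi_1'$-trace is just $\psi^{-1}(\cL_4)+\chi^{-1}(\cL_4)$. So the congruence you aim for in Step 3 does hold, with the explicit element $u=-\bigl(\psi_1(\la_1)\psi_2(\bar{\la}_2)\sigma\bigr)^{-1}\bigl(1-\chi^{-1}(\cL_4)\sigma\bigr)$.

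\textbf{Where the thin-prime condition enters.} It is used only through $\chi(\cL_4)\not\equiv 1\pmod{\mathfrak{P}}$, the half of the $\tau_\psi$ condition you record but never use. This is what makes $1-\chi^{-1}(\cL_4')\mathrm{Fr}_{\cL_4'}$ a unit in the local ring $\mathcal{O}[\mathrm{Gal}(K_0[\mu_3\la_3]/K_0)]$. Nilpotence of the augmentation ideal alone does not suffice; the augmentation must also be nonzero mod $\mathfrak{P}$.

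\textbf{Completing the argument.} Take $\mathrm{Fr}_{\cL_4'}$ to lift $\mathrm{Frob}_{\cL_4'/\la_3'}$ while acting trivially on $K_0[\la_3']$. Multiply by the inverses of these units over all $\ell'\mid m\ell$, as in \cite[Thm.~5.3.3]{LZ-Sym-square}, and then invoke Rubin's Lemmas 9.6.1 and 9.6.3. Your product-of-lifts ansatz has the right shape once these ingredients are supplied.
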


\begin{proof} 
Note that $[\la_1]\times[\bar{\la}_2]$ corresponds to ${\rm Frob}_{\cL_4/\la_3}\in H[\mu_3]^{(p)}$ under the map $e\circ w$ of Proposition \ref{ringclass}. First, we scale the class $\mathcal{Z}_{\mu_3\la_3,\psi}^{(6)}$ by multiplying it with
 $-\psi_1(\la_1)\psi_2(\bar{\la}_2)([\la_1]\times [\bar{\la}_2])$. Looking at modulo $(\ell-1)$, the factor now looks like
\begin{align*}
    -\psi_1(\la_1)\psi_2(\bar{\la}_2){\rm Frob}_{\cL_4/\la_3}\Bigl((\psi_1'(\la_1)+\psi_1'(\bar{\la}_1))-\frac{\psi_1(\la_1)\psi_2(\bar{\la}_2)}{\ell}{\rm Frob}_{\cL_4/\la_3}-\frac{\psi_1(\bar{\la}_1)\psi_2(\la_2)}{\ell}{\rm Frob}_{\cL_4/\la_3}^{-1}\Bigr)
\end{align*}
 Note that:
 \begin{enumerate}
     \item $\psi_1\psi_2((\ell))=\chi_{\psi_1}\chi_{\psi_2}(\ell)\ell^2=\epsilon_{K_1}(\ell)^{-1}\epsilon_{K_2}(\ell)^{-1}\ell^2=\ell^2\equiv 1\pmod{\ell-1}$ since $\ell$ splits in $K_0$.
     \item $\psi_1'(\la_1)=\tilde{\psi}_1'(\cL_4)$, $\psi_1(\la_1)=\tilde{\psi}_1(\cL_4)$, and $\psi_2(\bar{\la}_2)=\tilde{\psi}_2(\cL_4)$.
     \item $\psi_1'(\la_1)\psi_1'(\bar{\la}_1)=\chi_{\psi_1'}(\ell)\ell =\epsilon_{K_1}^{-1}(\ell)\ell\equiv 1\pmod{\ell-1}$.
 \end{enumerate}
 So modulo $(\ell-1)$, the factor is congruent to 
\begin{align*}
    -(\psi^{-1}(\cL_4)&+\chi^{-1}(\cL_4)){\rm Frob}_{\cL_4/\la_3}+\psi^{-1}(\cL_4)\chi^{-1}(\cL_4){\rm Frob}_{\cL_4/\la_3}^2+1 \\
    &= (1-\psi^{-1}(\cL_4){\rm Frob}_{\cL_4/\la_3})(1-\chi^{-1}(\cL_4){\rm Frob}_{\cL_4/\la_3}).
\end{align*}

Following \cite[Thm $5.3.3$]{LZ-Sym-square}, Remark \ref{assumption-key-psi} allows us to eliminate the second factor by multiplying with $(1-\chi^{-1}(\cL_4'){\rm Fr}_{\cL_4'})^{-1}$ for every prime $\ell'|m\ell$. Here, ${\rm Fr}_{\cL_4'}$ is the unique element of $\mathrm{Gal(K_0[\mu_3\la_3]/K_0)}$ that acts trivially on $\mathrm{Gal(K_0[\la_3']/K_0)}$ and maps to $\mathrm{Frob_{\cL_4'/\la_3'}}$ inside $\mathrm{Gal(K_0[\mu_3\la_3/\la_3']/K_0)}$. We explain why this inversion is well-defined. Indeed,  $R=\mathcal{O}[\mathrm{Gal}(K_0(\mu_3\la_3)/K_0)]$ is a local ring with a unique maximal ideal $\pi=\mathfrak{P}+I_G$ (where $I_G$ is the augmentation ideal). Because $\ell' \in \mathcal{T}'_{\psi}$, its Frobenius is conjugate to $\tau_{\psi}$, meaning $\chi(\mathcal{L}_4') \equiv \chi(\tau_{\psi}) \not\equiv 1 \pmod{\mathfrak{P}}$. Therefore, $(1-\chi^{-1}(\cL_4'){\rm Fr}_{\cL_4'})\equiv 1-\chi^{-1}(\cL_4')\not\equiv 0 \pmod{\pi}$.  

An important thing to note here is that doing all of this will not affect the base class besides flipping it sign. Together with Lemma 9.6.1 and 9.6.3 in Rubin \cite{Rubin-ES}, the result immediately follows.
\end{proof}


We can project to the second summand of equation (\ref{eqn:main-decomposition}), remove the $\psi$-part as in the proof of Theorem \ref{maintheorem1}
and obtain the analogous result for $\chi$:
\begin{thm}\label{maintheorem1-chi}
Suppose $p\nmid 6h_{K_0}$. Let $\mu_3\in\mathcal{N}'_{\chi}$ be a squarefree ideal of $\cO_{K_3}$ of norm $m$. Then there exists a collection of cohomology classes
\[
^{\cc}\mathcal{Z}_{\mu_3,\chi}\in H^1\bigl(K_0[\mu_3],\mathcal{O}_{\chi}\bigr)
\]
such that for every split prime $\la_3\in\mathcal{L}'_{\chi}$ of $\cO_{K_3}$ of norm $\ell$ with $(\ell,m)=1$, we have the norm relation
\begin{equation*}
    \mathrm{Norm}_{K_0[\mu_3]}^{K_0[\mu_3\la_3]}(^{\cc}\mathcal{Z}_{\mu_3\la_3,\chi})=P_{\cL_4}(\mathrm{Frob}_{\cL_4/\la_3})(^{\cc}\mathcal{Z}_{\mu_3,\chi}),
\end{equation*}
where $P_{\cL_4}(X)=\det(1-X\cdot \mathrm{Frob}_{\cL_4/\la_3}\,|\,\mathcal{O}_{\chi^{-1}}(1))$.
\end{thm}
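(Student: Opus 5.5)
The plan is to run the proof of Theorem \ref{maintheorem1} again, this time projecting onto the second direct summand of (\ref{eqn:main-decomposition}) rather than the first. First I project the classes $\mathcal{Z}_{\mu_3}^{(6)}$ onto $H^1(K_0[\mu_3],\cO_\chi)$ and call the result $^{\cc}\mathcal{Z}^{(6)}_{\mu_3,\chi}$. The projection onto this summand commutes with corestriction and with the action of $\cO[H[\mu_3]^{(p)}]$. So the norm relation of Proposition \ref{wrongnorm}, with the factor $(\ell-1)$ already removed as in Proposition \ref{wrongnorm-psi}, holds verbatim for these classes. The operator on the right-hand side is the same element of the group ring.

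Next I rescale $^{\cc}\mathcal{Z}^{(6)}_{\mu_3\la_3,\chi}$ by the unit $-\psi_1(\la_1)\psi_2(\bar{\la}_2)\,([\la_1]\times[\bar{\la}_2])$. By Proposition \ref{ringclass}, the element $[\la_1]\times[\bar{\la}_2]$ maps to ${\rm Frob}_{\cL_4/\la_3}$, and $[\bar{\la}_1]\times[\la_2]$ maps to its inverse. The identities (1)--(3) from the proof of Theorem \ref{maintheorem1} still hold, and the last term carries a factor $(1-\ell)$, so it vanishes modulo $\ell-1$. Hence the norm factor is congruent modulo $(\ell-1)$ to
\[
(1-\psi^{-1}(\cL_4){\rm Frob}_{\cL_4/\la_3})(1-\chi^{-1}(\cL_4){\rm Frob}_{\cL_4/\la_3}).
\]
The second factor is exactly $P_{\cL_4}({\rm Frob}_{\cL_4/\la_3})$ for $\cO_{\chi^{-1}}(1)$. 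The first factor is now the unwanted one.

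To remove the first factor I follow \cite[Thm 5.3.3]{LZ-Sym-square}. For each prime $\ell'\mid m\ell$, I multiply the class by $(1-\psi^{-1}(\cL_4'){\rm Fr}_{\cL_4'})^{-1}$ in the local ring $\cO[\mathrm{Gal}(K_0[\mu_3\la_3]/K_0)]$, whose maximal ideal is $\mathfrak{P}+I_G$. This inverse exists because $\ell'\in\mathcal{T}'_\chi$. That membership means ${\rm Frob}_{\ell'}$ is conjugate to $\tau_\chi$, so $\psi(\cL_4')\equiv\psi(\tau_\chi)\not\equiv 1\pmod{\mathfrak{P}}$ by Remark \ref{assumption-key-psi}. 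The factor therefore reduces to $1-\psi^{-1}(\cL_4')\not\equiv 0$ modulo the maximal ideal. I then need to check that these modifications are compatible along the tower $\mu_3\mid\mu_3\la_3$. The factors for $\ell'\mid m$ appear on both sides of the norm relation, since ${\rm Fr}_{\cL_4'}$ restricts correctly, while the factor at $\ell$ cancels the unwanted Euler factor. The base class is changed only up to sign.

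The final step converts the congruence modulo $(\ell-1)$ into an exact equality. Since $\ell\equiv 1\pmod p$ and we are in the $p$-part, I plan to invoke Rubin's Lemmas 9.6.1 and 9.6.3 \cite{Rubin-ES}, which show that a system satisfying the norm relations modulo $(\ell-1)$ can be modified to satisfy them exactly. I expect the main obstacle to be this bookkeeping: making the Loeffler--Zerbes inversion of the $\psi$-Euler factors compatible with Rubin's modification while keeping track of the ${\rm Fr}_{\cL_4'}$ normalisations. The argument is otherwise symmetric to Theorem \ref{maintheorem1}, with the roles of $\psi$ and $\chi$ exchanged.
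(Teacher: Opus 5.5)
Your proposal follows the paper's argument: the paper proves this statement by projecting $\mathcal{Z}^{(6)}_{\mu_3}$ to the $\chi$-summand of (\ref{eqn:main-decomposition}) and rerunning the proof of Theorem~\ref{maintheorem1} with $\psi$ and $\chi$ interchanged. That means rescaling by $-\psi_1(\la_1)\psi_2(\bar{\la}_2)([\la_1]\times[\bar{\la}_2])$, factoring the Euler factor modulo $\ell-1$, inverting $(1-\psi^{-1}(\cL_4')\mathrm{Fr}_{\cL_4'})$ in the local group ring via $\ell'\in\mathcal{T}'_\chi$ as in \cite[Thm 5.3.3]{LZ-Sym-square}, and finishing with \cite[Lemmas 9.6.1, 9.6.3]{Rubin-ES}. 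Your extra remarks (the $(1-\ell)$ term dying modulo $\ell-1$, $[\bar{\la}_1]\times[\la_2]\mapsto\mathrm{Frob}_{\cL_4/\la_3}^{-1}$, and the compatibility of the $\mathrm{Fr}_{\cL_4'}$ along the tower) only make explicit what the paper leaves implicit.
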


\subsection{Construction for general weights and wild norm relations}\label{wildnormsetup}

We now extend the above construction to other weights $(k_1',k_1,k_2)\in\mathbb{Z}_{ \ge 1}^3$ following \cite[Sec.\,3.3]{Do-biquadratic} and \cite[Sec.\,3.4]{CD}. Ultimately, we will demonstrate that these constructed cohomology classes satisfy the wild norm relations for the anticyclotomic $\Z_p^2$-extension of $K_0$. 

We assume throughout that $p \nmid 6h_{K_0}$ and that $p$ splits completely in $K_0$. Specifically, we have
\begin{equation*}
    (p)=\cP_1\cP_2\cP_3\cP_4 \text{ in } K_0,\quad\textrm{$(p)=\pp_1\ppbar_1$  in $K_1$,}\quad \textrm{ $(p)=\pp_2\ppbar_2$ in $K_2$,} 
\end{equation*}
where 
\begin{equation*}   \cP_2=\tau_3\cP_1,\;\cP_3=\tau_2\cP_1,\;\cP_4=\tau_1\cP_1.
\end{equation*}
Here, $\cP_1$ the prime of $K_0$ above $p$ induced by our fixed embedding $i_p:\overline{\Q}\hookrightarrow\overline{\Q}_p$. For $i\in\{1,2\}$, $\cP_1$ lies above $\pp_i\subset K_i$. Note that the numbering convention parallels the one established in Section \ref{subsec:prime-decomp}. Let $\Gamma_{\pp_i}^{K_i}$ be the Galois group of the unique $\Z_p$-extension of $K_i$ unramified outside $\pp_i$. There exists a unique Hecke character $\psi_{0,i}$ of $K_i$ of infinity type $(-1,0)$ and conductor $\pp_i$, whose $p$-adic avatar factors through $\Gamma_{\pp_i}^{K_i}$. The character $\psi_i$ fixed at the beginning of this section can be decomposed as 
\[
\psi_i=\ch_i {\psi_{0,i}^{k_i-1}},
\]
where $\ch_i$ is a ray class character of $K$ of conductor dividing $\fkf_i\pp_i$. Because $\Gamma_{\pp_i}^{K_i}$ is a quotient of $^iH_{\fkf_i\pp_i^\infty}$, we can naturally view $\psi_{0,i}$ and $\ch_i$ as characters of $^iH_{\fkf_i\pp_i^\infty}$. Consider the formal $q$-expansion
\[
\boldsymbol{\theta}_{\ch_i}(q)=\sum_{(\mathfrak{a},\fkf_i\pp_i)=1}\ch_i\psi_{0,i}(\mathfrak{a})[\mathfrak{a}]q^{N_{K/\Q}(\mathfrak{a})}\in\Lambda_{\pp_i}[\![q]\!],
\]
where $\Lambda_{\pp_i}=\cO[\![\Gamma_{\pp_i}^{K_i}]\!]\simeq\cO\dBr{\Gamma}$ and $\Gamma=1+p\Z_p$. This is the Hida family passing through $\theta_{\psi_i}$ (the specialization of $\boldsymbol{\theta}_{\ch_i}$ at weight $k_i$ and trivial character recovers the ordinary $p$-stabilization of $\theta_{\psi_i}$). Here, we identify $\Gamma_{\pp_i}^{K_i}$ with $1+p\Z_p$ via the geometrically normalized local Artin map.

Let $\bff$ be the Hida family associated to $\theta_{\psi_1'}$. Let $\bfg=\boldsymbol{\theta}_{\ch_1}$, $\bfh=\boldsymbol{\theta}_{\xi_2}$
be the CM Hida families associated to $\psi_1$ and $\psi_2$, respectively. After choosing a level-$N$ test vector for $\theta_{\psi_1'}$ denoted as $\breve{f}$, following \cite[eqn\,(3-8)]{Do-biquadratic} yields
\begin{equation}\label{eq:big-5}
\boldsymbol{\mathcal{Z}}^{(3)}_{\mu_3}\in 
H^1\bigl(K_3,{\rm Ind}^{\Q}_{K_1}\mathcal{O}({\psi_1'^{-1}})\otimes_\cO{\rm Ind}_{K_0[\mu_3]}^{K_3}\Lambda_\cO(\tilde{\psi}_1^{-1}\tilde{\psi}_2^{-1}\tilde{\kappa}_{\rm ac,1}^{(k_1-2)/2}\tilde{\kappa}_{\rm ac,2}^{(k_2-2)/2}\boldsymbol{\kappa}_{\rm ac}^{-1})(2-(k_1'+k_1+k_2)/2)\bigr).
\end{equation}
 For $i\in\{1,2\}$, we identify  $\Ga_i^{-}=\text{Gal}(K_{i,\infty}^{-}/K_i)$ with the anti-diagonal in $(1+p\Z_p)\times (1+p\Z_p)\simeq \mathcal{O}_{K_i,\mathfrak{p}_i}^{(1)}\times \mathcal{O}_{K_i,\ppbar_i}^{(1)}$  via the geometric normalized Artin map, and define $\kappa_{ac,i}:\Ga_i^- \rightarrow \Z_p^{\times}$ where $((1+p)^{-1/2},(1+p)^{1/2})\mapsto (1+p)$. Note that here we identify the anticyclotomic $\Z_p$ extension $K_{i,\infty}^{-}$ of $K_i$ with $\Gamma_{\pp_i}^{K_i}$.
We then identify the anticyclotomic $\Z_p^2$ extension $\Ga^{-}=\text{Gal}(K_{0,\infty}^{-}/K_0)$ of $K_0$ with $\Gamma_{\pp_1}^{K_1}\times \Gamma_{\pp_2}^{K_2}$ via the diagram (3-9) of loc. cit:
\begin{equation}
    \label{eq:G_K0^-identification}
\xymatrix{
(\mathcal{O}_{K_0,\cP_1}^{(1)}\times \mathcal{O}_{K_0,\cP_2}^{(1)})\times (\mathcal{O}_{K_0,\cP_3}^{(1)}\times \mathcal{O}_{K_0,\cP_4}^{(1)})\ar[d]\ar@{->>}[rr]\ar[d]&&\frac{(1+p\Z_p)\times(1+p\Z_p)}{\mathrm{diag}}\times \frac{(1+p\Z_p)\times(1+p\Z_p)}{\mathrm{diag}}\ar[d] \ar[r]^-{\simeq}&\Z_p^2\ar[d]\\
(\mathcal{O}_{K_1,\mathfrak{p}_1}^{(1)}\times \mathcal{O}_{K_1,\ppbar_1}^{(1)})\times (\mathcal{O}_{K_2,\mathfrak{p}_2}^{(1)}\times \mathcal{O}_{K_2,\ppbar_2}^{(1)})\ar@{->>}[rr]&&\frac{(1+p\Z_p)\times(1+p\Z_p)}{\mathrm{diag}}\times \frac{(1+p\Z_p)\times(1+p\Z_p)}{\mathrm{diag}}\ar[r]^-{\simeq}&\Z_p\times \Z_p
}
\end{equation}
Let $\La^-=\Z_p\dBr{\Ga^-}$ and define further $\bm{\kappa}_{ac}:\Ga^- \rightarrow \La^{\times}$ where
\begin{align*}
    ((1+p)^{-1/2},(1+p)^{1/2},(1+p)^{-1/2},(1+p)^{1/2})\mapsto [(1+p),(1+p)].
\end{align*}
For an $\cO$-lattice $T$  inside a $G_{K_0}$-representation $V$, Shapiro's lemma gives the isomorphism
\[
H^1\bigl(K_0,T\hat\otimes_{\cO}\Lambda_\cO^-(\boldsymbol{\kappa}_{\rm ac}^{-1})\bigr)\simeq H^1_{\rm Iw}(K_0[p^\infty],T),
\]
where $H^1_{\rm Iw}(K_0[p^\infty],T):=\varprojlim_{r,s} H^1(K_0[\mathfrak{p}_3^r\tilde{\mathfrak{p}}_3^s],T)$ with limit under the corestriction maps. Under Shapiro's lemma, the image of $\boldsymbol{\mathcal{Z}}^{(3)}_{\mu_3}$ in (\ref{eq:big-5})  becomes an Iwasawa cohomology class
\begin{equation}\nonumber
\boldsymbol{\mathcal{Z}}_{\mu_3}\in H^1_{\rm Iw}\bigl(K_0[\mu_3 p^\infty],{\rm Ind}^{\Q}_{K_1}\mathcal{O}({\psi_1'^{-1}})\otimes\tilde{\psi}_1^{-1}\tilde{\psi}_2^{-1}\tilde{\kappa}_{\rm ac,1}^{(k_1-2)/2}\tilde{\kappa}_{\rm ac,2}^{(k_2-2)/2}(2-(k_1'+k_1+k_2)/2)\bigr)
\end{equation}
which naturally decomposes into the $\psi$ and $\chi$ components:
\begin{equation}\nonumber
     H^1_{\rm Iw}\bigl(K_0[\mu_3 p^\infty],(\cO({\tilde{\psi}_1'^{-1}})\oplus\cO( {\tilde{\psi}_1'^{-\cc}}))\otimes\tilde{\psi}_1^{-1}\tilde{\psi}_2^{-1}\tilde{\kappa}_{\rm ac,1}^{(k_1-2)/2}\tilde{\kappa}_{\rm ac,2}^{(k_2-2)/2}(2-(k_1'+k_1+k_2)/2)\bigr).
\end{equation}
We can rewrite this decomposition as
\begin{equation}\label{eqn:main-decomposition-2}
    H^1_{\rm Iw}(\cO_{\psi}\otimes\tilde{\kappa}_{\rm ac,1}^{(k_1-2)/2}\tilde{\kappa}_{\rm ac,2}^{(k_2-2)/2})\oplus H^1_{\rm Iw}(\cO_{\chi}\otimes\tilde{\kappa}_{\rm ac,1}^{(k_1-2)/2}\tilde{\kappa}_{\rm ac,2}^{(k_2-2)/2})
\end{equation}
where we have defined the following Hecke characters over $K_0$
\begin{align*}
    \psi^{-1}=\widetilde{\psi_1{\psi}_1'}\,\widetilde{\psi_2}\,\bN^{(k_1'+k_1+k_2-4)/2}\\
    \chi^{-1}=\widetilde{\psi_1{\psi}_1'^{\cc}}\,\widetilde{\psi_2}\,\bN^{(k_1'+k_1+k_2-4)/2}.
\end{align*}
Both characters are self-dual. Here $\psi$ has infinity type (corresponding to the order $(\cP_1,\cP_2,\cP_3,\cP_4)$ or $(1,\tau_3,\tau_2,\tau_1)$):
\[\left(\frac{k_1'+k_1+k_2}{2}-1,2-\frac{k_1'+k_1+k_2}{2},1-\frac{k_1'+k_1-k_2}{2},\frac{k_1'+k_1-k_2}{2}\right),\]
while $\chi$ has infinity type:
\[\left(\frac{-k_1'+k_1+k_2}{2},\frac{k_1'-k_1-k_2}{2}+1,\frac{k_1'-k_1+k_2}{2},1-\frac{k_1'-k_1+k_2}{2}\right).\] 
The following assumption should be compared with an earlier Remark \ref{assumption-key-psi}.

\begin{assumption}\label{assumption-key-psi-infinity} We assume that $p>k_1'$ (ensuring the difference of infinity type at $\cP_1$ is strictly less than $p-1$). Upon restriction to inertia at $\cP_1$ and reduction modulo $p$: $\bar{\psi}|_{I_{\cP_1}}=\omega^{\frac{k_1'+k_1+k_2}{2}-1}$ and $\bar{\chi}|_{I_{\cP_1}}=\omega^{\frac{-k_1'+k_1+k_2}{2}}$, where $\omega$ is the mod $p$ cyclotomic character having order $(p-1)$. Because their exponents are not congruent modulo $p-1$, the characters are distinct. This guarantees the existence of
\begin{itemize}
    \item $\tau_{\psi}\in G_{K_0[p^\infty]}$ such that $\psi(\tau_{\psi})\equiv1\pmod{\mathfrak{P}}$ while $\chi(\tau_{\psi})\not\equiv 1\pmod{\mathfrak{P}}$,
    \item $\tau_{\chi}\in G_{K_0[p^\infty]}$ such that $\chi(\tau_{\chi})\equiv1\pmod{\mathfrak{P}}$ while $\psi(\tau_{\chi})\not\equiv 1\pmod{\mathfrak{P}}$.
\end{itemize}
\end{assumption}
\begin{defn}\label{def:thin-primes-infinity}
For $\alpha\in\{\psi,\chi\}$, let $\mathcal{T}'_\alpha$ be the set of primes $\ell \nmid pN$ that splits in $K_0$ such that:
\begin{itemize}
    \item $\ell \equiv 1\pmod{p}$,
    \item $\mathrm{Frob}_\ell$ is conjugate to $\tau_\alpha$.
\end{itemize}
Denote by $\mathcal{L}'_\alpha$ the set of primes $\la_3\subset\cO_{K_3}$, where $\ell=N_{K_3/\Q}(\la_3)\in \mathcal{T}'_\alpha$ . Let $\mathcal{N}'_\alpha$ be the set of squarefree products of primes inside $\mathcal{L}'_\alpha$ whose norm down to $\Q$ remains squarefree.
\end{defn}

By projecting to the first direct summand of equation (\ref{eqn:main-decomposition-2}), following the proof of Theorem \ref{maintheorem1} and invoking \cite[Thm 6.4.1]{Rubin-ES}, we can obtain a collection of Iwasawa cohomology classes for anticyclotomic twists (to eliminate $\tilde{\kappa}_{\rm ac,1}^{(k_1-2)/2}\tilde{\kappa}_{\rm ac,2}^{(k_2-2)/2}$). This yields a collection of Iwasawa cohomology classes satisfying the wild norm relation stated below:

\begin{thm}\label{maintheorem2}
Suppose $p\nmid 6h_{K_0}$. Let $\mu_3\in \mathcal{N}'_{\psi}$ and denote $m=N_{K_3/\Q}(\mu_3)$. Then there exists a collection of Iwasawa cohomology classes
\[
\mathbf{z}_{\mu_3,\psi}\in H^1_{\rm Iw}\bigl(K_0[\mu_3 p^\infty],\cO_{\psi}\bigr)
\]
such that for every split prime $\la_3\in\cL'_{\psi}$ of $\cO_{K_3}$ of norm $\ell$, where $\ell$ splits in $K_0$, with $(\ell,mp)=1$ we have the norm relation
\begin{equation*}
    \mathrm{Norm}_{K_0[\mu_3]}^{K_0[\mu_3\la_3]}(\mathbf{z}_{\mu_3\la_3,\psi})=P_{\cL_4}(\mathrm{Frob}_{\cL_4})(\mathbf{z}_{\mu_3,\psi}).
\end{equation*}
Here, $P_{\cL_4}(X)=\det(1-X\cdot \mathrm{Frob}_{\cL_4}|\,(\cO_{\psi^{-1}}(1))$.
\end{thm}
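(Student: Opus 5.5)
The plan is to start from the big class $\boldsymbol{\mathcal{Z}}_{\mu_3}$ obtained from (\ref{eq:big-5}) via Shapiro's lemma, project it to the $\psi$-summand of (\ref{eqn:main-decomposition-2}), and then repeat the argument of Theorem \ref{maintheorem1} at the Iwasawa level. The output of this first step is a class $\boldsymbol{\mathcal{Z}}_{\mu_3,\psi}\in H^1_{\rm Iw}(K_0[\mu_3p^\infty],\cO_\psi\otimes\tilde{\kappa}_{\rm ac,1}^{(k_1-2)/2}\tilde{\kappa}_{\rm ac,2}^{(k_2-2)/2})$.

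\emph{Tame norm relation for the projected class.} Following \cite[Sec.\,3.3]{Do-biquadratic} and \cite[Sec.\,3.4]{CD}, the diagonal classes in Hida families satisfy the $\Lambda$-adic analogue of Proposition \ref{wrongnorm}. The proof uses the same ingredients as in weight $(2,2,2)$: the degeneracy-map formulas for $\mathcal{N}^{\fkm}_{\fkn}$, the commutative diagram of Theorem \ref{norm1} for both $K_1$ and $K_2$, and Proposition \ref{ringclass}. The Hecke eigenvalues are now the $\Lambda$-adic ones from $\boldsymbol{\theta}_{\ch_1}$ and $\boldsymbol{\theta}_{\ch_2}$. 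After the Darmon--Rotger diamond-operator modification, which removes the factor $(\ell-1)$, one expects
\[
{\rm Norm}(\boldsymbol{\mathcal{Z}}_{\mu_3\la_3,\psi})=Q_\ell\,(\boldsymbol{\mathcal{Z}}_{\mu_3,\psi}),
\]
where $Q_\ell$ is the operator of Proposition \ref{wrongnorm-psi}, with $\psi_1$ and $\psi_2$ replaced by their $\Lambda$-adic twists. Specializing at $(k_1',k_1,k_2)$ and reading the infinity types, this operator is congruent modulo $(\ell-1)$, after multiplying by $-\tilde\psi_1\tilde\psi_2(\cL_4){\rm Frob}_{\cL_4}$ (a unit), to
\[
\bigl(1-\psi^{-1}(\cL_4){\rm Frob}_{\cL_4}\bigr)\bigl(1-\chi^{-1}(\cL_4){\rm Frob}_{\cL_4}\bigr),
\]
up to the anticyclotomic twist. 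This uses the self-duality (\ref{eq:self-dual}) exactly as in items (1)--(3) of the proof of Theorem \ref{maintheorem1}, together with $\ell\equiv1\pmod p$. The general weight shifts the powers of $\ell$ by $\bN^{(k_1'+k_1+k_2-4)/2}$, which is $\equiv 1$ modulo $\ell-1$.

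\emph{Removing the $\chi$-factor.} Here I use Assumption \ref{assumption-key-psi-infinity} and Definition \ref{def:thin-primes-infinity}. For $\ell'\in\mathcal{T}'_\psi$ we have $\chi(\cL_4')\not\equiv1\pmod{\mathfrak P}$, because $\tau_\psi$ lies in $G_{K_0[p^\infty]}$, so the twist by $\boldsymbol{\kappa}_{\rm ac}$ does not change the residue. Hence $1-\chi^{-1}(\cL_4'){\rm Fr}_{\cL_4'}$ is a unit in the local ring $\cO\dBr{{\rm Gal}(K_0[\mu_3\la_3p^\infty]/K_0)}$, whose maximal ideal is $\mathfrak P$ plus the augmentation ideal. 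Multiplying each $\boldsymbol{\mathcal{Z}}_{\mu_3,\psi}$ by $\prod_{\ell'\mid m}(1-\chi^{-1}(\cL'_4){\rm Fr}_{\cL'_4})^{-1}$, as in \cite[Thm 5.3.3]{LZ-Sym-square}, leaves the single factor $P_{\cL_4}$, still only modulo $(\ell-1)$. Rubin's Lemmas 9.6.1 and 9.6.3 in \cite{Rubin-ES} then upgrade this congruence to an exact relation after modifying the classes. These lemmas apply verbatim to inverse limits because all operations are compatible with corestriction in the $p^\infty$-tower.

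\emph{Untwisting.} Finally, I remove $\tilde{\kappa}_{\rm ac,1}^{(k_1-2)/2}\tilde{\kappa}_{\rm ac,2}^{(k_2-2)/2}$ using the twisting isomorphism of Iwasawa cohomology \cite[Thm 6.4.1]{Rubin-ES}. This character factors through $\Gamma^-$, so over $K_0[\mu_3p^\infty]$ twisting is an isomorphism of $\Lambda$-modules. It intertwines ${\rm Frob}_{\cL_4}$ with $\kappa({\rm Frob}_{\cL_4})^{-1}{\rm Frob}_{\cL_4}$, which converts the twisted Euler factor into $\det(1-X{\rm Frob}_{\cL_4}\mid\cO_{\psi^{-1}}(1))$ evaluated at ${\rm Frob}_{\cL_4}$.

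I expect the main obstacle to be the tame relation in families: keeping track of exactly which $\Lambda$-adic Frobenius twists appear, so that after specialization and untwisting the factor matches $P_{\cL_4}$ precisely rather than a twist of it. The fallback is to prove the relation at a Zariski-dense set of arithmetic specializations, namely weight $(2,2,2)$ twisted by finite-order anticyclotomic characters, using Theorem \ref{maintheorem1}, and then conclude by density, since $H^1_{\rm Iw}$ is torsion-free over $\Lambda$ under the non-Eisenstein hypotheses.
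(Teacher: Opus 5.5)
Your proposal is correct and matches the paper's route: you project $\boldsymbol{\mathcal{Z}}_{\mu_3}$ to the $\psi$-summand of (\ref{eqn:main-decomposition-2}), rerun the proof of Theorem \ref{maintheorem1} (scaling, the congruence modulo $\ell-1$ to the product of the $\psi$- and $\chi$-Euler factors, inverting $1-\chi^{-1}(\cL_4'){\rm Fr}_{\cL_4'}$ via $\tau_\psi$ as in \cite[Thm 5.3.3]{LZ-Sym-square}, then Rubin's Lemmas 9.6.1 and 9.6.3), and untwist with \cite[Thm 6.4.1]{Rubin-ES}, which is exactly what the paper does. Your density fallback is not needed, and it would not work as stated anyway: $f$ is fixed of weight $k_1'$, and Theorem \ref{maintheorem1} only supplies tame-level classes in weight $(2,2,2)$, not a Zariski-dense set of $p$-power-level specializations of the family.
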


By projecting to the second summand (the $\chi$-component) of equation (\ref{eqn:main-decomposition}),
we obtain an analogous result:

\begin{thm}\label{maintheorem2-chi}
Suppose $p\nmid 6h_{K_0}$. Let $\mu_3\in \mathcal{N}'_{\chi}$ and denote $m=N_{K_3/\Q}(\mu_3)$. Then there exists a collection of Iwasawa cohomology classes
\[
^{\cc}\mathbf{z}_{\mu_3,\chi}\in H^1_{\rm Iw}\bigl(K_0[\mu_3 p^\infty],\cO_{\chi}\bigr)
\]
such that for every split prime $\la_3\in\cL'_{\chi}$ of $\cO_{K_3}$ of norm $\ell$, where $\ell$ splits in $K_0$, with $(\ell,mp)=1$ we have the norm relation
\begin{equation*}
    \mathrm{Norm}_{K_0[\mu_3]}^{K_0[\mu_3\la_3]}(^{\cc}\mathbf{z}_{\mu_3\la_3,\chi})=P_{\cL_4}(\mathrm{Frob}_{\cL_4})(^{\cc}\mathbf{z}_{\mu_3,\chi}).
\end{equation*}
Here, $P_{\cL_4}(X)=\det(1-X\cdot \mathrm{Frob}_{\cL_4}|\,(\cO_{\chi^{-1}}(1))$.
\end{thm}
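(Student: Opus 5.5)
We propose to mirror the proof of Theorem~\ref{maintheorem2}, working now with the $\chi$-component of the decomposition (\ref{eqn:main-decomposition-2}).

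\textbf{Step 1 (projection and tame relation).} Project the Iwasawa class $\boldsymbol{\mathcal{Z}}_{\mu_3}$ onto the second summand of (\ref{eqn:main-decomposition-2}). This yields a class in $H^1_{\rm Iw}(K_0[\mu_3p^\infty],\cO_\chi\otimes\tilde{\kappa}_{\rm ac,1}^{(k_1-2)/2}\tilde{\kappa}_{\rm ac,2}^{(k_2-2)/2})$. Its specializations at finite layers satisfy the $\Lambda$-adic analogue of Proposition~\ref{wrongnorm}, with the $(\ell-1)$ factor removed as in \cite[\S3.2.1]{CD}. This analogue is obtained by interpolating the weight $(2,2,2)$ norm computations along the Hida families $\bff,\bfg,\bfh$, as in \cite[\S3.3]{Do-biquadratic}.

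\textbf{Step 2 (identify the Euler factor).} Rewrite the resulting operator modulo $(\ell-1)$, exactly as in the proof of Theorem~\ref{maintheorem1}. Multiply by $-\psi_1(\la_1)\psi_2(\bar\la_2)\,{\rm Frob}_{\cL_4/\la_3}$, and use Proposition~\ref{ringclass} to identify $[\la_1]\times[\bar\la_2]$ with ${\rm Frob}_{\cL_4/\la_3}$. Then use the self-duality relations (\ref{eq:self-dual}), together with $\ell$ splitting in $K_0$, to get
\[
\bigl(1-\psi^{-1}(\cL_4){\rm Frob}_{\cL_4}\bigr)\bigl(1-\chi^{-1}(\cL_4){\rm Frob}_{\cL_4}\bigr)\pmod{\ell-1}.
\]
The $\bN^{(k_1'+k_1+k_2-4)/2}$ twist must be tracked here. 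Since $\ell\equiv1\pmod{\ell-1}$, the powers of $\ell$ coming from general weights disappear, and the same identity holds for the new $\psi,\chi$.

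\textbf{Step 3 (remove the $\psi$-factor).} Invert $(1-\psi^{-1}(\cL_4'){\rm Fr}_{\cL_4'})$ for all primes $\ell'\mid m\ell$, following \cite[Thm.~5.3.3]{LZ-Sym-square}. This is possible in the local ring $\cO[\![{\rm Gal}(K_0[\mu_3\la_3p^\infty]/K_0)]\!]$, whose maximal ideal is $\mathfrak P+I_G$. The point is that $\ell'\in\mathcal T'_\chi$ means ${\rm Frob}_{\ell'}$ is conjugate to $\tau_\chi$. Because $\tau_\chi\in G_{K_0[p^\infty]}$ by Assumption~\ref{assumption-key-psi-infinity}, we get $\psi(\cL_4')\not\equiv1\pmod{\mathfrak P}$, so the element is a unit. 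This rescaling changes the bottom class only by a sign.

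\textbf{Step 4 (conclude).} Rubin's Lemmas 9.6.1 and 9.6.3 \cite{Rubin-ES} convert the congruence modulo $(\ell-1)$ into an honest norm relation, after modifying the classes. Since $\ell\equiv1\pmod p$, this fits the framework of \cite{Rubin-ES}. Finally, \cite[Thm.~6.4.1]{Rubin-ES} lets us twist away $\tilde{\kappa}_{\rm ac,1}^{(k_1-2)/2}\tilde{\kappa}_{\rm ac,2}^{(k_2-2)/2}$, since it is a character of $\Gamma^-$. This yields classes $^{\cc}\mathbf z_{\mu_3,\chi}$ with Euler factor $P_{\cL_4}$ for $\cO_{\chi^{-1}}(1)$.

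\textbf{Main obstacle.} We expect the hardest step to be the compatibility in Step 4. The unit-inversion and Rubin's modification must be carried out compatibly over the whole Iwasawa tower $K_0[\mu_3p^\infty]$ and commute with the anticyclotomic twisting. Our first approach is to perform the modification at each finite layer $K_0[\mu_3\mathfrak p_3^r\tilde{\mathfrak p}_3^s]$ using choices that are functorial in $(r,s)$, and then pass to the inverse limit.
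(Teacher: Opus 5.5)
Your proposal is correct and follows essentially the same route as the paper. You project $\boldsymbol{\mathcal{Z}}_{\mu_3}$ to the $\chi$-summand, reduce the norm-relation operator modulo $(\ell-1)$ to the product of the $\psi$- and $\chi$-Euler factors at $\cL_4$, invert the $\psi$-factor in the local group ring using the primes in $\mathcal{T}'_\chi$, and conclude with Rubin's Lemmas 9.6.1 and 9.6.3 and Theorem 6.4.1. One small correction to Step 3: the unit property there comes from $\psi(\tau_\chi)\not\equiv 1 \pmod{\mathfrak P}$ together with the Galois group being pro-$p$, not from $\tau_\chi\in G_{K_0[p^\infty]}$; this does not affect the argument.
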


\section{Selmer groups }

In this section, we show that the classes constructed in Theorem \ref{maintheorem2} land in certain Selmer groups defined by Greenberg \cite{Greenberg55}, which we will recall below.

\subsection{Greenberg and Bloch--Kato Selmer groups}

\begin{defn} Let $\mathbb{V}$ be a $G_{K_0}$-representation, containing a stable lattice $\mathbb{T}$. 
For each prime $\cP\in\{\cP_1,\cP_2,\cP_3,\cP_4\}$ of $K_0$ above $p$, we fix a $G_{K_{0,\cP}}$-stable subspace $\mathscr{F}_\cP^+(\mathbb{V})\subset \mathbb{V}$ and define  $\mathscr{F}_\cP^-(\mathbb{V})=\mathbb{V}/\mathscr{F}_\cP^+(\mathbb{V})$.
Let $L$ be a finite extension of $K_0$. The \emph{Greenberg Selmer group} ${\rm Sel}_{\mathscr{F}}(L,\mathbb{V})$ attached to $\mathscr{F}=\{\mathscr{F}_{\cP}^+(\mathbb{V})\}_{\cP\vert p}$ is defined by 
\begin{equation}\label{eqn:Greenberg-Selmer-Defn}
    {\rm Sel}_{\mathscr{F}}(L,\mathbb{V}):={\rm ker}\biggl\{H^1(L,\mathbb{V})\rightarrow\prod_{w}\frac{H^1(L_w,\mathbb{V})}{H^1_{\mathscr{F}}(L_w,\mathbb{V})}
    \biggr\},
\end{equation}
where $w$ runs over the finite primes of $L$, and the local conditions are given by
\[
H^1_{\mathscr{F}}(L_w,\mathbb{V})=\begin{cases}
{\rm ker}\bigl\{H^1(L_w,\mathbb{V})\rightarrow H^1(L_{w}^{\rm ur},\mathbb{V})\bigr\} & \textrm {if $w\nmid p$},  \\[0.2em]
{\rm ker}\bigl\{H^1(L_w,\mathbb{V})\rightarrow H^1(L_{w},\mathscr{F}_{\cP}^-(\mathbb{V}))\bigr\} & \textrm{if $w\mid \cP\mid p$}.   
\end{cases}
\]
We fix a lattice $\mathbb{T}\subset \mathbb{V}$. Let $H^1_{\mathscr{F}}(L_w,\mathbb{T})$ be the inverse image of $H^1_{\mathscr{F}}(L_w,\mathbb{V})$ under the natural map $$H^1(L_w,\mathbb{T})\rightarrow H^1(L_w,\mathbb{V}).$$ 
This then defines ${\rm Sel}_{\mathscr{F}}(L,\mathbb{T})$ as in (\ref{eqn:Greenberg-Selmer-Defn}). For any $\Z_p^2$-extension $L_\infty=\bigcup_{r,s}L_{r,s}$ of $L$, we put
\[
{\rm Sel}_{\mathscr{F}}(L_\infty,\mathbb{T}):=\varprojlim_{r,s}{\rm Sel}_{\mathscr{F}}(L_{r,s},\mathbb{T}),
\]
where the inverse limit is taken with respect to the corestriction map. We also put ${\rm Sel}_{\mathscr{F}}(L_\infty,\mathbb{V}):=
{\rm Sel}_{\mathscr{F}}(L_\infty,\mathbb{T})\otimes_{\Z_p}\Q_p$. Note that this group is independent of the chosen lattice $\mathbb{T}$.
\end{defn}

Fix a choice of Galois stable subgroups $\mathscr{F}=\{\mathscr{F}_{\cP}^+(\mathbb{V})\}_{\cP\vert p}$ and let $$\mathbb{A}:={\rm Hom}_{\Z_p}(\mathbb{T},\mu_{p^\infty}).$$ 
Define the associated \emph{dual Selmer group} ${\rm Sel}_{\mathscr{F}^*}(L,\mathbb{A})$ by
\[
{\rm Sel}_{\mathscr{F}^*}(L,\mathbb{A}):={\rm ker}\biggl\{H^1(L,\mathbb{A})\rightarrow\prod_{w}\frac{H^1(L_w,\mathbb{A})}{H^1_{\mathscr{F}^*}(L_w,\mathbb{A})}\biggr\},
\]
where $H^1_{\mathscr{F}^*}(L_w,\mathbb{A})$ is the orthogonal complement of $H^1_{\mathscr{F}}(L_w,\mathbb{T})$ under local Tate pairing
\[
H^1(L_w,\mathbb{T})\times H^1(L_w,\mathbb{A})\rightarrow\Q_p/\Z_p.
\]

\begin{defn}
We also define the \emph{Bloch-Kato Selmer group} ${\rm Sel}_{\rm BK}(L,\mathbb{V})$ following \cite{BK}:
\[
{\rm Sel}_{\rm BK}(L,\mathbb{V}):={\rm ker}\biggl\{H^1(L,\mathbb{V})\rightarrow\prod_w\frac{H^1(L_w,\mathbb{V})}{H^1_f(L_w,\mathbb{V})}\biggr\},
\]
where the local conditions are given by
\[
H^1_{f}(L_w,\mathbb{V})=
{\rm ker}\bigl\{H^1(L_w,\mathbb{V})\rightarrow H^1(L_{w}^{\rm ur},\mathbb{V})\bigr\},\]
at primes $w\nmid p$, and the crystalline condition at primes $w\mid p$:
\[H^1_{f}(L_w,\mathbb{V})={\rm ker}\bigl\{H^1(L_w,\mathbb{V})\rightarrow H^1(L_{w},\mathbb{V}\otimes\mathbf{B}_{\rm cris})\bigr\}  
\]
with $\mathbf{B}_{\rm cris}$ being Fontaine's crystalline period ring. 
The local conditions $H^1_f(L_w,\mathbb{T})\subset H^1(L_w,\mathbb{T})$ are defined by propagation similarly. 
\end{defn}

\begin{defn}\label{def:rel_str_ord}    Denote by ${\rm Sel}_{\alpha,\beta,\gamma,\delta}(K_0,\mathbb{V})$ the subgroup of $H^1(K_0,\mathbb{V})$ where classes are unramified at all primes $v\nmid p$; and they satisfy the conditions $\alpha$, $\beta$, $\gamma$, $\delta$ at $\cP_1$, $\cP_2$, $\cP_3$, $\cP_4$ respectively, where $\alpha,\beta,\gamma,\delta \in\{\varnothing,0\}$. Here, at prime $\cP\in\{\cP_1,\cP_2,\cP_3,\cP_4\}$
\begin{itemize}
    \item the condition $\varnothing$ means $\mathscr{F}_\cP^+(\mathbb{V})=\mathbb{V}$ (the \emph{relaxed} condition in \cite{Do-biquadratic}),
    \item the condition $0$ means $\mathscr{F}_\cP^+(\mathbb{V})=0$ (the \emph{strict} condition in \cite{Do-biquadratic}).
\end{itemize}

\end{defn}

\subsection{Selmer groups for Hecke characters}
Following Section \ref{sec:main-thms}, we focus on the one-dimensional $G_{K_0}$ representation $V_{\chi}$ and $V_{\psi}$.
Here, they are self-dual Hecke characters of $K_0$, where $\psi$ has infinity type (corresponding to the order $(\cP_1,\cP_2,\cP_3,\cP_4)$ or $(1,\tau_3,\tau_2,\tau_1)$):
\[\left(\frac{k_1'+k_1+k_2}{2}-1,2-\frac{k_1'+k_1+k_2}{2},1-\frac{k_1'+k_1-k_2}{2},\frac{k_1'+k_1-k_2}{2}\right),\]
while $\chi$ has infinity type:
\[\left(\frac{-k_1'+k_1+k_2}{2},\frac{k_1'-k_1-k_2}{2}+1,\frac{k_1'-k_1+k_2}{2},1-\frac{k_1'-k_1+k_2}{2}\right).\] 

Now, we will compute the explicit local conditions for the Bloch-Kato Selmer groups. We adopt the convention that the $p$-adic cyclotomic character has Hodge--Tate weight $-1$. For example, if $\chi$ has infinity type $(a,1-a,b,1-b)$, then the $p$-adic avatar of $\chi$ has Hodge--Tate weight $-a,a-1,-b,b-1$ at ${\cP_1,\cP_2,\cP_3,\cP_4}$ respectively.

\begin{lem}\label{lem:BK-Gr}
For any finite extension $L$ of $K_0$ we have
\begin{equation}
{\rm Sel}_{\rm BK}(L,V_{\psi})=\begin{cases}
{\rm Sel}_{\varnothing,0,0,\varnothing}(L,V_{\psi}) &\textrm{if $k_1'\geq k_1+k_2$},\\[0.2em]
{\rm Sel}_{\varnothing,0,0,\varnothing}(L,V_{\psi}) &\textrm{if $k_1\ge k_1'+k_2$},\\[0.2em]
{\rm Sel}_{\varnothing,0,\varnothing,0}(L,V_{\psi}) &\textrm{if $k_2\ge k_1+k_1'$},\\
{\rm Sel}_{\varnothing,0,0,\varnothing}(L,V_{\psi}) &\textrm{if otherwise},
\end{cases}
\nonumber
\end{equation}
and 
\begin{equation}
{\rm Sel}_{\rm BK}(L,V_{\chi})=\begin{cases}
{\rm Sel}_{0,\varnothing,\varnothing,0}(L,V_{\chi}) &\textrm{if $k_1'\geq k_1+k_2$},\\[0.2em]
{\rm Sel}_{\varnothing,0,0,\varnothing}(L,V_{\chi}) &\textrm{if $k_1\ge k_1'+k_2$},\\[0.2em]
{\rm Sel}_{\varnothing,0,\varnothing,0}(L,V_{\chi}) &\textrm{if $k_2\ge k_1+k_1'$},\\
{\rm Sel}_{\varnothing,0,\varnothing,0}(L,V_{\chi}) &\textrm{if otherwise}.
\end{cases}
\nonumber
\end{equation}

\end{lem}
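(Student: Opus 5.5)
The plan is to compare the two Selmer groups one prime at a time. Away from $p$ there is nothing to do: both ${\rm Sel}_{\rm BK}$ and ${\rm Sel}_{\alpha,\beta,\gamma,\delta}$ of Definition \ref{def:rel_str_ord} impose the unramified condition $H^1_f(L_w,\mathbb{V})=\ker\{H^1(L_w,\mathbb{V})\to H^1(L_w^{\rm ur},\mathbb{V})\}$ at $w\nmid p$. So the whole statement reduces to a local claim at each $w\mid\cP$, $\cP\in\{\cP_1,\dots,\cP_4\}$. For a one-dimensional $V$ the only $G_{K_{0,\cP}}$-stable subspaces are $0$ and $V$. Hence it suffices to show that the crystalline subspace satisfies
\[
H^1_f(L_w,V)=H^1(L_w,V) \quad\text{or}\quad H^1_f(L_w,V)=0,
\]
and to determine which case occurs in terms of the Hodge--Tate weight of $V$ at $\cP$.

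\textbf{Step 1 (local dichotomy).} Since $p$ splits completely in $K_0$ and the moduli $\fkf_1,\fkf_1',\fkf_2$ are prime to $p$, the $p$-adic avatars of $\psi$ and $\chi$ are crystalline at every $\cP\mid p$. The same holds after restriction to $G_{L_w}$. Next, $\chi\chi^{\cc}=\bN$ (and likewise for $\psi$) forces the crystalline Frobenius eigenvalue on $D_{\rm cris}(V|_{G_{L_w}})$ to be a Weil number of weight $-1$, i.e.\ of absolute value $q_w^{1/2}$ up to sign conventions. Such an eigenvalue is never $1$ or $q_w$. Therefore $H^0(L_w,V)=0$ and $H^2(L_w,V)=H^0(L_w,V^\vee(1))^\vee=0$, and the same vanishing holds for $D_{\rm cris}(V)^{\varphi=1}$ and $D_{\rm cris}(V^\vee(1))^{\varphi=1}$. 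The local Euler characteristic formula then gives $\dim H^1(L_w,V)=[L_w:\Q_p]$. The Bloch--Kato dimension formula gives
\[
\dim H^1_f(L_w,V)=\dim D_{\rm dR}(V)/\Fil^0+\dim H^0(L_w,V)=\dim D_{\rm dR}(V)/\Fil^0 .
\]
With the convention that $\Q_p(1)$ has Hodge--Tate weight $-1$, a character of weight $h$ has $\dim D_{\rm dR}/\Fil^0=[L_w:\Q_p]$ if $h\le -1$, and $0$ if $h\ge 0$. So $H^1_f$ is \emph{relaxed} ($\varnothing$) exactly when the Hodge--Tate weight at $\cP$ is negative, and \emph{strict} ($0$) when it is $\ge 0$.

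\textbf{Step 2 (bookkeeping of weights).} An infinity type $(a,1-a,b,1-b)$ gives Hodge--Tate weights $(-a,a-1,-b,b-1)$ at $(\cP_1,\cP_2,\cP_3,\cP_4)$. For $\psi$, take $a=\frac{k_1'+k_1+k_2}{2}-1$ and $b=1-\frac{k_1'+k_1-k_2}{2}$. For $\chi$, take $a=\frac{-k_1'+k_1+k_2}{2}$ and $b=\frac{k_1'-k_1+k_2}{2}$. At $\cP_1,\cP_2$ (respectively $\cP_3,\cP_4$) exactly one of the pair has negative weight, according to whether $a\ge1$ or $a\le 0$ (respectively $b\ge 1$ or $b\le 0$).

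The inequalities $k_1'\ge k_1+k_2$, $k_1\ge k_1'+k_2$, $k_2\ge k_1+k_1'$ are precisely what decide the signs of $a$ and $b$. For $\chi$, for instance, $a\le 0$ iff $k_1'\ge k_1+k_2$, and $b\le0$ iff $k_1\ge k_1'+k_2$. For $\psi$, only $b$ can change sign, and $b\ge 1$ iff $k_2\ge k_1+k_1'$. Running through the four regimes, together with the parity constraints making $a,b$ integers, should reproduce the displayed tables.

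\textbf{Main obstacle.} The main obstacle is the boundary bookkeeping in Step 2: the equality cases, where a weight equals $0$ or $-1$, and making sure the parity and integrality constraints are consistent with each listed case. In particular, I expect to have to recheck the sign conventions (geometric Frobenius, $p$-adic avatar, which of $\cP_1,\cP_4$ carries which weight) against the convention fixed just before the lemma. Where a computed entry appears to differ from the table, I would first suspect the identification of $\cP_2,\cP_3$ with $\tau_3,\tau_2$, and recompute from scratch on a single example such as the weight-$(2,2,2)$ character of infinity type $(1,0,1,0)$.
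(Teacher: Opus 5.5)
Your proposal is correct and is essentially the paper's proof. The paper cites Loeffler's Panchishkin criterion to get that, at each $w\mid\cP$, the Bloch--Kato condition is relaxed or strict according as the Hodge--Tate weight at $\cP$ is negative or non-negative, then reads off the weights. You verify that dichotomy directly for a character via the Bloch--Kato dimension formula, and your Weil-weight argument for the vanishing of $H^0$ and $H^2$ is exactly what justifies that identification. Your sign criteria for $a$ and $b$ are right, and running them through the four regimes, using that $k_1'+k_1+k_2$ is even, reproduces both tables entry by entry, so the discrepancy you anticipated does not arise.
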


\begin{proof}
By the Panchiskin condition (see Loeffler \cite{Loeffler--panchishkin}), for every prime $w\vert \cP\vert p$ of $L/K_0/\Q$  we have 
\[
H^1_f(L_w,\mathbb{V})={\rm im}\bigl\{H^1(L_w,{\rm Fil}_{\cP}^1(\mathbb{V}))\rightarrow H^1(L_w,\mathbb{V})\bigr\},
\]
where there exists a $G_{K_{\cP}}$-stable subspace ${\rm Fil}_{\cP}^1(\mathbb{V})\subset \mathbb{V}$ so that the Hodge--Tate weights of ${\rm Fil}_{\cP}^1(\mathbb{V})$ (resp. $\mathbb{V}/{\rm Fil}_{\cP}^1(\mathbb{V})$) are all $<0$  (resp. $\geq 0$). Computing the Hodge--Tate weights, 
we obtain the necessary equalities.
\end{proof}

We finish this section by noting that it is quite trivial to compute the conditions for the dual Selmer groups in our setting (the dual for the condition $\varnothing$  is the condition $0$ and vice versa). For example, the dual Selmer group of ${\rm Sel}_{\varnothing,0,0,\varnothing}(L,T_{\chi})$ consists of classes that are unramified outside $p$ and have the condition $(0,\varnothing,\varnothing,0)$ at $(\cP_1,\cP_2,\cP_3,\cP_4)$. Compatibly with Definition \ref{def:rel_str_ord}, this can be denoted as ${\rm Sel}_{0,\varnothing,\varnothing,0}(L,A_{\chi})$.

\section{Triple product \texorpdfstring{$p$}{p}-adic \texorpdfstring{$L$}{L}-function and Selmer group}\label{subsec:triple}

Here, we will recall some conventions on Hida families, triple product $p$-adic $L$-function (the $\bfg$-{unbalanced}) and Selmer groups (both balanced and $\bfg$-{unbalanced}) following Hsieh \cite{hsieh-triple}.

\subsection{Hida families} 
\label{subsubsec:hida}
We follow the convention of \cite[\S{3.1}]{hsieh-triple}. Let $\cO$ be the ring of integers of a finite extension of $\Q_p$. Let $\cR$ be a normal domain, finite flat over the Iwasawa algebra
\[
\Lambda:=\cO\dBr{1+p\Z_p}.
\] 
 Let $N$ be a positive integer primes $p$ and $\chi:(\Z/Np\Z)^\times\rightarrow\cO^\times$ be a Dirichlet character. Denote by $S^o(N,\chi,\cR)\subset\cR\dBr{q}$ the space of ordinary $\cR$-adic cusp forms of tame level $N$ and branch character $\chi$.

Let $\mathfrak{X}_\cR^+\subset{\rm Spec}\,\cR(\overline{\Q}_p)$ be the set of {arithmetic points} of $\cR$, which consists of the ring homomorphisms $Q:\cR\rightarrow\overline{\Q}_p$ such that for some $k_Q\in\Z_{\geq 2}$ called the {weight of $Q$} and $\epsilon_Q(z)\in\mu_{p^\infty}$,
$$Q\vert_{1+p\Z_p}:z\mapsto z^{k_Q-1}\epsilon_Q(z).$$   We say that $\boldsymbol{f}=\sum_{n=1}^\infty a_n(\bfff)q^n\in S^o(N,\chi,\cR)$ is a {primitive Hida family} if the specialization $\boldsymbol{f}_Q$ for every $Q\in\mathfrak{X}_\cR^+$ gives the $q$-expansion of an ordinary $p$-stabilised newform of weight  $k_Q$ and tame conductor $N$. Let $\mathfrak{X}_{\cR}^{\rm cls}\subset{\rm Spec}\,\cR(\overline{\Q}_p)$ be the set of ring homomorphisms $Q$ as above with $k_Q\in\Z_{\geq 1}$ such that $\bfff_Q$ is the $q$-expansion of a classical modular form.

Given $\bfff$ a primitive Hida family of tame conductor $N$, one can associate a Galois representation 
\[
\rho_{\bfff}:G_\Q\rightarrow{\rm Aut}_{\cR}(V_\bfff)\simeq{\rm GL}_2(\cR),
\]
where the determinant of $\rho_\bfff$ is $\chi_\cR\cdot\varepsilon_{\rm cyc}$, see \cite[\S{3.2}]{hsieh-triple}. By the result of Wiles \cite[Thm.~2.2.2]{wiles88}, the restriction of $V_\bfff$ to $G_{\Q_p}$ is reducible and one has a short exact sequence
\[
0\rightarrow V_\bfff^+\rightarrow V_\bfff\rightarrow V_\bfff^-\rightarrow 0.
\]
Here the quotient $V_\bfff^-$ is free of rank one over $\cR$, with $G_{\Q_p}$ acting via the unramified character sending an arithmetic Frobenius ${\rm Frob}_p^{-1}$ to $a_p(\bfff)$. Let $\bT(N,\cR)$ be the Hecke algebra acting on $\bigoplus_\chi S^o(N,\chi,\cR)$, where $\chi$ runs over the characters of $(\Z/Np\Z)^\times$. There is a $\cR$-algebra homomorphism attached to $\bfff$
\[
\lambda_{\bfff}:\bT(N,\cR)\rightarrow\cR
\] 
that factors through a local component $\bT_{\fkm}$, where $\fkm$ is the maximal ideal containing $\ker \lambda_{\bfff}$. Following Hida \cite{hida-AJM}, we define the {congruence ideal} $C(\bfff)$ of $\bfff$ by
\[
C(\bfff):=\lambda_{\bfff}({\rm Ann}_{\bT_\fkm}({\rm ker}\,\lambda_\bfff))\subset\cR.
\] 
Under the assumption that the residual representation $\bar{\rho}_{\bfff}$ is absolutely irreducible and $p$-distinguished, it can be shown that $C(\bfff)$ is generated by a nonzero element $\eta_{\bfff}\in\cR$ by \cite{hida-AJM,Wiles}.

\subsection{CM Hida families revisited}
\label{subsec:CM}
We explicitly construct CM Hida families, following the exposition in \cite[\S{8.1}]{hsieh-triple}. Let $K$ be an imaginary quadratic field of discriminant $-D_K<0$, and suppose that $p=\pp\ppbar$ splits in $K$, with $\pp$ the prime of $K$ above $p$ induced by our fixed embedding $\imath_p:\overline{\Q}\rightarrow\overline{\Q}_p$.

Let $K_\infty$ be the $\Z_p^2$-extension of $K$. Let $K({\pp^\infty})$ be the maximal subfield of $K_\infty$ unramified outside $\pp$. Put 
\[
\Gamma_\infty:={\rm Gal}(K_\infty/K)\simeq\Z_p^2,\quad\quad
\Gamma_{\pp}:={\rm Gal}(K({\pp^\infty})/K)\simeq\Z_p.
\]

For every ideal $\mathfrak{c}\subset\cO_K$, recall that $K_\mathfrak{c}$ is the ray class field of $K$ of conductor $\mathfrak{c}$. Using our notation, $K({\pp^\infty})$ is the maximal $\Z_p$-extension of $K$ inside $K_{\pp^\infty}$. Denote by ${\rm Art}_\pp$ the restriction of the Artin map to $K_\pp^\times$, with geometric normalisation. Then ${\rm Art}_\pp$ induces an embedding $1+p\Z_p\rightarrow\Gamma_{\pp}$, where we identified $\Z_p^\times$ and $\cO_{K_\pp}^\times$ via $\iota_p$. Let $\gamma_\pp$ be the image of $1+p$ hence it will be a topological generator of $\Gamma_{\pp}$.  

For each variable $S$ let $\Psi_S:\Gamma_\infty\rightarrow\cO\dBr{S}^\times$ be the universal character 
given by
\begin{equation}\label{eqn:top_gen}
    \Psi_S(\sigma)=(1+S)^{l(\sigma)},
\end{equation}
where $l(\sigma)\in\Z_p$ is such that $\sigma\vert_{K({\pp^\infty})}=\gamma_\pp^{l(\sigma)}$. 
Now assume that $\mathfrak{c}$ is prime to $p$. Given a finite order character $\ch:G_K\rightarrow\cO^\times$ of conductor dividing $\mathfrak{c}$, let
\[
\boldsymbol{\theta}_\ch(S)(q)=\sum_{(\fa,\pp\mathfrak{c})=1}\ch(\sigma_{\fa})\Psi^{-1}_{\frac{1+S}{1+p}-1}(\sigma_\fa)q^{N_{K/\Q}(\fa)}\in\cO\dBr{S}\dBr{q},
\]
where $\sigma_\fa\in{\rm Gal}(K_{\mathfrak{c}\pp^\infty}/K)$ is the Artin symbol of $\fa$. 
Then $\boldsymbol{\theta}_\ch(S)$ is a Hida family  defined over $\cO\dBr{S}$ of tame level $N_{K/\Q}(\mathfrak{c})D_K$ and tame character $(\ch\circ\mathscr{V})\epsilon_{K}\omega^{-1}$, where $\mathscr{V}:G_\Q^{\rm ab}\rightarrow G_K^{\rm ab}$ is the transfer map and $\epsilon_{K}$ is the quadratic character corresponding to $K/\Q$.

\subsection{Triple products of Hida families}\label{subsubsec:triple-hida}

Let
\[
\bff\in S^o(N_f,\chi_f,\cR_f),\quad\bfg\in S^o(N_g,\chi_g,\cR_g),\quad\bfh\in S^o(N_h,\chi_h,\cR_h)
\]
be three primitive Hida families such that 
\begin{equation}\label{eq:a}
\textrm{$\chi_f\chi_g\chi_h=\omega^{2a}$ for some $a\in\Z$,}
\end{equation} 
where $\omega$ is the Teichm\"uller character. Let 
\[
\mathcal{R}=\cR_f\hat\otimes_{\cO}\cR_g\hat\otimes_{\cO}\cR_h
\] 
be a finite extension of the three-variable Iwasawa algebra $\Lambda\hat\otimes_{\cO}\Lambda\hat\otimes_{\cO}\Lambda$. 

Let $\mathfrak{X}_{\mathcal{R}}^+\subset{\rm Spec}\,\mathcal{R}(\overline{\Q}_p)$ be the weight space of $\mathcal{R}$ given by
\[
\mathfrak{X}_{\mathcal{R}}^+:=\left\{\underline{Q}=(Q_1,Q_2,Q_3)\in\mathfrak{X}_{\cR_f}^+\times\mathfrak{X}_{\cR_g}^{\rm cls}\times\mathfrak{X}_{\cR_h}^{\rm cls}\;:\;k_{Q_1}+k_{Q_2}+k_{Q_3}\equiv 0\;({\rm mod}\;2)\right\}.
\] 
One can then partition $\mathfrak{X}_\mathcal{R}^+=\mathfrak{X}_\mathcal{R}^{\rm bal}\sqcup\mathfrak{X}_\mathcal{R}^{\bff}\sqcup\mathfrak{X}_\mathcal{R}^{\bfg}\sqcup\mathfrak{X}_\mathcal{R}^{\bfh}$ as follows:
\begin{enumerate}
    \item the set of \emph{balanced} weights:
    \begin{align*}
\mathfrak{X}_{\mathcal{R}}^{\rm bal}&:=\left\{\underline{Q}\in\mathfrak{X}_{\mathcal{R}}^+\;:\;\textrm{$k_{Q_1}+k_{Q_2}+k_{Q_3}> 2k_{Q_i}$ for all $i\in\{1,2,3\}$}\right\},
\end{align*}
    \item the set of $\bff$-{unbalanced} weights:
    \begin{align*}
        \mathfrak{X}_{\mathcal{R}}^\bff&:=\left\{\underline{Q}\in\mathfrak{X}_{\mathcal{R}}^+\;:\;\textrm{$k_{Q_1}\geq k_{Q_2}+k_{Q_3}$}\right\},
    \end{align*}
    \item the set of $\bfg$-{unbalanced} weights:
    \begin{align*}
        \mathfrak{X}_{\mathcal{R}}^\bfg&:=\left\{\underline{Q}\in\mathfrak{X}_{\mathcal{R}}^+\;:\;\textrm{$k_{Q_2}\geq k_{Q_1}+k_{Q_3}$}\right\},
    \end{align*}
    \item the set of $\bfh$-{unbalanced} weights:
    \begin{align*}
    \mathfrak{X}_{\mathcal{R}}^\bfh&:=\left\{\underline{Q}\in\mathfrak{X}_{\mathcal{R}}^+\;:\;\textrm{$k_{Q_3}\geq k_{Q_1}+k_{Q_2}$}\right\}.
    \end{align*}
\end{enumerate}

Let $\mathbf{V}=V_\bff\hat\otimes_{\cO}V_{\bfg}\hat\otimes_{\cO}V_{\bfh}$ be the triple tensor product Galois representation attached to $(\bff,\bfg,\bfh)$. By (\ref{eq:a}), one can decompose the determinant of $\mathbf{V}$ as $\det\mathbf{V}=\mathcal{X}^2\varepsilon_{\rm cyc}$. Put  
\begin{equation}\label{eqn:V-dagger}
\Vdag:=\mathbf{V}\otimes\mathcal{X}^{-1}.
\end{equation}
This is a self-dual twist of $\mathbf{V}$. For any $\underline{Q}=(Q_1,Q_2,Q_3)\in\mathfrak{X}_{\mathcal{R}}^\bfg$,  denote by $\VQdag$  the corresponding specialization.

For each prime $\ell$, let $\varepsilon_\ell(\VQdag)$ be the epsilon factor attached to the local representation $\VQdag\vert_{G_{\Q_\ell}}$  
(see Tate \cite[p.\,21]{tate-background}). We assume that for some $\underline{Q}\in\mathfrak{X}_{\mathcal{R}}^\bfg$, we have
\begin{equation}\label{eq:+1}
\textrm{ $\varepsilon_\ell(\VQdag)=+1$ for all prime factors $\ell$ of $N_f N_g N_h$.}
\end{equation}
Note that condition (\ref{eq:+1}) is independent of $\underline{Q}$ (see \cite[\S{1.2}]{hsieh-triple}). Furthermore it implies that the sign of the functional equation for the triple product $L$-function (with center at $s=0$) 
\[
L(\VQdag,s)
\] 
is $+1$ (resp. $-1$) for all $\underline{Q}\in\mathfrak{X}_{\mathcal{R}}^\bff\cup\mathfrak{X}_{\mathcal{R}}^\bfg\cup\mathfrak{X}_{\mathcal{R}}^\bfh$ (resp. $\underline{Q}\in\mathfrak{X}_{\mathcal{R}}^{\rm bal}$).

\begin{thm}[Theorem~A in \cite{hsieh-triple}]\label{thm:hsieh-triple}
Let $\bff,\bfg,\bfh$ be three primitive Hida families 
satisfying conditions (\ref{eq:a}) and (\ref{eq:+1}). Assume also that $\gcd(N_f,N_g,N_h)$ is squarefree,
and the residual representation $\bar{\rho}_{\bfg}$ is absolutely irreducible and $p$-distinguished.
Fix a generator $\eta_{\bfg}$ of the congruence ideal of $\bfg$. Then there exists a unique element 
\[
\mathscr{L}_p^{\bfg}(\bff,\bfg,\bfh)\in\mathcal{R}
\]
such that for all $\underline{Q}=(Q_1,Q_2,Q_3)\in\mathfrak{X}_{\mathcal{R}}^{\bfg}$ of weight $(k_1,k_2,k_3)$ with $\epsilon_{Q_1}=1$ we have
\[
\bigl(\mathscr{L}_p^{\bfg}(\bff,\bfg,\bfh)(\underline{Q})\bigr)^2=\Gamma_{\VQdag}(0)\cdot\frac{L(\VQdag,0)}{(\sqrt{-1})^{2k_2}\cdot\Omega_{\bfg_{Q_1}}^2}\cdot\mathcal{E}_p(\mathscr{F}_p^{\bfg}(\VQdag))\cdot\prod_{q\in\Sigma_{\rm exc}}(1+q^{-1})^2,
\]
where:
\begin{itemize}
\item $\Gamma_{\VQdag}(0)=16(2\pi)^{-2k_2}\Gamma(w_{\underline{Q}})\Gamma(w_{\underline{Q}}+2-k_1-k_3)\Gamma(w_{\underline{Q}}+1-k_1)\Gamma(w_{\underline{Q}}+1-k_3)$, 
\[
\text{and  }\quad w_{\underline{Q}}=(k_1+k_2+k_3-2)/2;
\] 
\item $\Omega_{\bfg_{Q_1}}$ is the Hida canonical period
\[
\Omega_{\bfg_{Q_1}}:=(-2\sqrt{-1})^{k_2+1}\cdot\frac{\Vert\bfg_{Q_1}^\circ\Vert_{\Gamma_0(N_f)}^2}{\eta_{\bfg_{Q_1}}}\cdot\Bigl(1-\frac{\chi_{g}'(p)p^{k_2-1}}{\alpha_{Q_1}^2}\Bigr)\Bigl(1-\frac{\chi_{g}'(p)p^{k_2-2}}{\alpha_{Q_1}^2}\Bigr),
\]
with $\bfg_{Q_1}^\circ\in S_{k_2}(\Gamma_0(N_{g}))$ the newform of conductor $N_g$ associated with $\bfg_{Q_1}$, $\chi_g'$ the prime-to-$p$ part of $\chi_g$, and $\alpha_{Q_1}$ the specialization of $a_p(\bfg)\in\cR_g^\times$ at $Q_1$;
\item $\mathcal{E}_p(\mathscr{F}_p^{\bfg}(\VQdag))$ is the modified $p$-Euler factor
and  $\Sigma_{\rm exc}$ is an explicitly defined subset of the prime factors of $N_f N_g N_h$, \cite[p.~416]{hsieh-triple}.

\end{itemize}

\end{thm}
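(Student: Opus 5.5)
This result is quoted from \cite{hsieh-triple}, so the plan is to follow Hsieh's construction: realize the $\bfg$-unbalanced triple product $L$-value as the square of a $\Lambda$-adic trilinear period and then interpolate that period in Hida families. Concretely, I would first construct a $\cR_f\hat\otimes\cR_h$-adic family of ordinary forms of the weight of $\bfg$. It is obtained by applying Hida's ordinary projector $e$ to the product $\breve{\bff}\cdot d^{\bullet}\breve{\bfh}$, where $\breve{\bff},\breve{\bfh}$ are suitable $\Lambda$-adic test vectors (level-raised translates of $\bff,\bfh$). The Serre derivative $d^{\bullet}$ (more precisely its $p$-depleted version) is interpolated $p$-adically using the weight $(k_2-k_1-k_3)/2\ge 0$, which is exactly where the hypothesis $\underline{Q}\in\mathfrak{X}^{\bfg}_{\mathcal{R}}$ is used.

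Next I would pair this family against the dual of $\bfg$ via Hida's $\Lambda$-adic Petersson pairing. Concretely, I would apply the linear functional $\lambda_{\bfg}$ on the localized Hecke algebra $\bT_{\fkm}$, normalized by the congruence number, i.e.\ the idempotent $1_{\bfg}$ multiplied by $\eta_{\bfg}$. The hypotheses that $\bar\rho_{\bfg}$ is absolutely irreducible and $p$-distinguished guarantee that $\bT_{\fkm}$ is Gorenstein. As a result, $C(\bfg)=(\eta_{\bfg})$ is principal, and $\eta_{\bfg}1_{\bfg}$ is integral. This defines $\mathscr{L}_p^{\bfg}(\bff,\bfg,\bfh)\in\mathcal{R}$. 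Uniqueness is automatic, since the points of $\mathfrak{X}^{\bfg}_{\mathcal{R}}$ with $\epsilon_{Q_1}=1$ are Zariski dense in ${\rm Spec}\,\mathcal{R}$.

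The interpolation formula then comes from specializing at $\underline{Q}$. The specialization equals the algebraic Petersson product $\langle e(\breve f\,\delta^{t}\breve h),\breve g^{*}\rangle/\langle g^\circ,g^\circ\rangle\cdot\eta$, up to the standard Euler factors produced by $e$ and by $p$-depletion. I would then square it and apply Ichino's triple product formula. That formula expresses $|\int \breve f\breve g\breve h|^2$ as $L(\VQdag,0)/\prod\|\cdot\|^2$ times a product of normalized local zeta integrals $I_v$. The archimedean integral produces $\Gamma_{\VQdag}(0)$ and the power of $\sqrt{-1}$. The $p$-adic integral, evaluated with ordinary-stabilized vectors, produces $\mathcal{E}_p(\mathscr{F}^{\bfg}_p(\VQdag))$ together with the factors $(1-\chi'_g(p)p^{k_2-1}\alpha^{-2})(1-\chi'_g(p)p^{k_2-2}\alpha^{-2})$ absorbed into $\Omega_{\bfg_{Q_1}}$.

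The main obstacle is the choice of test vectors at primes $q\mid N_fN_gN_h$. They must be chosen uniformly in the family, so that every local integral $I_q$ is a nonzero constant independent of $\underline{Q}$; Hsieh obtains the value $(1+q^{-1})^2$ for $q\in\Sigma_{\rm exc}$ and $1$ otherwise. This is where conditions (\ref{eq:+1}) and the squarefreeness of $\gcd(N_f,N_g,N_h)$ enter. Condition (\ref{eq:+1}) makes the local trilinear forms nonzero, by Prasad's dichotomy. The squarefreeness keeps the ramified local representations among special and principal series types, for which explicit newform-vector computations are feasible. I would handle this step case by case according to the local types (unramified, Steinberg-type twists, ramified principal series), computing the matrix coefficient integrals directly.
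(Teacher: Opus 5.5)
Your outline follows the construction behind Hsieh's Theorem~A. The paper gives no argument beyond citing \cite{hsieh-triple}, with the roles of $\bff$ and $\bfg$ interchanged. The global architecture is right: ordinary projection of $\breve{\bff}\cdot d^{t}\breve{\bfh}^{[p]}$ with $t=(k_2-k_1-k_3)/2$ interpolated $p$-adically, the functional $\eta_{\bfg}1_{\bfg}$ with $C(\bfg)$ principal by the Gorenstein property of $\bT_{\fkm}$, Ichino's formula, and Prasad's dichotomy to exploit (\ref{eq:+1}).

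The step that would fail as you describe it is the local analysis at $q\mid N_fN_gN_h$. Squarefreeness of $\gcd(N_f,N_g,N_h)$ does not confine the ramified local components to special and principal series. It only guarantees that at each $q$ at least one of $\pi_{f,q},\pi_{g,q},\pi_{h,q}$ has conductor exponent at most $1$; the other two may be arbitrary. For instance, $N_f=N_h=q^2$ with depth-zero supercuspidal components at $q$ and $q\nmid N_g$ is allowed. Also, local components of CM forms at primes ramified in the CM field, which is the case relevant to this paper, are often dihedral supercuspidal. So a case split over ``unramified, Steinberg twists, ramified principal series'' does not cover the theorem.

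The local computation has to be organised around the component of conductor exponent $\le 1$. That component is a constituent of a principal series induced from a pair of characters, one of which is unramified. Through that induced model one can, for instance, realise the local trilinear form as a local Rankin--Selberg integral against Whittaker functions of the other two representations, and evaluate it on newvectors whatever their type. This is what makes (sf) sufficient for a uniform choice of test vectors.

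Second, you have misquoted Ichino's formula. It says that $|I(\phi_1\otimes\phi_2\otimes\phi_3)|^2/\prod_i\langle\phi_i,\phi_i\rangle$ equals, up to an explicit constant, $L(1/2,\Pi)/L(1,\Pi,\mathrm{Ad})\cdot\prod_v I_v^*$. So the Petersson norms sit on the opposite side from where you put them. The step you then need, and do not mention, is converting each $L(1,\pi_i,\mathrm{Ad})$ into $\langle\phi_i,\phi_i\rangle$. All three ratios $\langle\phi_i,\phi_i\rangle/L(1,\pi_i,\mathrm{Ad})$ are explicit and go into $\Gamma_{\VQdag}(0)$ and the archimedean normalising constants. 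The only period left is then the denominator $\Vert\bfg^\circ\Vert^4$ from squaring the normalised pairing, which together with $\eta_{\bfg}^2$ produces $\Omega_{\bfg_{Q_1}}^{-2}$.

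Two smaller points. Zariski density of the interpolation points only pins down $\mathscr{L}_p^{\bfg}(\bff,\bfg,\bfh)^2$, so the uniqueness you obtain is up to sign. And the operator interpolated on $p$-depleted forms is $\theta=q\,d/dq$, not the Serre derivative.
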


\begin{rem}
    One can obtain similar $\bff,\bfh$-unbalanced triple product $p$-adic $L$-functions
    \[\mathscr{L}_p^{\bff}(\bff,\bfg,\bfh),\quad\mathscr{L}_p^{\bfh}(\bff,\bfg,\bfh)\in\mathcal{R}\]
    under an appropriate set of assumptions and weights.
\end{rem}

\subsection{Triple product Selmer groups}
\label{subsec:tripleSelmer}

Recall from equation (\ref{eqn:V-dagger}) that $\Vdag=\mathbf{V}\otimes\mathcal{X}^{-1}$ is the rank $8$ self-dual twist of the Galois representation associated to a triple of primitive Hida families $(\bff,\bfg,\bfh)$ 
given (\ref{eq:a}).

\begin{defn}\label{def:local-p}
Let
\[
\mathscr{F}^{\rm bal}_p(\Vdag)
:=\bigl(V_{\bff}\otimes V_{\bfg}^+\otimes V_{\bfh}^++V_{\bff}^+\otimes V_{\bfg}\otimes V_{\bfh}^++V_{\bff}^+\otimes V_{\bfg}^+\otimes V_{\bfh}\bigr)\otimes\mathcal{X}^{-1},
\]
be a rank $4$ submodule of $\Vdag$, and define the {balanced local condition} $\rH^1_{\rm bal}(\Q_p,\Vdag)$ by 
\[
\rH^1_{\rm bal}(\Q_p,\Vdag):={\rm im}\bigl(\rH^1(\Q_p,\mathscr{F}_p^{\rm bal}(\Vdag))\rightarrow\rH^1(\Q_p,\Vdag)\bigr).
\]
Similarly, we define other rank $4$ submodules of $\Vdag$
\begin{align*}
    \mathscr{F}_p^{\unb}(\Vdag)=\bigl(V_{\bff}^+\otimes V_{\bfg}\otimes V_{\bfh}\bigr)\otimes\mathcal{X}^{-1},\\
     \mathscr{F}_p^{\bfg}(\Vdag)=\bigl(V_{\bff}\otimes V_{\bfg}^+\otimes V_{\bfh}\bigr)\otimes\mathcal{X}^{-1},\\
    \mathscr{F}_p^{\bfh}(\Vdag)=\bigl(V_{\bff}\otimes V_{\bfg}\otimes V_{\bfh}^+\bigr)\otimes\mathcal{X}^{-1}.
\end{align*}
 Define the {$\bff$-unbalanced local condition} $\rH^1_{\unb}(\Q_p,\Vdag)$ by
\[
\rH^1_{\bff}(\Q_p,\Vdag):={\rm im}\bigl(\rH^1(\Q_p,\mathscr{F}_p^{\bff}(\Vdag))\rightarrow\rH^1(\Q_p,\Vdag)\bigr),
\]
the {$\bfg$-unbalanced local condition} $\rH^1_{\bfg}(\Q_p,\Vdag)$ by
\[
\rH^1_{\bfg}(\Q_p,\Vdag):={\rm im}\bigl(\rH^1(\Q_p,\mathscr{F}_p^{\bfg}(\Vdag))\rightarrow\rH^1(\Q_p,\Vdag)\bigr),
\]
and the {$\bfh$-unbalanced local condition} $\rH^1_{\bfh}(\Q_p,\Vdag)$ by
\[
\rH^1_{\bfh}(\Q_p,\Vdag):={\rm im}\bigl(\rH^1(\Q_p,\mathscr{F}_p^{\bfh}(\Vdag))\rightarrow\rH^1(\Q_p,\Vdag)\bigr).
\]
\end{defn}

Since the maps appearing in these definitions are injective, for $\star\in\{{\rm bal},\bff,\bfg,\bfh\}$, we can identify $\rH^1_{\star}(\Q_p,\Vdag)$ with $\rH^1(\Q_p,\mathscr{F}_p^\star(\Vdag))$.

\begin{defn}\label{def:Sel-bu}
Let $\star\in\{{\rm bal},\bff,\bfg,\bfh\}$. Define the Selmer group ${\rm Sel}^\star(\Q,\Vdag)$ by
\[
{\rm Sel}^\star(\Q,\Vdag):=\ker\biggl\{\rH^1(\Q,\Vdag)\rightarrow\frac{\rH^1(\Q_p,\Vdag)}{\rH^1_\star(\Q_p,\Vdag)}\times\prod_{v\neq p}\rH^1(\Q_v^{\rm nr},\Vdag)\biggr\}.
\]
We call ${\rm Sel}^{\rm bal}(\Q,\Vdag)$ the {balanced} Selmer group. For $\mathbf{t}\in\{\bff,\bfg,\bfh\}$, we call ${\rm Sel}^{\mathbf{t}}(\Q,\Vdag)$ the {$\mathbf{t}$-unbalanced} Selmer group.
\end{defn}
\begin{defn}
Let $\Adag={\rm Hom}_{\Z_p}(\Vdag,\mu_{p^\infty})$ and let $\star\in\{{\rm bal},\bff,\bfg,\bfh\}$. Define $\rH^1_{\star}(\Q_p,\Adag)\subset\rH^1(\Q_p,\Adag)$ to be the orthogonal complement of $\rH^1_\star(\Q_p,\Vdag)$ under the local Tate pairing
\[
\rH^1(\Q_p,\Vdag)\times\rH^1(\Q_p,\Adag)\rightarrow\Q_p/\Z_p.
\]
Similarly as above, we then define the balanced, the $\bff$-unbalanced, the $\bfg$-unbalanced and the $\bfh$-unbalanced  Selmer groups with coefficients in $\Adag$ by
\[
{\rm Sel}^\star(\Q,\Adag):=\ker\biggl\{\rH^1(\Q,\Adag)\rightarrow\frac{\rH^1(\Q_p,\Adag)}{\rH_\star^1(\Q_p,\Adag)}\times\prod_{v\neq p}\rH^1(\Q_v^{\rm nr},\Adag)\biggr\}.
\]
Let $$X^\star(\Q,\Adag)={\rm Hom}_{\Z_p}({\rm Sel}^\star(\Q,\Adag),\Q_p/\Z_p)$$ be the Pontryagin dual of ${\rm Sel}^\star(\Q,\Adag)$.
 
\end{defn}

\section{Arithmetic applications}
Finally, we obtain some arithmetic results from our constructed Euler system through identifying our classes as an anticyclotomic Euler system in the sense of Jetchev--Nekov\'a{\v r}--Skinner \cite{JNS} and the explicit reciprocity law.

\subsection{g--unbalanced reciprocity law}
\label{subsec:diag}

Let $(\bff,\bfg,\bfh)$ be a triple of primitive Hida families as in $\S\ref{subsubsec:hida}$ satisfying (\ref{eq:a}). Let $N={\rm lcm}(N_f,N_g,N_h)$. The big diagonal class constructed in \cite[\S{8.1}]{BSV}
\begin{equation}\label{eq:3-diag}
\kappa(\bff,\bfg,\bfh)\in\rH^1(\Q,\Vdag(N)),
\end{equation}
where $\Vdag(N)$ (this is $V(\bff,\bfg,\bfh)$ using the notation of \cite{BSV}) is a free $\mathcal{R}$-module isomorphic to finitely many copies of $\Vdag$, can be identified with class $\boldsymbol{\kappa}_m^{(2)}$ in equation $(3.18)$ of \cite{CD} when $m=1$. The definition of the Selmer groups in $\S\ref{subsec:tripleSelmer}$ extends to $\Vdag(N)$, and by \cite[Cor.\,8.2]{BSV} we have $\kappa(\bff,\bfg,\bfh)\in{\rm Sel}^{\rm bal}(\Q,\Vdag(N))$. Now we choose level-$N$ test vectors $(\breve{\bff},\breve{\bfg},\breve{\bfh})$, provided by \cite[Thm.~A]{hsieh-triple}, to project the classes from $\Vdag(N)$ to $\Vdag$ (see pages $50,51$ of loc. cit.).

We define more one-dimensional $G_{\Q_p}$-invariant subspaces of $\Vdag$:  
\begin{equation}\label{eq:3-summands}
\begin{aligned}
\mathscr{F}_p^{3}(\Vdag)&=V_\bff^+\hat\otimes_{\cO} V_{\bfg}^+\hat\otimes_{\cO} V_{\bfh}^+\otimes\mathcal{X}^{-1}, \\
\mathbf{V}_{\bff}^{\bfg\bfh}&=V_\bff^-\hat\otimes_{\cO} V_{\bfg}^+\hat\otimes_{\cO} V_{\bfh}^+\otimes\mathcal{X}^{-1},\\
\mathbf{V}_{\bfg}^{\bff\bfh}&=V_\bff^+\hat\otimes_{\cO} V_{\bfg}^-\hat\otimes_{\cO} V_{\bfh}^+\otimes\mathcal{X}^{-1},\\
\mathbf{V}_{\bfh}^{\bff\bfg}&=V_\bff^+\hat\otimes_{\cO} V_{\bfg}^+\hat\otimes_{\cO} V_{\bfh}^-\otimes\mathcal{X}^{-1}.
\end{aligned}
\end{equation}
Note that 
\begin{equation}\label{eq:gr2}
\mathscr{F}^{\rm bal}_p(\Vdag)/\mathscr{F}_p^3(\Vdag)\cong
\mathbf{V}_{\bff}^{\bfg\bfh}\oplus\mathbf{V}_{\bfg}^{\bff\bfh}\oplus\mathbf{V}_{\bfh}^{\bff\bfg},
\end{equation}

Assume that the congruence ideal $C(\bfg)\subset\cR_g$ is principal, generated by the nonzero $\eta_\bfg\in\cR_g$ (recall that this will be satisfied when the residual representation $\bar{\rho}_{\bfg}$ is absolutely irreducible and $p$-distinguished). 
One can deduce from results in \cite{KLZ} the construction of an injective three-variable $p$-adic regulator map with pseudo-null cokernel:
\begin{equation}\label{eq:Log}
{\rm Log}^{\eta_\bfg}
:\rH^1(\Q_p,\mathbf{V}_{\bfg}^{\bff\bfh})\rightarrow\mathcal{R},
\end{equation}
see the explicit map in \cite[\S{5.3.1}]{CD} and the explanation in \cite[\S{7.3}]{BSV}.

Let ${\rm res}_p(\kappa(\bff,\bfg,\bfh))_{\bfg}$ be the image of $\kappa(\bff,\bfg,\bfh)$ under the natural composition of maps:
\begin{equation}\label{eq:map-ERL}
{\rm Sel}^{\rm bal}(\Q,\Vdag)\xrightarrow{{\rm res}_p}\rH^1(\Q_p,\mathscr{F}_p^{\rm bal}(\Vdag))\rightarrow\rH^1(\Q_p,\mathscr{F}_p^{\rm bal}(\Vdag)/\mathscr{F}_p^3(\Vdag))\rightarrow\rH^1(\Q_p,\mathbf{V}_{\bfg}^{\bff\bfh}),
\end{equation}
where we first restrict at $p$ and then project onto the first direct summand in (\ref{eq:gr2}). The following result is an explicit reciprocity law that relates diagonal cycles with the triple product $p$-adic $L$-functions.

\begin{thm}
\label{thm:ERL}
Let $(\bff,\bfg,\bfh)$ be a triple of primitive Hida families as in Theorem~\ref{thm:hsieh-triple}. Assume that $\bar{\rho}_g$ is absolutely irreducible and $p$-distinguished and fix a generator $\eta_{\bfg}$ of the congruence ideal of $\bfg$. Then for every triple $(\breve{\bff},\breve{\bfg},\breve{\bfh})$ of level $N$ test vectors  for $(\bff,\bfg,\bfh)$ we have
\begin{equation}\label{eq:ERL-g}
\eta_{\bfg}\cdot{\mathrm{Log}}^{\bfg}_{(\breve{\bff}^\star,\breve{\bfg}^\star,\breve{\bfh}^\star)}({\mathrm{res}}_p(\kappa(\bff,\bfg,\bfh))_\bfg)=\mathscr{L}_p^{\bfg}(\bff,\bfg,\bfh),
\end{equation}
\end{thm}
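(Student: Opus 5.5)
The plan is to obtain (\ref{eq:ERL-g}) from the $\bfg$-unbalanced explicit reciprocity law of Bertolini--Seveso--Venerucci \cite[\S{7}--\S{8}]{BSV}, combined with Hsieh's interpolation (Theorem~\ref{thm:hsieh-triple}). Both sides of (\ref{eq:ERL-g}) lie in $\mathcal{R}$, and the left-hand side is well-defined by the injectivity of ${\rm Log}^{\eta_\bfg}$ in (\ref{eq:Log}). Since $\mathcal{R}$ is a finite extension of a three-variable Iwasawa algebra, two elements of $\mathcal{R}$ agree as soon as their specializations agree on a Zariski-dense set of points. I will therefore compare specializations at a dense subset of $\mathfrak{X}_{\mathcal{R}}^{\bfg}$, taking classical points $\underline{Q}$ of weight $(k_1,k_2,k_3)$ with $k_2\ge k_1+k_3$ and $\epsilon_{Q_1}=1$.

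First I specialize the big class. By \cite[Cor.~8.2]{BSV}, $\kappa(\bff,\bfg,\bfh)$ lies in ${\rm Sel}^{\rm bal}$, so the projection (\ref{eq:map-ERL}) onto $\mathbf{V}_\bfg^{\bff\bfh}$ makes sense. At $\underline{Q}$ its specialization is the image of the $p$-adic étale Abel--Jacobi class of the generalized Gross--Kudla--Schoen diagonal cycle, up to explicit Euler-type factors. The specialization of ${\rm Log}^{\eta_\bfg}$ at $\underline{Q}$ is then given by the Bloch--Kato logarithm, or dual exponential, paired against the differential $\eta_{\breve{\bff}^\star}\otimes\omega_{\breve{\bfg}^\star}\otimes\omega_{\breve{\bfh}^\star}$. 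This interpolation comes from the Perrin-Riou/Coleman-type big logarithm of \cite{KLZ} and \cite[\S{5.3.1}]{CD}; the normalization by $\eta_\bfg$ is exactly what converts the Hida-canonical pairing on the $\bfg$-factor into a Petersson product.

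Second, I compute the resulting Abel--Jacobi value via syntomic/de Rham comparison, following \cite{BDRSV}. In the $\bfg$-unbalanced range this value equals a $p$-adic trilinear period: the Petersson pairing of $\breve{\bfg}^\star_{Q_2}$ with $e_{\rm ord}(d^{t}\breve{\bff}^{[p]}\times\breve{\bfh})$, where $t=(k_2-k_1-k_3)/2$. It also carries the Euler factors $\mathcal{E}_p(\mathscr{F}_p^{\bfg}(\VQdag))$ divided by the factors that appear in $\Omega_{\bfg_{Q_1}}$.

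Third, I match this with Hsieh's construction. By \cite{hsieh-triple}, $\mathscr{L}_p^\bfg$ is defined precisely as $\eta_\bfg$ times the $\Lambda$-adic version of this same trilinear Petersson product, built from Hida's ordinary projector and the same test vectors. Equality at the dense set of $\underline{Q}$, and hence in $\mathcal{R}$, then follows.

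The main obstacle is the precise bookkeeping of Euler factors and periods: checking that the factors produced by the big logarithm match the factors $(1-\chi_g'(p)p^{k_2-1}\alpha_{Q_1}^{-2})(1-\chi_g'(p)p^{k_2-2}\alpha_{Q_1}^{-2})$ and $\mathcal{E}_p$, and that the test vectors $\breve{\bullet}^\star$ are compatible with those in Hsieh. My first approach would be to cite \cite[Thm.~A and \S{7.3}]{BSV} and \cite[\S{5.3}]{CD}, where this identification is carried out. Uniqueness in Theorem~\ref{thm:hsieh-triple} is stated via squares, but here the identity is between elements of $\mathcal{R}$ built from the same $\Lambda$-adic trilinear form, so no sign ambiguity arises.
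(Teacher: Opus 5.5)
Your fallback, citing \cite[Thm.~A]{BSV} in the form recorded in \cite[Rmk.~5.7]{CD}, is exactly the paper's proof; the paper gives nothing beyond that citation. The outline you give of \emph{why} the cited theorem holds, however, fails at its central step. You compare the two sides at classical points $\underline{Q}\in\mathfrak{X}^{\bfg}_{\mathcal{R}}$ and identify $\kappa(\bff,\bfg,\bfh)(\underline{Q})$ there with the \'etale Abel--Jacobi image of a generalized Gross--Kudla--Schoen cycle. Those cycles exist only for balanced weights. In the $\bfg$-unbalanced region ($k_2\geq k_1+k_3$) there is no cycle, and the specialized class is not geometric. Moreover, normalize $\varepsilon_{\rm cyc}$ to have Hodge--Tate weight $1$. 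Then the line $\mathbf{V}_{\bfg}^{\bff\bfh}=V_\bff^+\otimes V_\bfg^-\otimes V_\bfh^+\otimes\mathcal{X}^{-1}$ has weight $(k_1+k_3-k_2)/2$ at $\underline{Q}$, and this is $\leq 0$ exactly on $\mathfrak{X}^{\bfg}_{\mathcal{R}}$. So at your comparison points ${\rm Log}^{\eta_\bfg}$ interpolates a dual exponential, which no syntomic Abel--Jacobi computation can evaluate.

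The actual argument, following Darmon--Rotger's strategy, runs the other way: one compares at \emph{balanced} points, which are also Zariski-dense. At such a point three things hold:
\begin{itemize}
\item $\kappa(\bff,\bfg,\bfh)(\underline{Q})$ is, up to an explicit Euler factor, the \'etale Abel--Jacobi image of the diagonal cycle;
\item ${\rm Log}^{\eta_\bfg}$ interpolates the Bloch--Kato logarithm;
\item the Darmon--Rotger syntomic computation evaluates this logarithm, paired with $\omega_{\breve f}\otimes\eta_{\breve g}\otimes\omega_{\breve h}$, as $\langle\breve{\bfg}^\star,e_{\rm ord}(d^{t}\breve{\bff}^{[p]}\times\breve{\bfh})\rangle$ with $t=(k_2-k_1-k_3)/2<0$.
\end{itemize}
Note that the unit-root class must sit on the $\bfg$-factor, not on $\bff$ as you wrote, since the relevant graded piece is $V_\bfg^-$.

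So the Abel--Jacobi side computes $\mathscr{L}_p^{\bfg}$ \emph{outside} its interpolation range. The negative power of $d$ makes sense only because of the $p$-depletion. Your trilinear formula is the right one, but for balanced $\underline{Q}$, not $\bfg$-unbalanced ones.

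Correspondingly, Hsieh's interpolation formula, with $\mathcal{E}_p(\mathscr{F}^{\bfg}_p(\VQdag))$ and the factors in $\Omega_{\bfg_{Q_1}}$, plays no role in proving (\ref{eq:ERL-g}). The Euler-factor bookkeeping that must actually be done is between three factors: the Darmon--Rotger factor at balanced points, the interpolation factor of the big logarithm, and the factor relating $\kappa(\underline{Q})$ to the Abel--Jacobi class. Hsieh's formula enters only afterwards, when the identity is specialized at $\bfg$-unbalanced points to reach complex $L$-values, as in Theorem~\ref{thm:BK-def} and Proposition~\ref{prop:factor-L-indef}.
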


\begin{proof}
   This is the main theorem A in \cite{BSV}. This version is extracted from Remark 5.7 in \cite{CD}.
\end{proof}

\subsection{The preparation} We will specialize the first CM Hida family $\bff$ at $Q_1$ and leave the other two CM Hida families intact, following the convention in Section \ref{subsec:CM}. Let $f\in S_{k_1'}(pN_{\psi_1'})$ be the ordinary $p$-stabilization of $\bff:=\boldsymbol{\theta}_{\xi_1'}(S_1')$ 
at $S_1'=(1+p)^{k_1'}-1$ with $U_p$ eigenvalue $\psi_1'(\ppbar_1)$. For $i\in\{1,2\}$ let $\fkf_i\subset\cO_{K_i}$ be an ideal coprime to $pN_{\psi_i}$, $\ch_i$ be ray class characters of $K_i$ of conductors dividing $\mathfrak{f}_i$. Let $\chi_{\ch_i}$ be the central character of $\ch_i$. We assume that  
\begin{equation}\label{eq:sd-chars}
\chi_{\ch_1}\epsilon_{K_1}\chi_{\ch_2}\epsilon_{K_2}=1,
\end{equation}
and let
\begin{equation}\label{eq:gg*}
\bfg:=\boldsymbol{\theta}_{\ch_1}(S_1)\in\cO\dBr{S_1}\dBr{q}, 
\quad
\bfh:=\boldsymbol{\theta}_{\ch_2}(S_2)\in\cO\dBr{S_2}\dBr{q}
\end{equation}
be the CM Hida families 
attached to $\ch_1$ and $\ch_2$, respectively.
\begin{assumption}
    We assume that $\psi_1(\pp_1)\neq \psi_1(\ppbar_1)\pmod{p}$ (this ensures that the CM Hida family $\bfg:=\boldsymbol{\theta}_{\xi_1}(S_1)$ passing through $\theta_{\psi_1}$ is residually absolutely irreducible and $p$-distinguished). 
\end{assumption}

The triple $(\bff,\bfg,\bfh)$ satisfies conditions (\ref{eq:a}) and the associated $\bfg$-unbalanced triple product $p$-adic $L$-function $\mathscr{L}_p^{\bfg,\eta_{\bfg}}(\bff,\bfg,\bfh)$ 
is an element in $\mathcal{R}=\cR\hat\otimes_{\cO}\cO\dBr{S_1}\hat\otimes_{\cO}\cO\dBr{S_2}\simeq\cR\dBr{S_1,S_2}$. Let 
\begin{equation}\label{eq:triple-2var}
\mathscr{L}_p^{\bfg,\eta_\bfg}(f,\bfg,\bfh)\in\cO\dBr{S_1,S_2}
\end{equation}
be its image  
under the natural map $\cR\dBr{S_1,S_2}\rightarrow\cO\dBr{S_1,S_2}$ defined by $Q_1$.

Write $\Vsdag$ for the specialization of $\Vdag$ at $Q_1$. Let $V_f^\vee$ be the Galois representation associated to $f$, which in this case is $\mathrm{Ind}_{K_1}^\Q (\psi_1'^{-1})$. Recall that $\det(V_f^\vee)=\varepsilon_{\rm cyc}^{k_1'-1}$ in our conventions. Setting $T_i=\frac{(1+S_i)}{1+p}-1$ ($i\in\{1,2\}$), we have 
$\det(V_{\bfg_{T_1}}\otimes V_{\bfh_{T_2}})=\Psi_{T_1}\Psi_{T_2}\circ\mathscr{V}$, and so
\begin{equation}\label{eq:dec-V-first}
\begin{aligned}
\Vsdag&\simeq \mathrm{Ind}_{K_1}^\Q \cO(\psi_1'^{-1})\otimes({\rm Ind}_K^\Q\ch_1^{-1}\Psi_{T_1})\otimes({\rm Ind}_K^\Q\ch_2^{-1}\Psi_{T_2})\otimes\varepsilon_{\rm cyc}^{1-k_1'/2}(\Psi_{T_1}^{-1/2}\Psi_{T_2}^{-1/2}\circ\mathscr{V})\\
&\simeq \mathrm{Ind}_{K_1}^\Q \cO(\psi_1'^{-1})(1-k_1'/2)\otimes{\rm Ind}_{K_0}^\Q\tilde{\ch}_1^{-1}\tilde{\Psi}_{T_1}\tilde{\ch}_2^{-1}\tilde{\Psi}_{T_2},
\end{aligned}
\end{equation}
In particular, we get
\begin{align}
    \label{eq:shapiro}
\rH^1(\Q,\Vsdag)&\simeq\rH^1(K_0,\mathrm{Ind}_{K_1}^\Q \cO(\psi_1'^{-1})(1-k_1'/2)\otimes\tilde{\ch}_1^{-1}\tilde{\Psi}_{T_1}\tilde{\ch}_2^{-1}\tilde{\Psi}_{T_2})\nonumber \\
&\simeq \rH^1(K_0,(\cO(\tilde{\psi}_1'^{-1})\oplus \cO(\tilde{\psi}_1'^{\cc}))(1-k_1'/2)\otimes\tilde{\ch}_1^{-1}\tilde{\Psi}_{T_1}\tilde{\ch}_2^{-1}\tilde{\Psi}_{T_2}) \nonumber\\
&\simeq \rH^1(K_0,\cO(\tilde{\psi}_1'^{-1})(1-k_1'/2)\otimes\tilde{\ch}_1^{-1}\tilde{\Psi}_{T_1}\tilde{\ch}_2^{-1}\tilde{\Psi}_{T_2})\oplus \rH^1(K_0,\cO(\tilde{\psi}_1'^{-\cc})(1-k_1'/2)\otimes\tilde{\ch}_1^{-1}\tilde{\Psi}_{T_1}\tilde{\ch}_2^{-1}\tilde{\Psi}_{T_2})
\end{align}
by Shapiro's lemma.

\begin{rem}
    Results in this section can be obtained similarly in the $\bfh$-unbalanced case under the assumption that $\psi_2(\pp_2)\neq \psi_2(\ppbar_2)\pmod{p}$.
\end{rem}

\subsection{Local conditions of the Euler system}\label{subsec:local-condition} 


Recall from Theorem~\ref{maintheorem2} and \ref{maintheorem2-chi} that we have constructed classes
\[
\mathbf{z}_{\mu_3,\psi}\in H^1_{\rm Iw}\bigl(K_0[\mu_3 p^\infty],\cO_{\psi}\bigr), \qquad ^{\cc}\mathbf{z}_{\mu_3,\chi}\in H^1_{\rm Iw}\bigl(K_0[\mu_3 p^\infty],\cO_{\chi}\bigr),
\]
where 
\[\psi^{-1}=\widetilde{\psi_1{\psi}_1'}\,\widetilde{\psi_2}\,\bN^{(k_1'+k_1+k_2-4)/2},\qquad
    \chi^{-1}=\widetilde{\psi_1{\psi}_1'^{\cc}}\,\widetilde{\psi_2}\,\bN^{(k_1'+k_1+k_2-4)/2}.\]
\begin{prop}
\label{propselmer2}
Suppose $p\nmid 6h_{K_0}$. Let $\mu_3\in\mathcal{N}'$ (see Definition \ref{def:thin-primes-infinity}) run over squarefree product of prime ideals of $\la_3\in\mathcal{L}'$  with $m=N_{K_3/\Q}(\mu_3)$ coprime to $p$. The classes of Theorems\,\ref{maintheorem2} and \ref{maintheorem2-chi} satisfy
\[
\mathbf{z}_{\mu_3,\psi}\in{\rm Sel}_{\varnothing,0,0,\varnothing}(K_0[\mu_3p^\infty],\cO_{\psi}),\qquad ^{\cc}\mathbf{z}_{\mu_3,\chi}\in{\rm Sel}_{\varnothing,0,\varnothing,0}(K_0[\mu_3p^\infty],\cO_{\chi}).
\]
\end{prop}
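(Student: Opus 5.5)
The plan is to read off the local conditions from the balanced Selmer condition satisfied by the big diagonal class. By \cite[Cor.\,8.2]{BSV}, $\kappa(\bff,\bfg,\bfh)\in{\rm Sel}^{\rm bal}(\Q,\Vdag(N))$. The same holds for the tame-level variants $\tilde{\kappa}^{(3)}_m$ used in \S\ref{subsec:tame}, since their construction runs over $Y_1(N_{\psi_1}m)$, $Y_1(N_{\psi_2}m)$ with $m$ prime to $p$. Every subsequent operation in \S\ref{wildnormsetup} preserves Selmer conditions. These operations are the Hecke/test-vector projections, the isomorphisms $\nu$ of Theorem \ref{norm1}, the maps $e\circ w$ of Proposition \ref{ringclass}, Shapiro's lemma, the diamond-operator modification, the scalings by elements of $\cO[\mathrm{Gal}]$, and the anticyclotomic twist via \cite[Thm 6.4.1]{Rubin-ES}. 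The reason is that each is either $G_{\Q_p}$-equivariant on coefficients or an element of the group ring. Unramifiedness away from $p$ I will handle in the standard way: the classes come from étale cohomology of curves with good reduction outside $Nm$. At primes $v\nmid p$ the Greenberg condition for $V_\chi$ with nonzero HT weights reduces to $H^1_{\rm ur}$, and for Iwasawa cohomology over the anticyclotomic $\Z_p^2$-tower the unramified condition at $v\nmid p$ is automatic (\cite[Cor.\,B.3.5]{Rubin-ES}). The real work is therefore at the four primes $\cP_1,\dots,\cP_4$ above $p$.

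First I would restrict to $G_{\Q_p}$. Using the CM description $V_{\bfg}^\vee\simeq{\rm Ind}_{K_1}^{\Q}$ and $V_{\bfh}^\vee\simeq{\rm Ind}_{K_2}^{\Q}$, and the splitting of $p$, I identify the ordinary filtrations. The line $V_{\bfg}^-$ is the character of $K_1$ at the prime on which the Hida variable is unramified, so it is attached to $\ppbar_1$ (the $U_p$-eigenvalue is $\psi_1(\ppbar_1)$). Then $V_{\bfg}^+$ corresponds to $\pp_1$, and similarly for $\bfh$ with $\pp_2,\ppbar_2$ and for $f$ with $\pp_1,\ppbar_1$. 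Under the Shapiro isomorphism (\ref{eq:shapiro}), $H^1(\Q_p,\Vsdag)\simeq\bigoplus_{i=1}^4 H^1(K_{0,\cP_i},-)$ for the rank-two $G_{K_0}$-module $\cO(\tilde\psi_1'^{-1})\oplus\cO(\tilde\psi_1'^{-\cc})$ twisted by $\tilde{\ch}_1^{-1}\tilde{\ch}_2^{-1}\tilde\Psi$. The eight lines of $\Vdag$ at $p$ are then indexed by triples of signs $(\pm,\pm,\pm)$. Each $\cP_i$ determines which of $\pp_1/\ppbar_1$ and $\pp_2/\ppbar_2$ it lies over, using $\cP_2=\tau_3\cP_1$, $\cP_3=\tau_2\cP_1$, $\cP_4=\tau_1\cP_1$. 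Concretely, $\cP_1,\cP_3\mid\pp_1$ and $\cP_1,\cP_4\mid\pp_2$, mirroring the conventions of \S\ref{subsec:prime-decomp}. The $\psi$-summand (resp.\ $\chi$-summand) picks the $f$-line matching (resp.\ conjugate to) the $\bfg$-line.

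Second, I compute $\mathscr{F}^{\rm bal}_p$ in this dictionary. This is the span of lines with at least two $+$ signs. For each $\cP_i$ and each summand I check whether the line is in $\mathscr{F}^{\rm bal}_p$ or not. If it is, the condition there is relaxed ($\varnothing$). If it is not, the balanced condition forces the local class into $H^1$ of the zero submodule, i.e.\ strict ($0$). This bookkeeping should yield $(\varnothing,0,0,\varnothing)$ for $\psi$ and $(\varnothing,0,\varnothing,0)$ for $\chi$. As a consistency check, the answer should match the relaxed prime of total weight $\ge$ (as in Lemma \ref{lem:BK-Gr}) when $k_1'$ is small. Because the filtrations vary in Hida families, the computation is uniform in weight and passes to the specializations and twists of Theorems \ref{maintheorem2}, \ref{maintheorem2-chi}. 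Passing to $H^1_{\rm Iw}$ uses that the Greenberg conditions are compatible with corestriction and with Shapiro's lemma for $\Lambda^-(\boldsymbol{\kappa}_{\rm ac}^{-1})$.

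The main obstacle is this sign/prime bookkeeping. I must be careful about which summand of ${\rm Ind}_{K_1}\psi_1'^{-1}$ restricts to which decomposition group, and about the geometric versus arithmetic normalization of Frobenius, since an error swaps $\cP_3$ and $\cP_4$. A further subtlety is that ``not in the balanced submodule'' only gives vanishing in $H^1$ of the quotient. To deduce the strict condition I need $H^1(K_{0,\cP},\mathscr{F}^+)\to H^1(K_{0,\cP},\Vdag)$ to be injective, and the resulting strict-condition class to have trivial image. I plan to get both from $H^0$ of the relevant one-dimensional nontrivial characters vanishing over the Iwasawa tower, which holds since the characters have nonzero Hodge–Tate weight twisted by $\boldsymbol{\kappa}_{\rm ac}$.
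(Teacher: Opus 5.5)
Your route is the paper's: get membership in ${\rm Sel}^{\rm bal}$ from \cite[Cor.\,8.2]{BSV}, pass through Shapiro's lemma at $p$, and compare line by line with $\mathscr{F}^{\rm bal}_p$ (the span of the lines with at least two $+$ signs). That comparison is exactly the table (\ref{eqn:local-condition-for-reciprocity-law}). However, the one concrete input you commit to is backwards, and it produces precisely the $\cP_3\leftrightarrow\cP_4$ swap you warned about.

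Since $\tau_1$ generates ${\rm Gal}(K_0/K_1)$, it fixes $K_1$, so $\cP_4=\tau_1\cP_1$ lies over the same prime of $K_1$ as $\cP_1$. Likewise $\cP_3=\tau_2\cP_1$ lies over the same prime of $K_2$ as $\cP_1$. Hence $\pp_1=\cP_1\cP_4$, $\ppbar_1=\cP_2\cP_3$, $\pp_2=\cP_1\cP_3$, $\ppbar_2=\cP_2\cP_4$. This is what \S\ref{subsec:prime-decomp} actually says ($\la_1=\cL_1\cL_4$, $\la_2=\cL_1\cL_3$), not ``$\cP_1,\cP_3\mid\pp_1$ and $\cP_1,\cP_4\mid\pp_2$''.

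With your assignment, the sign triples for $(f,\bfg,\bfh)$ at $\cP_3,\cP_4$ are $(+,+,-),(-,-,+)$ on the $\psi$-summand and $(-,+,-),(+,-,+)$ on the $\chi$-summand. Your bookkeeping would therefore return $(\varnothing,0,\varnothing,0)$ for $\psi$ and $(\varnothing,0,0,\varnothing)$ for $\chi$, the opposite of the statement. Your sentence ``this bookkeeping should yield\dots'' is asserted rather than derived, and it contradicts your own setup.

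Now use the correct assignment: sign $+$ over $\pp_i$ and $-$ over $\ppbar_i$, with the $f$-sign equal to the $\bfg$-sign on the $\psi$-summand and opposite to it on the $\chi$-summand. The triples at $\cP_1,\cP_2,\cP_3,\cP_4$ are then $(+,+,+),(-,-,-),(-,-,+),(+,+,-)$ for $\psi$ and $(-,+,+),(+,-,-),(+,-,+),(-,+,-)$ for $\chi$. This gives $(\varnothing,0,0,\varnothing)$ and $(\varnothing,0,\varnothing,0)$ as claimed. Your Hodge--Tate consistency check would have caught the error if run against Lemma \ref{lem:BK-Gr}, whose infinity types are indexed with the correct convention: in the balanced range it gives $(\varnothing,0,\varnothing,0)$ for $\chi$, which is what the balanced condition must reduce to there.

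Two smaller points. First, your injectivity worry is moot. For CM forms and CM Hida families the restrictions to $G_{\Q_p}$ split as sums of characters, so $\mathscr{F}^{\rm bal}_p$ is a direct summand. The components of a balanced class on the complementary lines therefore vanish outright, with no $H^0$ argument needed. Second, away from $p$ you should not lean on \cite[Cor.\,B.3.5]{Rubin-ES}. It needs the relevant primes not to split completely in the tower, which fails here for primes of $K_0$ above primes of $K_3$ inert in $K_0$. The condition is in fact vacuous, because $V_\psi$ and $V_\chi$ have nonzero weight, so $H^1(L_w,V)=0$ for $w\nmid p$.
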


\begin{proof}
By \cite[Cor.\,8.2]{BSV} and \cite[Sec.\,4.2]{CD}, the classes $\mathbf{z}_{\mu_3,\psi},^{\cc}\mathbf{z}_{\mu_3,\chi}$ lands in the balanced Selmer group ${\rm Sel}^{\rm bal}(\Q,\Vdag)$, where the balanced local condition at $p$ upon specialised to $f$ is given by
\begin{equation}\label{eq:bal-def}
\begin{aligned}
\mathscr{F}^{\rm bal}_p(\Vsdag)
\simeq\bigl(\mathrm{Ind}_{K_1}^\Q \cO(\psi_1'^{-1})(1-k_1'/2)\otimes\tilde{\ch}_1^{-1}\tilde{\Psi}_{T_1}\tilde{\ch}_2^{-1}\tilde{\Psi}_{T_2}\bigr)&\oplus\bigl(\cO(\psi_1'^{-1})(1-k_1'/2)\otimes\tilde{\ch}_1^{-1}\tilde{\Psi}_{T_1}\tilde{\ch}_2^{-\cc}\tilde{\Psi}_{T_2}^{\cc}\bigr)\\
&\oplus\bigl(\cO(\psi_1'^{-1})(1-k_1'/2)\otimes\tilde{\ch}_1^{-\cc}\tilde{\Psi}_{T_1}^{\cc}\tilde{\ch}_2^{-1}\tilde{\Psi}_{T_2}\bigr).
\end{aligned}
\end{equation}

Put 
\begin{align*}
    \widetilde{\mathbf{V}}_{\psi,Q_1}^\dagger=\cO(\psi_1'^{-1})(1-k_1'/2)\otimes\tilde{\ch}_1^{-1}\tilde{\Psi}_{T_1}\tilde{\ch}_2^{-1}\tilde{\Psi}_{T_2}\\
\widetilde{\mathbf{V}}_{\chi,Q_1}^\dagger=\cO(\psi_1'^{-\cc})(1-k_1'/2)\otimes\tilde{\ch}_1^{-1}\tilde{\Psi}_{T_1}\tilde{\ch}_2^{-1}\tilde{\Psi}_{T_2}
\end{align*}
then Shapiro Lemma tells us that 
\[H^1(\Q,\Vsdag)\simeq H^1(K_0,\widetilde{\mathbf{V}}_{\psi,Q_1}^\dagger)\oplus H^1(K_0,\widetilde{\mathbf{V}}_{\chi,Q_1}^\dagger)\]
see (\ref{eq:shapiro}). The local condition  $\mathscr{F}_p^{\rm bal}(\Vsdag)$ cutting out the specialised balanced Selmer group at $p$ corresponds to
\begin{equation}\label{eqn:local-condition-for-reciprocity-law}
\begin{split}\
\mathscr{F}_{\cP_1}^{\rm bal}(\widetilde{\mathbf{V}}^\dagger_{Q_1}\vert_{G_{K_0}})&=\cO(\psi_1'^{-1})(1-k_1'/2)\otimes\tilde{\ch}_1^{-1}\tilde{\Psi}_{T_1}\tilde{\ch}_2^{-1}\tilde{\Psi}_{T_2}\oplus \cO(\psi_1'^{-\cc})(1-k_1'/2)\otimes\tilde{\ch}_1^{-1}\tilde{\Psi}_{T_1}\tilde{\ch}_2^{-1}\tilde{\Psi}_{T_2},\\
\mathscr{F}_{\cP_2}^{\rm bal}(\widetilde{\mathbf{V}}^\dagger_{Q_1}\vert_{G_{K_0}})&=\{0\}\oplus \{0\},\\
\mathscr{F}_{\cP_3}^{\rm bal}(\widetilde{\mathbf{V}}^\dagger_{Q_1}\vert_{G_{K_0}})&=\{0\}\oplus \cO(\psi_1'^{-1})(1-k_1'/2)\otimes\tilde{\ch}_1^{-\cc}\tilde{\Psi}_{T_1}^{\cc}\tilde{\ch}_2^{-1}\tilde{\Psi}_{T_2},\\
\mathscr{F}_{\cP_4}^{\rm bal}(\widetilde{\mathbf{V}}^\dagger_{Q_1}\vert_{G_{K_0}})&=\cO(\psi_1'^{-1})(1-k_1'/2)\otimes\tilde{\ch}_1^{-1}\tilde{\Psi}_{T_1}\tilde{\ch}_2^{-\cc}\tilde{\Psi}_{T_2}^{\cc}\oplus \{0\}.
\end{split}
\end{equation}
Hence the class $\mathbf{z}_{\mu_3,\psi}$ satisfies the $(\varnothing,0,0,\varnothing)$ condition and the class $\mathbf{z}_{\mu_3,\chi}$ satisfies the $(\varnothing,0,\varnothing,0)$ condition at the primes above $p$. 
\end{proof}

\begin{prop}\label{prop:factor-S}
Via the isomorphism (\ref{eq:shapiro}), 
\begin{enumerate}
    \item the balanced Selmer group ${\rm Sel}^{\rm bal}(\Q,\Vsdag)$ can be rewritten as
\begin{align*}
{\rm Sel}^{\rm bal}(\Q,\Vsdag)&\simeq{\rm Sel}_{\varnothing,0,0,\varnothing}(K_0,\widetilde{\mathbf{V}}_{\psi,Q_1}^\dagger)\oplus {\rm Sel}_{\varnothing,0,\varnothing,0}(K_0,\widetilde{\mathbf{V}}_{\chi,Q_1}^\dagger),
\end{align*}
\item the $\bff$-unbalanced Selmer group ${\rm Sel}^{\bff}(\Q,\Vsdag)$ can be rewritten as
\begin{align*}
{\rm Sel}^{\bff}(\Q,\Vsdag)&\simeq
{\rm Sel}_{\varnothing,0,0,\varnothing}(K_0,\widetilde{\mathbf{V}}_{\psi,Q_1}^\dagger)\oplus {\rm Sel}_{0,\varnothing,\varnothing,0}(K_0,\widetilde{\mathbf{V}}_{\chi,Q_1}^\dagger).\end{align*}
\item the $\bfg$-unbalanced Selmer group ${\rm Sel}^{\bfg}(\Q,\Vsdag)$ can be rewritten as
\begin{align*}
{\rm Sel}^{\bfg}(\Q,\Vsdag)&\simeq
{\rm Sel}_{\varnothing,0,0,\varnothing}(K_0,\widetilde{\mathbf{V}}_{\psi,Q_1}^\dagger)\oplus {\rm Sel}_{\varnothing,0,0,\varnothing}(K_0,\widetilde{\mathbf{V}}_{\chi,Q_1}^\dagger).\end{align*}
\item the $\bfh$-unbalanced Selmer group ${\rm Sel}^{\bfh}(\Q,\Vsdag)$ can be rewritten as
\begin{align*}
{\rm Sel}^{\bfh}(\Q,\Vsdag)&\simeq
{\rm Sel}_{\varnothing,0,\varnothing,0}(K_0,\widetilde{\mathbf{V}}_{\psi,Q_1}^\dagger)\oplus {\rm Sel}_{\varnothing,0,\varnothing,0}(K_0,\widetilde{\mathbf{V}}_{\chi,Q_1}^\dagger).\end{align*}
\end{enumerate}
\end{prop}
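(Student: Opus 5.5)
The plan is to reduce the comparison of Selmer groups to a bookkeeping exercise with the local conditions, using Shapiro's lemma (\ref{eq:shapiro}). Away from $p$ there is nothing to check. Shapiro's lemma identifies $H^1(\Q_v^{\rm nr},\Vsdag)$-triviality with unramifiedness at every prime of $K_0$ above $v$. Hence the unramified conditions away from $p$ match the conditions ``unramified at all $v\nmid p$'' in Definition \ref{def:rel_str_ord}.

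At $p$, since $p$ splits completely in $K_0$, the semilocal Shapiro isomorphism identifies
\[
H^1(\Q_p,\Vsdag)\simeq\bigoplus_{i=1}^4 H^1\bigl(K_{0,\cP_i},\widetilde{\mathbf{V}}_{\psi,Q_1}^\dagger\oplus\widetilde{\mathbf{V}}_{\chi,Q_1}^\dagger\bigr).
\]
Under this identification, a $G_{\Q_p}$-stable submodule $\mathscr{F}\subset\Vsdag$ corresponds to a tuple of $G_{K_{0,\cP_i}}$-stable subspaces. Because each $\widetilde{\mathbf{V}}^\dagger$ is a character, each such subspace is either everything or $0$ on each summand. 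So the whole proof reduces to computing, for each $\star\in\{{\rm bal},\bff,\bfg,\bfh\}$, which of the eight characters ($\psi$- or $\chi$-part at $\cP_1,\dots,\cP_4$) lie in $\mathscr{F}_p^\star(\Vsdag)$.

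To do this, I will decompose each factor restricted to $G_{\Q_p}$, using $\cP_1\mid\pp_1,\pp_2$, $\cP_4=\tau_1\cP_1$ and $\cP_3=\tau_2\cP_1$. For $V_f^\vee={\rm Ind}_{K_1}^\Q\psi_1'^{-1}$, the subspace $V_f^+$ is the line on which $G_{\Q_p}$ acts through the non-unit-root character. For the CM families $\bfg,\bfh$, the subspaces $V_\bfg^+$ and $V_\bfh^+$ are the lines carrying the characters ramified in the $\Psi_{T}$-variable, i.e.\ the $\ppbar_i$-embedded ones. Tracking through the tensor product as in (\ref{eqn:local-condition-for-reciprocity-law}), each of the eight characters is labelled by a triple of signs $(\pm,\pm,\pm)$ recording whether it comes from the $+$ or $-$ line of $f$, $\bfg$, $\bfh$. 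The $\chi$-component is distinguished from the $\psi$-component exactly by swapping the sign for $f$, since it uses $\psi_1'^{\cc}$.

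The four cases then follow by reading off membership. The balanced condition consists of triples with at least two plus signs; this recovers (\ref{eqn:local-condition-for-reciprocity-law}) and hence part (1), consistent with Proposition \ref{propselmer2}. The $\bff$-condition requires a plus in the first slot, the $\bfg$-condition a plus in the second, and the $\bfh$-condition a plus in the third. For $\psi$ and $\chi$ at each $\cP_i$, I will read off whether the corresponding character satisfies the constraint, giving the four patterns $(\varnothing,0,0,\varnothing)$, $(0,\varnothing,\varnothing,0)$, and so on.

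As a consistency check on the sign assignments, each unbalanced $\mathscr{F}$ has rank $4$, so exactly four of the eight slots must be relaxed. The balanced cases are also checked against Proposition \ref{propselmer2}.

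The main obstacle is pinning down the sign labelling at each $\cP_i$ correctly, particularly how conjugation by $\tau_1,\tau_2$ swaps the $\pm$ lines of $\bfg$ and $\bfh$ and of $f$. I would first fix the labels via the balanced case, which is already known from (\ref{eqn:local-condition-for-reciprocity-law}). I would then deduce the unbalanced cases purely combinatorially, using that $(f\text{-sign},\bfg\text{-sign},\bfh\text{-sign})$ at $\cP_1,\cP2,\cP_3,\cP_4$ for the $\psi$-part runs through $(+,+,+)$, $(-,-,-)$, $(+,-,+)$, $(+,+,-)$ up to the known sign flip for $\chi$ in the first entry.
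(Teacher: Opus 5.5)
Your route is the paper's own: decompose $\mathscr{F}_p^\star(\Vsdag)$ under the semilocal Shapiro isomorphism into the eight rank-one pieces and read off which of them survive. However, the sign table you commit to is wrong, and it breaks parts (1) and (2).

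The factors ${\rm Ind}_{K_1}^{\Q}\psi_1'^{-1}$ and ${\rm Ind}_{K_1}^{\Q}\ch_1^{-1}\Psi_{T_1}$ are both induced from $K_1$, while ${\rm Ind}_{K_2}^{\Q}\ch_2^{-1}\Psi_{T_2}$ is induced from $K_2$. So on the $\psi$-part, the $f$- and $\bfg$-signs at $\cP_i$ depend only on the prime of $K_1$ below $\cP_i$. They are $+$ exactly when that prime is $\pp_1$: the unit-root quotient of $f$ is the $\ppbar_1$-line because $a_p=\psi_1'(\ppbar_1)$, and $\Psi_{T_1}$ is ramified only at $\pp_1$. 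Likewise, the $\bfh$-sign depends only on the prime of $K_2$ below $\cP_i$, and is $+$ exactly for $\pp_2$. Equivalently, conjugation by $\tau_2$ (nontrivial on $K_1$, trivial on $K_2$) swaps the $f$- and $\bfg$-lines together, while $\tau_1$ swaps only the $\bfh$-line. Since $\cP_1,\cP_4\mid\pp_1$, $\cP_2,\cP_3\mid\ppbar_1$, $\cP_1,\cP_3\mid\pp_2$ and $\cP_2,\cP_4\mid\ppbar_2$, the labels at $\cP_1,\dots,\cP_4$ are as follows. The $\psi$-part is $(+,+,+),(-,-,-),(-,-,+),(+,+,-)$, and the $\chi$-part is $(-,+,+),(+,-,-),(+,-,+),(-,+,-)$. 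Your entry $(+,-,+)$ for $\psi$ at $\cP_3=\tau_2\cP_1$ swaps the $\bfg$-line but not the $f$-line; it is actually the $\chi$-label there. Your $f$-signs $(+,-,+,+)$ are therefore not constant on the primes over $\pp_1$ and over $\ppbar_1$, as they must be.

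This error is not cosmetic. With your table, the balanced condition gives $(\varnothing,0,\varnothing,\varnothing)$ for $\psi$ and $(\varnothing,0,0,0)$ for $\chi$. The $\bff$-condition gives $(\varnothing,0,\varnothing,\varnothing)$ and $(0,\varnothing,0,0)$. Both contradict (1), (2) and (\ref{eqn:local-condition-for-reciprocity-law}); only (3) and (4) come out right, because they do not see the $f$-sign. Your rank-$4$ check cannot catch this, since there are $3+1$ relaxed slots either way.

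Calibrating on the balanced case would expose the error at $\cP_3$, but it cannot replace the computation. The balanced data leaves undetermined the $f$-signs at $\cP_1,\cP_2$, and which of $\bfg,\bfh$ carries the $+$ at $\cP_3,\cP_4$. That is exactly the information (2)--(4) need. So the table has to be derived from the splitting of $p$ as above. With the corrected table, reading off the four conditions gives precisely (1)--(4).
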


\begin{proof} For the balanced case, see Proposition \ref{propselmer2}. For the $\unb$-unbalanced case, note that 
\begin{equation}\label{eq:unbal-def}
\begin{aligned}
\mathscr{F}^{\bff}_p(\Vsdag\vert_{G_{K_0}})
\simeq\bigl(\cO(\psi_1'^{-1})(1-k_1'/2)\otimes\tilde{\ch}_1^{-1}\tilde{\Psi}_{T_1}\tilde{\ch}_2^{-1}\tilde{\Psi}_{T_2}\bigr)&\oplus\bigl(\cO(\psi_1'^{-1})(1-k_1'/2)\otimes\tilde{\ch}_1^{-1}\tilde{\Psi}_{T_1}\tilde{\ch}_2^{-\cc}\tilde{\Psi}_{T_2}^{\cc}\bigr)\\
\oplus\bigl(\cO(\psi_1'^{-1})(1-k_1'/2)\otimes\tilde{\ch}_1^{-\cc}\tilde{\Psi}_{T_1}^{\cc}\tilde{\ch}_2^{-1}\tilde{\Psi}_{T_2}\bigr)&\oplus\bigl(\cO(\psi_1'^{-1})(1-k_1'/2)\otimes\tilde{\ch}_1^{-\cc}\tilde{\Psi}_{T_1}^{\cc}\tilde{\ch}_2^{-\cc}\tilde{\Psi}_{T_2}^{\cc}\bigr).
\end{aligned}
\end{equation}
and the result follows. Similar computations can be performed for the $\bfg$-unbalanced and the $\bfh$-unbalanced cases.
\end{proof}

As a consequence, we also obtain the following isomorphisms for the Selmer groups with coefficients in 
\begin{align*}\mathbf{A}_{Q_1}^\dagger&={\rm Hom}_{\Z_p}(\Vsdag,\mu_{p^\infty}),\\
    \widetilde{\mathbf{A}}_{\psi,Q_1}^\dagger&=\cO(\psi_1')(k_1'/2)\otimes\tilde{\ch}_1\tilde{\Psi}_{T_1}^{-1}\tilde{\ch}_2\tilde{\Psi}_{T_2}^{-1},\\
\widetilde{\mathbf{A}}_{\chi,Q_1}^\dagger&=\cO(\psi_1'^{\cc})(k_1'/2)\otimes\tilde{\ch}_1\tilde{\Psi}_{T_1}^{-1}\tilde{\ch}_2\tilde{\Psi}_{T_2}^{-1}
\end{align*}by local Tate duality. 

\begin{cor}\label{cor:factor-S} We can identify
the balanced Selmer group ${\rm Sel}^{\rm bal}(\Q,\mathbf{A}_{Q_1}^\dagger)$ as
\begin{align*}
{\rm Sel}^{\rm bal}(\Q,\mathbf{A}_{Q_1}^\dagger)&\simeq{\rm Sel}_{0,\varnothing,\varnothing,0}(K_0,\widetilde{\mathbf{A}}_{\psi,Q_1}^\dagger)\oplus {\rm Sel}_{0,\varnothing,0,\varnothing}(K_0,\widetilde{\mathbf{A}}_{\chi,Q_1}^\dagger),
\end{align*}
the $\unb$-unbalanced Selmer group ${\rm Sel}^{\unb}(\Q,\mathbf{A}_{Q_1}^\dagger)$ as
\begin{align*}
{\rm Sel}^{\unb}(\Q,\mathbf{A}_{Q_1}^\dagger)&\simeq{\rm Sel}_{0,\varnothing,\varnothing,0}(K_0,\widetilde{\mathbf{A}}_{\psi,Q_1}^\dagger)\oplus {\rm Sel}_{\varnothing,0,0,\varnothing}(K_0,\widetilde{\mathbf{A}}_{\chi,Q_1}^\dagger),
\end{align*}
the $\bfg$-unbalanced Selmer group ${\rm Sel}^{\bfg}(\Q,\Vsdag)$ can be rewritten as
\begin{align*}
{\rm Sel}^{\bfg}(\Q,\mathbf{A}_{Q_1}^\dagger)&\simeq
{\rm Sel}_{0,\varnothing,\varnothing,0}(K_0,\widetilde{\mathbf{A}}_{\psi,Q_1}^\dagger)\oplus {\rm Sel}_{0,\varnothing,\varnothing,0}(K_0,\widetilde{\mathbf{A}}_{\chi,Q_1}^\dagger),\end{align*}
and the $\bfh$-unbalanced Selmer group ${\rm Sel}^{\bfh}(\Q,\mathbf{A}_{Q_1}^\dagger)$ as
\begin{align*}
{\rm Sel}^{\bfh}(\Q,\mathbf{A}_{Q_1}^\dagger)&\simeq
{\rm Sel}_{0,\varnothing,0,\varnothing}(K_0,\widetilde{\mathbf{A}}_{\psi,Q_1}^\dagger)\oplus {\rm Sel}_{0,\varnothing,0,\varnothing}(K_0,\widetilde{\mathbf{A}}_{\chi,Q_1}^\dagger).\end{align*}
\end{cor}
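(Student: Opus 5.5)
The plan is to deduce the statement from Proposition \ref{prop:factor-S} by local Tate duality, working one prime of $K_0$ above $p$ at a time. The point is that each local condition in Proposition \ref{prop:factor-S} is, at each $\cP_i$, either relaxed or strict, and the orthogonal complement of one is the other.

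\textbf{Step 1: Shapiro's lemma for the discrete module.} Taking Pontryagin duals of the decomposition (\ref{eq:dec-V-first}) gives
\[
\mathbf{A}_{Q_1}^\dagger\simeq {\rm Ind}_{K_0}^{\Q}\bigl(\widetilde{\mathbf{A}}_{\psi,Q_1}^\dagger\oplus\widetilde{\mathbf{A}}_{\chi,Q_1}^\dagger\bigr).
\]
This holds because, for a finite-index subgroup, induction and coinduction agree, so the dual of an induced module is the induced module of the dual. We also use that ${\rm Hom}(-,\mu_{p^\infty})$ takes the direct sum $\cO(\tilde\psi_1'^{-1})\oplus\cO(\tilde\psi_1'^{-\cc})$ to the corresponding sum of inverse characters, twisted by $\varepsilon_{\rm cyc}$. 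Shapiro's lemma then gives the analogue of (\ref{eq:shapiro}) for $\mathbf{A}_{Q_1}^\dagger$.

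\textbf{Step 2: matching the local decompositions and pairings.} Since $p$ splits completely in $K_0$, the local cohomology decomposes as
\[
H^1(\Q_p,{\rm Ind}_{K_0}^{\Q}M)\simeq\bigoplus_{i=1}^4H^1(K_{0,\cP_i},M),
\]
and the same holds at primes $v\neq p$. The local Tate pairing on $H^1(\Q_v,\Vsdag)\times H^1(\Q_v,\mathbf{A}^\dagger_{Q_1})$ is carried to the sum of the local Tate pairings for $K_0$ at the primes above $v$. This is the standard compatibility of cup product with Shapiro's isomorphism: Shapiro is the composite of restriction with the evaluation map, and corestriction commutes with the invariant maps.

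\textbf{Step 3: computing the orthogonal complements.} Under Step 2, the orthogonal complement of a direct sum of local conditions is the direct sum of the componentwise complements. At a prime $\cP_i$ with relaxed condition $\varnothing$, the complement of the whole group $H^1(K_{0,\cP_i},\widetilde{\mathbf{V}}^\dagger)$ is $0$. At a prime with strict condition $0$, the complement is the whole group. On the lattice, the strict condition is the torsion subgroup of $H^1(K_{0,\cP_i},\widetilde{\mathbf{T}}^\dagger)$, so I will need these torsion groups to vanish. They do vanish, because $H^0(K_{0,\cP_i},\widetilde{\mathbf{A}}^\dagger)=0$, which follows since the relevant characters are residually nontrivial on inertia at $\cP_i$: the Hodge–Tate weights are nonzero modulo $p-1$, as in Assumption \ref{assumption-key-psi-infinity}.

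At primes $v\nmid p$, the unramified condition is its own annihilator up to the finite groups $H^1$ of inertia, which are harmless here. The same Shapiro matching applies, giving the unramified conditions in the definition of ${\rm Sel}_{\alpha,\beta,\gamma,\delta}$.

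\textbf{Step 4: reading off the four cases.} Applying the swap $\varnothing\leftrightarrow 0$ to each case of Proposition \ref{prop:factor-S} gives the statement:
\begin{itemize}
\item balanced: $(\varnothing,0,0,\varnothing)$ becomes $(0,\varnothing,\varnothing,0)$ for the $\psi$-part, and $(\varnothing,0,\varnothing,0)$ becomes $(0,\varnothing,0,\varnothing)$ for the $\chi$-part;
\item $\bff$-unbalanced: the $\chi$-part $(0,\varnothing,\varnothing,0)$ becomes $(\varnothing,0,0,\varnothing)$;
\item $\bfg$-unbalanced: both parts become $(0,\varnothing,\varnothing,0)$;
\item $\bfh$-unbalanced: both parts become $(0,\varnothing,0,\varnothing)$.
\end{itemize}

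I expect the main obstacle to be Step 2 together with the integral subtlety in Step 3. One must check that Shapiro's isomorphism on $\Vsdag$ and on $\mathbf{A}^\dagger_{Q_1}$ are adjoint under the Tate pairings, including the identification of the $\psi$-component of $\mathbf{A}^\dagger_{Q_1}$ with $\widetilde{\mathbf{A}}^\dagger_{\psi,Q_1}$ (and not the $\chi$-one). One must also check that the torsion in the strict integral conditions vanishes, so that the complement is literally the full group and not only its divisible part.
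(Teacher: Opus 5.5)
Your proposal is correct and follows the paper's argument: the corollary is read off from Proposition~\ref{prop:factor-S} by local Tate duality, using that Shapiro's isomorphism is compatible with the local pairings, so that relaxed and strict conditions are exchanged at each $\cP_i$; your Step~4 reproduces all four cases.

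The one thing to drop is the torsion detour in Step~3. The local conditions on $\Vsdag$ are the images of $\rH^1(\Q_p,\mathscr{F}_p^\star(\Vsdag))$, so after Shapiro a strict condition is literally $0$, and its annihilator is the whole group by perfectness of local duality. At $v\nmid p$ both sides use the unramified condition by definition, so no duality is needed there. The justification you offered for the torsion vanishing would not have worked anyway: Assumption~\ref{assumption-key-psi-infinity} only separates $\bar\psi$ from $\bar\chi$ on $I_{\cP_1}$ and says nothing about nontriviality at every $\cP_i$.
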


\subsection{Applying the general machinery} \label{sec:JNS-Euler-system}
We show crucial arithmetic applications by invoking the general Euler system machinery of Jetchev--Nekov\'a{\v r}--Skinner  \cite{JNS} (see some details for the imaginary quadratic case in \cite[\S{4.3}]{Do-PhD} and \cite[\S{8}]{ACR}). These results will be utilized to deduce the Bloch--Kato conjecture and the anticyclotomic Iwasawa main conjecture  via an explicit reciprocity law that relates our Euler system classes and special values of complex and $p$-adic $L$-functions.

For every ideal $\mu_3\in \mathcal{N}$, denote by
\[
z_{\mu_3,\psi}\in{\rm Sel}_{\varnothing,0,0,\varnothing}(K_0[\mu_3],\cO_{\psi})
\]
the image of $\mathbf{z}_{\mu_3,\psi}$ from Theorem\,\ref{maintheorem2} and Proposition \ref{propselmer2}  under the projection
\begin{equation}\label{eq:Sel_projection_from_K_infty}
  {\rm Sel}_{\varnothing,0,0,\varnothing}(K_0[\mu_3p^\infty],\cO_{\psi})\rightarrow {\rm Sel}_{\varnothing,0,0,\varnothing}(K_0[\mu_3],\cO_{\psi}).
\end{equation}
And denote the base class
\[
z_{\psi}:={\rm Norm}^{K_0[1]}_{K_0}(z_{1,\psi})\in{\rm Sel}_{\varnothing,0,0,\varnothing}(K_0,\cO_{\psi}).
\]
Note that since we assume $p \nmid h_{K_0}$, $K_0[1]$ is actually the same with $K_0$ (recall that $K_0[\fkn]$ is the maximal $p$-extension inside the ring class field of $K_0$ of conductor $\fkn$). Therefore, $z_{\psi}=z_{1,\psi}$.

Similarly, we define the image of $^{\cc}\mathbf{z}_{\mu_3,\chi}$ from Theorem \ref{maintheorem2-chi} under the projection map

\[^{\cc}z_{\mu_3,\chi}\in{\rm Sel}_{\varnothing,0,\varnothing,0}(K_0[\mu_3],\cO_{\chi})\]
together with its base class
\[
^{\cc}z_{\chi}\in{\rm Sel}_{\varnothing,0,\varnothing,0}(K_0,\cO_{\chi}).
\]

\begin{thm}\label{thm:rank-1-general}
Assume $p\nmid 6h_{K_0}$ and $p>k_1'$. One has the following implications:
\begin{align*}
    z_{\psi} \text{ is non-torsion } \quad\Rightarrow\quad{\rm Sel}_{\varnothing,0,0,\varnothing}(K_0,V_{\psi}) \text{ is one-dimensional},\\
    ^{\cc}z_{\chi} \text{ is non-torsion } \quad\Rightarrow\quad{\rm Sel}_{\varnothing,0,\varnothing,0}(K_0,V_{\chi}) \text{ is one-dimensional.}
\end{align*}

\end{thm}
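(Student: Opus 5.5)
The plan is to feed the collection $\{z_{\mu_3,\psi}\}_{\mu_3\in\mathcal{N}'_\psi}$ (and, symmetrically, $\{{}^{\cc}z_{\mu_3,\chi}\}_{\mu_3\in\mathcal{N}'_\chi}$) into the anticyclotomic Euler system machinery of Jetchev--Nekov\'a\v{r}--Skinner \cite{JNS}, in the form spelled out for imaginary quadratic fields in \cite[\S 4.3]{Do-PhD} and \cite[\S 8]{ACR}, adapted to the CM extension $K_0/K_3$. We treat $\psi$; the $\chi$ case is identical after swapping the local conditions $(\varnothing,0,0,\varnothing)\leftrightarrow(\varnothing,0,\varnothing,0)$ and $\tau_\psi\leftrightarrow\tau_\chi$.

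\textbf{Step 1: the formal inputs.}
\begin{enumerate}
\item By Theorem~\ref{maintheorem2}, the classes satisfy the tame norm relations with Euler factor $P_{\cL_4}$ of $\cO_{\psi^{-1}}(1)$. This is exactly the shape of a ``split'' anticyclotomic Euler system indexed by ideals of $K_3$ whose norms are split in $K_0$.
\item By Proposition~\ref{propselmer2}, the classes lie in the Greenberg Selmer group ${\rm Sel}_{\varnothing,0,0,\varnothing}$. Its local conditions at $\cP_1,\cP_4$ (relaxed) and at $\cP_2=\tau_3\cP_1$, $\cP_3=\tau_3\cP_4$ (strict) are interchanged with their duals by complex conjugation $\tau_3$. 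Together with $\psi\psi^{\cc}=\bN$, this shows the Selmer structure is self-dual in the sense required by \cite{JNS}: its dual is ${\rm Sel}_{0,\varnothing,\varnothing,0}(K_0,A_{\psi})$, which is its $\tau_3$-conjugate.
\item The Selmer structure must be \emph{cartesian}, meaning the local conditions at $p$ are given by $G_{K_{0,\cP}}$-stable sub-lattices with saturated cokernel. This holds because they are $0$ or everything.
\end{enumerate}

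\textbf{Step 2: the big-image hypothesis.} This is where Assumption~\ref{assumption-key-psi-infinity} enters. For a character, the JNS hypotheses reduce to three conditions:
\begin{itemize}
\item $\bar\psi\neq 1$ and $\bar\psi\neq\omega$ on $G_{K_0[p^\infty]}$, giving $H^0(K_0[\mu_3p^\infty],\bar T)=H^0(\ldots,\bar T^\vee(1))=0$;
\item the existence of a $\gamma\in G_{K_0(\mu_{p^\infty})}$ with $\psi(\gamma)\not\equiv1$;
\item a Chebotarev set of primes $\ell\equiv1\pmod p$ whose Frobenius makes $T/(\mathrm{Frob}-1)$ free of rank one.
\end{itemize}
The restriction to inertia $\bar\psi|_{I_{\cP_1}}=\omega^{(k_1'+k_1+k_2)/2-1}$, together with $p>k_1'$, rules out the triviality cases (possibly after a mild extra condition). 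The primes in $\mathcal{L}'_\psi$ are chosen with Frobenius conjugate to $\tau_\psi$, so $\psi(\mathrm{Frob})\equiv1$; this is the Kolyvagin-prime condition. The same choice makes the auxiliary $\chi$-factor removed in the proof of Theorem~\ref{maintheorem1} invertible.

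\textbf{Step 3: the Kolyvagin argument.} Having checked the axioms, the JNS main theorem applies: if the base class $z_\psi=z_{1,\psi}$ is non-torsion, then ${\rm Sel}_{\varnothing,0,0,\varnothing}(K_0,T_\psi)$ has $\cO$-rank one. Concretely, the derivative classes built from $z_{\mu_3,\psi}$ via Kolyvagin operators bound the dual Selmer group ${\rm Sel}_{0,\varnothing,\varnothing,0}(K_0,A_\psi)$ to have corank $0$, hence to be finite. Global duality (Poitou--Tate) for the self-dual structure then gives $\mathrm{rank}\,{\rm Sel}_{\varnothing,0,0,\varnothing}(K_0,T_\psi)=1$. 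Tensoring with $\Q_p$ yields $\dim_F{\rm Sel}_{\varnothing,0,0,\varnothing}(K_0,V_\psi)=1$.

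\textbf{Main obstacle.} I expect the hardest part to be verifying the JNS hypotheses in this split biquadratic setting, rather than the Kolyvagin argument itself. Two points need care:
\begin{itemize}
\item the norm relation is only available along $\mathcal{L}'_\psi$, a restricted set of primes, so one must check that JNS only needs a Chebotarev-dense set of Kolyvagin primes;
\item ${\rm Frob}_{\cL_4}$ must be the correct Frobenius for the split-prime formalism with respect to the pairing $\mu_3\mid\mu_1\mu_2$.
\end{itemize}
If the rank-one version of JNS requires a ``rank one'' local condition count, I would check it directly. Summing over the places above each place of $K_3$, the relaxed minus strict dimensions at $\{\cP_1,\cP_2\}$ and $\{\cP_3,\cP_4\}$ each equal $1-1=0$. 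This balanced count is the expected parity/rank situation.
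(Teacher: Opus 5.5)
Your route is the paper's: Theorem~\ref{maintheorem2} and Proposition~\ref{propselmer2} make $\{z_{\mu_3,\psi}\}$ an anticyclotomic Euler system in the sense of \cite{JNS}, one checks the JNS hypotheses, and the JNS theorem gives the conclusion. The gap is that the one hypothesis the paper actually has to argue for is the one you leave unproved.

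\textbf{The missing hypothesis.} The paper works rationally with $V_\psi$ and checks three things: (i) absolute irreducibility; (ii) $\sigma=1$; (iii) an element $\gamma$ fixing $K_0^{\infty}=K_0[1]K_0(\mu_{p^\infty},(\cO_{K_0}^\times)^{1/p^\infty})$ with $\psi(\gamma)\neq 1$. For (iii), suppose $\psi$ factored through ${\rm Gal}(K_0^{\infty}/K_0)$. Then it would factor through the maximal abelian subextension. That subextension is $K_0(\mu_{p^\infty})$, since $K_0[1]=K_0$ and the Kummer extension by units contributes nothing abelian for $p\ge 5$. So $\psi$ would be a finite-order character times a power of the cyclotomic character, hence of parallel infinity type $(a,a,a,a)$. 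The self-dual type $(a,1-a,b,1-b)$ is never parallel, since $a=1-a$ has no integer solution. You state a version of this condition, with the larger group $G_{K_0(\mu_{p^\infty})}$ in place of $G_{K_0^\infty}$; that is equivalent only after this abelianization remark, and you give no argument for existence.

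\textbf{Your residual check.} It does not work as written. Knowing $\bar\psi|_{I_{\cP_1}}=\omega^{e}$ with $e=\frac{k_1'+k_1+k_2}{2}-1$, together with $p>k_1'$, cannot exclude $\bar\psi\in\{1,\omega\}$, because $e$ can be $\equiv 0$ or $1\pmod{p-1}$. The hypothesis $p>k_1'$ is what separates $\bar\psi$ from $\bar\chi$ in Assumption~\ref{assumption-key-psi-infinity}; it is not a big-image input. No extra condition is needed, though: at $\cP_2=\tau_3\cP_1$ the exponent is $1-e$. This persists on $G_{K_0[\mu_3p^\infty]}$ because $\omega$ has prime-to-$p$ order. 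So $\bar\psi=\omega^j$ would force $2j\equiv 1\pmod{p-1}$, which is impossible. In any case, for a statement about $V_\psi$ the residual hypotheses are unnecessary.

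\textbf{Step 3.} Your description of the mechanism is wrong. As you note in Step~1, the dual Selmer group ${\rm Sel}_{0,\varnothing,\varnothing,0}(K_0,A_\psi)$ is the $\tau_3$-conjugate of ${\rm Sel}_{\varnothing,0,0,\varnothing}(K_0,V_\psi/\cO_\psi)$. Your own balanced count says the core rank is $0$, so the $\cO$-rank of ${\rm Sel}_{\varnothing,0,0,\varnothing}(K_0,\cO_\psi)$ equals the $\cO$-corank of the dual Selmer group. Hence the dual Selmer group can never be shown finite when $z_\psi$ is non-torsion, since finiteness would force rank $0$. What you describe is Rubin's non-self-dual argument. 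In the anticyclotomic setting (Kolyvagin, Howard, JNS) the conclusion is that the dual Selmer group has corank exactly one, with its finite part bounded in terms of the index of $z_\psi$. Because you ultimately invoke the JNS theorem as a black box, this does not break the proof, but the ``concrete'' argument you sketch would fail if carried out.

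Your worry that the classes exist only for $\mu_3\in\mathcal{N}'_\psi$ is legitimate. The paper's proof writes $\mu_3\in\mathcal{N}$ and does not address it, so one should check that the Kolyvagin primes JNS need can be chosen with Frobenius conjugate to $\tau_\psi$.
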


\begin{proof}
We prove the first statement. Combining Theorem~\ref{maintheorem2} and Proposition~\ref{propselmer2}, the system of classes 
\begin{equation}\label{eq:ES-rel-str}
\bigl\{z_{\mu_3,\psi}\in{\rm Sel}_{\varnothing,0,0,\varnothing}(K_0[\mu_3],\cO_{\psi})\;\colon\;\mu_3\in\mathcal{N}\bigr\}
\end{equation}
forms an anticyclotomic Euler system in the sense of  Jetchev--Nekov\'a{\v r}--Skinner \cite{JNS}.

The following properties (i)--(iii) follow easily:
\begin{itemize}
    \item[(i)] $V_{\psi}$ is absolutely irreducible (as it is one-dimensional);
    \item[(ii)] There is an element $\sigma\in G_{K_0}$ fixing $K_0^{\infty}:=K_0[1]K_0(\mu_{p^\infty},(\cO_{K_0}^\times)^{1/p^\infty})$ such that $V_{\psi}/(\sigma-1)V_{\psi}$ is one-dimensional (take $\sigma=1$);
    \item[(iii)] There is an element $\gamma\in G_{K_0}$ fixing $K_0^{\infty}$ such that $V_{\psi}^{\gamma=1}=0$. One can take $\gamma\not\in \ker \psi$, which is possible since $\psi$ does not factor through ${\rm Gal}(K_0^{\infty}/K_0)$. Indeed, if $\psi$ factors through $K_0^{\infty}$ then it will factor through the maximal abelian sub-extension of $K_0^{\infty}/K_0$, which is $K^{\rm cyc}=K_0[1]K_0(\mu_{p^\infty})$ as the Kummer extension part contributes absolutely nothing to the maximal abelian sub-extension for $p\ge 5$. Hence, $\psi$ will factor through $K^{\rm cyc}$, which forces $\psi$ to be equal to a power of cyclotomic character times a finite order character. Therefore, the infinity type of $\psi$ will be $(a,a,a,a)$, a contradiction.
\end{itemize}
Hence, the fact that $z_{\psi}$ is non-torsion implies the one-dimensionality of ${\rm Sel}_{\varnothing,0,0,\varnothing}(K_0,V_{\psi})$ by the general machinery of \cite{JNS}. The proof for the second part is identical.
\end{proof}

Recall that $K_{0,\infty}^-$ is the anticyclotomic $\Z_p^2$ extension over $K_0$ and $\Lambda_{K_0}^-=\Z_p\dBr{{\rm Gal}(K_{0,\infty}^-/K_0)}$. 
Let $\mathbf{z}_{1,\psi}$ be the $\Lambda_{K_0}^-$-adic class of Theorem\,\ref{maintheorem2} of conductor $\mu_3=(1)$, and put the Iwasawa-theoretic base class
\[
\mathbf{z}_{\psi}:={\rm Norm}^{K_0[1]}_{K_0}(\mathbf{z}_{1,\psi}).
\]
Again $\mathbf{z}_{1,\psi}=\mathbf{z}_{\psi}$ from the assumption on the class number of $K_0$. Note that by Proposition \ref{propselmer2}, one has
\[
\mathbf{z}_{\psi}\in{\rm Sel}_{\varnothing,0,0,\varnothing}(K_0[p^\infty],\cO_{\psi}).
\]
Similarly, we have another base class
\[
^{\cc}\mathbf{z}_{\chi}\in{\rm Sel}_{\varnothing,0,\varnothing,0}(K_0[p^\infty],\cO_{\chi}).
\]

Denote by 
\begin{align*}
    X_{0,\varnothing,\varnothing,0}(K_{0,\infty}^-,\cO_{\psi^{-1}}(1))&={\rm Hom}_{\Z_p}\bigl(\varinjlim{\rm Sel}_{\varnothing,0,0,\varnothing}(K_0[\mu_3p^\infty],\cO_{\psi^{-1}}(1)),\Q_p/\Z_p\bigr) \\
    X_{0,\varnothing,0,\varnothing}(K_{0,\infty}^-,\cO_{\chi^{-1}}(1))&={\rm Hom}_{\Z_p}\bigl(\varinjlim{\rm Sel}_{\varnothing,0,\varnothing,0}(K_0[\mu_3p^\infty],\cO_{\chi^{-1}}(1)),\Q_p/\Z_p\bigr).
\end{align*}

One then has a divisibility towards an anticyclotomic Iwasawa main conjecture `without $p$-adic $L$-functions' as follows:
\begin{thm}\label{thm:IMC-general}
Assume that $p\nmid 6h_{K_0}$ and $p>k_1'$. 
\begin{enumerate}
    \item If $\mathbf{z}_{\psi}$ is non-torsion, then: 
\begin{enumerate}
    \item $X_{0,\varnothing,\varnothing,0}(K_{0,\infty}^-,\cO_{\psi^{-1}}(1))$ and ${\rm Sel}_{\varnothing,0,0,\varnothing}(K_0[\mu_3p^\infty],\cO_{\psi})$ both have $\Lambda_{K_0}^-$-rank one.
\item And we have the divisibility
\[
{\rm char}_{\Lambda_{K_0}^-}(X_{0,\varnothing,\varnothing,0}(K_{0,\infty}^-,\cO_{\psi^{-1}}(1))_{\rm tors})\supset{\rm char}_{\Lambda_{K_0}^-}\biggl(\frac{{\rm Sel}_{\varnothing,0,0,\varnothing}(K_0[p^\infty],\cO_{\psi})}{\Lambda_{K_0}^-\cdot\mathbf{z}_{\psi}}\biggr)^2
\]
in $\Lambda_{K_0}^-$. 
\end{enumerate}
     \item If $^{\cc}\mathbf{z}_{\chi}$ is non-torsion, then: 
\begin{enumerate}
    \item $X_{0,\varnothing,0,\varnothing}(K_{0,\infty}^-,\cO_{\chi^{-1}}(1))$ and ${\rm Sel}_{\varnothing,0,\varnothing,0}(K_0[\mu_3p^\infty],\cO_{\chi})$ both have $\Lambda_{K_0}^-$-rank one.
\item And we have the divisibility
\[
{\rm char}_{\Lambda_{K_0}^-}(X_{0,\varnothing,0,\varnothing}(K_{0,\infty}^-,\cO_{\chi^{-1}}(1))_{\rm tors})\supset{\rm char}_{\Lambda_{K_0}^-}\biggl(\frac{{\rm Sel}_{\varnothing,0,\varnothing,0}(K_0[p^\infty],\cO_{\chi})}{\Lambda_{K_0}^-\cdot \,^{\cc}\mathbf{z}_{\chi}}\biggr)^2
\]
in $\Lambda_{K_0}^-$. 
\end{enumerate}
\end{enumerate}

Here, the subscript ${\rm tors}$ denotes the $\Lambda_{K_0}^-$-torsion submodule.
\end{thm}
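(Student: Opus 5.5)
The plan is to treat Theorem \ref{thm:IMC-general} as a direct application of the Iwasawa-theoretic part of the Jetchev--Nekov\'a\v{r}--Skinner machinery \cite{JNS}, in parallel with the proof of Theorem \ref{thm:rank-1-general}. I describe the argument for $\psi$; the $\chi$ case only changes the local conditions at $\cP_3,\cP_4$.

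\textbf{Step 1: the Iwasawa-theoretic Euler system.} By Theorem \ref{maintheorem2} and Proposition \ref{propselmer2}, the classes
\[
\bigl\{\mathbf{z}_{\mu_3,\psi}\in{\rm Sel}_{\varnothing,0,0,\varnothing}(K_0[\mu_3p^\infty],\cO_\psi)\;\colon\;\mu_3\in\mathcal{N}'_\psi\bigr\}
\]
satisfy the tame norm relations with the Euler factor $P_{\cL_4}$ of $\cO_{\psi^{-1}}(1)$. They therefore form a $\Lambda_{K_0}^-$-adic anticyclotomic Euler system in the sense of \cite{JNS}.

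\textbf{Step 2: the local conditions.} The Greenberg condition $(\varnothing,0,0,\varnothing)$ must be a legitimate Selmer structure for the JNS formalism. Since $\psi\psi^{\cc}=\bN$ and complex conjugation swaps $\cP_1\leftrightarrow\cP_2$ and $\cP_3\leftrightarrow\cP_4$, the condition is self-dual under the pairing $\cO_\psi\times\cO_{\psi^{-1}}(1)$. Indeed, relaxed at $\cP$ pairs with strict at $\cP^{\cc}$, and this matches the dual $X_{0,\varnothing,\varnothing,0}$. This is also a Panchishkin-type ordinary condition, so the hypotheses on the local conditions at $p$ in \cite{JNS} hold.

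\textbf{Step 3: the big-image hypotheses.} These are verified for the twisted representation $\cO_\psi\otimes\Lambda^-_{K_0}(\boldsymbol{\kappa}_{\rm ac}^{-1})$, exactly as in items (i)--(iii) of the proof of Theorem \ref{thm:rank-1-general}, now over $K_0[p^\infty]$.
\begin{itemize}
\item Irreducibility is automatic in rank one.
\item For the element $\gamma$, we use that $\psi$ does not have parallel infinity type. Hence $\psi$ restricted to $G_{K_0[p^\infty]}$ is still nontrivial modulo $\mathfrak{P}$ on the relevant subgroup.
\item The primes used are those of $\mathcal{T}'_\psi$. Their existence comes from Assumption \ref{assumption-key-psi-infinity}, which uses $p>k_1'$. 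The classes are only indexed by these primes, so the Chebotarev argument in \cite{JNS} must be run with Frobenius restricted to the conjugacy class of $\tau_\psi$.
\end{itemize}

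\textbf{Step 4: conclusions.} Given non-torsionness of $\mathbf{z}_\psi$, the Iwasawa-theoretic Kolyvagin bound of \cite{JNS} (compare \cite[\S8]{ACR} and \cite[\S4.3]{Do-PhD}) yields three things:
\begin{itemize}
\item ${\rm Sel}_{\varnothing,0,0,\varnothing}(K_0[p^\infty],\cO_\psi)$ has $\Lambda^-_{K_0}$-rank one;
\item the dual $X_{0,\varnothing,\varnothing,0}$ has rank one, by the self-dual structure and Poitou--Tate global duality;
\item the divisibility of ${\rm char}(X_{\rm tors})$ by the square of ${\rm char}({\rm Sel}/\Lambda^-_{K_0}\mathbf{z}_\psi)$.
\end{itemize}
The square appears because the structure is self-dual, as in Howard's bipartite setting. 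In the two-variable setting the divisibility is first established after specializing along height-one primes (one-variable $\Z_p$-lines in $\Gamma^-$), where the one-variable JNS result applies. It is then assembled via the standard argument that a divisibility holding at all but finitely many height-one specializations lifts to $\Lambda^-_{K_0}$, up to pseudo-null error.

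\textbf{Main obstacle.} I expect the hardest step to be Step 3 together with the restriction to the thin prime set $\mathcal{N}'_\psi$. The Kolyvagin derivative argument needs enough Kolyvagin primes with prescribed Frobenius on the Selmer classes. Here the available primes are constrained to have Frobenius conjugate to $\tau_\psi$, so one must check that this still leaves room to impose independent conditions. The approach is to argue, as in \cite[Thm.~5.3.3]{LZ-Sym-square}, that the field cut out by $\psi$ mod $\mathfrak{P}^n$ and the Kummer fields of the Selmer classes are linearly disjoint from the field cutting out the $\chi$-condition. This disjointness holds because $\bar\psi\neq\bar\chi$ on inertia at $\cP_1$.
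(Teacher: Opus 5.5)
Your proposal follows the paper's proof. The paper likewise builds the $\Lambda_{K_0}^-$-adic Euler system from Theorem~\ref{maintheorem2} and Proposition~\ref{propselmer2}, checks that $\bar{\cO}_\psi$ is irreducible, takes $\sigma=1$, produces $\gamma$ fixing $K_0^\infty$ with $\bar\psi(\gamma)\neq 1$, and then invokes \cite{JNS}; your discussion of the self-dual local conditions, the thin prime set $\mathcal{N}'_\psi$, and descent along height-one specializations only makes explicit what the paper leaves inside that citation.

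Two points need tightening. First, for $\gamma$ what is needed is that the infinity type is non-parallel modulo $p-1$; this is automatic, since self-duality makes the exponents at $\cP_1$ and $\cP_2$ sum to $1$ while $p-1$ is even. Second, compatibility of the $\tau_\psi$-condition with the Kolyvagin conditions does not follow from $\bar\psi\neq\bar\chi$ on inertia alone. It follows from having an element in $(\ker\bar\psi\cap\ker\omega)\setminus\ker\bar\chi$, which the paper packages into Assumption~\ref{assumption-key-psi-infinity}, together with the fact that the Kummer conditions live in a $p$-extension while the $\bar\chi$-condition lives in a prime-to-$p$ one.
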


\begin{proof}
We prove the first part, the proof of the second one is similar. Combining Theorem\,\ref{maintheorem2} and Proposition\,\ref{propselmer2}, the system of classes
\begin{equation}\label{eq:ES-Lambda}
\bigl\{\mathbf{z}_{\mu_3,\psi}\in{\rm Sel}_{\varnothing,0,0,\varnothing}(K_0[\mu_3p^\infty],\cO_{\psi})\,\colon\;\mu_3\in\mathcal{N}\bigr\}
\end{equation}
forms a $\Lambda_{K_0}^-$-adic anticyclotomic Euler system in the sense of Jetchev--Nekov\'a{\v r}--Skinner.

The following properties hold
\begin{itemize}
    \item[(i)] $\bar{\cO}_{\psi}:=\cO_{\psi}/\mathfrak{P}\cO_{\psi}$ is absolutely irreducible;
    \item[(ii)] There is an element $\sigma\in G_K$ fixing $K_0^{\infty}$ such that $\cO_{\psi}/(\sigma-1)\cO_{\psi}$ is free of rank $1$ over $\cO$;
    \item[(iii)] There is an element $\gamma\in G_{K_0}$ fixing $K_0^{\infty}$ and acting as multiplication by a scalar $a_\gamma\neq 1$ on $\bar{\cO}_{\psi}$ (one takes $\gamma\not\in\ker \bar{\psi}$, argues similar to Theorem \ref{thm:rank-1-general} modulo $(p-1)$ by replacing the cyclotomic character in the argument by $\omega$).
\end{itemize}
and so the non-torsionness of $\mathbf{z}_{\psi}$ implies the conclusions by the general machinery of \cite{JNS}.
\end{proof}

\subsection{On the Bloch--Kato conjecture in rank zero}
\label{subsec:BK-def}

Our first application is the Bloch--Kato conjecture in analytic rank zero for the conjugate self-dual $G_{K_0}$-representation $V_{\chi}=L_\mathfrak{P}(\chi)$.

\begin{assumption}\label{eq:chi-decomp}
    We assume that the conjugate self-dual Hecke character $\chi$ over $K_0$ can be decomposed as: 
    \begin{equation*}
\chi^{-1}=\widetilde{\psi_1{\psi}_1'^{\cc}}\,\widetilde{\psi_2}\,\bN^{(k_1'+k_1+k_2-4)/2},
    \end{equation*}
    where
    \begin{enumerate}
     \item $\psi_1,\psi_1'$ are Hecke characters of $K_1$ of infinity type $(1-k_1,0)$, $(1-k_1',0)$, with $k_1,k_1'\geq 1$, and modulus $\fkf_1$, $\fkf_1'$ respectively.
    \item  $\psi_2$ is a Hecke character of $K_2$ of infinity type $(1-k_2,0)$, with $k_2\geq 1$ and modulus  $\fkf_2$.
    \item $\tilde{\psi}_i$ is the Hecke character of $K_0$, obtained by composing $\mathbb{A}_{K_0}^{\times}\xrightarrow{\mathbb{N}_{K_0/K_i}}\mathbb{A}_{K_i}^{\times}\xrightarrow{\psi_i}\mathbb{C}$ for each $i\in\{1,2\}$.
    \item The following condition forces the \emph{self-duality} property of $\chi$:
    \begin{equation*}
{{\chi_{\psi_1'}\epsilon_{K_1}=1},\qquad\chi_{\psi_1}\epsilon_{K_1}\chi_{\psi_2}\epsilon_{K_2}=1}.
    \end{equation*}
    \item $\psi_1(\pp_1)\neq \psi_1(\ppbar_1)\pmod{p}$.
    \end{enumerate}
In this scenario, the infinity type of $\chi$ (corresponding to the order $(\cP_1,\cP_2,\cP_3,\cP_4)$ or $(1,\tau_3,\tau_2,\tau_1)$) is
\[\left(\frac{-k_1'+k_1+k_2}{2},\frac{k_1'-k_1-k_2}{2}+1,\frac{k_1'-k_1+k_2}{2},1-\frac{k_1'-k_1+k_2}{2}\right).\]
\end{assumption}

\begin{thm}\label{thm:BK-def}
Let $\chi$ be an anticyclotomic Hecke character of $K_0$ as in Assumption (\ref{eq:chi-decomp}). Assume further that:
\begin{enumerate}
    \item $k_1\ge k_1'+k_2$,
    \item $\prod_{p|N}\epsilon_p=1$ where $N=\mathrm{lcm}(N_{\psi_1'},N_{\psi_1},N_{\psi_2})$ and $\epsilon_p$ is the local root number of the triple product, (This will be automatic if $\mathrm{gcd}(N_{\psi_1'},N_{\psi_1},N_{\psi_2})=1$.)
    \item $p>k_1'$,
    \item $p\nmid 6h_{K_0}$, the class number of $K_0$,
\end{enumerate}
then we have the following implication
\[
L(\chi,0)\neq 0\quad\Longrightarrow\quad{\rm Sel}_{\rm BK}(K_0,V_{\chi})=0.
\] 
In other words, the Bloch--Kato conjecture holds in analytic rank zero for $V_{\chi}$.
\end{thm}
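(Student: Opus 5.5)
The strategy is the standard one: combine the Euler system of Theorem~\ref{maintheorem2-chi}, placed in the Selmer group of Proposition~\ref{propselmer2}, with the $\bfg$-unbalanced reciprocity law (Theorem~\ref{thm:ERL}), and feed the result into the rank-zero form of the Jetchev--Nekov\'a{\v r}--Skinner machinery.

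First, under $k_1\ge k_1'+k_2$, Lemma~\ref{lem:BK-Gr} identifies ${\rm Sel}_{\rm BK}(K_0,V_\chi)$ with ${\rm Sel}_{\varnothing,0,0,\varnothing}(K_0,V_\chi)$. The specialization $(k_1',k_1,k_2)$ lies in $\mathfrak{X}^{\bfg}_{\mathcal{R}}$, so the relevant $p$-adic $L$-function is $\mathscr{L}_p^{\bfg}$. Condition (2) gives (\ref{eq:+1}), so Theorem~\ref{thm:hsieh-triple} applies. Assumption~\ref{eq:chi-decomp}(5) makes $\bar\rho_{\bfg}$ absolutely irreducible and $p$-distinguished. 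By (\ref{eq:dec-V-first}) and Artin formalism,
\[
L(\Vsdag,s)=L(\psi,s)\,L(\chi,s)
\]
(up to the shift, the centers agree). The interpolation formula then expresses $\mathscr{L}_p^{\bfg}(f,\bfg,\bfh)(k_1,k_2)^2$ as a nonzero multiple of $L(\psi,0)L(\chi,0)$ times $\mathcal{E}_p$.

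Two complications arise here. The interpolated value involves $L(\psi,0)$ as well, and $\mathcal{E}_p$ could vanish. To isolate $\chi$, I would decompose the $\bfg$-unbalanced quotient line $\mathbf{V}_{\bfg}^{\bff\bfh}$ under (\ref{eq:shapiro}) and note that, in the basis of (\ref{eqn:local-condition-for-reciprocity-law}), the rank-one quotient at $\cP_3$ in the $\chi$-summand (where the balanced condition $\varnothing$ differs from the Bloch--Kato condition $0$) is exactly where ${\rm Log}^{\bfg}$ sees the $\chi$-component ${}^{\cc}z_\chi$. One should then either verify that $\mathscr{L}_p^{\bfg}$ factors through this component, with the $\chi$-factor interpolating $L(\chi,0)$, or argue with the Katz-type factorization of the CM triple product into two anticyclotomic $p$-adic $L$-functions. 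I expect this step to be the main obstacle: controlling the factorization and the nonvanishing of the Euler factors. I would try to show the Euler factors are nonzero because the weights are away from the trivial-zero locus, using the Hodge--Tate weights of $\chi$ at $\cP_3$.

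Granting this, $L(\chi,0)\ne0$ gives ${\rm Log}({\rm res}_{\cP_3}{}^{\cc}z_\chi)\neq 0$. Hence the base class ${}^{\cc}z_\chi\in{\rm Sel}_{\varnothing,0,\varnothing,0}(K_0,\cO_\chi)$ is non-torsion and its localization at $\cP_3$ is non-torsion. Theorem~\ref{thm:rank-1-general} then gives $\dim{\rm Sel}_{\varnothing,0,\varnothing,0}(K_0,V_\chi)=1$, spanned by ${}^{\cc}z_\chi$. Since $H^1$ of a character at $\cP_3$ has a rank-one relevant piece, the localization map ${\rm Sel}_{\varnothing,0,\varnothing,0}\to H^1(K_{0,\cP_3},V_\chi)$ is injective. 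Its kernel is ${\rm Sel}_{\varnothing,0,0,\varnothing}\cap{\rm Sel}_{\varnothing,0,\varnothing,0}={\rm Sel}_{\varnothing,0,0,0}$.

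Finally, I would use Poitou--Tate global duality together with self-duality $V_\chi^\vee(1)\simeq V_{\chi^{\cc}}$, and the conjugation symmetry relating conditions at $\cP_3,\cP_4$, to compare ${\rm Sel}_{\varnothing,0,0,\varnothing}$ with ${\rm Sel}_{\varnothing,0,0,0}$. The relaxed condition at $\cP_4$ contributes nothing extra, because its dual strict condition is implied by the injectivity just proved. This yields ${\rm Sel}_{\varnothing,0,0,\varnothing}(K_0,V_\chi)=0$.
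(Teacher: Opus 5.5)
There is a genuine gap, and it sits exactly at the step you ``grant''. The reciprocity law gives $\eta_{\bfg}\cdot{\rm Log}\bigl({\rm res}_{\cP_3}({}^{\cc}z_\chi)\bigr)=\mathscr{L}_p^{\bfg}$. The square of the right-hand side interpolates the product $L(\psi,0)\,L(\chi,0)$. Neither of your proposed fixes removes the factor $L(\psi,0)$.
\begin{itemize}
\item Your first suggestion, that ${\rm Log}^{\bfg}$ only sees the $\chi$-summand at $\cP_3$, is true, but it is already built into the reciprocity law. The value is still the product, because the diagonal class ${}^{\cc}z_\chi$ is built from the whole triple and so ``knows'' $\psi$.
\item Your second suggestion, the factorization of Proposition~\ref{prop:factor-L-indef}, still needs the $\psi$-factor to be nonzero at the relevant point, i.e.\ $L(\psi,0)\neq0$. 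Nothing in the hypotheses gives this for the given $(\psi_1,\psi_1',\psi_2)$.
\end{itemize}
So ${\rm res}_{\cP_3}({}^{\cc}z_\chi)$ may vanish even though $L(\chi,0)\neq 0$. The Euler factor $\mathcal{E}_p$ you also worry about is only a side issue.

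The missing idea is to use the freedom in the decomposition of $\chi$.
\begin{itemize}
\item Replace $(\psi_1,\psi_1')$ by $(\psi_1\phi,\psi_1'\phi)$, with $\phi$ a ring class character of $K_1$ of $\ell$-power conductor.
\item Since $\phi\phi^{\cc}=1$ and $\phi|_{\mathbb{A}_{\mathbf{Q}}^\times}=1$, this leaves $\chi$ and the self-duality conditions unchanged, while $\psi$ becomes $\psi\tilde\phi^{-2}$.
\item Condition (2) gives sign $+1$ for the triple product. $L(\chi,0)\neq0$ forces sign $+1$ for $\chi$. Hence $\psi$ also has sign $+1$.
\item The non-vanishing theorem of Burungale--He--Tian--Ye (Theorem~1.4 of \cite{BHTY}) then supplies $\phi$ with $L(\psi\tilde\phi^{-2},0)\neq0$.
\end{itemize}
Rerunning the construction with the twisted triple gives a new Euler system for the same $V_\chi$. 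Its base class now has nonzero restriction at $\cP_3$ whenever $L(\chi,0)\neq0$.

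Once this is in place, the rest of your argument matches the paper. Theorem~\ref{thm:rank-1-general} makes ${\rm Sel}_{\varnothing,0,\varnothing,0}(K_0,V_\chi)$ one-dimensional and spanned by ${}^{\cc}z_\chi$, so ${\rm Sel}_{\varnothing,0,0,0}(K_0,V_\chi)=0$.

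Your Poitou--Tate step is vague, but the mechanism is right and can be made precise as follows. Conjugating ${}^{\cc}z_\chi$ gives $x\in{\rm Sel}_{0,\varnothing,0,\varnothing}(K_0,V_\chi^\vee(1))$ with ${\rm res}_{\cP_4}(x)\neq0$; this uses $V_\chi^\vee(1)\simeq V_{\chi^{\cc}}$ and the fact that $\cc$ swaps $\cP_3\leftrightarrow\cP_4$. Global reciprocity against $\kappa\in{\rm Sel}_{\varnothing,0,0,\varnothing}={\rm Sel}_{\rm BK}$ leaves only the $\cP_4$ pairing. Since $H^1(K_{0,\cP_4},V_\chi)$ is one-dimensional, this forces ${\rm res}_{\cP_4}(\kappa)=0$.
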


\begin{proof}
We consider the CM Hida families
\[
\bfg=\boldsymbol{\theta}_{\ch_1}(S_1),\quad\bfh=\boldsymbol{\theta}_{\ch_2}(S_2),
\]
that pass through $\theta_{\psi_1}$ and $\theta_{\psi_2}$ respectively. Note that the triple $(\bff,\bfg,\bfh)$ satisfies (\ref{eq:+1}). Then the isomorphism (\ref{eq:dec-V-first}) of the associated $\Vsdag$ together with the further specialization of $Q_1$ to ${\mathcal{Q}_1}$, corresponding to $\theta_{\psi_1}$ and $\theta_{\psi_2}$ (here $T_i=(1+p)^{k_i-1}-1$ for $i=1,2$) show a parallel decomposition for $L$-values
\[
L(\mathbb{V}_{\mathcal{Q}_1}^\dag,0)=L(\chi,0)\cdot L(\psi,0),
\]
where 
\begin{align*}
\psi&=(\widetilde{\psi_1{\psi}_1'}\,\widetilde{\psi_2}\,\bN^{(k_1'+k_1+k_2-4)/2})^{-1}\\
    \chi&=(\widetilde{\psi_1{\psi}_1'^{\cc}}\,\widetilde{\psi_2}\,\bN^{(k_1'+k_1+k_2-4)/2})^{-1}.
\end{align*}
Under the condition $k_1\ge k_1'+k_2$, we are in the range of interpolation for $\mathscr{L}_p^{\bfg}$. 

Note further that the first two conditions force the sign of the triple product to be $+1$, and this combines with the sign of the functional equation of $\chi$ being $+1$ (since $L(\chi,0)\neq 0$) will ensure that the sign of the functional equation of $\psi$ being $+1$ as well.

For any ring class character $\phi$ of $K_1$ with conductor $\ell^m$, one can change $(\psi_1,\psi_1')\mapsto(\psi_1\phi,\psi_1'\phi)$. This will fix $\chi$ and turn $\psi$ into $\psi\tilde{\phi}^{-2}$. Note that $\phi|_{\mathbb{A}_{\Q}^\times}=1$ so condition $(4)$ of Assumption \ref{eq:chi-decomp} is intact. Furthermore, as $\mathrm{Gal}(K_0/\Q)$ is abelian, $\tilde{\phi}$ will be anticyclotomic over $K_0$.  We now choose $\phi$ such that $L(\psi\tilde{\phi}^{-2},0)\neq 0$, which is possible by Theorem 1.4 of \cite{BHTY}, by assuming that $\ell\nmid 6pD_{K_3}$ and splits in $K_0$. Combining with Theorem\,\ref{thm:ERL}, we deduce that
\[
L(\chi,0)\neq 0\quad\Longrightarrow\quad L(\mathbb{V}_{\mathcal{Q}_1}^\dag,0)\neq 0.
\]

We observe that the specialization at $Q_1$ of the composition of maps in (\ref{eq:map-ERL}), which is a map from $ {\rm Sel}^{\rm bal}(\Q,\Vdag)$ to $\rH^1(\Q_p,\mathbf{V}_{\bfg}^{\bff\bfh})$, corresponds to the composition of (see also equation (\ref{eqn:local-condition-for-reciprocity-law}))
\begin{align*}
    {\rm Sel}^{\rm bal}(\Q,\Vsdag)&\rightarrow {\rm Sel}_{\varnothing,0,\varnothing,0}(K_0,\widetilde{\mathbf{V}}_{\chi,Q_1}^\dagger) \xrightarrow{{\rm res}_{\mathcal{P}_3}} \rH^1(K_{0,\mathcal{P}_3},\cO(\psi_1'^{-\cc})(1-k_1'/2)\otimes\tilde{\ch}_1^{-1}\tilde{\Psi}_{T_1}\tilde{\ch}_2^{-1}\tilde{\Psi}_{T_2}).
\end{align*}
Here, the first arrow is the projection to the second summand of part (1) of Proposition \ref{prop:factor-S}. Further specialization at $\cQ_1$ implies that
\[L(\mathbb{V}_{\mathcal{Q}_1}^\dag,0)\neq 0\qquad\Longrightarrow \qquad {\rm res}_{\mathcal{P}_3}(^{\cc}z_{\chi})\neq 0\]
by the reciprocity law in Theorem \ref{thm:ERL}. 

From our construction and Section \ref{sec:JNS-Euler-system}, the class $^{\cc}z_{\chi}\in{\rm Sel}_{\varnothing,0,\varnothing,0}(K_0,\cO_{\chi})$ is the base class of the anticyclotomic Euler system 
\[
\bigl\{^{\cc}z_{\mu_3,\chi}\in{\rm Sel}_{\varnothing,0,\varnothing,0}(K_0[\mu_3],\cO_{\chi})\;\colon\;\mu_3\in\mathcal{N}\bigr\}.
\]
 Recall again that $K_0[1]=K_0$ from the assumption $p \nmid h_{K_0}$. By Theorem\,\ref{thm:rank-1-general}, we conclude that the Selmer group ${\rm Sel}_{\varnothing,0,\varnothing,0}(K_0,V_{\chi})$ is one-dimensional, 
spanned by $^{\cc}z_{\chi}$.

 Using the fact that $\chi$ is conjugate self-dual, we see that ${\rm res}_{\cP_3}({\rm Sel}_{\varnothing,0,\varnothing,0}(K_0,V_{\chi}))\neq 0$ implies that $\mathrm{res}_{\cP_4}( \mathrm{Sel}_{0,\varnothing,0,\varnothing}(K_0,V_\chi^*))\neq 0$. Let $x\in \mathrm{Sel}_{0,\varnothing,0,\varnothing}(K_0,V_\chi^*)$ such that $\mathrm{res}_{\cP_4}(x)\neq 0$. Take any $\kappa\in \mathrm{Sel}_{\rm BK}(K_0,V_\chi) $, which is equal to $ {\rm Sel}_{\varnothing,0,0,\varnothing}(K_0,V_{\chi}) $ by Lemma \ref{lem:BK-Gr} due to our assumption that $k_1\ge k_1'+k_2$. By Mazur--Rubin \cite[Thm\,2.3.4]{MR-KS}, we have 
        \[\sum_{i=1}^4\langle \mathrm{res}_{\cP_i} (\kappa),\mathrm{res}_{\cP_i}(x)\rangle=0 \qquad\Rightarrow \qquad  \langle \mathrm{res}_{\cP_4} (\kappa),\mathrm{res}_{\cP_4}(x)\rangle=0 \qquad\Rightarrow \qquad \mathrm{res}_{\cP_4} (\kappa)=0,\]
        i.e. $\mathrm{Sel}_{\rm BK}(K_0,V_\chi)=\mathrm{Sel}_{\varnothing,0,0,0}(K_0,V_\chi)$. Now using the exact sequence
        \[0\longrightarrow \mathrm{Sel}_{\varnothing,0,0,0}(K_0,V_\chi)\longrightarrow {\rm Sel}_{\varnothing,0,\varnothing,0}(K_0,V_{\chi})\xrightarrow{{\rm res}_{\mathcal{P}_3}} \rH^1(K_{0,\mathcal{P}_3},V_\chi),\]
         we conclude that the Bloch--Kato Selmer group vanishes.
\end{proof}

\begin{rem}
    Note that since $\chi\chi^c=\bN$, we have $L(\chi^{-1}\bN,0)=L(\chi^\cc,0)$. And $L(\chi^\cc,0)=0$ if and only if $L(\chi,0)=0$, so our assumption fits into the framework of the Bloch--Kato conjecture.

\end{rem}

\subsubsection{Examples}\label{Example}
We give the following examples as an application of our main theorem: 
\begin{enumerate}
    \item $\chi$ is a Hecke character of infinity type $(1,0,0,1)$ and conductor $(42+42i)$ defined over $K_0=\Q(\zeta_{12})$ and decomposes as in Assumption (\ref{eq:chi-decomp}) where:
    \begin{enumerate}
        \item 
   $\psi_1'$ is a Hecke character of infinity type $(-1,0)$ and conductor $(2\sqrt{-3})$ defined over $K_1=\Q(\sqrt{-3})$ and gives rise to the CM modular form $f\in S_2(\Ga_0(36))$ (this is the modular form 36.2.a.a in the LMFDB list). 
   \item $\psi_1$ is a Hecke character of infinity type $(-2,0)$ and conductor $(6-3\sqrt{-3})$ defined  over $K_1=\Q(\sqrt{-3})$ and gives rise to the CM modular form $g\in S_3(\Ga_1(189),\chi_g)$ (this is the modular form 189.3.d.b in the LMFDB list). Here, $\chi_g$ is a character over $(\Z/189\Z)^\times$, induced from the Kronecker symbol $\left(\frac{-7}{\bullet}\right)$.
   \item $\psi_2$ is a Hecke character of finite order and conductor $(14+14i)$ defined  over $K_2=\Q(i)$ and gives rise to the CM modular form $h\in S_1(\Ga_1(1568),\chi_h)$ (this is the modular form 1568.1.c.a in the LMFDB list). Here, $\chi_h$ is a character over $(\Z/1568\Z)^\times$, induced also from the Kronecker symbol $\left(\frac{-7}{\bullet}\right)$, so this triple satisfies condition (4) of Assumption (\ref{eq:chi-decomp}). 
   \item The global root number for the triple product in this example is $+1$ since we are in the unbalanced range $(2,3,1)$ and $\gcd(36,189,1568)=1$.
    \end{enumerate}
    \item $\chi$ is a Hecke character of infinity type $(1,0,0,1)$ and conductor $(6+6i)$ defined over $K_0=\Q(\zeta_{12})$ and decomposes as in Assumption (\ref{eq:chi-decomp}) where:
    \begin{enumerate}
        \item 
   $\psi_1'$ is a Hecke character of infinity type $(-1,0)$ and conductor $(3)$ defined over $K_1=\Q(\sqrt{-3})$ and gives rise to the CM modular form $f\in S_2(\Ga_0(27))$ (this is the modular form 27.2.a.a in the LMFDB list). 
   \item $\psi_1$ is a Hecke character of infinity type $(-2,0)$ and conductor $(3)$ defined  over $K_1=\Q(\sqrt{-3})$ and gives rise to the CM modular form $g\in S_3(\Ga_1(27),\chi_g)$ (this is the modular form 27.3.b.a in the LMFDB list). Here, $\chi_g$ is a character over $(\Z/27\Z)^\times$, induced from the Kronecker symbol $\left(\frac{-3}{\bullet}\right)$.
   \item $\psi_2$ is a Hecke character of finite order and conductor $(6+6i)$ defined over $K_2=\Q(i)$ and gives rise to the CM modular form $h\in S_1(\Ga_1(288),\chi_h)$ (this is the modular form 288.1.e.a in the LMFDB list). Here, $\chi_h$ is a character over $(\Z/288\Z)^\times$, induced also from the Kronecker symbol $\left(\frac{-3}{\bullet}\right)$, so this triple satisfies condition (4) of Assumption (\ref{eq:chi-decomp})). 
   \item The computation of the local root number at $3$ of this example is a bit trickier since $3^2|\gcd(27,27,288)$, nevertheless the global root number for the triple product is $+1$.
    \end{enumerate}
\end{enumerate}

\subsection{Factorization of the triple product \texorpdfstring{$p$}{p}-adic \texorpdfstring{$L$}{L}-function}
\subsubsection{Anticyclotomic Katz \texorpdfstring{$p$}{p}-adic \texorpdfstring{$L$}{L}-function for imaginary quadratic fields}

We recall the interpolation property of the Katz $p$-adic $L$-functions by Katz and Hida--Tilouine \cite{Katz49,HT-ENS}, following the exposition in de Shalit \cite{deshalit}. For any ideal $\mathfrak{c}\subset\cO_K$ coprime to $p$, we let $Z(\mathfrak{c})$ denote the ray class group of $K$ of conductor $\mathfrak{c}p^\infty$ (so $Z(\mathfrak{c})\simeq{\mathrm{Gal}}(K_{\mathfrak{c}p^\infty}/K)$). Here, $K$ is an imaginary quadratic field.

\begin{thm}\label{thm:katz}
There exists an element $\Lcal_{\pp,\mathfrak{c}}^{\mathrm{K}}
\in\cO\dBr{Z(\mathfrak{c})}\hat\otimes_{\Z_p}\Z_p^{\ur}$ 
such that for every character $\xi$ of $Z(\mathfrak{c})$ that is crystalline at both $\pp$ and $\ppbar$, corresponding to a Hecke character of infinity type $(k,j)$ with $k>0$ and $j\leq 0$, then we have
\[
\Lcal_{\pp,\mathfrak{c}}^{\mathrm{K}}(\xi)=\frac{\Omega_p^{k-j}}{\Omega_\infty^{k-j}}\cdot\Gamma(k)\cdot\biggl(\frac{\sqrt{D_K}}{2\pi}\biggr)^j\cdot \mathcal{E}_p(\xi)\cdot L_{\mathfrak{c}}(\xi,0),
\]
where $L_{\mathfrak{c}}(\xi,s)$ denotes the Hecke $L$-function of $\xi$ with the Euler factors at the primes dividing $p$ and $\mathfrak{c}$ removed and $\mathcal{E}_p(\xi)=(1-\xi^{-1}(\pp)p^{-1})(1-\xi(\bar{\pp}))$.  
Moreover, we have the functional equation 
\[
\Lcal_{\pp,\mathfrak{c}}^{\mathrm{K}}(\xi)=\Lcal_{\pp,\bar{\mathfrak{c}}}^{\mathrm{K}}(\xi^{-\cc}\mathbf{N}^{-1}),
\] 
where the equality is up to a $p$-adic unit.
\end{thm}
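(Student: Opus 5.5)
This statement is a recollection of the classical construction of Katz and Hida--Tilouine, so the plan is to reduce it to those sources, normalised as in de Shalit, rather than to reprove it. Concretely, I would realise $\Lcal_{\pp,\mathfrak{c}}^{\mathrm{K}}$ as a $p$-adic measure on $Z(\mathfrak{c})$ in three steps.
\begin{enumerate}
\item Form the two-variable $p$-adic Eisenstein measure of level $\mathfrak{c}$, valued in Katz's ring of generalized $p$-adic modular functions.
\item Evaluate it at the CM points attached to proper $\cO_K$-ideals. These points are ordinary because $p=\pp\ppbar$ splits in $K$, and since they are defined over the maximal unramified extension, the measure lands in $\cO\dBr{Z(\mathfrak{c})}\hat\otimes_{\Z_p}\Z_p^{\ur}$.
\item Sum over the ray class group, twisted by the finite-order part of $\xi$, to obtain a measure on $Z(\mathfrak{c})$.
\end{enumerate}

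The interpolation formula then comes from Damerell's formula in the form of Katz and Shimura. At a character $\xi$ of infinity type $(k,j)$ with $k>0$ and $j\le 0$, integrating against the measure amounts to applying $\delta^{-j}$ (the Maass--Shimura operator, resp. the Katz operator $\theta^{-j}$ $p$-adically) to a weight-$k$ Eisenstein series and evaluating at CM points. The comparison of algebraic, complex and $p$-adic trivializations at CM points produces the periods $\Omega_\infty$ and $\Omega_p$, the factor $\Gamma(k)$, and $(\sqrt{D_K}/2\pi)^j$, giving $L_{\mathfrak{c}}(\xi,0)$. The factor $\mathcal{E}_p(\xi)=(1-\xi^{-1}(\pp)p^{-1})(1-\xi(\ppbar))$ arises from $p$-depleting the Eisenstein series, i.e.\ removing the Euler factor at $\ppbar$ and the dual one at $\pp$.

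For the functional equation, I would use the density of the characters in the interpolation range that are crystalline at $\pp$ and $\ppbar$. This reduces $\Lcal_{\pp,\mathfrak{c}}^{\mathrm{K}}(\xi)=\Lcal_{\pp,\bar{\mathfrak{c}}}^{\mathrm{K}}(\xi^{-\cc}\mathbf{N}^{-1})$ to an identity of special values. Note that $\xi\mapsto\xi^{-\cc}\mathbf{N}^{-1}$ preserves the range, sending $(k,j)\mapsto(1-j,1-k)$. The required identity then follows from the complex functional equation of the Hecke $L$-function of $\xi$, which relates $L(\xi,0)$ to $L(\xi^{-1}\mathbf{N}^{-1},0)=L(\xi^{-\cc}\mathbf{N}^{-1},0)$ up to the root number and Gamma factors. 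One must also check that the modified Euler factors match: $\mathcal{E}_p(\xi)$ transforms into $\mathcal{E}_p(\xi^{-\cc}\mathbf{N}^{-1})$.

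The main obstacle is this last bookkeeping step. One has to verify that the local root numbers, the conductor change $\mathfrak{c}\leftrightarrow\bar{\mathfrak{c}}$, and the ratio of transcendental factors $\Gamma(k)(\sqrt{D_K}/2\pi)^j$ versus $\Gamma(1-j)(\sqrt{D_K}/2\pi)^{1-k}$ combine, after dividing by the common period power $(\Omega_p/\Omega_\infty)^{k-j}$, to a $p$-adic unit that is constant (or $p$-adically interpolable) in $\xi$. I would follow de Shalit, Ch.~II, \S\S4--6, where exactly this is checked, and accept the equality only up to a unit, as stated.
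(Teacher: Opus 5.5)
Your overall plan, which is to realise $\Lcal_{\pp,\mathfrak{c}}^{\mathrm{K}}$ via the Katz/Hida--Tilouine construction as normalised in de Shalit, is the same as the paper's. However, the argument you sketch reverses the logical order, and as written it has a gap.

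\textbf{Your first step only covers part of the range.} For $\xi$ of infinity type $(k,j)$, the Damerell/Eisenstein-measure computation applies $\delta^{-j}$ (resp.\ $\theta^{-j}$) to an Eisenstein series of weight $k+j$, not $k$. That is what gives a nearly holomorphic form of weight $k-j$, and hence the period $\Omega^{k-j}$. Such an Eisenstein series exists only for $k+j\geq 1$. So the construction yields the interpolation formula only for $k>-j\geq 0$; this is de Shalit, Thm.~II.4.14, and it is all the paper takes from the construction. Characters with $k>0$, $j\le 0$ and $k+j\le 0$ are not reached by your first step. In the paper they are obtained \emph{from} the $p$-adic functional equation (de Shalit, Thm.~II.6.4), combined with the complex functional equation. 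That combination is exactly the bookkeeping you describe: $\mathcal{E}_p(\xi)=\mathcal{E}_p(\xi^{-\cc}\mathbf{N}^{-1})$, equal period powers $\Omega^{k-j}$ on both sides, and root numbers.

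\textbf{Consequently your proof of the functional equation is circular.} If $\xi$ is crystalline of type $(k,j)$, then $\xi^{-\cc}\mathbf{N}^{-1}$ has type $(1-j,1-k)$. Both lie in the range $k>-j\ge 0$ only when $k+j=1$. But every crystalline character with $k+j=1$ takes the same value on the image in $Z(\mathfrak{c})$ of the diagonal $1+p\Z_p\subset\cO_{K,\pp}^\times\times\cO_{K,\ppbar}^\times$ (the cyclotomic direction). So these characters all lie in the zero locus of a single nonzero element $[\gamma]-u$ of $\cO\dBr{Z(\mathfrak{c})}$. Comparing special values there cannot force an identity of measures.

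\textbf{How to close the gap.} You have two options:
\begin{enumerate}
\item Take Thm.~II.6.4 as an independent input, as the paper does.
\item Run your density argument on the line $k+j=1$ (type $(1,0)$ already suffices), but with \emph{all} finite-order twists, including those ramified at $\pp$ and $\ppbar$. These twists are dense. This route requires the interpolation formula for $p$-ramified characters, with its Gauss-sum factors, and you must match those factors against the local root numbers at $p$ in the complex functional equation.
\end{enumerate}
Only after that does the interpolation formula on the full range $k>0$, $j\le 0$ follow.
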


\begin{proof}
Our $\Lcal_{\pp,\mathfrak{c}}^{\mathrm{K}}$ corresponds to the measure denoted $\mu(\mathfrak{c}\bar{\pp}^\infty)$ in \cite[Thm.~II.4.14]{deshalit}, 
and the functional equation is given in \cite[Thm.~II.6.4]{deshalit} (which allows to extend the interpolation property from $k>-j\geq 0$ to the entire range in the statement).
\end{proof}


Let $\Gamma_\mathfrak{c}$ be the maximal torsion-free subgroup of $Z(\mathfrak{c})$, and fix a (non-canonical) splitting
\[
Z(\mathfrak{c})\simeq\Delta_{\mathfrak{c}}\times\Gamma_{\mathfrak{c}}
\]
with $\Delta_{\mathfrak{c}}$ a finite group and $\Gamma_{\mathfrak{c}}\simeq\Z_p^2$. For $\mathfrak{c}'\supset\mathfrak{c}$ the natural projection $Z(\mathfrak{c})\twoheadrightarrow Z(\mathfrak{c}')$ takes $\Delta_{\mathfrak{c}}$ to $\Delta_{\mathfrak{c}'}$, inducing an isomorphism $\Gamma_{\mathfrak{c}}\xrightarrow{\sim}\Gamma_{\mathfrak{c}'}$. Thus in the following we shall identify $\Gamma_{\mathfrak{c}}$ with $\Gamma_\infty:=\Gamma_{(1)}$, the Galois group of the $\Z_p^2$-extension of $K$ as introduced in Section $\ref{subsec:CM}$. 

Suppose $\eta$ is a Hecke character of $K$ of conductor dividing $\mathfrak{c}p^\infty$. Viewing $\eta$ as a character on $Z(\mathfrak{c})\simeq\Delta_{\mathfrak{c}}\times\Gamma_K$, we put $\bar{\eta}:=\eta\vert_{\Delta_\mathfrak{c}}$, and denote by $\mathcal{L}_{\pp,\bar{\eta}}^{{\mathrm{K}},-}$ the image of $\mathcal{L}_{\pp,\mathfrak{c}}^{\mathrm{K}}$ under the composite map 
\begin{equation}\label{eqn:proj_hida_fam}  \cO\dBr{Z(\mathfrak{c})}\hat\otimes_{\Z_p}\Z_p^{\ur}\rightarrow\cO\dBr{\Gamma_K}\hat\otimes_{\Z_p}\Z_p^{\ur}\rightarrow\cO\dBr{\Gamma_K^{1-\cc}}\hat\otimes_{\Z_p}\Z_p^{\ur}\rightarrow\cO\dBr{\Gamma_\pp}\hat\otimes_{\Z_p}\Z_p^{\ur},
\end{equation}
where the first arrow is the natural projection defined by $\bar{\eta}$, the second arrow is defined by $\gamma\mapsto\gamma^{1-\cc}$ for $\gamma\in\Gamma_\infty$, and the last arrow is the isomorphism $\Gamma_K^{1-\cc}\simeq \Gamma_\pp$ via the map $\gamma^{1-\cc}\mapsto{(\gamma\vert_{K({\pp^\infty})})}^{1/2}$. We put also $\bar{\eta}^-:=\bar{\eta}^{\cc-1}$.

\begin{lem}\label{lem:HT}
Let $\ch$ be a ray class character of $K$ such that $\bar{\ch}^-$ has conductor $\mathfrak{c}$ prime-to-$p$. Assume that:
\begin{enumerate}
\item[(i)] $\mathfrak{c}$ is only divisible by primes that are split in $K$; 
\item[(ii)] $\Delta_{\mathfrak{c}}$ has order prime-to-$p$; 
\item[(iii)] $\bar{\xi}^-\vert_{G_{K_v}}\neq 1$ for all primes $v\mid p$ in $K$;
\item[(iv)] $\bar{\ch}^-$ has order at least $3$. 
\end{enumerate}
Then the congruence ideal of the CM Hida family $\boldsymbol{\theta}_{\ch}(S)$ is principal, generated by $\mathcal{L}_{\pp,\bar{\ch}^-}^{{\mathrm{K}},-}$.
\end{lem}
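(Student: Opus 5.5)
The plan is to reduce Lemma~\ref{lem:HT} to the established results on congruence ideals of CM Hida families: Hida--Tilouine's anticyclotomic main conjecture, together with the Hida--Tilouine/Hida formula for congruence numbers of CM forms. This is the route taken in \cite[Prop.~8.1]{hsieh-triple} and \cite[\S{5}]{CD}. I would proceed in three steps.

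\textbf{Step 1: identify the congruence module with a class group.} Write $\bfg=\boldsymbol{\theta}_{\ch}(S)$ and $\lambda_\bfg:\bT_\fkm\to\cO\dBr{S}$. Hypotheses (iii) and (iv) ensure that $\bar\rho_\bfg={\rm Ind}_K^\Q\bar\ch$ is absolutely irreducible and $p$-distinguished. Indeed, $\bar\ch^{-}\neq 1$ gives irreducibility. The condition $\bar\ch^-\vert_{G_{K_v}}\neq1$ gives distinguishedness, since the two characters $\bar\ch$ and $\bar\ch^\cc$ on $G_{\Q_p}=G_{K_\pp}$ then differ. Hence $C(\bfg)$ is principal by \cite{hida-AJM,Wiles}, and it remains to identify a generator.

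By Hida's theory, $\bT_\fkm/(\ker\lambda_\bfg)+{\rm Ann}(\ker\lambda_\bfg)$ is controlled by $\ker\lambda_\bfg/(\ker\lambda_\bfg)^2$, i.e.\ by the adjoint Selmer group. Since $\bfg$ has CM, ${\rm ad}^0\rho_\bfg=\epsilon_K\oplus{\rm Ind}_K^\Q(\ch^{-}\Psi^{1-\cc})$. Using the $R=\bT$ theorem in the residually irreducible, $p$-distinguished setting (Wiles, Taylor--Wiles, Hida), this gives
\[
\cO\dBr{S}/C(\bfg)\;\simeq\;X(K_\infty^{-},\bar\ch^{-})\text{-part}\;\oplus\;(\text{contribution of }\epsilon_K),
\]
up to pseudo-isomorphism. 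Here $X$ denotes the Iwasawa module of the maximal abelian pro-$p$ extension unramified outside $\pp$ of the relevant anticyclotomic tower, twisted by $\ch^-$. The $\epsilon_K$ contribution is the $p$-part of the class number of $K$, and it is trivial by (ii). Hypothesis (i) (split primes dividing $\mathfrak{c}$) is what guarantees that no extra local Euler factors or Tamagawa defects at primes of $\mathfrak{c}$ enter.

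\textbf{Step 2: apply the anticyclotomic main conjecture.} Invoke the Rubin/Hida--Tilouine anticyclotomic main conjecture, in its refined form with $\Delta_\mathfrak{c}$ of order prime to $p$ by (ii). It says that the characteristic ideal of the $\bar\ch^-$-isotypic part of that Iwasawa module is generated by the image of the Katz $p$-adic $L$-function under the projection (\ref{eqn:proj_hida_fam}), namely $\mathcal{L}_{\pp,\bar\ch^-}^{{\rm K},-}$. The substitution $\gamma^{1-\cc}\mapsto(\gamma\vert_{K(\pp^\infty)})^{1/2}$ matches the variable $S$ of $\bfg$, because $\Psi$ in \eqref{eqn:top_gen} and the normalization $\frac{1+S}{1+p}-1$ are arranged for exactly this.

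\textbf{Step 3: upgrade the ideal equality to an equality with $C(\bfg)$.} Since $\cO\dBr{S}$ is a two-dimensional regular local ring and $C(\bfg)$ is principal, comparing characteristic ideals suffices once both modules have no nonzero pseudo-null submodules. For the congruence module this holds because it is cyclic; for the Iwasawa module it follows from (iii), via the standard Greenberg argument.

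The main obstacle is Step 1: matching Hida's congruence ideal exactly, not just up to $\mu$-invariant or units, with the anticyclotomic Selmer module, and keeping track of the precise projection \eqref{eqn:proj_hida_fam}. Rather than redoing this, I would follow Hida--Tilouine \cite{HT-ENS} and Hida's result that the anticyclotomic Katz $p$-adic $L$-function generates the congruence ideal, exactly as in \cite[Prop.~8.1, Cor.~8.2]{hsieh-triple}. I would then only check that hypotheses (i)--(iv) are the ones needed there, with (iv) excluding the quadratic case in which $\ch^-$ is self-conjugate and an Eisenstein/extra-CM congruence could occur.
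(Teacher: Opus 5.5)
Your proposal matches the paper's argument: the paper cites Alonso--Castella--Rivero \cite[Prop.~4.6]{ACR2} and presents the lemma as a consequence of the proof of the anticyclotomic main conjecture of Hida--Tilouine \cite{HT-ENS,HT-117} and Hida \cite{hida-coates}, and your Steps 1--3 unpack that same citation (congruence ideal versus the adjoint Selmer module, then the main conjecture for the $\bar\ch^-$-part). One correction: the vanishing of the $\epsilon_K$ (class group) contribution comes from $p\nmid h_K$, which the paper invokes separately and which holds in its setting because $p\nmid h_{K_0}$; it does not follow from (ii), since $\Delta_{\mathfrak{c}}$ having order prime to $p$ does not by itself rule out $p\mid h_K$ (the $p$-part of the Hilbert class field may lie inside the $\Z_p^2$-extension).
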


\begin{proof}
See Alonso--Castella--Rivero \cite[Prop.~4.6]{ACR2}. This is a consequence of the proof of the anticyclotomic Iwasawa main conjecture for Hecke characters by Hida--Tilouine \cite{HT-ENS,HT-117} and Hida \cite{hida-coates} (note that $p\nmid h_K$). 
\end{proof}

\begin{rem}
    Following Section \ref{subsec:CM}, we may view $\mathcal{L}_{\pp,\bar{\ch}^-}^{{\mathrm{K}},-}$ as an element of $\cO\dBr{S}\hat\otimes_{\Z_p}\Z_p^{\ur}$ after choosing a topological generator $\gamma_\pp$ of $\Gamma_\pp$.
\end{rem}

\subsubsection{Anticyclotomic Katz $p$-adic $L$-function for imaginary biquadratic fields}
For any ideal $\mathfrak{c}_0\subset\cO_{K_0}$ coprime to $p$, we let $Z(\mathfrak{c}_0)$ denote the ray class group of $K_0$ of conductor $\mathfrak{c}_0p^\infty$ (so $Z(\mathfrak{c}_0)\simeq{\mathrm{Gal}}(K_{0,\mathfrak{c}_0p^\infty}/K_0)$). Here, $K_0$ is our imaginary biquadratic field. 
\begin{thm}\label{thm:Katz-K_0}
Given a CM type $\Sigma$ of $K_0$, denote by $\Sigma_p$ the set of places of $K_0$ above p induced by $i_p \circ\sigma$ for $\sigma\in \Sigma$. There exists an element $\Lcal_{\Sigma_p,\mathfrak{c}_0}^{\mathrm{K}_0}
\in\cO\dBr{Z(\mathfrak{c}_0)}\hat\otimes_{\Z_p}\Z_p^{\ur}$ 
such that for every character $\lambda$ of $Z(\mathfrak{c}_0)$ that is crystalline at $\{\cP_1,\cP_2,\cP_3,\cP_4\}$, corresponding to a Hecke character of infinity type $k\Sigma+\kappa(1-\cc)$ with $k>0$ and $\kappa\in\Z_{\ge 0}[\Sigma]$, then we have
 \[\Lcal_{\Sigma_p,\mathfrak{c}_0}^{\mathrm{K}_0}(\lambda) = \frac{\Omega_p^{k\Sigma+2\kappa}}{\Omega_\infty^{k\Sigma+2\kappa}}\cdot \pi^{\kappa}\cdot L_{p\mathfrak{c}_0}(\lambda,0) \cdot \mathcal{E}_p(\lambda) \mathcal{E}_{\mathfrak{c}_0^+}(\lambda)  \frac{\Gamma_\Sigma(k\Sigma + \kappa)}{\sqrt{|D_{K_3}|}(\mathrm{Im}\,\vartheta)^\kappa } [\mathcal{O}_{K_0}^\times : \mathcal{O}_{K_3}^\times].\]
\end{thm}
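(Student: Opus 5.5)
This statement is the Katz $p$-adic $L$-function for the CM field $K_0$ (Katz, Hida--Tilouine), specialized to our biquadratic field. I would not reconstruct the measure; I would import it from the general CM theory and only check that its normalizations match the ones used here.

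The plan is as follows. First, fix the CM type $\Sigma$ and check that it is $p$-ordinary. Since $p$ splits completely in $K_0$, the set $\Sigma_p$ of places $\cP_i$ induced by $i_p\circ\sigma$ for $\sigma\in\Sigma$, together with its conjugate $\Sigma_p^{\cc}$, partitions $\{\cP_1,\cP_2,\cP_3,\cP_4\}$. This is exactly Katz's ordinarity hypothesis.

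Second, invoke Katz's construction \cite{Katz49} and its refinement by Hida--Tilouine \cite{HT-ENS}. They give a measure on $Z(\mathfrak{c}_0)$ whose value at a $p$-crystalline character of infinity type $k\Sigma+\kappa(1-\cc)$ (with $k>0$, $\kappa\ge 0$) is the corresponding CM period-normalized $L$-value. Concretely, one evaluates Eisenstein measures on the Hilbert modular variety of $K_3$ at the CM points attached to $(K_0,\Sigma)$, applying the differential operators $\delta^\kappa$. The value is then converted to a Hecke $L$-value by the Damerell-type formula. In their statement:
\begin{itemize}
\item the factor $\Gamma_\Sigma(k\Sigma+\kappa)\pi^\kappa/(\mathrm{Im}\,\vartheta)^\kappa$ comes from the archimedean computation of $\delta^\kappa$;
\item $\sqrt{|D_{K_3}|}$ comes from the discriminant of the totally real subfield;
\item the index $[\cO_{K_0}^\times:\cO_{K_3}^\times]$ comes from the stabilizer of the CM point, which is nontrivial here because $K_0$ may contain extra roots of unity (e.g. $\Q(\zeta_{12})$);
\item $\mathcal{E}_p(\lambda)$ and $\mathcal{E}_{\mathfrak{c}_0^+}(\lambda)$ are the modified Euler factors at $p$ and at the split part of $\mathfrak{c}_0$, coming from removing Euler factors at primes of $\Sigma_p$ and $\Sigma_p^{\cc}$ and from the partial Fourier coefficients.
\end{itemize}

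Third, match conventions. Both the measure and the interpolated characters must use the geometrically normalized reciprocity map and the $p$-adic avatar convention of \S\ref{theta}. After that, the $\cO\dBr{Z(\mathfrak{c}_0)}\hat\otimes\Z_p^{\ur}$-valuedness is inherited directly. As a sanity check, when $K_0$ is replaced by an imaginary quadratic field the formula should collapse to Theorem~\ref{thm:katz}.

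The main obstacle is this normalization bookkeeping: the powers of $2\pi$, the sign of $\kappa$ versus $(1-\cc)$, and the unit index. Rather than rederive these, I would cite Hida--Tilouine's Theorem II and Hsieh's formulation in \cite{Hsieh-JAMS}, which states the formula in exactly this shape (including $[\cO_{K_0}^\times:\cO_{K_3}^\times]$ and $\mathcal{E}_{\mathfrak{c}_0^+}$), and verify only that our hypotheses ($p$ split, $\mathfrak{c}_0$ prime to $p$) meet theirs.
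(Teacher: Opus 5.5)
Your proposal is correct and follows the paper's approach: the paper gives no independent argument and simply extracts this interpolation formula from the statement of the Katz/Hida--Tilouine theorem on p.~2 of Hsieh's paper \cite{hsieh-crelle-mu}, which already has this normalization, including $[\mathcal{O}_{K_0}^\times:\mathcal{O}_{K_3}^\times]$ and $\mathcal{E}_{\mathfrak{c}_0^+}$; the paper cites that paper rather than \cite{Hsieh-JAMS}. The only hypothesis to verify is $p$-ordinarity of $\Sigma$, and as you note it holds automatically because $p$ splits completely in $K_0$.
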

\begin{proof}
We extract this version from Hsieh \cite[p.\,2]{hsieh-crelle-mu}.    
\end{proof}

\begin{rem}Note that we can view $\lambda$ as a character on $Z(\mathfrak{c_0})\simeq\Delta_{\mathfrak{c_0}}\times\Gamma_{K_0}$. Similar to equation (\ref{eqn:proj_hida_fam}), we have a corresponding composition for the biquadratic field $K_0$ via identifying the anticyclotomic $\Z_p^2$ extension $\Ga^{-}=\text{Gal}(K_{0,\infty}^{-}/K_0)$ of $K_0$ with $\Gamma_{\pp_1}^{K_1}\times \Gamma_{\pp_2}^{K_2}$ (by the diagram \ref{eq:G_K0^-identification}). The image of $\Lcal_{\Sigma_p,\mathfrak{c}_0}^{\mathrm{K}_0}$, denoted as $\Lcal_{\Sigma_p,\bar{\lambda}^-}^{\mathrm{K}_0,-}$, can be viewed as an element of $\cO\dBr{S_1,S_2}\hat\otimes_{\Z_p}\Z_p^{\ur}$, after choosing a topological generator $\gamma_{\pp_i}$ of $\Gamma_{\pp_i}$ for $i\in\{1,2\}$.
\end{rem}

\subsubsection{The factorization result}

We now fix our choice $\eta_{\bfg}$ of generator of the congruence ideal of $\bfg=\boldsymbol{\theta}_{\xi_1}(S_1)$. For $\xi_1$ satisfying the conditions of Lemma~\ref{lem:HT}, choose
\[
\eta_{\bfg}=\mathcal{L}_{\pp_1,\bar{\xi}_1^-}^{{\mathrm{K}_1},-},
\] inside Theorem \ref{thm:hsieh-triple}, and let $\mathscr{L}_p^{\bfg}(f,\bfg,\bfh)$ denote its image under the specialization by $Q_1$.

Note that $\ch_1$ can be replaced by a twist $\ch_1\cdot\phi\circ\mathbf{N}$ for a Dirichlet character $\phi$ without changing $\bar\ch_1^-$, and thus in the following we may assume that $\ch_1$ satisfies the following minimality hypotheses:
\begin{equation}\label{eq:minimal}
\textrm{the conductor of $\ch_1$ is minimal among Dirichlet twists.} 
\end{equation}

\begin{prop}\label{prop:factor-L-indef}
Assume that $\ch_1$ satisfies the conditions in Lemma~\ref{lem:HT}.  Denote 
\begin{align*}
W_1^{\psi} =  (1+p)^{k_1' - 1}\left(\frac{1+S_1}{1+p}\right)^{1/2}-1,\quad W_2^{\psi}= (1+p)^{1-k_1'}\left(\frac{1+S_2}{1+p}\right)^{1/2}-1,
\end{align*}
and 
\begin{align*}
W_1^{\chi} =  (1+p)^{1-k_1' }\left(\frac{1+S_1}{1+p}\right)^{1/2}-1,\quad W_2^{\chi}= (1+p)^{k_1'-1}\left(\frac{1+S_2}{1+p}\right)^{1/2}-1.
\end{align*}
Then
\begin{align*}
\bigl(\mathscr{L}_p^{\bfg}(f,\bfg,\bfh)(S_1,S_2)\bigr)^2
&=\mathbf{w}\cdot\mathcal{L}_{\cP_1,\cP_4,\bar{\psi}_0^-}^{{\mathrm{K}_0},-}(W_1^{\psi},W_2^{\psi})\cdot\mathcal{L}_{\cP_1,\cP_4,\bar{\chi}_0^-}^{{\mathrm{K}_0},-}(W_1^{\chi},W_2^{\chi}),
\end{align*}
where $\widetilde{\psi_1'\ch_1\ch_2}=\psi_0^{-1}$, $\widetilde{\psi_1'^{\cc}\ch_1\ch_2}=\chi_0^{-1}$ and $\mathbf{w}$ is a unit in $\cO\dBr{S_1,S_2}\otimes_{\Z_p}\bQ_p$.
\end{prop}

\begin{proof}
Let $k_1, k_2$ be integers with $k_1\equiv k_2\pmod{2}$ and $k_1\geq k_2+k_1'$. Recall that $T_i=\frac{(1+S_i)}{1+p}-1$ for $i=1,2$. Set $T_i=(1+p)^{k_i-1}-1$ for $i=1,2$,
and denote by $\VQdag$ the specialization of $\Vdag$ at $\underline{Q}=(Q_1,T_1,T_2)$. By restricting equation (\ref{eq:dec-V-first}) to $G_{K_0}$
\begin{equation}\label{eq:dec-V}
\begin{aligned}    \Vsdag|_{K_0}\simeq\cO(\tilde{\psi}_1'^{-1})(1-k_1'/2)\otimes\tilde{\ch}_1^{-1}\tilde{\Psi}_{T_1}\tilde{\ch}_2^{-1}\tilde{\Psi}_{T_2}\oplus \cO(\tilde{\psi}_1'^{-\cc})(1-k_1'/2)\otimes\tilde{\ch}_1^{-1}\tilde{\Psi}_{T_1}\tilde{\ch}_2^{-1}\tilde{\Psi}_{T_2}
\end{aligned}
\end{equation}
 we find that the completed $L$-value appearing in the interpolation formula of Theorem~\ref{thm:hsieh-triple} is given by
\begin{equation}\label{eq:explicit}
\begin{aligned}
\Gamma_{\VQdag}(0)\cdot L(\VQdag,0)&=\frac{\Gamma\bigl(\frac{k_1'+k_1+k_2}{2}-1\bigr)\Gamma\bigl(\frac{k_1-k_2-k_1'}{2}+1\bigr)\Gamma\bigl(\frac{k_1+k_2-k_1'}{2}\bigr)\Gamma\bigl(\frac{k_1-k_2+k_1'}{2}\bigr)}{2^4\cdot(2\pi)^{2k_1}}\times L(\psi,0)\cdot L(\chi,0),
\end{aligned}
\end{equation}
where
\begin{equation*}
   \psi= \tilde{\psi'}_1^{-1}\bN^{1-k_1'/2}\tilde{\ch}_1^{-1}\tilde{\ch}_2^{-1} \tilde{\Psi}_{T_1}\tilde{\Psi}_{T_2},\qquad \chi= \tilde{\psi'}^{-\cc}_1\bN^{1-k_1'/2}\tilde{\ch}_1^{-1}\tilde{\ch}_2^{-1} \tilde{\Psi}_{T_1}\tilde{\Psi}_{T_2}
\end{equation*}
Note that the modified Euler factor decomposes at $p$ as
\begin{equation}\label{eq:Ep}
\begin{aligned}
\mathcal{E}_p(\mathscr{F}_p^{\bfg}(\VQdag))
&=\mathcal{E}_p(\psi)\times \mathcal{E}_p(\chi)
\end{aligned}
\end{equation}
and the $\Gamma$-factor as well.
Moreover, letting $\chi_g'$ be the prime-to-$p$ part of the nebentypus character of $\bfg_{T_1}$, we have
\begin{align*}
\biggl(1-\frac{\chi_g'(p)p^{k_1-1}}{\ch_1\Psi^{-1}_{T_1}(\bar{\pp})^2}\biggr)\biggl(1-\frac{\chi_g'(p)p^{k_1-2}}{\ch_1\Psi^{-1}_{T_1}(\bar{\pp})^2}\biggr)
&=\bigl(1-\ch_1^{\cc-1}\Psi_{T_1}^{1-\cc}(\ppbar)\bigr)\bigl(1-\ch_1^{\cc-1}\Psi^{1-\cc}_{T_1}(\ppbar)p^{-1}\bigr),
\end{align*}
and therefore the canonical period $\Omega_{\bfg_{T_1}}$ in Theorem~\ref{thm:hsieh-triple} (associated with the generator $\eta_{\bfg}=\mathcal{L}_{\pp_1,\bar{\xi}_1^-}^{{\mathrm{K}_1},-}$ of $C(\bfg)$ from Lemma~\ref{lem:HT}) is given by
\begin{equation}\label{eq:period}
\Omega_{\bfg_{T_1}}=(-2\sqrt{-1})^{k_1+1}\cdot\frac{\Vert\bfg_{T_1}^\circ\Vert_{\Gamma_0(C)}^2}{\eta_{\bfg_{T_1}}^{\star}}\cdot\bigl(1-\ch_1^{\cc-1}\Psi_{T_1}^{1-\cc}(\ppbar_1)\bigr)\bigl(1-\ch_1^{\cc-1}\Psi^{1-\cc}_{T_1}(\ppbar_1)p^{-1}\bigr),
\end{equation}
where $C=N_{K/\Q}(\fkf_1)D_K$, and we note that we may ignore the term $\prod_{q\in\Sigma_{\mathrm{exc}}}(1+q^{-1})^2$ from Theorem~\ref{thm:hsieh-triple}, since (up to a $p$-adic unit) it only contributes a fixed power of $p$ (in particular, independent of $k_1,k_2$).
%

On the other hand, since $g_{T_1}$ has weight $k_1$, from Hida's formula for the adjoint $L$-value \cite[Thm.~7.1]{HT-ENS} (using that $\xi_1$ satisfies the minimality condition (\ref{eq:minimal})) and Dirichlet's class number formula we obtain 
\[
\Vert \bfg_{T_1}^\circ\Vert^2_{\Gamma_0(C)}=\Gamma(k_1)\cdot\frac{D_{K_1}^2}{2^{2k_1}\pi^{k_1+1}}\cdot\frac{2\pi h_{K_1}}{\vert\cO_{K_1}^\times\vert\sqrt{D_{K_1}}}\cdot L(\ch_1^{\cc-1}\Psi_{T_1}^{1-\cc},1).
\]

Note that $L(\ch_1^{\cc-1}\Psi_{T_1}^{1-\cc},1)=L(\ch_1^{\cc-1}\Psi_{T_1}^{1-\cc}\mathbf{N}^{-1},0)$, and $\ch_1^{\cc-1}\Psi_{T_1}^{1-\cc}\mathbf{N}^{-1}$ has infinity type $(k_1,2-k_1)$. Hence for $k_1\geq 2$ this character lies in the range of interpolation of $\cL_{\pp_1,\mathfrak{f}_1}^{\mathrm{K}_1}$, where $\mathfrak{f}_1$ denotes the conductor of $\xi_1^{1-\cc}$, and from the right above formula for $L(\ch_1^{\cc-1}\Psi_{T_1}^{1-\cc},1)$ and Theorem~\ref{thm:katz} we obtain
\begin{equation}\label{eq:hida-formula}
\begin{aligned}
\cL_{\pp_1,\mathfrak{f}_1}^{\mathrm{K}_1}(\ch_1^{\cc-1}\Psi_{T_1}^{1-\cc}\mathbf{N}^{-1})&
=\biggl(\frac{\Omega_p}{\Omega_\infty}\biggr)^{2k_1-2}\cdot\frac{\pi^{2k_1-2}\cdot 2^{3k_1-3}}{\sqrt{D_{K_1}}^{k_1+1}}\\
&\quad\times\bigl(1-\ch_1^{\cc-1}\Psi^{1-\cc}_{T_1}(\ppbar_1)\bigr)\bigl(1-\ch_1^{\cc-1}\Psi^{1-\cc}_{T_1}(\ppbar_1)p^{-1}\bigr)\cdot\Vert\bfg^\circ_{T_1}\Vert_{\Gamma_0(C)}^2\cdot\frac{\vert\cO_{K_1}^\times\vert}{h_{K_1}}.
\end{aligned}
\end{equation}
Moreover, by the functional equation of Theorem~\ref{thm:katz} 
and the definition of $\eta_{\bfg}$ we have the relation 
\begin{equation}\label{eq:Katz-and-congruence}
    \frac{h_{K_1}}{\vert\cO_{K_1}^\times\vert}\cdot\Lcal_{\pp_1,\mathfrak{f}_1}^{\mathrm{K}_1}(\ch_1^{\cc-1}\Psi_{T_1}^{1-\cc}\mathbf{N}^{-1})\sim_p\eta_{\bfg_{T_1}}^{\star},
\end{equation}
where $\sim_p$ denotes equality up to a $p$-adic unit. Therefore, equations \eqref{eq:hida-formula} and \eqref{eq:Katz-and-congruence} imply that
\begin{equation}\label{eq:subs-for-hida-formula}
\frac{\Vert\bfg_{T_1}^\circ\Vert_{\Gamma_0(C)}^2}{\eta_{\bfg_{T_1}}^{\star}}\cdot\bigl(1-\ch_1^{\cc-1}\Psi_{T_1}^{1-\cc}(\ppbar_1)\bigr)\bigl(1-\ch_1^{\cc-1}\Psi^{1-\cc}_{T_1}(\ppbar_1)p^{-1}\bigr)\sim_p  \biggl(\frac{\Omega_\infty}{\Omega_p}\biggr)^{2k_1-2}\cdot\frac{\sqrt{D_{K_1}}^{k_1+1}}{(2\pi)^{2k_1-2}}   
\end{equation}

Hence, from (\ref{eq:period}) and (\ref{eq:subs-for-hida-formula}) we arrive at
\begin{equation}\label{eq:Omega-bis}
\frac{1}{\Omega_{\bfg_{T_1}}^2}\sim_p\biggl(\frac{\Omega_p}{\Omega_\infty}\biggr)^{4k_1-4}\times\pi^{4k_1-4}.
\end{equation}

Finally, note that we will use the $p$-adic $L$-function over $K_0$ with CM type $\Sigma$ corresponding with $\Sigma_p=(\cP_1,\cP_4)$ via $i_p$, which implies that both $\psi$ and $\chi$ are in the range of interpolation for $\mathcal{L}_{\cP_1,\cP_4,\bar{\psi}_0^-}^{{\mathrm{K}_0},-}$ and $\mathcal{L}_{\cP_1,\cP_4,\bar{\chi}_0^-}^{{\mathrm{K}_0},-}$, respectively. Extracting from Theorem \ref{thm:Katz-K_0}, the transcendental part for $\mathcal{L}_{\cP_1,\cP_4,\bar{\psi}_0^-}^{{\mathrm{K}_0},-}$ is
\begin{equation}\label{eqn:transcendental-part-psi}
    \pi^{k_1'+k_1-3}\left(\frac{\Omega_p}{\Omega_\infty}\right)^{2k_1'+2k_1-4}
\end{equation}
and the transcendental part for $\mathcal{L}_{\cP_1,\cP_4,\bar{\chi}_0^-}^{{\mathrm{K}_0},-}$ is
\begin{equation}
    \label{eqn:transcendental-part-chi}\pi^{k_1-k_1'-1}\left(\frac{\Omega_p}{\Omega_\infty}\right)^{2k_1-2k_1'}.
\end{equation}
Here, we use the fact that the $p$-adic and infinity periods of the $p$-adic $L$-function over $K_0$ agree with the ones of the $p$-adic $L$-function over $K_1$ by defining the CM abelian surface $A$ over the biquadratic field as $E\times E$ (since $A$ has induced CM type), where $E$ is the CM elliptic curve over the imaginary quadratic field, which respects the decomposition $\cP_1\cP_4=\mathfrak{p}_1$. In particular
\[\Omega_{\infty,\pp_1}= \Omega_{\infty,\cP_1},\qquad \Omega_{p,\pp_1}= \Omega_{p,\cP_1}, \qquad \Omega_{\infty,\pp_1}= \Omega_{\infty,\cP_4},\qquad \Omega_{p,\pp_1}= \Omega_{p,\cP_4},\]
see also Hida \cite{Hida_2007}.

Thus substituting (\ref{eq:explicit}), (\ref{eq:Ep}), (\ref{eq:Omega-bis}), (\ref{eqn:transcendental-part-psi}), and (\ref{eqn:transcendental-part-chi}) into the interpolation formula for $\mathscr{L}_p^\bfg(f,\bfg,\bfh)$ in Theorem~\ref{thm:hsieh-triple} and we finally arrive at
\begin{align*}
\bigl(\mathscr{L}_p^{\bfg}(f,\bfg,\bfh)(S_1,S_2)\bigr)^2
=\mathcal{L}_{\cP_1,\cP_4,\bar{\psi}_0^-}^{{\mathrm{K}_0},-}(W_1^{\psi},W_2^{\psi})\cdot\mathcal{L}_{\cP_1,\cP_4,\bar{\chi}_0^-}^{{\mathrm{K}_0},-}(W_1^{\chi},W_2^{\chi}),
\end{align*}
up to a unit in $\cO\dBr{S_1,S_2}\otimes_{\Z_p}\bQ_p$, which yields the result.
\end{proof}

\subsection{On the Iwasawa main conjecture}
\label{subsec:IMC-def}

Our second application is the Iwasawa main conjecture for self-dual Hecke character over $K_0$ by combining with the main result of \cite{Hsieh-JAMS}. The strategy originates from proofs of Theorems $6.7$ and $7.12$ in \cite{CD}. Here, we use the formulation of Greenberg \cite{Greenberg55} in the context of Galois representations.

\begin{assumption}\label{Assumption:Hsieh}
    We assume the following two conditions, stemming from the main theorem $2$ of \cite{Hsieh-JAMS}:
    \begin{enumerate}
        \item $p\nmid D_{K_3}\#(\Delta_{\mathfrak{c}_0}),$
        \item $\chi$ is unramified at $\cP_2, \cP_3$, and $\chi\omega_{K_0}^{-a}$ is unramified at $\cP_1, \cP_4$ for some $a\not\equiv 2\pmod{p-1}$.
    \end{enumerate}
\end{assumption}

\begin{thm}\label{thm:IMC-def} Assume that the sign of $\chi$ is $+1$. Under the same assumption as in Theorem~\ref{thm:BK-def} together with Assumption \ref{Assumption:Hsieh}, we have the following equality inside $\Lambda_{K_0}\otimes_{\Z_p}\Q_p$
\[
{\rm char}_{\Lambda_{K_0}}(X_{0,\varnothing,\varnothing,0}(K_{0,\infty},\cO_{\chi^{-1}}(1)))= (\mathcal{L}_{\cP_1,\cP_4,\chi}^{{\mathrm{K}_0}}).
\]

\end{thm}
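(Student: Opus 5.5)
The plan is to combine a one-sided divisibility coming from our Euler system with the opposite divisibility supplied by Hsieh's main theorem \cite{Hsieh-JAMS}, in the way the proofs of Theorems 6.7 and 7.12 in \cite{CD} proceed. Hsieh's result, which applies under Assumption~\ref{Assumption:Hsieh} (condition (1) rules out $\mu$-invariant issues and condition (2) is his ramification hypothesis at the primes above $p$), gives the inclusion
\[
(\mathcal{L}_{\cP_1,\cP_4,\chi}^{{\mathrm{K}_0}})\subset {\rm char}_{\Lambda_{K_0}}(X_{0,\varnothing,\varnothing,0}(K_{0,\infty},\cO_{\chi^{-1}}(1)))
\]
in $\Lambda_{K_0}\otimes\Q_p$, with the CM type $\Sigma_p=\{\cP_1,\cP_4\}$ chosen so that the Selmer conditions match: strict at $\cP_1,\cP_4$ and relaxed at $\cP_2,\cP_3$. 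For the reverse inclusion, it suffices to prove it after restricting to the anticyclotomic line $\Lambda_{K_0}^-$, because a divisibility in one direction together with equality of the specialized characteristic ideals forces equality. This is the standard argument of \cite{CD} via a control theorem and specialization, and it works as long as $\mathcal{L}^{-}$ is nonzero. Its nonvanishing follows from $L(\chi,0)\neq 0$ at a suitable specialization, which the sign $+1$ together with \cite{BHTY} provide, as in the proof of Theorem~\ref{thm:BK-def}.

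On the anticyclotomic side I would argue as follows. Theorem~\ref{thm:IMC-general}(2) applies to $^{\cc}\mathbf{z}_{\chi}$, which is non-torsion because its $\cP_3$-restriction is nonzero by the reciprocity law used in Theorem~\ref{thm:BK-def}. It gives that $X_{0,\varnothing,0,\varnothing}$ has rank one and bounds its torsion by the square of the index of $^{\cc}\mathbf{z}_{\chi}$ in ${\rm Sel}_{\varnothing,0,\varnothing,0}$. Next I would use Poitou--Tate global duality, changing the local condition at $\cP_3$ from relaxed to strict and at $\cP_4$ from strict to relaxed, exactly as in the last step of Theorem~\ref{thm:BK-def}. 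This yields an exact sequence that identifies ${\rm char}\,X_{0,\varnothing,\varnothing,0}$ with ${\rm char}(X_{0,\varnothing,0,\varnothing,{\rm tors}})$ multiplied by ${\rm char}\bigl(H^1(K_{0,\cP_3},\cdot)/\Lambda\,{\rm res}_{\cP_3}({}^{\cc}\mathbf{z}_{\chi})\bigr)^2$, divided by the square of the index term. In this identification the index term cancels, and what remains is the divisibility ${\rm char}\,X_{0,\varnothing,\varnothing,0}\supset\bigl({\rm Log}_{\cP_3}({}^{\cc}\mathbf{z}_{\chi})\bigr)^2$. Theorem~\ref{thm:ERL}, specialized via Proposition~\ref{prop:factor-S}, identifies ${\rm Log}_{\cP_3}({}^{\cc}\mathbf{z}_{\chi})$ with $\eta_{\bfg}^{-1}\mathscr{L}_p^{\bfg}(f,\bfg,\bfh)$ restricted to the $\chi$-component. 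Proposition~\ref{prop:factor-L-indef} then factors $(\mathscr{L}_p^{\bfg})^2$ as the product of the $\psi$-part and the $\chi$-part of the Katz $p$-adic $L$-function over $K_0$.

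The main obstacle is isolating the $\chi$-factor. The reciprocity law only controls the square of the triple product $p$-adic $L$-function, which is the product of both Katz $L$-functions. To separate them I would use the $\chi$-component projection $\tau_\chi$ of Assumption~\ref{assumption-key-psi-infinity}: the $\psi$-part is a unit multiple times a factor that is nonvanishing on the relevant line after twisting $(\psi_1,\psi_1')$ by a ring class character $\phi$. This twist fixes $\chi$ while making the $\psi$-factor invertible in $\Lambda\otimes\Q_p$, using \cite{BHTY}. The relation $\cL_\chi^2=\cL_\chi\cdot\cL_\chi$ then comes from the functional equation of the Katz measure. The other delicate point is passing from the anticyclotomic $\Z_p^2$ to the full $\Lambda_{K_0}$. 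Here I would use Hsieh's divisibility together with the fact that the anticyclotomic restrictions of the two characteristic ideals agree up to units, which forces equality by a standard Nakayama/specialization argument.
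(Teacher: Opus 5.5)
\textbf{There is a genuine gap: the two divisibilities you combine point the same way.} Your overall architecture is the paper's. It has an anticyclotomic divisibility from the Euler system (Theorem~\ref{thm:IMC-general}, global duality and the reciprocity law). It uses the twist $(\psi_1,\psi_1')\mapsto(\psi_1\phi,\psi_1'\phi)$ with \cite{BHTY} to make the $\psi$-factor of Proposition~\ref{prop:factor-L-indef} a unit. It then upgrades to $\Lambda_{K_0}$ via \cite{Hsieh-JAMS}. But as written, no equality follows.

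You attribute to Hsieh the inclusion $(\mathcal{L}^{\mathrm{K}_0}_{\cP_1,\cP_4,\chi})\subset{\rm char}_{\Lambda_{K_0}}(X_{0,\varnothing,\varnothing,0})$. That says the characteristic ideal divides the $p$-adic $L$-function, i.e.\ it is an upper bound on the Selmer group. Your anticyclotomic argument yields ${\rm char}_{\Lambda^-_{K_0}}(X_{0,\varnothing,\varnothing,0}(K^-_{0,\infty},\cO_{\chi^{-1}}(1)))\supset(\mathcal{L}^{\mathrm{K}_0,-}_{\cP_1,\cP_4,\mathfrak{c}_0})$. This is the same kind of statement, as it must be, since an Euler system can only bound Selmer groups from above. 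So the ``reverse inclusion'' you announce is never established. Your later claim that the anticyclotomic restrictions of the two characteristic ideals agree up to units also has no source: the Euler system gives only one divisibility on the anticyclotomic line.

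\textbf{The fix.} Hsieh's theorem goes the other way. His Eisenstein-congruence argument on $U(2,1)$ gives the lower bound ${\rm char}_{\Lambda_{K_0}}(X)\subset(\mathcal{L})$ over the full $\Lambda_{K_0}$. In his proof of equality \cite[Thm.~8.17]{Hsieh-JAMS}, the opposite divisibility on the anticyclotomic line is supplied by Hida \cite{hida-coates}. The paper replaces exactly that input with the Euler-system divisibility. The combination then runs as follows:
\begin{enumerate}
\item Write ${\rm char}(X)=(c)$ and $\mathcal{L}=\ell$. Hsieh gives $c=\ell u$.
\item A control theorem shows that the image $\bar c$ divides ${\rm char}(X^-)$.
\item By the Euler system, ${\rm char}(X^-)$ divides $\bar\ell$.
\item Since $\bar\ell\neq 0$ (via \cite{BHTY}), $\bar u$ is a unit, hence $u$ is a unit.
\end{enumerate}
Only one divisibility on the line is needed as input; anticyclotomic equality is an output. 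With this correction your argument matches the paper's.

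\textbf{Two side remarks play no role.} Using $\tau_\chi$ to ``isolate'' the $\chi$-factor is unnecessary, because $\tau_\chi$ only enters the construction of the Euler system. Invoking the functional equation for ``$\mathcal{L}_\chi^2=\mathcal{L}_\chi\cdot\mathcal{L}_\chi$'' is also unnecessary. Once the $\psi$-factor is a unit, the factorization directly gives $({\rm Log}_{\cP_3})^2\sim\mathcal{L}_\chi$.
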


\begin{proof}
Following the proof of Theorem \ref{thm:BK-def}, we have 
\[\mathcal{L}_{\pp_1,\bar{\xi}_1^-}^{{\mathrm{K}_1},-}\cdot{\mathrm{Log}}^{\bfg}_{(\breve{\bff}^\star,\breve{\bfg}^\star,\breve{\bfh}^\star)}({\rm res}_{\mathcal{P}_3}(^{\cc}\mathbf{z}_{\chi}))=\mathscr{L}_p^{\bfg}(\bff,\bfg,\bfh),\]
by the reciprocity law \ref{thm:ERL}. By the decomposition \ref{prop:factor-L-indef}, the RHS is related to 
 \[\mathcal{L}_{\cP_1,\cP_4,\bar{\psi}_0^-}^{{\mathrm{K}_0},-}(W_1^{\psi},W_2^{\psi})\cdot\mathcal{L}_{\cP_1,\cP_4,\bar{\chi}_0^-}^{{\mathrm{K}_0},-}(W_1^{\chi},W_2^{\chi}).\]
  We can arrange the first factor to be a unit inside $\cO\dBr{S_1,S_2}\hat\otimes_{\Z_p}\Z_p^{\ur}$ by Theorem $1.4$ of \cite{BHTY} (see also \cite{BHKO}) by changing $(\psi_1,\psi_1')\mapsto(\psi_1\phi,\psi_1'\phi)$ as inside the proof of Theorem \ref{thm:BK-def}, see also extensive works of Hida \cite{Hida-annals-mu,Hida_2007,hida-dwork,hida-fix}, and \cite{hsieh-crelle-mu}. Theorem $1.1$ of \cite{BHTY} also implies that the second factor is non-zero in the Iwasawa algebra by the interpolation formula, which then shows that the base class $^{\cc}\mathbf{z}_{\chi}$ of our Euler system is non-torsion. By Theorem \ref{thm:IMC-general}, we have
\[
{\rm char}_{\Lambda_{K_0}^-}(X_{0,\varnothing,0,\varnothing}(K_{0,\infty}^-,\cO_{\chi^{-1}}(1))_{\rm tors})\supset{\rm char}_{\Lambda_{K_0}^-}\biggl(\frac{{\rm Sel}_{\varnothing,0,\varnothing,0}(K_0[p^\infty],\cO_{\chi})}{\Lambda_{K_0}^-\cdot \,^{\cc}\mathbf{z}_{\chi}}\biggr)^2
\]
in $\Lambda_{K_0}^-$. Via the explicit reciprocity law and global duality, we obtain that $X_{0,\varnothing,\varnothing,0}(K_{0,\infty}^-,\cO_{\chi^{-1}}(1))$ is torsion together with
\[
{\rm char}_{\Lambda_{K_0}^-}(X_{0,\varnothing,\varnothing,0}(K_{0,\infty}^-,\cO_{\chi^{-1}}(1)))\supset (\mathcal{L}_{\cP_1,\cP_4,\mathfrak{c}_0}^{{\mathrm{K}_0},-})
\]
in $\Lambda_{K_0}^-$, which is one divisibility of the anticyclotomic Iwasawa Main Conjecture for the character $\chi$.

Following the proof of \cite[Thm.\, 8.17]{Hsieh-JAMS}, replacing the Hida's solution to the anticyclotomic
main conjecture in \cite{hida-coates} by our divisibility, we obtain the equality over the full $\Lambda_{K_0}$.\qedhere

\end{proof}

\subsection{On the Bloch--Kato conjecture in rank at least one}

Our last application is extracted from the proof of Theorem~\ref{thm:IMC-def}. It provides a result towards the Bloch--Kato conjecture in rank at least $1$.

\begin{thm}\label{thm:BK-def-1}
Let $\chi$ be an anticyclotomic Hecke character of $K_0$ as in Assumption (\ref{eq:chi-decomp}). Assume further that:
\begin{enumerate}
    \item $\prod_{p|N}\epsilon_p=1$,
    \item $p>k_1'$,
    \item $p\nmid 6h_{K_0}$, the class number of $K_0$,
   \item the sign of the functional equation of $\chi$ being $+1$.
\end{enumerate}
If the triple $(k_1',k_1,k_2)$ forms $3$ sides of a triangle (which induces $L(V_\chi,0)=0$) then 
\[
{\rm dim}_{L_\mathfrak{P}}\,{\rm Sel}_{\rm BK}(K_0,V_{\chi})\geq 1.
\]

\end{thm}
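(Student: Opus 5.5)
Since the triple $(k_1',k_1,k_2)$ forms the sides of a triangle, we are in the balanced region: $k_1'+k_1+k_2>2k_i$ for each weight. By Lemma~\ref{lem:BK-Gr} (the ``otherwise'' case) this gives ${\rm Sel}_{\rm BK}(K_0,V_\chi)={\rm Sel}_{\varnothing,0,\varnothing,0}(K_0,V_\chi)$. By Proposition~\ref{propselmer2}, this is exactly the Selmer group containing the base class ${}^{\cc}z_\chi$. So the plan is to show ${}^{\cc}z_\chi$ is non-torsion. Theorem~\ref{thm:rank-1-general} then makes ${\rm Sel}_{\varnothing,0,\varnothing,0}(K_0,V_\chi)$ one-dimensional, hence $\dim {\rm Sel}_{\rm BK}\ge 1$. (The vanishing $L(\chi,0)=0$ is consistent with this: with hypothesis (1), balanced weights force the triple-product sign to be $-1$.)

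To get non-torsionness I would not work at the balanced point itself, where no reciprocity law is available. Instead I would use the $\Lambda$-adic class ${}^{\cc}\mathbf{z}_\chi$ of Theorem~\ref{maintheorem2-chi}, seen as a specialization at $Q_1$ of the big diagonal class $\kappa(\bff,\bfg,\bfh)$ in the $\bfg$-variable. The steps are as follows.

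\textbf{Step 1.} Exactly as in the proof of Theorem~\ref{thm:IMC-def}, apply Theorem~\ref{thm:ERL} along the $\bfg$-unbalanced region $k_1\ge k_1'+k_2$. This identifies ${\rm Log}^{\bfg}({\rm res}_{\cP_3}({}^{\cc}\mathbf{z}_\chi))$, up to $\eta_\bfg$, with $\mathscr{L}_p^\bfg(f,\bfg,\bfh)$.

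\textbf{Step 2.} By Proposition~\ref{prop:factor-L-indef}, the square of $\mathscr{L}_p^\bfg(f,\bfg,\bfh)$ is the product of the two biquadratic Katz functions. Twist $(\psi_1,\psi_1')$ by an anticyclotomic $\phi$ as in the proof of Theorem~\ref{thm:BK-def}, using \cite{BHTY}, so that the $\psi$-factor is a unit. Then use \cite[Thm.~1.1]{BHTY} to see that the $\chi$-factor is a nonzero element of $\cO\dBr{S_1,S_2}$. Hence ${\rm res}_{\cP_3}({}^{\cc}\mathbf{z}_\chi)$ is non-torsion, and so is ${}^{\cc}\mathbf{z}_\chi$.

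\textbf{Step 3.} Descend from the Iwasawa class to the specialization ${}^{\cc}z_\chi$ at the balanced weights, i.e.\ the specialization at our given $(k_1,k_2)$ with trivial anticyclotomic character. The class ${}^{\cc}z_\chi$ is obtained from ${}^{\cc}\mathbf{z}_\chi$ by the anticyclotomic twist and projection used in Theorem~\ref{maintheorem2-chi}.

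The main obstacle is Step 3. A non-torsion $\Lambda$-adic class may specialize to a torsion class at a particular point, and at balanced weights the explicit reciprocity law gives no information. My first attempt would avoid needing non-vanishing at that point. Apply Theorem~\ref{thm:IMC-general}(2) to the non-torsion class ${}^{\cc}\mathbf{z}_\chi$ to get that ${\rm Sel}_{\varnothing,0,\varnothing,0}(K_0[p^\infty],\cO_\chi)$ has $\Lambda_{K_0}^-$-rank one. Then use a control theorem: specialization to $K_0$ has finite kernel and cokernel under the irreducibility hypotheses (i)--(iii) of that proof. Together with the rank-one statement, this should yield $\dim{\rm Sel}_{\varnothing,0,\varnothing,0}(K_0,V_\chi)\ge 1$ via the parity/structure argument of \cite{JNS}. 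The parity input is that the sign of $\chi$ is $+1$ while the relevant Selmer rank is odd, which is what the one-dimensionality over $\Lambda$ encodes.

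If control fails, the fallback is a standard argument. Take the Euler system over $\Lambda$ and specialize at a height-one prime of $\Lambda_{K_0}^-$ avoiding the characteristic ideal of the torsion part of the Selmer module. This produces nonzero classes in nearby anticyclotomic twists. Then pass to the limit at the trivial character using a rigidity/Nekovář–Perrin-Riou rank-lower-bound argument.
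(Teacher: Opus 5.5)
Your Steps 1--2, together with the ``first attempt'' in Step 3, follow the paper's proof. The chain is: non-torsionness of ${}^{\cc}\mathbf{z}_\chi$ from the proof of Theorem~\ref{thm:IMC-def}, then $\Lambda_{K_0}^-$-rank one from Theorem~\ref{thm:IMC-general}, then injectivity of ${\rm Sel}_{\varnothing,0,\varnothing,0}(K_0[p^\infty],\cO_\chi)/(\gamma_{1,-}-1,\gamma_{2,-}-1)\to{\rm Sel}_{\varnothing,0,\varnothing,0}(K_0,\cO_\chi)$, and finally Lemma~\ref{lem:BK-Gr} in the balanced range, so the base class ${}^{\cc}z_\chi$ never needs to be non-torsion. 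No parity input, sign consideration or finite cokernel is needed: the coinvariants of a rank-one $\Lambda_{K_0}^-$-module already have positive $\cO$-rank, and injectivity (or finite kernel) of the descent map carries this to ${\rm Sel}_{\rm BK}(K_0,V_\chi)$.
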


\begin{proof}
 The class $^{\cc}\mathbf{z}_{\chi}\in{\rm Sel}_{\varnothing,0,\varnothing,0}(K_0[p^\infty],\cO_{\chi})$ is non-torsion via the proof of Theorem~\ref{thm:IMC-def}. Moreover, $^{\cc}\mathbf{z}_{\chi}$ is the base class of a $\Lambda_{K_0}^-$-adic anticyclotomic Euler system. Then Theorem~\ref{thm:IMC-general} implies that ${\rm Sel}_{\varnothing,0,\varnothing,0}(K_0[p^\infty],\cO_{\chi})$ has $\Lambda_{K_0}^-$-rank $1$. The natural map (compare with the projection (\ref{eq:Sel_projection_from_K_infty}))
\begin{equation}\label{eq:proj}
{\rm Sel}_{\varnothing,0,\varnothing,0}(K_0[p^\infty],\cO_{\chi})/(\gamma_{1,-}-1,\gamma_{2,-}-1)\rightarrow{\rm Sel}_{\varnothing,0,\varnothing,0}(K_0,\cO_{\chi})
\end{equation}
is injective (see also \cite[Prop.\,5.3.14]{MR-KS} and Greenberg \cite[p.\,72]{greenberg-cetraro}). 

Therefore, the Selmer group ${\rm Sel}_{\varnothing,0,\varnothing,0}(K_0,\cO_{\chi})$ has a positive $\cO$-rank. The theorem then follows by noting that ${\rm Sel}_{\varnothing,0,\varnothing,0}(K_0,\cO_{\chi})={\rm Sel}_{\rm BK}(K_0,\cO_{\chi})$ inside the balanced range by Lemma \ref{lem:BK-Gr}.
\end{proof}

\begin{rmk}\label{rmk:BK_rank_1}
Note that by letting $^{\cc}z_{\chi}\in{\rm Sel}_{\varnothing,0,\varnothing,0}(K_0,\cO_{\chi})$ be the image of $^{\cc}\mathbf{z}_{\chi}$ under the map (\ref{eq:proj}), such a class $^{\cc}z_{\chi}\in{\rm Sel}_{\rm BK}(K_0,V_{\chi})$ satisfies:
\[
^{\cc}z_{\chi} \text{ is non-torsion}\quad\Longrightarrow\quad{\rm dim}_{L_\mathfrak{P}}\,{\rm Sel}_{\rm BK}(K_0,V_{\chi})=1
\]
by Theorem~\ref{thm:rank-1-general}, under the same assumption in Theorem \ref{thm:BK-def-1}.
\end{rmk}

\begin{lem}For $k_1\ge k_1'+k_2$, if $\mathrm{res}_{\cP_4}(\mathrm{Sel}_{\mathrm{BK}}(K_0,V_\chi))\neq 0$ then
      \[\dim_{L_\mathfrak{P}} (\mathrm{Sel}_{\mathrm{BK}}(K_0,V_\chi))=\dim_{L_\mathfrak{P}}(\mathrm{Sel}_{\varnothing,0,\varnothing,0}(K_0,V_\chi))+1.\]

\end{lem}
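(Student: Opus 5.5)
The plan is to compare four Selmer groups that differ only at $\cP_3$ and $\cP_4$, using the Poitou--Tate parity argument together with the conjugate self-duality of $\chi$. Throughout, $V=V_\chi$ and $V^*=V_{\chi^{-1}}(1)$.

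\textbf{Step 1: set-up.} Since $k_1\ge k_1'+k_2$, Lemma~\ref{lem:BK-Gr} gives ${\rm Sel}_{\rm BK}(K_0,V)={\rm Sel}_{\varnothing,0,0,\varnothing}(K_0,V)$. Both ${\rm Sel}_{\varnothing,0,0,\varnothing}(K_0,V)$ and ${\rm Sel}_{\varnothing,0,\varnothing,0}(K_0,V)$ contain ${\rm Sel}_{\varnothing,0,0,0}(K_0,V)$, with exact sequences
\[
0\to{\rm Sel}_{\varnothing,0,0,0}(K_0,V)\to{\rm Sel}_{\varnothing,0,0,\varnothing}(K_0,V)\xrightarrow{{\rm res}_{\cP_4}}H^1(K_{0,\cP_4},V),
\]
\[
0\to{\rm Sel}_{\varnothing,0,0,0}(K_0,V)\to{\rm Sel}_{\varnothing,0,\varnothing,0}(K_0,V)\xrightarrow{{\rm res}_{\cP_3}}H^1(K_{0,\cP_3},V).
\]
It therefore suffices to prove two things: ${\rm res}_{\cP_4}$ has one-dimensional image on ${\rm Sel}_{\rm BK}(K_0,V)$, and ${\rm res}_{\cP_3}$ vanishes on ${\rm Sel}_{\varnothing,0,\varnothing,0}(K_0,V)$.

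\textbf{Step 2: local dimensions.} For $\cP\mid p$ we have $K_{0,\cP}=\Q_p$, and $V$ is a one-dimensional $G_{\Q_p}$-character that is pure of motivic weight $-1$. Hence neither $V$ nor $V^*$ has nonzero $G_{\Q_p}$-invariants. This is automatic from the Hodge--Tate weight unless that weight is $0$; in the weight-$0$ case the Frobenius eigenvalue is a Weil number of absolute value $p^{1/2}$ or $p^{-1/2}$, hence not $1$. So $H^0=H^2=0$, and the local Euler characteristic formula gives $\dim H^1(K_{0,\cP},V)=1$. In particular ${\rm res}_{\cP_4}({\rm Sel}_{\rm BK})\neq0$ forces the image to be exactly one-dimensional, so
\[
\dim{\rm Sel}_{\rm BK}(K_0,V)=\dim{\rm Sel}_{\varnothing,0,0,0}(K_0,V)+1.
\]

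\textbf{Step 3: the parity lemma.} Let $\mathcal{F}\subset\mathcal{G}$ be Selmer structures differing only at one place $v$, with $H^1_{\mathcal G}/H^1_{\mathcal F}$ one-dimensional. Poitou--Tate global duality (e.g.\ \cite[Thm.\,2.3.4]{MR-KS}) gives
\[
\bigl(\dim{\rm Sel}_{\mathcal G}(V)-\dim{\rm Sel}_{\mathcal F}(V)\bigr)+\bigl(\dim{\rm Sel}_{\mathcal F^*}(V^*)-\dim{\rm Sel}_{\mathcal G^*}(V^*)\bigr)=1,
\]
with both brackets nonnegative. Take $\mathcal G=(\varnothing,0,0,\varnothing)$ and $\mathcal F=(\varnothing,0,0,0)$. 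By Step 2 the first bracket is $1$, so the second vanishes:
\[
{\rm Sel}_{0,\varnothing,\varnothing,\varnothing}(K_0,V^*)={\rm Sel}_{0,\varnothing,\varnothing,0}(K_0,V^*),
\]
that is, ${\rm res}_{\cP_4}$ kills ${\rm Sel}_{0,\varnothing,\varnothing,\varnothing}(K_0,V^*)$.

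\textbf{Step 4: transport by conjugation.} Since $\chi\chi^{\cc}=\bN$, we have $V^*\cong V_{\chi^{\cc}}$, and conjugation by $\tau_3$ identifies $H^1(K_0,V_{\chi^{\cc}})$ with $H^1(K_0,V_\chi)$. Because $\cP_2=\tau_3\cP_1$ and $\cP_4=\tau_3\cP_3$, this identification swaps the local conditions at $\cP_1\leftrightarrow\cP_2$ and $\cP_3\leftrightarrow\cP_4$. It therefore carries ${\rm Sel}_{a,b,c,d}(K_0,V^*)$ isomorphically onto ${\rm Sel}_{b,a,d,c}(K_0,V)$, compatibly with restriction to the swapped primes. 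Applying this to Step 3 gives
\[
{\rm Sel}_{\varnothing,0,\varnothing,\varnothing}(K_0,V)={\rm Sel}_{\varnothing,0,0,\varnothing}(K_0,V),
\]
so ${\rm res}_{\cP_3}$ vanishes on ${\rm Sel}_{\varnothing,0,\varnothing,\varnothing}(K_0,V)$, and a fortiori on ${\rm Sel}_{\varnothing,0,\varnothing,0}(K_0,V)$. The second exact sequence then gives ${\rm Sel}_{\varnothing,0,\varnothing,0}(K_0,V)={\rm Sel}_{\varnothing,0,0,0}(K_0,V)$, and combining with Step 2 yields the lemma.

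\textbf{Main obstacle.} The delicate point is Step 4. One must check carefully that the conjugate self-duality isomorphism is compatible with the local Tate pairings and the labelling of the primes above $p$, so that the dual of the condition $\varnothing$ at $\cP_4$ on $V^*$ becomes the condition at $\cP_3$ on $V$ rather than at some other prime. The same bookkeeping underlies the final step of the proof of Theorem~\ref{thm:BK-def}, and I would follow that argument for the sign and labelling conventions.
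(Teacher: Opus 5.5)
Your proof is correct and uses the same ingredients as the paper's proof. These are the identification ${\rm Sel}_{\rm BK}(K_0,V_\chi)={\rm Sel}_{\varnothing,0,0,\varnothing}(K_0,V_\chi)$, global duality between $V_\chi$ and $V_\chi^*$, the swap $\cP_3\leftrightarrow\cP_4$ coming from conjugate self-duality, and one-dimensionality of $H^1(K_{0,\cP},V_\chi)$ (which you verify explicitly and the paper uses tacitly). The only difference is the order: the paper first transports the hypothesis to some $x\in{\rm Sel}_{0,\varnothing,\varnothing,0}(K_0,V_\chi^*)$ with ${\rm res}_{\cP_3}(x)\neq 0$ and then kills ${\rm res}_{\cP_3}$ on ${\rm Sel}_{\varnothing,0,\varnothing,0}(K_0,V_\chi)$ via $\sum_i\langle{\rm res}_{\cP_i}\kappa,{\rm res}_{\cP_i}x\rangle=0$, whereas you apply duality first and transport afterwards (the five-term Poitou--Tate sequence you use is more than needed; plain reciprocity already suffices).
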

\begin{proof}Recall that $\mathrm{Sel}_{\mathrm{BK}}(K_0,V_\chi)=\mathrm{Sel}_{\varnothing,0,0,\varnothing}(K_0,V_\chi)$ in this $\bfg$-unbalanced range by Lemma \ref{lem:BK-Gr}. Using the fact that $\chi$ is conjugate self-dual, we see that $\mathrm{Sel}_{\mathrm{BK}}(K_0,V_\chi^*)=\mathrm{Sel}_{0,\varnothing,\varnothing,0}(K_0,V_\chi^*)$. Furthermore,  $\mathrm{res}_{\cP_3}(\mathrm{Sel}_{\mathrm{BK}}(K_0,V_\chi^*))\neq 0$ by the assumption that  $\mathrm{res}_{\cP_4}(\mathrm{Sel}_{\mathrm{BK}}(K_0,V_\chi))\neq  0$. Let $x\in \mathrm{Sel}_{\mathrm{BK}}(K_0,V_\chi^*)$ such that $\mathrm{res}_{\cP_3}(x)\neq 0$. Take any $\kappa\in \mathrm{Sel}_{\varnothing,0,\varnothing,0}(K_0,V_\chi) $. By \cite[Thm\,2.3.4]{MR-KS}, we have 
        \[\sum_{i=1}^4\langle \mathrm{res}_{\cP_i} (\kappa),\mathrm{res}_{\cP_i}(x)\rangle=0 \qquad\Rightarrow \qquad  \langle \mathrm{res}_{\cP_3} (\kappa),\mathrm{res}_{\cP_3}(x)\rangle=0 \qquad\Rightarrow \qquad \mathrm{res}_{\cP_3} (\kappa)=0,\]
        i.e. $\mathrm{Sel}_{\varnothing,0,\varnothing,0}(K_0,V_\chi)=\mathrm{Sel}_{\varnothing,0,0,0}(K_0,V_\chi)$. Now using the exact sequence
        \[0\longrightarrow \mathrm{Sel}_{\varnothing,0,0,0}(K_0,V_\chi)\longrightarrow \mathrm{Sel}_{\mathrm{BK}}(K_0,V_\chi)\xrightarrow{{\rm res}_{\mathcal{P}_4}} \rH^1(K_{0,\mathcal{P}_4},V_\chi),\]
        and the assumption that the last restriction map is non-trivial, we conclude the result.\qedhere
   
\end{proof}
Combining with Theorem \ref{thm:rank-1-general}, we obtain the following rank two result:
\begin{cor} \label{cor:rank-2-preparation}
For $k_1\ge k_1'+k_2$, if $\mathrm{res}_{\cP_4}(\mathrm{Sel}_{\mathrm{BK}}(K_0,V_\chi))\neq 0$ and $^{\cc}z_{\chi}$ is non-torsion then
      \[\dim_{L_\mathfrak{P}} (\mathrm{Sel}_{\mathrm{BK}}(K_0,V_\chi))=2.\]

\end{cor}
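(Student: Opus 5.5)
The corollary follows by combining the preceding Lemma with Theorem~\ref{thm:rank-1-general}, so the plan is two short steps.

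\emph{Step 1.} Since $k_1\ge k_1'+k_2$ and $\mathrm{res}_{\cP_4}(\mathrm{Sel}_{\mathrm{BK}}(K_0,V_\chi))\neq 0$, the hypotheses of the preceding Lemma hold verbatim. It gives
\[
\dim_{L_\mathfrak{P}}\mathrm{Sel}_{\mathrm{BK}}(K_0,V_\chi)=\dim_{L_\mathfrak{P}}\mathrm{Sel}_{\varnothing,0,\varnothing,0}(K_0,V_\chi)+1 .
\]
Recall where this comes from. Lemma~\ref{lem:BK-Gr} identifies $\mathrm{Sel}_{\rm BK}$ with $\mathrm{Sel}_{\varnothing,0,0,\varnothing}$ in this $\bfg$-unbalanced range. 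Global duality then forces $\mathrm{Sel}_{\varnothing,0,\varnothing,0}(K_0,V_\chi)$ to equal $\mathrm{Sel}_{\varnothing,0,0,0}(K_0,V_\chi)$. That group is the kernel of $\mathrm{res}_{\cP_4}$ on $\mathrm{Sel}_{\rm BK}(K_0,V_\chi)$, and the image of $\mathrm{res}_{\cP_4}$ is a nonzero subspace of the one-dimensional $H^1$-quotient at $\cP_4$.

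\emph{Step 2.} Because $^{\cc}z_{\chi}\in\mathrm{Sel}_{\varnothing,0,\varnothing,0}(K_0,\cO_\chi)$ is non-torsion, and $p>k_1'$ and $p\nmid 6h_{K_0}$ are in force from the standing assumptions, Theorem~\ref{thm:rank-1-general} (the Jetchev--Nekov\'a\v{r}--Skinner machinery applied to the Euler system of Theorem~\ref{maintheorem2-chi}) gives $\dim_{L_\mathfrak{P}}\mathrm{Sel}_{\varnothing,0,\varnothing,0}(K_0,V_\chi)=1$. Substituting this into Step 1 gives dimension $2$.

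The only point needing care is the claim in Step 1 that the image of $\mathrm{res}_{\cP_4}$ contributes exactly one dimension. This holds because the target of $\mathrm{res}_{\cP_4}$, after quotienting by the Bloch--Kato condition at $\cP_4$, is one-dimensional for a character. More precisely, $\mathrm{Sel}_{\varnothing,0,0,0}$ is the kernel of the map $\mathrm{Sel}_{\varnothing,0,0,\varnothing}\to H^1(K_{0,\cP_4},V_\chi)/H^1_0$, whose target has dimension $1$ by the local Euler characteristic formula, assuming $H^0$ and $H^2$ of $V_\chi$ at $\cP_4$ vanish; this holds since the Hodge--Tate weight there is nonzero. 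Nonvanishing of the restriction then makes this map surjective, which is exactly the "$+1$" in the Lemma. I expect no genuine obstacle beyond confirming that the Lemma's hypotheses and those of Theorem~\ref{thm:rank-1-general} are simultaneously satisfied.
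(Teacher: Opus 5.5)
Your proof is correct and is exactly the paper's: the corollary follows by substituting $\dim_{L_\mathfrak{P}}\mathrm{Sel}_{\varnothing,0,\varnothing,0}(K_0,V_\chi)=1$ from Theorem~\ref{thm:rank-1-general} into the preceding Lemma. One small correction to your side remark on the ``$+1$'': the vanishing of $H^2(K_{0,\cP_4},V_\chi)\cong H^0(K_{0,\cP_4},V_\chi^*(1))^\vee$ does not follow from the Hodge--Tate weight of $V_\chi$ at $\cP_4$ being nonzero, because in the boundary case $k_1=k_1'+k_2$ (the case used in Theorem~\ref{thm:rank-2}) $V_\chi$ has the same Hodge--Tate weight $-1$ as $\Q_p(1)$ there; it follows instead from purity, since the crystalline Frobenius eigenvalue of $V_\chi$ at $\cP_4$ is a Weil number of odd weight, so $V_\chi|_{G_{K_{0,\cP_4}}}$ can be neither trivial nor cyclotomic.
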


\begin{thm}\label{thm:rank-2}

For $k_2=1$, $k_1'$ even and $k_1=k_1'+1$, we assume that
\begin{enumerate}
    \item $\chi^{-1}$, having infinity type $(1,0,0,1)$ is attached to a CM abelian surface,
    \item $L(\chi,0)=0$ with sign of the functional equation being $+1$ (hence vanishing to an even order) 
\end{enumerate}
then we have:
      \[ ^{\cc}z_{\chi}\text{ is non-torsion}\qquad\Rightarrow\qquad\dim_{L_\mathfrak{P}} (\mathrm{Sel}_{\mathrm{BK}}(K_0,V_\chi))=2.\]
\end{thm}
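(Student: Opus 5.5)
The plan is to reduce Theorem \ref{thm:rank-2} to Corollary \ref{cor:rank-2-preparation}. First check the weights. With $k_2=1$, $k_1=k_1'+1$ and $k_1'$ even, we have $k_1=k_1'+k_2$, so the hypothesis $k_1\ge k_1'+k_2$ of Lemma \ref{lem:BK-Gr} and of the corollary holds. The parity condition $k_1'+k_1+k_2\equiv 0 \pmod 2$ also holds. The infinity type of $\chi$ from Assumption \ref{eq:chi-decomp} becomes
\[
\left(1,0,\tfrac{k_1'-k_1'-1+1}{2},1-0\right)=(1,0,0,1),
\]
which matches hypothesis (1). By Lemma \ref{lem:BK-Gr} we therefore have $\mathrm{Sel}_{\rm BK}(K_0,V_\chi)=\mathrm{Sel}_{\varnothing,0,0,\varnothing}(K_0,V_\chi)$. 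Since ${}^{\cc}z_\chi$ is assumed non-torsion, the only remaining input for Corollary \ref{cor:rank-2-preparation} is
\[
\mathrm{res}_{\cP_4}\bigl(\mathrm{Sel}_{\rm BK}(K_0,V_\chi)\bigr)\neq 0 .
\]

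To prove this nonvanishing I would argue by contradiction and use global duality together with the parity of the Bloch--Kato rank. By Theorem \ref{thm:rank-1-general}, $\mathrm{Sel}_{\varnothing,0,\varnothing,0}(K_0,V_\chi)$ is one-dimensional and spanned by ${}^{\cc}z_\chi$.

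Suppose $\mathrm{res}_{\cP_4}$ vanished on $\mathrm{Sel}_{\rm BK}$. Then $\mathrm{Sel}_{\rm BK}=\mathrm{Sel}_{\varnothing,0,0,0}$, which is contained in $\mathrm{Sel}_{\varnothing,0,\varnothing,0}$, so $\dim\mathrm{Sel}_{\rm BK}\le 1$. Now the sign is $+1$ and $\chi$ is conjugate self-dual with $\chi^{\vee}(1)\simeq\chi^{\cc}$. Nekov\'a\v{r}'s parity result for self-dual Selmer complexes (applied to $\mathrm{Ind}_{K_0}^{K_3}\chi$, a symplectic/orthogonal self-dual representation over $K_3$) then forces $\dim\mathrm{Sel}_{\rm BK}$ to be even. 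Hence $\mathrm{Sel}_{\rm BK}=0$, which gives $\mathrm{Sel}_{\varnothing,0,0,0}=0$.

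On the other hand, consider the exact sequence
\[
0\to \mathrm{Sel}_{\varnothing,0,0,0}\to\mathrm{Sel}_{\varnothing,0,\varnothing,0}\xrightarrow{\mathrm{res}_{\cP_3}}H^1(K_{0,\cP_3},V_\chi).
\]
Since $\mathrm{Sel}_{\varnothing,0,0,0}=0$, this forces $\mathrm{res}_{\cP_3}({}^{\cc}z_\chi)\neq0$. By the explicit reciprocity law Theorem \ref{thm:ERL}, specialized as in the proof of Theorem \ref{thm:BK-def}, this means $\mathscr L_p^{\bfg}$ is nonzero at the point $\mathcal Q_1$. The point $(k_1',k_1'+1,1)$ lies in the $\bfg$-unbalanced interpolation range, and the interpolation formula there gives $L(\chi,0)L(\psi,0)\neq0$. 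This contradicts $L(\chi,0)=0$, provided the modified Euler factor $\mathcal E_p$ does not vanish. For infinity type $(1,0,0,1)$ with $p$ split and the weights involved, the Euler factor at $p$ should be nonzero by a weight argument, since no exceptional zero occurs.

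I expect the main obstacle to be the parity step, namely justifying that $\dim\mathrm{Sel}_{\rm BK}(K_0,V_\chi)$ is even when the sign is $+1$. If Nekov\'a\v{r}'s parity theorem is not directly available, the fallback is the CM abelian surface hypothesis: identify $\mathrm{Sel}_{\rm BK}$ with the $p$-adic Selmer group of the surface, so that parity follows from the Cassels--Tate pairing and known $p$-parity results for abelian varieties with CM. A secondary issue is making sure the reciprocity law genuinely detects $L(\chi,0)$ and not only the product $L(\chi,0)L(\psi,0)$. This is fine, because a nonzero product already contradicts $L(\chi,0)=0$.
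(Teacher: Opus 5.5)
Your argument is correct and uses the same ingredients as the paper: Theorem \ref{thm:rank-1-general}, the reciprocity law at $\cP_3$, Lemma \ref{lem:BK-Gr}, parity of $\dim\mathrm{Sel}_{\rm BK}(K_0,V_\chi)$ for the CM abelian surface (which the paper takes from Bell), and Corollary \ref{cor:rank-2-preparation}; you only run the contradiction in the opposite order, using parity to force $\mathrm{Sel}_{\rm BK}(K_0,V_\chi)=0$ and then contradicting $L(\chi,0)=0$, whereas the paper first deduces $\mathrm{res}_{\cP_3}({}^{\cc}z_\chi)=0$ from $L(\chi,0)=0$ and then contradicts parity. One small correction: in your direction the nonvanishing of $\mathcal{E}_p$ is not needed, because $\mathscr{L}_p^{\bfg}(\mathcal{Q}_1)\neq 0$ already forces every factor in the interpolation formula to be nonzero; what is really used (implicitly in the paper as well) is $\eta_{\bfg}(\mathcal{Q}_1)\neq 0$ together with injectivity of the specialization of $\mathrm{Log}^{\bfg}$ at $\mathcal{Q}_1$.
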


\begin{proof}
By Theorem \ref{thm:rank-1-general}, which is the expected outcome after feeding our Euler system to the machinery of \cite{JNS}, we know that ${\rm Sel}_{\varnothing,0,\varnothing,0}(K_0,V_{\chi})$ is one-dimensional since $^{\cc}z_{\chi}$ is non-torsion. From the vanishing of $L(\chi,0)$, we conclude that 
${\rm res}_{\mathcal{P}_3}(^{\cc}z_{\chi})= 0$ by the reciprocity law (see the proof of Theorem \ref{thm:BK-def}). Therefore, we have 
\[{\rm Sel}_{\varnothing,0,\varnothing,0}(K_0,V_{\chi})={\rm Sel}_{\varnothing,0,0,0}(K_0,V_{\chi}).\]
By Lemma \ref{lem:BK-Gr}, $\mathrm{Sel}_{\mathrm{BK}}(K_0,V_\chi)=\mathrm{Sel}_{\varnothing,0,0,\varnothing}(K_0,V_\chi)$ as $k_1=k_1'+k_2$. Hence, we know  the dimension of $\mathrm{Sel}_{\mathrm{BK}}(K_0,V_\chi)$ is at least $1$. Now, we have two cases, either $\mathrm{res}_{\cP_4}(\mathrm{Sel}_{\mathrm{BK}}(K_0,V_\chi))$ is trivial or not. In the first case, this leads to $\mathrm{Sel}_{\mathrm{BK}}(K_0,V_\chi)={\rm Sel}_{\varnothing,0,0,0}(K_0,V_{\chi})$, which is one dimensional. But that number has to be even due to the recent result of Bell \cite{rank-CM-abelian-variety}, which is a contradiction. Therefore, $\mathrm{res}_{\cP_4}(\mathrm{Sel}_{\mathrm{BK}}(K_0,V_\chi))$ is non-trivial and we conclude the result using Corollary \ref{cor:rank-2-preparation}.
\end{proof}

   \newpage

\bibliographystyle{amsalpha}
\bibliography{Schoen-cm}

\end{document}